\documentclass[a4paper]{amsart}
\usepackage{amsmath, amsthm, amsfonts, amscd, amssymb, MnSymbol}
\usepackage{xcolor}
\usepackage{enumitem}
\usepackage{tikz-cd}
\usepackage{hyperref}
\usepackage[normalem]{ulem}
\usepackage{leftindex}

\newtheorem{thm}{Theorem}[section]
\newtheorem*{thma}{Theorem A}
\newtheorem*{thmb}{Theorem B}
\newtheorem*{thmc}{Theorem C}
\newtheorem{cor}[thm]{Corollary}

\newtheorem{lem}[thm]{Lemma}
\newtheorem{rmk}[thm]{Remark}
\newtheorem{prop}[thm]{Proposition}

{\theoremstyle{remark}}

\numberwithin{equation}{section}

\usepackage{biblatex}
\newcommand{\Ker}{\operatorname{Ker}} 
\newcommand{\Img}{\operatorname{Im}}
 
\newcommand{\id}{\operatorname{id}}

\newcommand{\Span}{\operatorname{Span}}
\newcommand{\Spec}{\operatorname{Spec}}
\newcommand{\End}{\operatorname{End}}
\newcommand{\Irr}{\operatorname{Irr}}
\newcommand{\SSp}{*\text{-}\operatorname{Irr}}
\newcommand{\Prim}{\operatorname{Prim}}
\newcommand{\SPrim}{*\text{-}\operatorname{Prim}}
\newcommand{\Max}{\operatorname{Max}}
\newcommand{\Pol}{\operatorname{Pol}}
\newcommand{\ev}{\operatorname{ev}}

\newcommand{\Nbb}{\mathbb N}
\newcommand{\Cbb}{\mathbb C}
\newcommand{\Zbb}{\mathbb Z}
\newcommand{\Rbb}{\mathbb R}

\newcommand{\Tbb}{\mathbb{T}}

\newcommand{\Lbb}{\mathbb{L}}

\newcommand{\Acal}{\mathcal{A}}
\newcommand{\Bcal}{\mathcal{B}}

\newcommand{\Hcal}{\mathcal{H}}
\newcommand{\Ical}{\mathcal{I}}

\newcommand{\Lcal}{\mathcal{L}}

\newcommand{\Ocal}{\mathcal{O}}

\newcommand{\Scal}{\mathcal{S}}
\newcommand{\Rcal}{\mathcal{R}}

\newcommand{\Tcal}{\mathcal{T}}

\newcommand{\gfrak}{\mathfrak{g}}
\newcommand{\hfrak}{\mathfrak{h}}
\newcommand{\kfrak}{\mathfrak{k}}

\newcommand{\Cfrak}{\mathfrak{C}}
\newcommand{\tfrak}{\mathfrak{t}}
\newcommand{\nfrak}{\mathfrak{n}}
\newcommand{\bfrak}{\mathfrak{b}}
\newcommand{\Dfrak}{\mathfrak{D}}
\newcommand{\gf}{\mathfrak{g}}
\newcommand{\hf}{\mathfrak{h}}
\newcommand{\kf}{\mathfrak{k}}
\newcommand{\Cf}{\mathfrak{C}}
\newcommand{\tf}{\mathfrak{t}}
\newcommand{\Df}{\mathfrak{D}}

\newcommand{\Qbf}{\mathbf{Q}}
\newcommand{\Pbf}{\mathbf{P}}

\newcommand{\BDelta}{\boldsymbol{\Delta}}

\newcommand{\wrm}{\mathrm{w}}

\newcommand{\roots}{\mathbf{Q}}

\newcommand{\weights}{\mathbf{P}}

\begin{document}

\title[Complexification of $ q $-deformations]{On the complexification of $ q $-deformations of compact semisimple Lie groups}

\author{Heon Lee}
\address[1]{Institute for Advanced Study in Mathematics, Harbin Institute of Technology \\
Harbin 150001 \\
China
}
\address[2]{Research Institute of Mathematics, Seoul National University \\
Gwanak-ro 1, Gwanak-gu, Seoul 08826 \\
Republic of Korea
}
\email{heon.lee.math@gmail.com}

\author{Christian Voigt} 
\address{School of Mathematics and Statistics \\
         University of Glasgow \\
         University Place \\
         Glasgow G12 8QQ \\
         United Kingdom 
}
\email{christian.voigt@glasgow.ac.uk}

\subjclass[2020]{20G42, 
46L67, 
17B37. 
}

\begin{abstract}
For the quantised algebra of functions on a simply connected compact semisimple Lie group, we study the relationship between the primitive spectrum, topologically irreducible representations by bounded operators on Hilbert spaces, and quantum holomorphic evaluations induced via the Drinfeld double. In terms of the Joseph stratification of the primitive spectrum, we prove a sharp dichotomy: primitive ideals in homogeneous strata always arise as kernels of topologically irreducible representations by bounded operators on a Hilbert space, whereas those in inhomogeneous strata do not admit any such realisations. We further show that, unlike in the classical situation, quantum holomorphic evaluations typically fail to be topologically irreducible, and that their kernels do not determine the primitive spectrum. Finally, we describe the Iwasawa decomposition of the Drinfeld double in terms of polynomial functions, extending work by Podleś-Woronowicz for the quantum Lorentz group to all complex semisimple quantum groups, and clarifying the structure of quantum holomorphic evaluations.
\end{abstract}

\maketitle

\section{Introduction}

A fundamental feature underpinning classical representation theory is the fact that there is a bijective correspondence between compact semisimple Lie groups and complex semisimple Lie groups given by \emph{complexification}. The aim of this paper is to study quantum analogues of this correspondence for quantised function algebras on $ q $-deformations, in different incarnations.

To motivate this, recall first that there is an elegant approach to the complexification of a compact connected Lie group $ K $ in terms of the Hopf $ * $-algebra $ \Ocal^\mathbb{R}(K) $ of matrix coefficients of finite-dimensional unitary representations of \(K\), see \cite{Chevalley}. Indeed, forgetting the $ * $-structure allows one to view this Hopf algebra as the algebra of representative functions on a complex affine algebraic group, and hence a complex Lie group, and this group is precisely the complexification $ G = K_\mathbb{C} $ of $ K $. That is, the Hopf algebra $ \Ocal(G) = \Ocal^\mathbb{R}(K) $ encodes both $ K $ and $ G $ at the same time, and the passage from $ G $ to $ K $ amounts to incorporating the $ * $-structure on $ \Ocal(G) $ corresponding to the compact real form of $ G $. 
By construction, the evaluation homomorphisms at points of $ G $ correspond to the maximal ideals of $ \Ocal(G) $, or equivalently, the irreducible algebra representations of $ \Ocal(G) $, which are precisely its characters. The points of $ K $ correspond to $ * $-algebra characters, that is, characters compatible with the $ * $-structure of $\Ocal^\mathbb{R}(K) $. 

A similar picture remains valid in the quantum setting, and provides the link between Woronowicz's theory of compact quantum groups and the purely algebraic theory of quantised function algebras and 
quantised universal enveloping algebras. Namely, the standard $ q $-deformation $ K_q $ of a simply connected compact semisimple Lie group is encoded by a Hopf $ * $-algebra $ \Ocal^\mathbb{R}(K_q) $, whose underlying Hopf algebra $ \Ocal(G_q) $ can be viewed as a quantisation of $ \Ocal(G) $. By the work of Hodges-Levasseur \cite{Hodges1993}, Joseph \cite{Joseph} and others, the structure of the primitive spectrum of $ \Ocal(G_q) $ fits nicely with the Poisson geometry of $ G $ at the semiclassical level, see \cite{Yakimov2014} for a detailed account. Note that, in view of the analogy with point evaluations in the classical case, one may regard the primitive spectrum of $ \Ocal(G_q) $ as an incarnation of the complexification of $ K_q $.  

There is another approach to the complexification of $ K_q $ using operator algebras. This builds on the work of Podle\'s-Woronowicz on the quantum Lorentz group \cite{Podles1990}, and starts from the $ C^* $-completion $ C(K_q) $ of $ \Ocal^\mathbb{R}(K_q) $. In this picture, the complexification $ G_q $ is encoded by the Drinfeld double of $ K_q $.  Our main aim is to clarify the connection between the resulting analytic approach to complexification and its purely algebraic counterpart in terms of Hopf $ * $-algebras. At the level of quantised universal enveloping algebras and convolution algebras, there is indeed a close correspondence \cite{VoigtYuncken}, but the picture is less clear on the dual side, namely at the level of function algebras. 

We first study bounded Hilbert space representations of the quantised function algebra $ \Ocal(G_q) = \Ocal^\mathbb{R}(K_q) $. Vaksman-Soibelman and Soibelman \cite{VAKSMAN_SOIBELMAN_functionsquantumsu2}, \cite{SOIBELMAN_functionscompactquantumgroups} classified all topologically irreducible $ * $-representations, see also \cite{KorogodskiSoibelman}. We consider the more general case of algebra representations by bounded operators on a Hilbert space which are not necessarily $ * $-preserving. To formulate our result, recall that the Joseph stratification 
\[
\Prim(\Ocal(G_q))
=
\coprod_{\wrm\in W\times W}
\Prim_{\wrm}(\Ocal(G_q))
\]
of the set $ \Prim(\Ocal(G_q)) $ of primitive ideals of $ \Ocal(G_q) $ is parametrised by pairs of elements of the Weyl group $ W $. We shall say that $ \Prim_{\wrm}(\Ocal(G_q)) $ is \emph{homogeneous} if $ \wrm $ is of the form $ \wrm = (w,w) $ for some $ w \in W $, and \emph{inhomogeneous} otherwise. 
Using this terminology we establish the following dichotomy. 

\begin{thma} 
Primitive ideals in homogeneous Joseph strata can always be realised as kernels of topologically irreducible representations by bounded operators on some Hilbert space, whereas primitive ideals in inhomogeneous strata do not admit any such realisations. 
\end{thma}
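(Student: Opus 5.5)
We will treat the two halves of the statement separately, using the known description of the Joseph strata. By Hodges--Levasseur and Joseph, $\Prim_{\wrm}(\Ocal(G_q))$ for $\wrm=(w_1,w_2)$ is parametrised by the torus $H$ (modulo a finite group) times the primitive spectrum of a localised quantum torus. Concretely, every primitive ideal in this stratum is the kernel of a representation obtained in two steps. First take the tensor product of highest-weight-type modules over the quantum Schubert cells associated with $w_1$ and $w_2$. Then twist by a character of $\Ocal(H)$, that is, by an element $t\in H=(\Cbb^\times)^{\mathrm{rk}}$.

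\textbf{Homogeneous strata.} Here we start from the Soibelman classification of irreducible $*$-representations. For $w\in W$ with reduced expression $w=s_{i_1}\cdots s_{i_k}$, we take the $*$-representation $\pi_w=\pi_{i_1}\otimes\cdots\otimes\pi_{i_k}$ on $\ell^2(\Nbb)^{\otimes k}$, where each $\pi_i$ is built from the $SU_q(2)$ representation via the Hopf $*$-homomorphism $\Ocal(G_q)\to\Ocal(SU_q(2))$. We then compose with a character $\chi_t$ of the torus $\Ocal(T)$, now allowing $t\in H$ to be complex rather than unitary. The resulting representation $\pi_w\otimes\chi_t$ is still a bounded algebra representation on the same Hilbert space, since $\chi_t$ is scalar-valued. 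It is topologically irreducible because its image contains the image of $\pi_w$ up to scalar rescalings of generators, and $\pi_w$ is irreducible. One checks the latter via the weight operators: these are diagonal with simple joint spectrum, and the lowering operators act as weighted shifts.

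Next we compute the kernel of $\pi_w\otimes\chi_t$ and identify it with the Joseph-parametrised primitive ideal in $\Prim_{(w,w)}$ with torus parameter $t$. The key is that Soibelman's kernels are exactly the ideals $I_{w,w}$ generated by the appropriate matrix coefficients $c^\lambda_{f,v}$ vanishing on Schubert data, together with the localisation structure. As $t$ ranges over $H$ and one also varies the quantum-torus parameters (which in the homogeneous case are absorbed by the diagonal torus action), every primitive ideal in the stratum is reached. The torus $H\times H$ acts transitively on each homogeneous stratum modulo a finite group, which is what makes this possible.

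\textbf{Inhomogeneous strata.} Suppose $\pi$ is a bounded topologically irreducible representation on $\Hcal$ with $\ker\pi\in\Prim_{(w_1,w_2)}$ and $w_1\neq w_2$. Modulo $\ker\pi$, the normal elements $c^{\lambda}_{w_1}$ and $c^{\lambda}_{w_2}$ (the extremal matrix coefficients) are nonzero and $q$-commute with the generators. In the inhomogeneous case one obtains a normal element $a$ with the following property: after localising, some generator $b$ satisfies $ab=q^{m}ba$ with $m\neq0$, and $a$ itself acts injectively. Moreover, the quotient contains a quantum torus relation $uv=q^{m}vu$ with $u,v$ invertible modulo the ideal, but with $v$ only invertible after localisation.

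We then use boundedness. A normal element $a$ with $\pi(a)\pi(x)=\pi(\sigma(x))\pi(a)$ has a closed range that is invariant. By topological irreducibility, $\pi(a)$ is injective with dense range. One then derives $\pi(a)\pi(b)\pi(a)^{-1}=q^{m}\pi(b)$ on a dense domain. Combining this with a second normal element $a'$ coming from $w_2$ that scales $b$ by $q^{-m'}$ in the opposite direction, we get that the bounded operator $\pi(aa')$ intertwines $\pi(b)$ with a nontrivial dilate of itself. A spectral-radius argument (Rosenblum-type) then forces $\pi(b)$ to be quasinilpotent, and iterating this gives $\pi(b)=0$. This contradicts $b\notin\ker\pi$.

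The main obstacle is extracting precisely such a pair of normal elements in every inhomogeneous stratum, uniformly in $(w_1,w_2)$. I would first reduce to the rank-one case via the embeddings $\Ocal(G_q)\to\Ocal(SL_q(2))$ attached to a simple reflection $s$ with $\ell(s w_1)<\ell(w_1)$ but $s$ not a left descent of $w_2$. There $(s,e)$-type ideals are explicitly visible, as in Podleś--Woronowicz.
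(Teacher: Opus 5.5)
Your homogeneous half is essentially the paper's argument, with one correction. The paper twists Soibelman's $\pi_w$ by $K_\omega$ with $\omega$ complex. Since $(\pi_w\otimes K_\omega)(f)=\pi_w(K_\omega\triangleright f)$, the image is unchanged, so topological irreducibility is preserved, and the kernel is $K_{-\omega}\triangleright\Ker(\pi_w)$. The left $H$-action is transitive on $\Prim_{(w,w)}(\Ocal(G_q))$, so every primitive ideal in the stratum is reached. The correction: $\Ker(\pi_w)$ is not $I_{(w,w)}$. That ideal is prime but not primitive, since every member of $\Prim_{(w,w)}(\Ocal(G_q))$ contains it strictly. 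What you need, and can cite from Korogodski--Soibelman, is $\Ker(\pi_w)\in\Prim_{(w,w)}(\Ocal(G_q))$. This rests on algebraic irreducibility on the finitely supported vectors $D_w$; topological irreducibility alone does not give primitivity.

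The inhomogeneous half has a genuine gap, and its central analytic step is false. Suppose $AB=\lambda BA$ with $A$ bounded, injective with dense range, and $|\lambda|\neq 1$. Then $\sigma(AB)\setminus\{0\}=\sigma(BA)\setminus\{0\}$ is invariant under multiplication by $\lambda$, so $AB$ is quasinilpotent; nothing follows about $B$. Take the representation $\varphi_q$ of $\Cf^\infty(SU_q(2))$ on $\ell^2(\Nbb)$. The normal element gives $A=\varphi_q(u^\varpi_{21})$, diagonal with entries $q^k$, and $B=\varphi_q(u^\varpi_{11})$ is a weighted shift of spectral radius $1$, yet $AB=q^{-1}BA$. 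This happens in a homogeneous stratum, where bounded irreducible representations do exist. So no argument built only from \emph{a normal element acting injectively and $q$-commuting nontrivially with a generator} can succeed, even with your $m\neq m'$. You must say where $w_1\neq w_2$ enters, and the proposal does not; you yourself flag producing $a,a'$ as the main obstacle.

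The rank-one reduction does not help either. The maps $\sigma_i\colon\Ocal(G_q)\to\Cf^\infty(SU_{q_i}(2))$ go the wrong way to transport representations of $\Ocal(G_q)$. Moreover, a simple reflection that is a left descent of exactly one of $w_1,w_2$ need not exist: $s_1$ and $s_1s_2$ in type $A_2$ both have left descent set $\{s_1\}$.

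The missing idea is the paper's, and it works as follows.
\begin{enumerate}
\item Replacing $\theta$ by $f\mapsto\theta(f^*)^*$ swaps the type, so you may assume $w_+\not\geq w_-$.
\item The Demazure-module characterisation of the Bruhat order then gives $e^\rho_{i_{w_-(\rho)}}\perp V^+_{w_+}(\rho)$.
\item Hence every entry $u^\rho_{i1}$ of the first column of $u^\rho$ not lying in $I^+_{w_+}\subseteq\Ker(\theta)$ has weight $\epsilon^\rho_i\neq w_-(\rho)$. Modulo $I_\wrm$ it $q$-commutes with $c^-_{w_-,\rho}$ with factor $q^{(\rho,\rho)-(w_-(\rho),\epsilon^\rho_i)}\le q^r$ for a fixed $r>0$.
\item Because $u^\rho$ is invertible in $M_{n_\rho}(\Ocal(G_q))$, $\theta(u^\rho)$ is invertible. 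So $\xi\mapsto(\theta(u^\rho_{i1})\xi)_i$ is bounded below by some $a>0$.
\item Apply this to $\theta(c^-_{w_-,\rho})^m\xi$ and commute. This gives $a\|\theta(c^-_{w_-,\rho})^m\|\le q^{mr}\|\theta(u^\rho)\|\,\|\theta(c^-_{w_-,\rho})^m\|$.
\item This contradicts $\theta(c^-_{w_-,\rho})^m=\theta(c^-_{w_-,m\rho})\neq 0$ for large $m$.
\end{enumerate}
Your argument lacks exactly two things: the lower bound coming from invertibility of $u^\rho$, and the strict gap $r>0$ supplied by the Bruhat order, which vanishes when $w_+=w_-$. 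The argument also needs neither topological irreducibility nor any reduction to rank one.
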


For the details regarding Theorem A and its proof we refer to Theorem \ref{thm:primitive ideals in homogeneous strata} and Theorem \ref{thm:no bdd repn for inhomogeneous strata} below. In fact, we show that there are no bounded representations of $ \Ocal(G_q) $ on Hilbert spaces, topologically irreducible or not, whose kernels lie in \(\Prim_{(w_+,w_-)}(\Ocal(G_q))\) with \(w_+ \neq w_-\).

Next we study the link between the primitive spectrum of $ \Ocal(G_q) $ and representations of  function algebras associated to the complexification of $ K_q $ at the level of $ C^* $-algebras, given by the Drinfeld double of $ K_q $. Following \cite{VoigtYuncken}, we shall use the notation $ \Cf^\infty(K_q) = \Ocal^\mathbb{R}(K_q) $ for the Hopf $ * $-algebra underlying the compact quantum group $ K_q $ in this context. The $ C^* $-algebra $ C_0(G_q) $ of functions on the Drinfeld double $ G_q $ is then obtained as the $ C^* $-completion of
\[
\Cf_c^\infty(G_q)=\Cf^\infty(K_q)\otimes \Dfrak(K_q),
\]
where $ \Df(K_q) $ is the dual of $ \Cf^\infty(K_q) $ in the sense of algebraic quantum groups \cite{VanDaele}. By construction, nondegenerate $ * $-representations of $ C_0(G_q) $ are in bijective correspondence with nondegenerate $ * $-representations of $ \Cf^\infty_c(G_q) $. Moreover, one obtains a canonical identification 
\[
\SSp(\Cf_c^\infty(G_q)) \cong \SSp(\Cf^\infty(K_q)) \times \SSp(\Df(K_q)) 
\]
of the corresponding sets of irreducible $ * $-representations. In particular, every irreducible $ * $-representation of $ \Cf_c^\infty(G_q) $ is of the form $ \pi = \pi_C \otimes \pi_\lambda $ where $ \pi_C \in \SSp(\Cf^\infty(K_q)) $ and $ \lambda \in \weights^+ $. 

Writing $ \Cf^\infty(G_q) $ for the algebraic multiplier algebra of $ \Cf_c^\infty(G_q) $, there exists a unital algebra embedding 
\[
i_-: 
\Ocal(G_q) \rightarrow
\Cfrak^\infty(G_q),
\]
which can be interpreted as realising holomorphic polynomials on $ G_q $ as unbounded multipliers of $ C_0(G_q) $.  
We denote by $ \Pol^\Rbb(G_q) $ the $ * $-subalgebra generated by the image of $ i_- $. This is the quantum analogue of the $ * $-algebra of holomorphic and antiholomorphic polynomials on the complex Lie group $ G $.

The relation between the $ * $-representations of $ \Pol^\Rbb(G_q) $ and $ \Cfrak^\infty_c(G_q) $, and hence also of $ C_0(G_q) $, can be described as follows.  

\begin{thmb} 
Every unital $ * $-representation of \(\Pol^\Rbb(G_q)\) by bounded operators on a Hilbert space extends uniquely to a unital $ * $-representation of \(\Cf^\infty(G_q)\), which in turn restricts to a nondegenerate $ * $-representation of \(\Cf_c^\infty(G_q)\).
\end{thmb}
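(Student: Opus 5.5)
We first make the structure of $\Pol^\Rbb(G_q)$ explicit through the Iwasawa decomposition of the Drinfeld double. Write $i_-$ as the composite of the coproduct with the two natural maps into the factors of the multiplier algebra. That is, $i_-(f) = (\id\otimes \iota)\Delta(f)$ or a twisted variant, where the second leg $\Ocal(G_q)\to \Cf^\infty(K_q)$ is the identity on the compact factor and $\iota:\Ocal(G_q)\to \mathcal{M}(\Df(K_q))$ is the map given by the quantum Borel part $\Ocal(G_q)\to \Ocal(B_q)$ followed by the natural embedding into multipliers of $\Df(K_q)=\bigoplus_{\lambda\in\weights^+}\End(V(\lambda))$. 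Combining $i_-(\Ocal(G_q))$ with its adjoint, we aim to show that
\[
\Pol^\Rbb(G_q) = \Cf^\infty(K_q)\otimes \Pol(AN_q),
\]
where $\Pol(AN_q)\subset \mathcal M(\Df(K_q))$ is the $*$-algebra generated by the matrix coefficients of the $L$-functionals $l^\pm_{ij}$, that is, by the image of $U_q(\bfrak)$-type elements in $\prod_\lambda \End(V(\lambda))$. In particular we need $\Cf^\infty(K_q)\otimes 1 \subset \Pol^\Rbb(G_q)$. This should follow by expressing the antiholomorphic generators $i_-(f)^*$ and using that $\Cf^\infty(K_q)$ is recovered from products $i_-(f)^* i_-(g)$ summed against the unitarity relations of corepresentation matrices, in the same way as Podle\'s--Woronowicz do for $SL_q(2,\Cbb)$.

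Given a unital bounded $*$-representation $\pi$ of $\Pol^\Rbb(G_q)$ on $\Hcal$, we restrict it to $\Cf^\infty(K_q)\otimes1$ and to $1\otimes \Pol(AN_q)$, obtaining commuting $*$-representations $\pi_C$ and $\pi_D$. The key point is then that a bounded $*$-representation of $\Pol(AN_q)$ automatically integrates to $\Df(K_q)$. The diagonal generators $K_\mu$ (Cartan part) are positive invertible elements, and they satisfy $q$-commutation relations with the nilpotent root generators $E_i$ together with the relation coming from the unitarity/triangularity of $l^\pm$. We would show that boundedness forces the joint spectrum of the $K_\mu$ to lie in $\{q^{(\mu,\lambda)}\}$ for the weights $\lambda$ appearing. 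The argument: $K_\mu$ is bounded and $E_i$ shifts spectral subspaces of $K_\mu$ by factors $q^{\pm(\alpha_i,\mu)}$, so boundedness kills infinite chains. Using the Casimir-type relations, we would then show that $\Hcal$ decomposes into isotypic components on which $\pi_D$ factors through $\End(V(\lambda))$. This yields central projections $p_\lambda=\pi(1_\lambda)$ summing strongly to $1$, and hence a nondegenerate $*$-representation of $\Df(K_q)$ commuting with $\pi_C$.

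Tensoring the two then gives a nondegenerate $*$-representation of $\Cf_c^\infty(G_q)$, and it extends to the multiplier algebra $\Cf^\infty(G_q)$ by the standard strong-limit procedure: for $x\in\Cf^\infty(G_q)$, set $\pi(x)\xi=\sum_\lambda \pi(x\,1_\lambda)\xi$. This extension agrees with the original $\pi$ on $\Pol^\Rbb(G_q)$ because both agree after multiplying by each $p_\lambda$. Uniqueness holds because any extension must send the central idempotents $1_\lambda$ to projections determined by the spectral decomposition of $\pi(K_\mu)$, and nondegeneracy of the restriction then determines everything. The main obstacle is the middle step, namely proving that bounded $*$-representations of $\Pol(AN_q)$ have discrete spectrum of the right form and decompose into finite-dimensional blocks $\End(V(\lambda))$. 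There I would first treat $SU_q(2)$ explicitly and then reduce to it via rank-one subalgebras and highest-weight vector arguments.
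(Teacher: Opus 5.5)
Your overall architecture is the right one: split $\pi$ into a $*$-representation $\pi_C$ of $\Cf^\infty(K_q)$ and a commuting positive representation $\pi_D$ of the quantum $AN$-part, then show that $\pi_D$ comes from a finite type representation of $\Df(K_q)$. But the first step, on which everything else rests, is false. The identity $\Pol^\Rbb(G_q)=\Cf^\infty(K_q)\otimes\Pol(AN_q)$, and in particular $\Cf^\infty(K_q)\otimes 1\subseteq\Pol^\Rbb(G_q)$, already fails in the classical case $q=1$. There it would say that the $K$-component of the Iwasawa decomposition $g=k(g)b(g)$ is a polynomial in $g$ and $\bar g$. For $SL(2,\Cbb)$, however, $k(g)_{11}=g_{11}(|g_{11}|^2+|g_{21}|^2)^{-1/2}$, and along $g(t)=\begin{pmatrix} t & -1 \\ 1 & 0 \end{pmatrix}$, $t\in\Rbb$, this equals $t(t^2+1)^{-1/2}$, which is not polynomial in $t$.

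The computation you propose cannot produce such elements either. By unitarity of $u^\mu$ one has $i_-(u^\mu)^*i_-(u^\mu)=1\otimes l^-(u^\mu)^*l^-(u^\mu)=1\otimes J(u^\mu)$, which annihilates the compact leg and only yields $1\otimes F_rU_q(\gf)$ (Lemma~\ref{lem:PolR(Gq) contains 1 otimes FrUq(g)}). That algebra contains $K_{-2\mu}$ for $\mu\in\weights^+$, but not $K_{2\mu}$ for $\mu\neq 0$. To extract the compact leg from $i_-(u^\mu)=(u^\mu\otimes 1)(1\otimes l^-(u^\mu))$ you would have to invert $1\otimes l^-(u^\mu)$ inside $\Pol^\Rbb(G_q)$, which is impossible: otherwise $u^\mu\otimes 1$ would lie in $M_{n_\mu}(\Pol^\Rbb(G_q))$. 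So there are no subalgebras $\Cf^\infty(K_q)\otimes 1$ and $1\otimes\Pol(AN_q)$ to which $\pi$ could be restricted. Your claim that the Cartan generators act by positive invertible operators is exactly what must be proved, since a priori only the positive operators $\pi(1\otimes K_{-2\mu})$ are given.

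The missing idea is to perform the splitting at the level of bounded operators, as in Theorem~\ref{thm:Iwasawa decomposition}:
\begin{enumerate}
\item Since $i_-(u^\mu)$ is invertible, $S=\pi(1\otimes J(u^\mu))$ is positive and invertible.
\item Its compression $\pi(1\otimes K_{-2\mu})=pS\iota$ to the highest weight component is then invertible. Injectivity follows from $\langle\xi,pS\iota\xi\rangle=\|S^{1/2}\iota\xi\|^2$, and surjectivity from a closed range argument.
\item Functional calculus, together with a Liouville argument for the $q$-commutation relations, extends $\pi_D=\pi(1\otimes\,\cdot\,)$ from $F_rU_q(\gf)$ to a positive representation of $U_q^\Rbb(\kf)$.
\item Only then can one set $\pi_C(u^\mu)=\pi i_-(u^\mu)\,\pi_D(l^-(u^\mu))^{-1}$. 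One checks that it is unitary, that it commutes with $\pi_D$ (by comparing two expressions for $\pi i_-(u^\lambda\otimes u^\mu)^*\pi i_-(u^\lambda\otimes u^\mu)$), and that it is a unital $*$-homomorphism.
\end{enumerate}

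A secondary gap concerns your extension $\pi(x)\xi=\sum_\lambda\pi(x1_\lambda)\xi$ to $\Cf^\infty(G_q)=\prod_\mu\Cf^\infty(K_q)\otimes\End(V(\mu))$. It gives bounded operators only if finitely many $p_\lambda$ are nonzero. Neither this finite type property nor the discreteness of the joint spectrum of the $\pi_D(K_\mu)$ in reducible representations follows from the heuristic that boundedness kills infinite chains. The paper obtains both from the norm bounds on $\pi_D(K_{\varpi_i}^{\pm1})$, $\pi_D(E_i)$, $\pi_D(F_i)$, via the finite-dimensionality of $C^*_M(U_q^\Rbb(\kf))$ (Lemmas~\ref{lem:bounded representation of Uq(g)} and~\ref{lem: norm bounded positive reps}).
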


We also characterise precisely which nondegenerate $ * $-representations of $ \Cf_c^\infty(G_q) $ arise this way. 
Our result, see Theorem \ref{thm:generalisation of Podles and Woronowicz} below, generalises \cite[Theorem~5.4.4]{Podles1990} from the quantum Lorentz group $ SL_q(2, \Cbb) $ to arbitrary \(G_q \). In fact, in the case \( G_q = SL_q(2, \Cbb) \), the unital $ * $-representations of \(\Pol^\Rbb(SL_q(2, \Cbb)) \) correspond to \( SL_q(2, \Cbb) \)-matrices in the sense of \cite{Podles1990}, and this correspondence was the starting point of the work of Podle\'s-Woronowicz.

In analogy to the classical case, we define \emph{quantum holomorphic evaluations} of \(\Ocal(G_q)\) at ``points'' of \(G_q\) to be the bounded algebra representations of \(\Ocal(G_q)\) of the form 
\[
\theta_\pi: \Ocal(G_q) \xrightarrow{i_-} \Pol^\Rbb(G_q) \xrightarrow{\pi} \Lbb(\Hcal_\pi)
\]
for $ \pi \in \SSp(\Pol^\Rbb(G_q)) $, the set of all topologically irreducible $ * $-representations of $ \Pol^\Rbb(G_q) $. In contrast to classical point evaluations, it turns out that quantum holomorphic evaluations are typically reducible, and their kernels usually fail to be primitive ideals, see Proposition \ref{prop:kernels of q-holomorphic evaluations are not primitive} and Corollary \ref{cor:kernels of quantum holomorphic evaluations are not primitive-general case}.
At best, one may hope to connect them with \(\Prim(\Ocal(G_q))\) through the Zariski-closed subsets
\[
\Prim(\theta_\pi)
=
\{P \in \Prim(\Ocal(G_q)) \mid P \supseteq \Ker(\theta_\pi)\}
\]
associated to $ \pi \in \SSp(\Pol^\Rbb(G_q)) $. We show, however, that these subsets only detect a limited amount of information about $ \Prim(\Ocal(G_q)) $. 

\begin{thmc} 
Let $ \pi = \pi_C \otimes \pi_\lambda \in \SSp(\Pol^\Rbb(G_q)) $, where $ \pi_C $ is a topologically irreducible $ * $-representation of $ \Cf^\infty(K_q) $ and $ \lambda \in \weights^+ $. If $ \Ker(\pi_C) \in \Prim_{(w,w)}(\Ocal(G_q)) $ then 
\[
\Prim(\theta_\pi) \cap \Prim_{(w,w)}(\Ocal(G_q)) \subset \Prim_{(w,w)}(\Ocal(G_q))
\]
is finite. Moreover 
\[
\bigcup_{\pi \in \SSp(\Pol^\Rbb(G_q))} \Prim(\theta_\pi) \subsetneqq \Prim(\Ocal(G_q))
\]
is a strict inclusion. 
\end{thmc}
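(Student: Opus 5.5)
Throughout I use the standard structure: $\Ocal(G_q)$ is spanned by matrix coefficients $c^\mu_{\xi,\eta}$. The embedding $i_-$ is given, up to conventions, by the comultiplication $\Delta$ followed by $\id\otimes\iota$, where $\iota:\Ocal(G_q)\to\Dfrak(K_q)$ is the canonical map into the multiplier algebra of $\Dfrak(K_q)$. Hence
\[
\theta_\pi(c^\mu_{\xi,\eta})=\sum_i \pi_C(c^\mu_{\xi,e_i})\otimes \pi_\lambda(c^\mu_{e_i,\eta}),
\]
and $\pi_\lambda(c^\mu_{e_i,\eta})$ acts on the finite-dimensional space $V(\lambda)$. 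Consequently $\theta_\pi$ is, up to conjugation, the tensor product representation $\pi_C\otimes\rho_\lambda$. Here $\rho_\lambda$ is a finite-dimensional, non-$*$ representation of $\Ocal(G_q)$ on $V(\lambda)$, obtained by restricting the canonical $\Ocal(G_q)$–$U_q(\gfrak)$ pairing. It factors through the quotient $\Ocal(G_q)\to\Ocal(T)$-type Borel quotients, i.e.\ through $\Ocal(B_q^-)$ or $\Ocal(B_q^+)$, depending on the convention for $i_-$.

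\textbf{Step 1: finiteness.} Filter $V(\lambda)$ by weight spaces. Since $\rho_\lambda$ is triangular with respect to a weight filtration, $\theta_\pi$ has a finite filtration by subrepresentations. Each subquotient is $\pi_C\otimes\chi_\nu$, where $\chi_\nu$ is a character of $\Ocal(G_q)$ in the torus $H=\Hom(\weights,\Cbb^\times)$, given by $\nu$ a weight of $V(\lambda)$ (with $q$-power values). Since $\Ker(\theta_\pi)$ contains the product of the kernels of these subquotients, any primitive $P\supseteq\Ker(\theta_\pi)$ contains one of the ideals $\Ker(\pi_C\otimes\chi_\nu)=\Ker(\pi_C)\cdot\chi_\nu$. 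This is the right translate of $\Ker(\pi_C)$ by a torus character. The $H$-action (by left and right translation) preserves strata, so $\Ker(\pi_C)\cdot\chi_\nu\in\Prim_{(w,w)}$.

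In Joseph's description, $\Prim_{\wrm}$ is an $H$-orbit fibred over the maximal spectrum of a Laurent polynomial centre of a localisation. The primitive ideals in the stratum are maximal among prime ideals lying in that stratum, so no two of them are comparable. Hence a primitive ideal of $\Prim_{(w,w)}$ containing $\Ker(\pi_C)\chi_\nu\in\Prim_{(w,w)}$ must equal it. The intersection therefore has at most $\dim V(\lambda)$ elements. The key facts to check are the triangularity of $\rho_\lambda$ and this incomparability within a stratum; the latter I would take from the earlier Joseph stratification results.

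\textbf{Step 2: strict inclusion.} Use Step 1 together with a dimension count. Every $P\in\Prim(\theta_\pi)$ contains some $\Ker(\pi_C)\chi_\nu$, which is the kernel of a bounded representation. By Theorem \ref{thm:no bdd repn for inhomogeneous strata} such ideals lie in homogeneous strata, and by the Soibelman classification, $\Ker(\pi_C)$ ranges over $\Prim_{(w,w)}$ only up to the compact torus $T$. Twisting by $\chi_\nu$ with $\nu\in\weights$ only adds the countable set $q^{\weights}$ of real positive parts. Thus $\bigcup_\pi\{\Ker(\pi_C)\chi_\nu\}$ meets the stratum $\Prim_{(e,e)}\cong H$ in the set $T\cdot q^{\weights}$, which is a proper subset of $H=(\Cbb^\times)^r$.

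Take $P=\Ker(\chi)$ for a character $\chi\in H\setminus T q^{\weights}$, say $|\chi(\varpi)|$ irrational. Any $\Ker(\pi_C)\chi_\nu$ with $\pi_C$ in a stratum $(w,w)$ with $w\neq e$ is not contained in the maximal ideal $\Ker\chi$. This is because containment would force $\Ker\chi$ into the Zariski closure of that stratum, whereas the $(e,e)$ stratum consists of characters and is open in the torus only for $w=e$. The precise argument here is that characters lying in the closure of $\Prim_{(w,w)}$ are exactly those of $B$-types that vanish appropriately, which excludes points of $H$. So $P\notin\bigcup_\pi\Prim(\theta_\pi)$.

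The main obstacle is the rigorous identification of $\theta_\pi$ with the filtered tensor product $\pi_C\otimes\rho_\lambda$, including the correct Borel side, and the claim that a torus character cannot contain kernels from the higher strata. For the latter I would first try to argue directly, by exhibiting matrix coefficients $c^{\mu}_{\xi,\eta}$ with distinct weights that vanish under every character but not under $\pi_C$ when $w\neq e$.
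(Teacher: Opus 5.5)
Your Step 1 is correct and is essentially the paper's argument. The map $\theta_\pi(f)=\pi_C(f_{(1)})\otimes\pi_\lambda(l^-(f_{(2)}))$ is block triangular for the weight filtration of $V(\lambda)$, with graded pieces $\pi_C\otimes K_{-\nu}$ for $\nu\in\weights(\lambda)$. Primality then gives $P\supseteq\Ker(\pi_C\otimes K_{-\nu})$ for some $\nu$, and Proposition~\ref{prop:primitive ideals are maximal in each stratum} finishes the argument.

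The gap is in Step 2, and it is a false claim, not a missing detail. The set $\Prim(\theta_\pi)$ contains every primitive ideal containing some $\Ker(\pi_C\otimes K_{-\nu})$, and you have the direction of the closure relation reversed. Lemma~\ref{lem:Joseph's lemma} only says that a primitive ideal of stratum $\wrm$ containing an intersection of ideals from stratum $\wrm'$ satisfies $\wrm\le\wrm'$. For $\wrm=(e,e)$ this is no restriction at all.

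Concretely, take $G_q=SL_q(2,\Cbb)$, $\pi_C=\pi_{w_0}=\varphi_q$ and $\lambda=0$, so that $\theta_\pi=\pi_{w_0}$. The operators $\varphi_q(u^\varpi_{21})$ and $\varphi_q(u^\varpi_{12})=-q\varphi_q(u^\varpi_{21})^*$ are compact. The operator $\varphi_q(u^\varpi_{11})$ is a weighted backward shift whose image in the Calkin algebra is a unitary with spectrum $\Tbb$. Hence the induced map from $\Ocal(G_q)$ to the Calkin algebra kills $I_{(e,e)}$, which here is generated by $u^\varpi_{21}$ and $u^\varpi_{12}$. It is also injective on $\Ocal(G_q)/I_{(e,e)}\cong\Cbb[t^{\pm1}]$, since a Laurent polynomial in a unitary with spectrum $\Tbb$ vanishes only if it is zero. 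Because this map factors through $\pi_{w_0}$, we get $\Ker(\pi_{w_0})\subseteq I_{(e,e)}$.

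So every $P\in\Prim_{(e,e)}(\Ocal(G_q))$ lies in $\Prim(\theta_\pi)$: this is the kernel of an arbitrary character, whatever its modulus. In particular your candidate $P=\Ker\chi$ is covered, and the whole $(e,e)$ stratum sits inside $\bigcup_\pi\Prim(\theta_\pi)$.

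Your countability idea is sound, but it has to be run in the top stratum $(w_0,w_0)$, where Lemma~\ref{lem:Joseph's lemma} does constrain which $\pi_C$ can contribute. Suppose $J\in\Prim_{(w_0,w_0)}(\Ocal(G_q))$ lies in $\Prim(\theta_\pi)$ with $\Ker(\pi_C)\in\Prim_{(w,w)}(\Ocal(G_q))$. Then $J\supseteq\bigcap_{\nu\in\weights(\lambda)}\Ker(\pi_C\otimes K_{-\nu})$, so the lemma gives $(w_0,w_0)\le(w,w)$, hence $w=w_0$. Your Step 1 then forces $J=\Ker(\pi_C\otimes K_{-\nu})$ for some $\nu$.

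Thus the union meets $\Prim_{(w_0,w_0)}(\Ocal(G_q))$ only in $\{\Ker(\pi_{w_0}\otimes K_{\zeta+\nu})\mid \zeta\in\tf^*,\ \nu\in\weights\}$. This is a proper subset: $\omega\mapsto\Ker(\pi_{w_0}\otimes K_\omega)$ maps $H$ onto the stratum with finite fibres (Theorem~\ref{thm:primitive ideals in homogeneous strata}). The parameters $\omega$ that occur form, up to these finite fibres, a countable union of translates of the compact torus $T$, which cannot exhaust $H\cong(\Cbb^\times)^N$.
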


This result clarifies to what extent the primitive spectrum of \(\Ocal(G_q)\) can be accessed via the Drinfeld double $ G_q $. For the details see Theorem~\ref{thm:the best that Ker theta pi can do}, Corollary~\ref{cor:kernels of quantum holomorphic evaluations are not primitive-general case}, and Remark~\ref{rmk:embedding of SPrim(Cc(Gq)) into Prim(C(Kq))}. 
In summary, in the quantum setting, the link between complexification in the algebraic and the analytic approach is significantly weaker than in the classical case. 

Let us explain how the paper is organised. Section~\ref{sec:preliminaries} gathers some background material on \(q\)-deformations of compact semisimple Lie groups and on the primitive spectrum of $ \Ocal(G_q) $. Section~\ref{sec:Irreducible representations and primitive spectrum} is devoted to the relationship between topologically irreducible Hilbert space representations of $ \Ocal(G_q) $ and the Joseph stratification of the primitive spectrum. We use this to complement our results on Beurling-Fourier algebras in \cite{LeeVoigt_BeurlingFourier} from the point of view of primitive ideals. Finally, in Section~\ref{sec:Complex semisimple quantum groups and primitive spectrum} we treat the irreducible representations of function algebras on Drinfeld doubles and discuss the relationship between quantum holomorphic evaluations and the primitive spectrum. 

We conclude with a few remarks on notation. We write \( \Nbb = \{ 0, 1, 2, \ldots \} \) for the set of natural numbers. All vector spaces, linear maps, and algebraic tensor products in this paper are over the field of complex numbers \( \Cbb \). Inner products on Hilbert spaces are conjugate-linear in the first argument. 
We write \( \Lbb(\Hcal) \) for the algebra of bounded operators on a Hilbert space \( \Hcal \). The orthogonal complement of a subset $ X $ of a Hilbert space is denoted by $ X^\perp $. 
If $ X $ is a subset of a Banach space we write $ [X] $ for the closed linear span of $ X $. 

We thank Woo Young Lee for communicating to us the current state of the art concerning the invariant subspace problem for separable infinite-dimensional Hilbert spaces. We also thank Keita Goto and Kan Kitamura for helpful comments on a draft version of our work.  

\section{Preliminaries}\label{sec:preliminaries}

In this section, we discuss some preliminary material that will be used throughout the paper. We freely use results from \cite{Joseph, KorogodskiSoibelman, Yakimov2014, VoigtYuncken}, translated into the notation and conventions of \cite{VoigtYuncken}.

\subsection{Representations} 

Throughout this paper we work with algebras over the complex numbers, not necessarily unital. By definition, an algebra homomorphism between algebras $ \Acal $ and $ \Bcal $ is a linear map $ f: \Acal \to \Bcal $ such that $ f(ab) = f(a) f(b) $ for all \( a,b \in \Acal \). In particular, algebra homomorphisms need not preserve the units if the source and target are unital. A unital algebra homomorphism between unital algebras is an algebra homomorphism \( f: \Acal \to \Bcal \) such that \(f(1_\Acal) = 1_\Bcal \). 

By a \emph{representation} of an algebra \( \Acal \) we mean an algebra homomorphism \(\pi: \Acal \to \End(V) \), where \(\End(V) \) is the algebra of linear endomorphisms of a complex vector space \( V \). We sometimes also write $ a \cdot v = \pi(a)(v) $ for the action of $ a \in \Acal $ on $ v \in V $. For a subset \( X \subseteq V \) we denote by \( \pi(\Acal) X = \Acal \cdot X \) the linear span of all \( \pi(a) x \) with \( a \in \Acal, \, x \in X \). 
A representation $ \pi: \Acal \rightarrow \End(V) $ is called (algebraically) \emph{nondegenerate} if $ \pi(\Acal) V = V $. If $ \Acal $ is unital then $ \pi $ is nondegenerate if and only if it is unital. 
An \emph{intertwiner} between representations $ \pi_1: \Acal \rightarrow \End(V_1) $ and $ \pi_2: \Acal \rightarrow \End(V_2) $ is a linear map $ T: V_1 \rightarrow V_2 $ such that $ \pi_2(a) T = T \pi_1(a) $ for all $ a \in \Acal $. We say that $ \pi_1, \pi_2 $ are \emph{equivalent} if there exists an invertible intertwiner between them. 

A nonzero representation \(\pi: \Acal \to \End(V) \) on a vector space $ V $ is called \emph{irreducible} if $ V $ admits no nontrivial invariant linear subspaces. Note that every irreducible representation is automatically nondegenerate. We write $ \Irr(\Acal) $ for the set of equivalence classes of irreducible representations of $ \Acal $. A representation is called reducible if it is not irreducible. 
An ideal \(I \subseteq \Acal \) is called \emph{primitive} if it is the kernel of an irreducible representation of \(\Acal \). We write $ \Prim(\Acal) $ for the set of all primitive ideals of $ \Acal $. 

By a \emph{bounded representation} of an algebra \(\Acal \) we mean an algebra homomorphism \( \pi: \Acal \to \Lbb(\Hcal)\), where \( \Lbb(\Hcal) \) is the algebra of bounded operators on a Hilbert space \( \Hcal \). It is called (topologically) \emph{nondegenerate} if $ [\pi(\Acal) \Hcal] = \Hcal $. 
A nonzero bounded representation \( \pi: \Acal \to \Lbb(\Hcal) \) on a Hilbert space $ \Hcal $ is called \emph{topologically irreducible} if \( \Hcal \) admits no nontrivial closed invariant subspaces. Such $ * $-representations are automatically nondegenerate. Note that a topologically irreducible representation typically fails to be irreducible. In particular, any bounded representation of a finitely generated algebra on an infinite-dimensional Hilbert space is reducible for cardinality reasons. We shall call an ideal $ I \subseteq \Acal $ \emph{topologically primitive} if it is the kernel of a topologically irreducible bounded representation. Two bounded representations $ \pi_1: \Acal \rightarrow \Lbb(\Hcal_1) $ and $ \pi_2: \Acal \rightarrow \Lbb(\Hcal_2) $ are called \emph{boundedly equivalent} if there exists an invertible bounded linear operator $ T: \Hcal_1 \rightarrow \Hcal_2 $ such that $ \pi_2(a) T = T \pi_1(a) $ for all $ a \in \Acal $. 

If \( \Acal \) is a \( * \)-algebra, then a \( * \)-representation of \( \Acal \) is a \( * \)-homomorphism \( \pi : \Acal \rightarrow \Lbb(\Hcal) \) for some Hilbert space \( \Hcal \). Clearly, every $ * $-representation is a bounded representation. Two $ * $-representations are called \emph{unitarily equivalent} if there exists a unitary intertwiner between them. We write $ \SSp(\Acal) $ for the set of equivalence classes of topologically irreducible $ * $-representations of $ \Acal $ modulo unitary equivalence. In a similar way, we denote by $ \SPrim(\Acal) $ the set of kernels of topologically  irreducible $ * $-representations of $ \Acal $. 

Recall that an ideal $ P \subsetneqq \Acal $ is \textit{prime} if, for any two ideals $ I, J \subseteq \Acal $, the condition $ IJ \subseteq P $ implies $ I \subseteq P $ or $ J \subseteq P $. We write $\Spec(\Acal)$ for the set of all prime ideals of $ \Acal $. An ideal $ P $ in $ \Acal $ is called \emph{completely prime} if $ \Acal/P $ is a domain, that is, if $ \Acal/P $ has no nontrivial zero divisors. 
Every completely prime ideal is prime, 
and every primitive ideal is prime. 
Similarly, every topologically primitive ideal is prime.

\subsection{The \texorpdfstring{$q$}{q}-deformation of a compact semisimple Lie group}

Throughout, we consider a simply connected compact semisimple Lie group \( K \) and let \( G \) be its complexification, with Lie algebras denoted by \( \kfrak \) and \( \gfrak \), respectively. We fix a maximal torus \( T \subseteq K \) with Lie algebra \( \tfrak \), whose complexification is denoted by \( H \subseteq G \), and a set of positive roots \( \BDelta^+ \) inside the set of roots \( \BDelta \) associated with the Cartan subalgebra \( \hfrak = \Cbb \otimes_\Rbb \tfrak \) of \( \gfrak \). Let \( \Sigma = \{ \alpha_1, \dots, \alpha_N \} \subseteq \BDelta^+ \) be the resulting set of simple roots, and let \( \{ \varpi_1, \dots, \varpi_N \} \) be the corresponding set of fundamental weights, defined with respect to the bilinear form on \( \hfrak^* \) obtained by rescaling the Killing form so that in each simple factor the shortest root \( \alpha \) satisfies \( (\alpha, \alpha) = 2 \). The subgroups of \( (i \tfrak)^* \) generated by \( \{ \alpha_1, \dots, \alpha_N \} \) and \( \{ \varpi_1, \dots, \varpi_N \} \) are denoted by \( \Qbf \) and \( \Pbf \), respectively, and we write \( \Qbf^+ \) and \( \Pbf^+ \) for the subsets consisting of nonnegative integer linear combinations of the respective generators. We have
\[
\rho = \frac{1}{2} \sum_{\alpha \in \BDelta^+} \alpha = \sum_{1 \leq j \leq N} \varpi_j \in \weights^+.
\]
For each \( 1 \leq i \leq N \), let \( d_i = \frac{(\alpha_i, \alpha_i)}{2} \) and define
\[
\alpha_i^\vee = d_i^{-1} \alpha_i, \quad \varpi_i^\vee = d_i^{-1} \varpi_i.
\]
The Cartan matrix for $\gf$ is given by the matrix $(a_{ij})_{1 \leq i, j \leq N}$ with
\[
a_{ij} = (\alpha_i ^\vee , \alpha_j) , \qquad 1\leq i , j \leq N.
\]
We denote the subgroups of \( (i \tfrak)^* \) generated by \( \{ \alpha_1^\vee, \dots, \alpha_N^\vee \} \) and \( \{ \varpi_1^\vee, \dots, \varpi_N^\vee \} \) by \( \Qbf^\vee \) and \( \Pbf^\vee \), respectively. By construction, \( \Qbf^\vee \) and \( \Pbf^\vee \) are the \( \Zbb \)-duals of \( \Pbf \) and \( \Qbf \), respectively, via the bilinear form \( ( \cdot, \cdot ) \). As usual, we endow \( \Pbf \) with the partial ordering such that \( \lambda \leq \mu \) if and only if there exists \( \nu \in \Qbf^+ \) such that \( \mu = \lambda + \nu \).

Let \( W \) be the Weyl group associated with the root system \( \BDelta \). We denote by \( s_\alpha \) the reflection with respect to \( \alpha \in \BDelta^+ \). The simple reflections \( s_i = s_{\alpha_i} \) for \( 1 \leq i \leq N \) generate \( W \), that is, each \( w \in W \) can be expressed as a product of simple reflections. We denote by \( \ell(w) \in \mathbb{N} \) the length of any reduced expression of $ w \in W $ with respect to these generators. Moreover we write \( w_0 \in W \) for the element with the longest reduced expression. It is uniquely determined and satisfies \( w_0^2 = e \in W \), where $e \in W$ is the unit element.

For \( w, w' \in W \) and \( \alpha \in \BDelta^+ \), we define
\[
w \xleftarrow{\alpha} w' \Longleftrightarrow \left( w = s_\alpha w' \text{ and } \ell(w) = \ell(w') - 1 \right).
\]
The Bruhat order on \( W \) is defined by
\[
w \leq w' \Longleftrightarrow \left( w = w' \text{ or there exists a chain } w \xleftarrow{} \cdots \xleftarrow{} w' \right),
\]
and we note that $e \leq w \leq w_0$ for all $w \in W$.

Now fix a deformation parameter \( 0 < q < 1 \). Moreover let \( h \in \Rbb \) be such that \( e^h = q \), and denote \( \hbar = \frac{h}{2\pi} \). For \( n \in \mathbb{N} \), define the \( q \)-numbers, \( q \)-factorials, and \( q \)-binomial coefficients by
\[
[n]_q = \frac{q^{n} - q^{-n}}{q - q^{-1}}, 
\qquad 
[n]_q! = \prod_{j=1}^{n} [j]_q, \qquad \begin{bmatrix} n \\ k \end{bmatrix}_q = \frac{ [n]_q ! }{ [k]_q ! [ n- k ]_q !  } \quad  \text{($ 0 \leq k \leq n$)} .
\]
We also write \( q_i = q^{d_i} \) for \( 1 \leq i \leq N \). 

The quantised universal enveloping algebra \( U_q(\gfrak) \) is defined as the universal algebra with generators
\[
K_\lambda, \;\; E_i, \;\; F_i, 
\quad \lambda \in \weights, \, 1 \leq i \leq N,
\]
and relations 
\begin{gather*}
K_0 = 1 , \qquad K_\lambda K_{\mu} = K_{\lambda + \mu} , \\
K_\lambda E_i K_{-\lambda} = q^{( \lambda , \alpha_i )} E_i , 
\qquad 
K_{\lambda} F_i K_{-\lambda} = q^{- ( \lambda, \alpha_i)} F_i , \\
E_i F_j - F_j E_i = \delta_{ij} \frac{K_{i} - K_i ^{-1}}{q_i - q_i ^{-1}}, \\
\sum_{k=0} ^{1 - a_{ij}} (-1)^k \begin{bmatrix} 1-a_{ij} \\ k \end{bmatrix}_{q_i} E_i ^{1 - a_{ij} - k} E_j E_i ^k  = 0 , \quad i \neq j \\
\sum_{k=0} ^{1 - a_{ij}} (-1)^k \begin{bmatrix} 1 - a_{ij} \\ k \end{bmatrix}_{q_i} F_i ^{1 - a_{ij} - k} F_j F_i ^k = 0, \quad i \neq j,
\end{gather*}
where $ \lambda , \mu \in \weights$, $1 \leq i, j \leq N $, and $K_i = K_{\alpha_i}$. It becomes a Hopf algebra with respect to the coproduct given by
\begin{align*}
\hat{\Delta}(K_\mu) &= K_\mu \otimes K_\mu, \\
\hat{\Delta}(E_i) &= E_i \otimes K_i + 1 \otimes E_i, \\
\hat{\Delta}(F_i) &= F_i \otimes 1 + K_i^{-1} \otimes F_i,
\end{align*}
the counit given by
\begin{align*}
\hat{\epsilon}(K_\lambda) = 1, 
\qquad 
\hat{\epsilon}(E_j) = 0, 
\qquad 
\hat{\epsilon}(F_j) = 0,
\end{align*}
and the antipode given by
\begin{align*}
\hat{S}(K_\lambda) = K_{-\lambda}, 
\qquad 
\hat{S}(E_j) = -E_j K_j^{-1}, 
\qquad 
\hat{S}(F_j) = -K_j F_j
\end{align*}
on generators. In particular, we have
\[
\hat{S}^2 (X) = K_{2\rho} X K_{-2\rho} , \qquad X \in U_q(\gf).
\]

We equip \( U_q(\gfrak) \) with the involution
\[
K_\lambda ^* = K_\lambda, 
\qquad 
E_i ^* = K_i F_i , 
\qquad 
F_i^* = E_i K_i ^{-1},
\]
corresponding to the compact real form of $ G $,
and denote the resulting Hopf $ * $-algebra by $ U_q ^\Rbb (\kf)$. In turn, when we view the quantised universal enveloping algebra as an algebra without a $ * $-structure, we continue to denote it by $ U_q(\gf) $.

We denote by \( U_q(\nfrak_+) \) the unital subalgebra of \( U_q(\gfrak) \) generated by \( E_1, \dots, E_N \), and by \( U_q(\nfrak_-) \) the unital subalgebra generated by \( F_1, \dots, F_N \). We also let \( U_q(\hfrak) \) be the unital subalgebra of \( U_q(\gfrak) \) generated by all \( K_\lambda \) for \(\lambda \in \weights \). Then \( U_q(\bfrak_\pm) = U_q(\hfrak) U_q(\nfrak_\pm) = U_q(\nfrak_\pm) U_q(\hfrak) \) are Hopf subalgebras of \( U_q(\gfrak) \). 
Moreover there are vector space isomorphisms
\begin{equation*}
U_q(\nfrak_-) \otimes U_q(\hf) \otimes U_q( \nfrak_+) \xrightarrow[\cong]{} U_q(\gf), \quad U_q( \nfrak_\pm) \otimes U_q(\hf) \xrightarrow[\cong]{} U_q (\bfrak_\pm),
\end{equation*}
induced by multiplication in $ U_q(\gf) $.

A finite-dimensional representation $ \pi: U_q(\gf) \rightarrow \End(V) $ is called \textit{integrable} if there exists a direct sum decomposition
\[
V = \bigoplus_{\nu \in \weights} V_\nu 
\quad \text{where} \quad 
V_\nu = \{ v \in V \mid \pi(K_\lambda)(v) = q^{(\lambda, \nu)} v \text{ for all } \lambda \in \weights \}.
\]
The subspace $ V_\nu \subseteq V $ is called the \textit{weight space} of weight $ \nu \in \weights $, 
and any $ \nu $ with $ V_\nu \neq 0 $ is called a \textit{weight of $ V $}. By a weight vector of weight $ \nu $ we mean any nonzero element $ v \in V_\nu $. The generators \( E_i \) and \( F_i \) of $U_q(\gf)$ act as weight raising and lowering operators in the sense that
\begin{equation*}
\pi(E_i)(v) \in V_{\nu + \alpha_i}, \quad \pi(F_i)(v) \in V_{\nu - \alpha_i},
\end{equation*}
for \( 1 \leq i \leq N \) and \( v \in V_\nu \). 

The equivalence classes of irreducible integrable representations of \( U_q(\gfrak) \) are parametrised by the set \( \weights^+ \) of dominant integral weights. For each \( \mu \in \weights^+ \), we denote the corresponding representation by \( \pi_\mu:  U_q(\gfrak) \rightarrow \End(V(\mu)) \), and the set of its weights by $ \weights(\mu) $. The set $ \weights(\mu) $ is invariant under the action of $W$, and we have
\(w_0(\mu) \leq \nu \leq \mu \) for all \( \nu \in \weights(\mu). \)
For this reason, $ \mu $ and $w_0(\mu) $ are referred to as the \textit{highest} and \textit{lowest} weights of $V(\mu)$, respectively. One has $\dim V(\mu)_{w(\mu)} = 1 $ for all $ w \in W $ and
\[
(\nu, \nu) < (\mu, \mu) , \qquad \forall \nu \in \weights(\mu) \setminus \{w(\mu) \mid w \in W\}.
\]
Every irreducible integrable representation of \( U_q(\gfrak) \) becomes a $ * $-representation of \( U_q^{\Rbb}(\kfrak) \) with respect to an inner product which is uniquely determined up to a positive scalar. This means that integrable representations of \( U_q(\gfrak) \) and integrable $ * $-representations of \( U_q^{\Rbb}(\kfrak) \) are essentially the same thing. 

Let us write $ \Bcal_\gf $ for the braid group associated with $ \gf $, and $ \Tcal_1, \dots, \Tcal_N $ for the generators of $ \Bcal_\gf $ corresponding to the simple roots $ \alpha_1, \dots, \alpha_N $. For $ w \in W $ and a reduced expression $ w = s_{i_1} \cdots s_{i_l} $, we define
\(
\Tcal_w = \Tcal_{i_1} \cdots \Tcal_{i_l} \in \Bcal_\gf,
\)
and this is independent of the reduced expression of  $ w $.

Now let $ V $ be an integrable representation of $ U_q (\gf) $. Then $ \Bcal_\gf $ acts on $ V $ by
\[
\Tcal_i (v) = \sum_{ \substack{r,s,t \geq 0 \\ r - s + t = (\nu , \alpha_i ^\vee ) } } (-1)^s q_i ^{s - rt} F_i ^{(r)} E_i ^{(s)} F_i ^{(t)} \cdot v, 
\qquad v \in V_\nu,
\]
where we write 
\[
E_i^{(k)} = \frac{E_i ^k }{ [k]_{q_i} !}, 
\quad 
F_i ^{(k)} = \frac{F_i ^k}{ [k]_{q_i} !} 
\]
for $ 1 \leq i \leq N $. We have $ \Tcal_w(V_\nu) = V_{w(\nu)} $ for any weight $ \nu $ of $ V $.

The braid group $ \Bcal_\gf $ also acts on $U_q(\gf)$ via the algebra automorphisms determined by
\begin{gather*}
\Tcal_i (K_\mu )  = K_{s_i \mu} , \qquad 
\Tcal_i (E_i) = -K_i F_i , \qquad 
\Tcal_i (F_i) = - E_i K_i ^{-1}, \\
\Tcal_i (E_j) = \sum_{k = 0} ^{ - a_{ij} } (-1)^k q_i ^k E_i ^{(k)} E_j E_i ^{( -a_{ij} - k)}, \quad i \neq j, \\
\Tcal_i (F_j) = \sum_{k = 0} ^{ - a_{ij} } (-1)^k q_i ^{- k} F_i ^{( - a_{ij} - k )} F_j F_i ^{( k )}, \quad i \neq j.
\end{gather*}
This is compatible with the action on integrable representations in the sense that
\[
\Tcal_i(X \cdot v) = \Tcal_i (X) 
\cdot \Tcal_i(v), 
\qquad X \in U_q(\gf), \, v \in V,
\]
for any integrable representation $ V $.

We shall work with orthonormal bases for irreducible integrable representations consisting of weight vectors. Given $ \mu \in \weights^+ $ we write $ e_1^\mu, \dots, e_{n_\mu}^\mu $ for such a basis, and let \( \epsilon_j^\mu \in \weights(\mu) \) be the weight of $ e_j^\mu $ for \( 1 \leq j \leq n_\mu = \dim(V(\mu)) \). We order the basis vectors such that \( \epsilon_1^\mu = \mu \) and \( \epsilon_{n_\mu}^\mu = w_0(\mu) \). That is, $ e^\mu_1 $ is a highest weight vector, and $ e^\mu_{n_\mu} $ is a lowest weight vector of $ V(\mu) $. In fact, we shall arrange without loss of generality that the vectors $ e^\mu_i $ are ordered in a non-ascending order, so that $ \epsilon^\mu_i > \epsilon^\mu_j $ implies $ i < j $. 

For every $ w \in W $ there is a unique index $ 1 \leq i_{w(\mu)} \leq n_\mu $ such that $ e^\mu_{i_{w(\mu)}} $ has weight $ \epsilon^\mu_{i_{w(\mu)}} = w(\mu) $. These weight vectors, associated to weights in the $ W $-orbit of the highest weight, are all uniquely determined up to a scalar, and we use tensor products and the braid group action to fix them as follows. 
Starting from the fundamental weights, we choose the highest weight vectors inductively such that 
\begin{equation}\label{eq:identification of highest weight vectors}
e_1^{\lambda + \mu} = e_1^\lambda \otimes e_1^\mu 
\end{equation}
for all $ \lambda, \mu \in \weights^+ $, using the fact that the vector $ e_1^\lambda \otimes e_1^\mu $ in the tensor product representation $ V(\lambda) \otimes V(\mu) $ generates a subrepresentation isomorphic to $ V(\lambda + \mu) $. 
For $ w \in W $, we then fix $ e_{i_{w(\mu)}}^\mu $ by setting 
\begin{equation}\label{eq:w-weight vectors}
e_{i_{w(\mu)}}^\mu = \frac{\mathcal{T}_w(e_1^\mu)}{\lVert \mathcal{T}_w(e_1^\mu) \rVert}.
\end{equation}
Using
\(
\mathcal{T}_w \bigl(e_1^\lambda \otimes e_1^\mu\bigr)
= \mathcal{T}_w(e_1^\lambda) \otimes \mathcal{T}_w( e_1^\mu),
\)
we observe that
\begin{equation}\label{eq:identification of w-weight vectors}
\begin{aligned}
e_{i_{w(\lambda)}}^\lambda \otimes e_{i_{w(\mu)}}^\mu
= e_{i_{w(\lambda+\mu)}}^{\lambda+\mu}
\end{aligned}
\end{equation}
for all $ \lambda, \mu \in \weights^+ $ and $ w \in W $. 

By definition, the \textit{quantised coordinate ring of $ G $} is the vector space 
\[
\Ocal(G_q) = \bigoplus_{\mu \in \weights^+} \End\bigl(V(\mu)\bigr)^*, 
\]
equipped with the unique Hopf algebra structure $(\Delta, \epsilon, S)$ for which the canonical bilinear pairing
\begin{equation*}
U_q(\gfrak) \times \Ocal(G_q)
\rightarrow \Cbb, \qquad
(X,f) = \sum_{\mu \in \weights^+} f_\mu\bigl(\pi_\mu(X)\bigr)
\end{equation*}
is a nondegenerate skew-pairing of Hopf algebras. In other words, we stipulate 
\begin{gather*}
(XY, f) = (X, f_{(1)})(Y, f_{(2)}), \qquad
(X, fg) = (X_{(1)}, g)(X_{(2)}, f), \\
(1, f) = \epsilon(f), \qquad
(X, 1) = \hat{\epsilon}(X), \\
(\hat{S}(X), f) = (X, S^{-1}(f)), \qquad
(X, S(f)) = (\hat{S}^{-1}(X), f), 
\end{gather*}
for all $ X,Y \in U_q(\gfrak)$ and $ f,g \in \Ocal(G_q) $. Incorporating the $ * $-structure of $ U_q(\gfrak) = U_q^{\Rbb}(\kfrak) $ induces a $ * $-structure on $ \Ocal(G_q) $ given by  
\[
(X^*, f) = \overline{(X, S(f)^*)}, \qquad
(X, f^*) = \overline{(\hat{S}^{-1}(X)^*, f)}.
\]
This turns $ \Ocal(G_q) $ into a Hopf $ * $-algebra which we denote by $ \Cf^\infty(K_q) $ or $ \Ocal^\mathbb{R}(K_q) $, and refer to as the \textit{quantised algebra of functions on $ K $}. By construction, the canonical bilinear pairing between $ U_q^{\Rbb}(\kfrak) $ and $ \Cf^\infty(K_q) $ 
becomes a skew-pairing of Hopf $ * $-algebras.  

The underlying vector space of \( \Ocal(G_q) = \Cf^\infty(K_q) \)
is the linear span of all \emph{matrix coefficients} $ c_{\xi,\eta}^\mu \in \End\bigl(V(\mu)\bigr)^* = \Lbb\bigl(V(\mu)\bigr)^* $ for $\mu \in \weights^+$ and $ \xi, \eta \in V(\mu) $, given by 
\( c^\mu_{\xi, \eta}(x) = \langle \xi, x(\eta) \rangle \).
With our choice of orthonormal basis $ e_1^\mu, \dots, e_{n_\mu}^\mu $ of $ V(\mu) $, we shall abbreviate 
\(
u_{ij}^\mu = c_{e_i^\mu, e_j^\mu}^\mu. 
\)
Then the vectors $ u^\mu _{ij} $ for $ \mu \in \weights^+ $ and $ 1 \leq i,j \leq n_\mu $ form a linear basis of the quantised coordinate ring. Moreover, $ u^\mu = (u_{ij}^\mu)_{1 \leq i,j \leq n_\mu} $ is a unitary element in the $ * $-algebra $ M_{n_\mu}\bigl(\Cf^\infty(K_q)\bigr) $, and we have 
\[
\Delta(u_{ij}^\mu)
= \sum_{1 \leq k \leq n_\mu} u_{ik}^\mu \otimes u_{kj}^\mu,
\qquad
\epsilon(u_{ij}^\mu) = \delta_{ij},
\qquad
S(u_{ij}^\mu) = (u_{ji}^\mu)^*
\]
for all $ 1 \leq i,j \leq n_\mu $. 

The quantised algebra of functions \( \Cfrak^\infty(K_q) \) becomes a \( U_q^\Rbb(\kfrak) \)-bimodule via the left and right actions given by
\begin{equation*}
X \triangleright f = f_{(1)} (X, f_{(2)}), \quad f \triangleleft X = (X, f_{(1)}) f_{(2)}, \quad X \in U_q^\Rbb(\kfrak), \, f \in \Cfrak^\infty(K_q),
\end{equation*}
where we use the Sweedler notation $\Delta(f) = f_{(1)} \otimes f_{(2)}$.
This induces a \( \Pbf \times \Pbf \)-grading on \( \Cfrak^\infty(K_q) \), for which the associated direct summands $ \Cfrak^\infty(K_q)_{\lambda, \nu} $ of \( \Cfrak^\infty(K_q) \) for $ \lambda, \nu \in \weights $ are determined by 
\begin{equation}\label{eq:torus action}
K_\zeta \triangleright f = q^{(\zeta, \nu)} f, \quad f \triangleleft K_\zeta = q^{(\zeta, \lambda)} f, \quad f \in \Cfrak^\infty(K_q)_{\lambda, \nu}
\end{equation}
for $ K_\zeta $ with $ \zeta \in \weights$. The left and right \( U_q^\Rbb(\kfrak) \)-actions introduced above can be extended to turn \( \Cfrak^\infty(K_q) \) into a bimodule over the multiplier Hopf $ * $-algebra dual to $ \Cfrak^\infty(K_q) $. This allows one in particular to consider the left and right actions 
of the linear functionals \( K_\zeta \) for \( \zeta \in \hfrak^* \) on $ \Cfrak^\infty(K_q) $ defined by
\begin{equation*}
(K_\zeta, u^\mu_{ij}) = q^{(\zeta, \epsilon_j^\mu)} \delta_{ij}, \quad \mu \in \weights^+, \; 1 \leq i,j \leq n_\mu. 
\end{equation*}
Since \( K_\zeta = 1 \) if and only if \( \zeta \in i \hbar^{-1} \Qbf^\vee \) and $ K_{\zeta_1} K_{\zeta_2} = K_{\zeta_1 + \zeta_2} $, these elements generate an action of \( \hfrak^* / i \hbar^{-1} \Qbf^\vee \times \hfrak^* / i \hbar^{-1} \Qbf^\vee \cong H \times H \) on \( \Cfrak^\infty(K_q) \), determined by the same formulas as in \eqref{eq:torus action}. 
With respect to the dual multiplier Hopf algebra structure, the elements $ K_\zeta $ satisfy
\[ 
\hat{\Delta}(K_\zeta) = K_\zeta \otimes K_\zeta, \quad 
\hat{S}(K_\zeta) = K_{-\zeta}, \quad 
(K_\zeta)^* = K_{-\overline{\zeta}} 
\] 
for all \( \zeta, \zeta_1, \zeta_2 \in \hfrak^* \), where \( \overline{(\cdot)} : \hfrak^* \to \hfrak^* \) denotes conjugation with respect to the real form \( \tfrak^* \subseteq \hfrak^* \). 

Define the \textit{left adjoint action of \( U_q(\gfrak) \)} on itself by 
\[
X \rightarrow Y = X_{(1)} Y \hat{S}(X_{(2)}) 
\]
for \(X, Y \in U_q(\gfrak) \). Similarly, we have the \textit{right adjoint action of \( U_q(\gfrak) \)} on itself given by 
\[
Y \leftarrow X = \hat{S}(X_{(1)}) Y X_{(2)} 
\]
for \( X,Y \in U_q(\gfrak) \). Note that we have
\[
\hat{S} ( X \rightarrow Y ) = \hat{S} (Y) \leftarrow \hat{S}(X), \qquad X,Y \in U_q (\gfrak).
\]
The \emph{ left and right locally finite parts of \( U_q(\gfrak) \)} are defined by
\begin{align*}
&F_l U_q(\gfrak) = \big\{ X \in U_q(\gfrak) \mid \dim \big( U_q(\gfrak) \rightarrow X \big) < \infty \big\}, \\
&F_r U_q(\gfrak) = \big\{ X \in U_q(\gfrak) \mid \dim \big( X \leftarrow U_q(\gfrak) \big) < \infty \big\}, \nonumber
\end{align*}
respectively. Both are $*$-subalgebras of \( U_q(\gfrak) \), and we have
\begin{equation}\label{eq:locally finite part is big1}
F_l U_q(\gfrak) = \bigoplus_{\mu \in \weights^+} U_q(\gfrak) \rightarrow K_{2\mu}, \quad F_r U_q(\gfrak) = \hat{S} ( F_l U_q (\gf) ) = \bigoplus_{\mu \in \weights^+} K_{- 2\mu} \leftarrow U_q(\gfrak). 
\end{equation}
In particular, since 
\begin{align} \label{eq:ej}
K_{-2\rho} \leftarrow E_j 
= - E_j K_j^{-1} K_{-2\rho} K_j + K_{-2\rho} E_j = K_{-2\rho} \big(1 - q_j^2\big) E_j, 
\end{align}
and likewise,
\begin{align} \label{eq:fj}
K_{-2\rho} \leftarrow F_j = - K_j F_j K_{-2\rho} + K_j K_{-2\rho} F_j = K_{-2\rho} \big(1 - q_j^{-2} \big) K_j F_j, 
\end{align}
the elements $ K_{-2\rho} E_j $ and $ K_{-2\rho} K_j F_j $ are contained in $ F_r U_q(\gf) $. This allows one to show 
\begin{equation} \label{eq:locally finite part is big2}
U_q (\gf) = U_q (\hf) F_r U_q (\gf) = F_r U_q (\gf) U_q (\hf) = U_q (\hf) F_l U_q (\gf) = F_l U_q (\gf) U_q (\hf),
\end{equation}
and also 
\begin{equation} \label{eq:locally finite part is big in representation}
\pi_\lambda(F_r U_q(\gf)) = \End(V(\lambda))
\end{equation}
for all $\lambda \in \weights^+$. 

Let \( \Rcal \in \prod_{\lambda, \mu} \End(V(\lambda)) \otimes \End(V(\mu)) \) denote the universal \( R \)-matrix of \( U_q(\gfrak) \) as in \cite[Theorem~3.108]{VoigtYuncken}. Consider the \( l \)-functionals \( l^\pm : \Cfrak^\infty(K_q) \to U_q(\gfrak) \) defined by
\begin{equation*}
(l^+(f), g) = (\Rcal, g \otimes f), \quad (l^-(f), g) = (\Rcal^{-1}, f \otimes g), \quad f, g \in \Cfrak^\infty(K_q),
\end{equation*}
which are Hopf algebra homomorphisms satisfying 
\(
l^\pm(f)^* = l^\mp(f^*) \) for all \( f \in \Cfrak^\infty(K_q) \).
We also note that \( l^\pm(\Cfrak^\infty(K_q)) \subseteq U_q(\bfrak_\pm) \).

Let us define linear maps \( I, J : \Cfrak^\infty(K_q) \to U_q(\gfrak) \) by
\begin{equation*}
I(f) = l^-(f_{(1)}) \hat{S} l^+(f_{(2)}), \quad J(f) = \hat{S} l^+ (f_{(1)} ) l^- ( f_{(2)} ),
\end{equation*}
and note that $J(f) = \hat{S} I S^{-1} (f) = \hat{S}^{-1} I S (f)$. Then \( I , J \) are linear isomorphisms from \( \Cfrak^\infty(K_q) \) onto $ F_l U_q(\gf) $ and $ F_r U_q(\gf) $, respectively.
Also, since
\[
J(u^\mu _{ij}) = \sum_{1 \leq k \leq n_\mu} \hat{S} l^+ (u^\mu _{ik} ) l^- ( u^\mu _{kj}) = \sum_{1 \leq k \leq n_\mu} l^+ ( {u^\mu _{ki}} ^* )  l^- ( u^\mu_{kj} ) = \sum_{1 \leq k \leq n_\mu} l^- ( u^\mu _{ki} )^* l^- ( u^\mu _{kj} )
\]
for $\mu \in \weights^+$ and $ 1 \leq i, j \leq n_\mu$, we have
\begin{equation*}
J(u^\mu) = \big( J(u^\mu _{ij}) \big)_{1 \leq i,j \leq n_\mu} = l^- (u^\mu) ^* l^- (u^\mu) \in M_{n_\mu} (F_r U_q (\gf) ).
\end{equation*}
The variant $ I': \Cf^\infty(K_q) \rightarrow U_q(\gf) $ of the map $ I $ introduced above, defined by  
\begin{equation} \label{eq:I'-isomorphism}
I'(f) = l^+ ( f_{(1)} ) \hat{S} l^- ( f_{(2)} ) 
\end{equation}
is also a linear isomorphism from $ \Cf^\infty(K_q) $ onto $ F_l U_q (\gf) $, see \cite[Section~6.5]{Lee2025}. 

\subsection{The primitive spectrum of \texorpdfstring{$\Ocal(G_q)$}{O(Gq)}}

In this section we summarise some results on the structure of $ \Ocal(G_q) $ from \cite{Joseph}, \cite{Yakimov2014} that will be used in this paper. Recall that we write $ \Cf^\infty(K_q) $ in place of $ \Ocal(G_q) $ if we are concerned with statements or results involving the involution on the quantised coordinate ring.

Let $ \mu \in \weights^+ $ and $ \xi \in V(\mu)_\lambda, \eta \in V(\mu)_\nu $ for $ \lambda, \nu \in \weights(\mu) $. Then we shall sometimes abbreviate
\[
c^\mu_{\xi,\eta}
= c^\mu_{\xi,\nu}
= c^\mu_{\lambda,\eta}
= c^\mu_{\lambda,\nu}
= c_{\lambda ,\nu},
\]
depending on which one of these expressions for the matrix coefficient $ c^\mu_{\xi,\eta} \in \Ocal(G_q) $ is most convenient. By abuse of notation, we will even sometimes write $ c^\mu_{\lambda, \nu} = c_{\lambda, \nu} $ for matrix coefficients of unspecified weight vectors in $ V(\mu)_\lambda, V(\mu)_\nu $, when these vectors are not important. 

Let $ \mu, \nu \in \weights^+ $ and $ 1 \leq i,j \leq n_\mu, 1 \leq k,l \leq n_\nu $. With the notation above, there are scalars $ \alpha^{ikl}_{rs}, \beta^{ijk}_{uv} \in \mathbb{C} $ such that
\begin{align} \label{eq:multiplication rule for coefficients}
u^\nu_{kl} u^\mu_{i1} &= q^{(\mu, \epsilon_l) -(\epsilon_i, \epsilon_k)} u^\mu_{i1} u^\nu_{kl} + \sum_{r = i + 1}^{n_\mu} \sum_{s = 1}^{k - 1} \alpha^{ikl}_{rs} u^\nu_{sl} u^\mu_{r1}, \\
u^\mu_{ij} u^\nu_{k n_\nu} 
&= q^{-(\epsilon_j, w_0(\nu)) +(\epsilon_i, \epsilon_k)} u^\nu_{kn_\nu} u^\mu_{ij} + \sum_{u = i + 1}^{n_\mu} \sum_{v = 1}^{k - 1} \beta^{ijk}_{uv} u^\nu_{v n_\nu} u^\mu_{uj} \nonumber
\end{align}
in $ \Ocal(G_q) $, compare \cite[Proposition 3.135]{VoigtYuncken} \footnote{We point out that there is a sign error in \cite[Proposition 3.135]{VoigtYuncken}, which is related to a typo in formula 3.104 c). The analogous formula 3.105 d) is correct.}. Here we recall that the vectors $ e^\mu_i $ are ordered in a non-ascending order, so that $ \epsilon^\mu_i > \epsilon^\mu_j $ implies $ i < j $. 
We can also rewrite these relations informally as 
\begin{align}
\label{eq:multiplication rule for coefficients2}
c_{\lambda,\nu} \, c^\mu_{\mu',\mu}
&= q^{(\mu,\nu)-(\mu',\lambda)} \, c^\mu_{\mu',\mu} \, c_{\lambda,\nu} + \sum_{0 \neq \gamma \in \Qbf^+}  c_{\lambda+\gamma,\nu} \, c^\mu_{\mu'-\gamma,\mu}, \\
c_{\lambda,\nu} \, c^{-w_0(\mu)}_{-\mu',-\mu}
&= q^{(\mu,\nu)-(\mu',\lambda)} \, c^{-w_0(\mu)}_{-\mu',-\mu} \, c_{\lambda,\nu} + \sum_{0 \neq \gamma \in \Qbf^+} c^{-w_0(\mu)}_{-\mu'+\gamma,-\mu} \, c_{\lambda-\gamma,\nu}, 
\nonumber
\end{align}
only indicating the contributions in \eqref{eq:multiplication rule for coefficients} from different weight spaces.  

Define
\begin{align*}
\Ocal^+
&= \Span\{u^\mu_{i,1} \mid \mu \in \weights^+,\, 1 \leq i \leq n_\mu \} = \Span\{ c^\mu_{\xi,\mu} \mid \mu \in \weights^+,\, \xi \in V(\mu) \}, \\ 
\Ocal^- &= \Span\{u^\mu_{j,n_\mu} \mid \mu \in \weights^+,\, 1 \leq j \leq n_\mu \} = \Span\{ c^{-w_0(\mu)}_{\xi,-\mu} \mid \mu \in \weights^+,\, \xi \in V(-w_0(\mu)) \}. \nonumber
\end{align*}
These are subalgebras of $ \Ocal(G_q) $ satisfying
\(
\Ocal^+ \Ocal^- = \Ocal(G_q) = \Ocal^- \Ocal^+.
\)
We also have 
\begin{equation}\label{eq:the +- parts of O(Gq) conjugate}
(\Ocal^\pm)^* = \Ocal^\mp,
\end{equation}
since for every $ \xi \in V(\mu)_\nu $ there exists a unique $ \eta \in V(-w_0(\mu))_{-\nu} $ such that 
\begin{equation} \label{eq:coefficients under conjugation}
(c^\mu_{\xi, \mu})^* = c^{-w_0(\mu)}_{\eta,-\mu}.
\end{equation}
In fact, we have the following more precise result. Let $\vee : \Qbf \rightarrow \Qbf^\vee$ be the group isomorphism defined by $\alpha_i \mapsto \alpha_i^\vee$.

\begin{thm}\label{thm:coefficients under conjugation}
For $ \mu \in \weights^+ $ and $ \nu \in \weights(\mu) $, let $ n_{\mu,\nu} = \dim(V(\mu)_\nu) $. Then one can choose orthonormal bases
\[
\bigl\{
e_i^{\mu,\nu}
\mid
1 \leq i \leq n_{\mu,\nu}
\bigr\}
\subseteq V(\mu)_\nu
\]
of the weight spaces of $ V(\mu) $ such that
\[
\bigl(c^\mu_{e_i^{\mu,\lambda}, e_j^{\mu,\nu}} \bigr)^*
= (-1)^{(\rho,(\lambda-\nu)^\vee)} q^{(\rho,\lambda-\nu)}
c^{-w_0(\mu)}_{e_i^{-w_0(\mu),-\lambda}, e_j^{-w_0(\mu),-\nu}}
\]
for all $ \mu\in\weights^+, \lambda,\nu\in\weights(\mu), 1 \leq i \leq n_{\mu,\lambda} $, and $ 1 \leq j \leq n_{\mu,\nu} $.
\end{thm}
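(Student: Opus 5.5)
We compute $(c^\mu_{\xi,\eta})^*$ from the definition of the involution on $\Cf^\infty(K_q)$ and then realise the result as a matrix coefficient of the dual representation. For $X\in U_q(\gf)$,
\[
(X,(c^\mu_{\xi,\eta})^*)=\overline{(\hat S^{-1}(X)^*,c^\mu_{\xi,\eta})}
=\overline{\langle \xi,\pi_\mu(\hat S^{-1}(X)^*)\eta\rangle}
=\langle \pi_\mu(\hat S^{-1}(X))\xi,\eta\rangle .
\]
Here we used that $\pi_\mu$ is a $*$-representation for the compact real form. Consequently $(c^\mu_{\xi,\eta})^*$ is the matrix coefficient of the contragredient representation $X\mapsto \pi_\mu(\hat S(X))^t$, or its variant with $\hat S^{-1}$, on the conjugate dual space $\overline{V(\mu)}^*$. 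This representation is irreducible with lowest weight $-\mu$, so it is isomorphic to $V(-w_0(\mu))$.

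Concretely, the map $\overline{V(\mu)}\ni\bar\xi\mapsto\langle\xi,\cdot\rangle$ intertwines $\pi_\mu\circ\hat S^{-1}$, transposed, with an irreducible integrable representation that sends $V(\mu)_\lambda$ to weight $-\lambda$. The only subtlety is that this identification is an isometry only up to the twist $\hat S^2=\mathrm{Ad}(K_{2\rho})$. To make the correct unitary intertwiner $\Phi:\overline{V(\mu)}\to V(-w_0(\mu))$, we write $\hat S^{-1}(X)=K_{-2\rho}\hat S(X)K_{2\rho}$ and absorb the $K_{\pm 2\rho}$ factors. Taken in halves $K_{\pm\rho}$ on each side, they produce the scalar $q^{(\rho,\lambda-\nu)}$ when $\xi$ has weight $\lambda$ and $\eta$ has weight $\nu$. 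The upshot is a formula
\[
(c^\mu_{\xi,\eta})^*=q^{(\rho,\lambda-\nu)}\,c^{-w_0(\mu)}_{\Phi(\xi),\Phi(\eta)}
\]
for some antiunitary, weight-reversing map $\Phi:V(\mu)\to V(-w_0(\mu))$ that intertwines suitably, up to signs attached to $E_i,F_i$.

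The sign $(-1)^{(\rho,(\lambda-\nu)^\vee)}$ comes from choosing $\Phi$ so that it commutes with the Chevalley generators only up to sign. The contragredient action sends $E_i$ to something proportional to $-F_i$, through $\hat S(E_i)=-E_iK_i^{-1}$, so a natural $\Phi$ intertwines up to a factor $-1$ per simple root. To normalise this, we twist $\Phi$ by the operator acting on $V(\mu)_\lambda$ by $(-1)^{(\rho,(\mu-\lambda)^\vee)}$. This is well defined because $\mu-\lambda\in\Qbf^+$, and $(\rho,\alpha_i^\vee)=1$. Passing to matrix coefficients, the factors for $\xi$ and $\eta$ combine to $(-1)^{(\rho,(\lambda-\nu)^\vee)}$, and the $\mu$-dependence cancels.

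Finally, we fix orthonormal bases $e_i^{\mu,\nu}$ of the weight spaces $V(\mu)_\nu$ arbitrarily and define $e_i^{-w_0(\mu),-\nu}=\Phi(e_i^{\mu,\nu})$. Since $\Phi$ is antiunitary and maps $V(\mu)_\nu$ onto $V(-w_0(\mu))_{-\nu}$, these vectors form orthonormal bases. For self-dual pairs, meaning $-w_0(\mu)=\mu$, we must check consistency: the bases of $V(\mu)_\nu$ and $V(\mu)_{-\nu}$ are then chosen compatibly. We handle this by choosing bases only for one weight in each pair $\{\nu,-\nu\}$. When $\nu=0$ we need $\Phi$ to map a chosen orthonormal basis to itself, which requires $\Phi^2=\pm 1$ with the right sign on $V(\mu)_0$. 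We expect this to follow from Schur's lemma together with the scalar and sign computation above.

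The main obstacle is the bookkeeping of the powers of $q$ and the signs coming from $\hat S^{\pm1}$ and the $*$-structure $E_i^*=K_iF_i$. In particular we must verify that the resulting $\Phi$ is genuinely antiunitary, not merely antiunitary up to a weight-dependent scalar. We would settle this by checking on $E_i,F_i,K_\lambda$ that $\Phi$ intertwines the $*$-representation on $V(\mu)$ with the twisted action on $V(-w_0(\mu))$, and then invoking uniqueness of the invariant inner product up to a scalar.
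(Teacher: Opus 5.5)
Your mechanism for the $q$-power is correct. With $\theta(X)=K_{-\rho}\hat{S}^{-1}(X)^*K_{\rho}$, which is antilinear, multiplicative, $*$-preserving and satisfies $\theta(K_\zeta)=K_{-\zeta}$, you get an antiunitary $J\colon V(\mu)\to V(-w_0(\mu))$ with $\pi_{-w_0(\mu)}(X)J=J\pi_\mu(\theta(X))$. This gives $(c^\mu_{\xi,\eta})^*=q^{(\rho,\lambda-\nu)}c^{-w_0(\mu)}_{J\xi,J\eta}$, and your parity twist $\epsilon$ produces the stated sign with $\Phi=J\epsilon$. There is a small slip: $\overline{\langle\xi,\pi_\mu(\hat{S}^{-1}(X)^*)\eta\rangle}=\langle\eta,\pi_\mu(\hat{S}^{-1}(X))\xi\rangle$, whereas your right-hand side is antilinear in $X$.

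Note, however, that for $\mu\neq -w_0(\mu)$ the sign is pure convention. Any sign function of the weight can be absorbed into the basis of $V(-w_0(\mu))$, and the formula for $-w_0(\mu)$ then follows from $*\circ *=\id$ because the scalar is real. So the whole content of the statement lies in the self-dual case, and that is exactly where your argument stops.

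\textbf{The gap: the self-dual case.} For $-w_0(\mu)=\mu$, requiring the formula for all pairs $(\lambda,\nu)$ forces $\Phi e^{\mu,\nu}_i=t\,e^{\mu,-\nu}_i$ for a single phase $t$ and all $\nu,i$. Hence $\Phi^2=+1$ on all of $V(\mu)$, not only on $V(\mu)_0$.
\begin{itemize}
\item Choosing bases on one weight of each pair $\{\nu,-\nu\}$ and transporting them by $\Phi$ does not avoid this. For $\lambda$ in the chosen half and $\nu$ in the other half, the formula is off by exactly the scalar $\Phi^2$.
\item Schur's lemma only gives $\Phi^2=\pm 1$. This sign does not depend on the phase of $\Phi$, and your twist merely multiplies $J^2$ by $(-1)^{(\rho,(2\mu)^\vee)}$, where $2\mu=\mu-w_0(\mu)\in\Qbf$.
\item What has to be proved is therefore $J^2=(-1)^{(\rho,(2\mu)^\vee)}=(-1)^{(2\rho^\vee,\mu)}$, with $\rho^\vee=\sum_i\varpi_i^\vee$. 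This is a $q$-analogue of the Frobenius--Schur formula deciding whether a self-dual irreducible representation is of real or quaternionic type.
\end{itemize}

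This condition is not vacuous. For $K_q=SU_q(2)$ and $\mu=\varpi$ there is no zero weight at all, yet $J^2=-1$. The minus sign in $(u^\varpi_{21})^*=-q^{-1}u^\varpi_{12}$ is precisely what compensates for it. Nothing in your proposal decides between the stated sign and its opposite.

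To close the gap you need an actual computation of $\Phi^2$ on the highest weight vector. One route is to relate $\Phi e^\mu_1$ to $\mathcal{T}_{w_0}e^\mu_1$ and track the sign with which $\mathcal{T}_{w_0}^2$ acts on $V(\mu)_\mu$; classically the Tits lift of $w_0$ squares to an element acting on $V(\mu)$ by $(-1)^{(2\rho^\vee,\mu)}$. Another is a continuity argument in $q$ that reduces to the classical Frobenius--Schur theorem.
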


When specialised to $K_q=SU_q(2)$, Theorem~\ref{thm:coefficients under conjugation} recovers 
the well-known formulas
\[
(u^\varpi _{11})^* = u^\varpi _{22} , \qquad (u^\varpi _{21})^* = - q^{-1} u^\varpi _{12}.
\]
We also note that Theorem~\ref{thm:coefficients under conjugation}
provides a corrected version of \cite[Proposition~2.3.2]{KorogodskiSoibelman}. 
As the weaker form \eqref{eq:coefficients under conjugation} is all that is needed in this paper, we omit the proof of Theorem~\ref{thm:coefficients under conjugation} here; a complete proof is given in the accompanying note [Supplement to ``On the complexification of $ q $-deformations of compact semisimple Lie groups"] that can be found in \cite{Supplement}.

For any $\mu \in \weights^+$ and $ w \in W $, define \textit{the Demazure module} $ V_w^\pm(\mu) $ by
\begin{align*}
V_w^+(\mu) &= U_q(\bfrak_+) \cdot V(\mu)_{w(\mu)} \subseteq V(\mu), \\
V_w^-(\mu) &= U_q(\bfrak_-) \cdot V(-w_0(\mu))_{-w(\mu)} \subseteq V(-w_0(\mu)). \nonumber
\end{align*}
By \cite[Proposition~4.4.5]{Joseph} we have
\begin{equation} \label{eq:Bruhat order and Demazure module}
w \leq w'   \Longleftrightarrow V^+ _{w} (\rho) \subseteq V^+ _{w'} ( \rho )
\end{equation}
for any $ w, w' \in W $, where $ \leq $ denotes the Bruhat order. Moreover, the subspaces
\begin{align*}
I_w^+ &= \Span \{ c^\mu_{\xi , \mu} \mid \mu \in \weights^+,\, \xi \in V_w^+(\mu)^\perp \} \subseteq \Ocal^+, \\
I_w^- &= \Span \{ c^{-w_0(\mu)}_{\xi, -\mu} \mid \mu \in \weights^+,\, \xi \in V_w^-(\mu)^\perp \} \subseteq \Ocal^- \nonumber
\end{align*}
are completely prime ideals of $ \Ocal^+ $ and $ \Ocal^- $, respectively, see 
\cite[Proposition 10.1.8]{Joseph}, \cite[Theorem 2.1]{Yakimov2014}. We also note that \eqref{eq:coefficients under conjugation} implies
\begin{equation}\label{eq:Demazure ideals conjugate}
(I_w^\pm)^* = I_w^\mp.
\end{equation}
For each pair $\wrm = (w_+, w_-) \in W \times W $, one has $ I_{w_+} ^+ \Ocal^- = \Ocal^-I_{w_+} ^+ $ and $ \Ocal^+ I_{w_-}^- = I_{w_-}^- \Ocal^+ $ due to
\eqref{eq:multiplication rule for coefficients}, \eqref{eq:multiplication rule for coefficients2}. 
Then
\begin{equation*}
I_\wrm = I_{w_+} ^+ \Ocal^- + \Ocal^+ I_{w_-}^- = \Ocal^- I_{w_+}^+ + I_{w_-}^- \Ocal^+ \subseteq \Ocal(G_q)
\end{equation*}
is again a completely prime ideal \cite[Proposition 10.3.5]{Joseph}, \cite[Theorem 2.1]{Yakimov2014}. From \eqref{eq:the +- parts of O(Gq) conjugate} and \eqref{eq:Demazure ideals conjugate} it follows that with respect to the $ * $-structure of $ \Ocal^\mathbb{R}(K_q) $ we have 
\begin{equation}\label{eq:Joseph ideal conjugate}
(I_{(w_+, w_-)})^* =  (I_{w_+}^+)^* (\Ocal^-)^* + (\Ocal^+)^* (I_{w_-}^-)^* = I_{w_+}^- \Ocal^+ + \Ocal^- I_{w_-}^+ = I_{(w_-, w_+)}.
\end{equation}
Note also that the ideals $ I_w^\pm $ and $ I_\wrm $ are all invariant under the actions of $ H $ described in \eqref{eq:torus action}.

For each $ \mu \in \weights^+ $ and $w \in W$, define
\begin{equation} \label{eq:w-multiplicative elements}
c_{w, \mu}^+ = u^\mu _{ i_{w\mu} 1 } \in \Ocal^+, \qquad c_{w, \mu}^- = ( c^+ _{w, \mu} )^* \in \Ocal^-.
\end{equation}
Note that, in our convention, these elements may be written as $c_{w, \mu} ^+ = c^\mu _{w(\mu), \mu}$ and  $c_{w, \mu} ^- = c^{-w_0 (\mu)} _{-w(\mu) , -\mu}$, respectively.
By \eqref{eq:identification of highest weight vectors}--\eqref{eq:identification of w-weight vectors}, we have
\begin{equation} \label{eq:relation for w-multiplicative elements}
c_{w, \mu_1}^\pm c_{w, \mu_2}^\pm  = c_{w, \mu_1 + \mu_2}^\pm
\end{equation}
for any $ \mu_1, \mu_2 \in \weights^+ $. 
Therefore, the subsets $E_w^\pm = \{ c_{w, \mu}^\pm \mid \mu \in \weights^+ \} \subseteq \Ocal^\pm$ are multiplicative. Moreover, by \eqref{eq:multiplication rule for coefficients}, \eqref{eq:multiplication rule for coefficients2}, these elements satisfy the commutation relations
\begin{equation}\label{eq:commutation relations for w-multiplicative elements}
c_{\lambda, \nu} c_{w_\pm , \mu}^\pm  = q^{(\mu, \nu) - (w_\pm(\mu), \lambda)} c_{w_\pm , \mu}^\pm  c_{\lambda, \nu} \textup{ mod } I_{\wrm}
\end{equation}
for any $ \wrm = (w_+, w_-) \in W \times W$, $\mu \in \weights^+$, and $\lambda, \nu \in \weights $. 
This implies that 
\begin{equation*}
E_{\wrm} = \{ \zeta\, c_{w_+, \lambda}^+ c_{w_-, \mu}^- \mid \zeta \in \Cbb^\times,\, \lambda, \mu \in \weights^+ \},
\end{equation*}
considered as a subset of $\Ocal(G_q) / I_\wrm$, is a multiplicative set and satisfies both the left and right Ore conditions, see \cite[Section~A.2]{Joseph}. Since $ (c^{\pm} _{w, \mu})^* = c^{\mp} _{w, \mu} $ we have
\begin{equation}\label{eq:w-multiplicative subset conjugate}
(E_{(w_+, w_-)})^* = E_{(w_-, w_+)}
\end{equation}
for any $ w_+, w_- \in W $. Let us record the following result from \cite{Joseph}, 
compare \cite[Theorem 2.3]{Yakimov2014}. 

\begin{thm} \label{thm:strata of prime spectrum}
Let $J \subseteq \Ocal(G_q)$ be a completely prime ideal. Then there exists a unique $\wrm = (w_+, w_-) \in W \times W$ such that $I_\wrm \subseteq J$ and $(J/I_\wrm) \cap E_\wrm = \emptyset$.
\end{thm}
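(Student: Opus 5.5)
The plan is to establish existence and uniqueness separately. Existence comes from a Demazure-type descending argument, and uniqueness from the fact that the normal elements $c^\pm_{w,\mu}$ detect the Bruhat order.

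\emph{Existence.} First I fix $w_+$ using the completely prime ideal $J^+ = J \cap \Ocal^+$ of $\Ocal^+$.

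For each $\mu \in \weights^+$, let $S_\mu$ be the set of weights $\nu$ of $V(\mu)$ with $c^\mu_{\nu,\mu} \notin J$ for some vector of weight $\nu$. Modulo lower terms, relation \eqref{eq:multiplication rule for coefficients2} says that the elements $c^\mu_{\xi,\mu}$ with $\xi$ of \emph{minimal} weight among those not in $J$ are normal modulo $J$.

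I would show the following, in order:
\begin{enumerate}
\item The set $\{\xi \in V(\mu) \mid c^\mu_{\xi,\mu} \in J\}$ is a subspace. Its orthogonal complement $M_\mu$ is $U_q(\bfrak_+)$-stable. The reason is that $J$ is stable under the left action $\triangleright$ of $U_q(\bfrak_-)$ modulo normal elements; more precisely, $c^\mu_{\xi,\mu} \mapsto c^\mu_{X^*\xi,\mu}$ comes from the left $U_q$-action, and primeness together with the commutation relations gives invariance.
\item $M_\mu$ is generated by an extremal weight vector, i.e.\ $M_\mu = V^+_{w}(\mu)$. This uses complete primeness: products $c^{\mu_1}_{\xi_1,\mu_1} c^{\mu_2}_{\xi_2,\mu_2}$ of lowest elements correspond to $\xi_1\otimes\xi_2$ projected to $V(\mu_1+\mu_2)$, and the domain property forces the lowest weights of $M_\mu$ to be additive in $\mu$. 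Hence they are of the form $w(\mu)$ for a single $w$, as with $\mu=\rho$ regular.
\end{enumerate}
Setting $w_+ = w$, we get $I^+_{w_+} \subseteq J$. Moreover $c^+_{w_+,\mu} \notin J$ for all $\mu$, since $e^\mu_{i_{w(\mu)}} \in M_\mu$.

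The same argument applied to $\Ocal^-$ (or via the $*$-symmetry \eqref{eq:Demazure ideals conjugate}, used only formally on the algebra side) produces $w_-$. Then $I_\wrm = I^+_{w_+}\Ocal^- + \Ocal^+ I^-_{w_-} \subseteq J$.

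To see that $E_\wrm \cap J/I_\wrm = \emptyset$: $J$ is completely prime, so it contains no product $c^+_{w_+,\lambda}c^-_{w_-,\mu}$ of elements that are individually not in $J$.

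\emph{Uniqueness.} Suppose $\wrm$ and $\wrm'$ both satisfy the conclusion. If $w_+ \not\leq w_+'$, then by \eqref{eq:Bruhat order and Demazure module} we have $V^+_{w_+}(\rho)\not\subseteq V^+_{w_+'}(\rho)$. I expect this to yield $e^\rho_{i_{w_+(\rho)}} \perp V^+_{w_+'}(\rho)$ (extremal weight vectors lie in a Demazure module only if the weight condition holds), so $c^+_{w_+,\rho} \in I^+_{w_+'} \subseteq J$. This contradicts $c^+_{w_+,\rho}\notin J$. By symmetry $w_+ = w_+'$, and similarly $w_- = w_-'$.

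\emph{Main obstacle.} The main obstacle is step (2) of the existence part: showing that the "support" $M_\mu$ of $J$ is exactly a Demazure module of a single Weyl group element, uniformly in $\mu$. I would attack it first for $\mu=\rho$, choosing $w$ with $w(\rho)$ the minimal weight of $M_\rho$; such a vector must be extremal by a normality/Ore argument together with $(\nu,\nu)<(\rho,\rho)$ for non-extremal weights. I would then propagate to all $\mu$ via $V(\mu)\otimes V(\rho)\supseteq V(\mu+\rho)$ and \eqref{eq:identification of w-weight vectors}.
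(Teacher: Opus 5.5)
The paper gives no proof of this statement; it imports it from \cite{Joseph} (compare \cite[Theorem~2.3]{Yakimov2014}). Your proposal therefore has to stand on its own. Its existence part has a genuine gap.

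\textbf{What is correct.} Your uniqueness argument is right. It is the same mechanism the paper uses in the proof of Theorem~\ref{thm:no bdd repn for inhomogeneous strata}. If $w_+\not\leq w_+'$, then by \eqref{eq:Bruhat order and Demazure module} the one-dimensional weight space $V(\rho)_{w_+(\rho)}$ meets $V^+_{w_+'}(\rho)$ trivially, so it is orthogonal to it. Hence $c^+_{w_+,\rho}\in I^+_{w_+'}\subseteq J$, which contradicts $(J/I_\wrm)\cap E_\wrm=\emptyset$. For $w_-$ you can pass to $J^*$. The final disjointness step is also fine, once you know $c^\pm_{w_\pm,\mu}\notin J$ for all $\mu$. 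This already follows from $c^\pm_{w_\pm,\rho}\notin J$, by complete primeness and \eqref{eq:relation for w-multiplicative elements}.

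\textbf{The gap: step (2).} This step is exactly the substance of Joseph's theorem. Your tools control only the bottom of $M_\mu$. Coefficients of minimal weight are normal modulo $J$. Commuting such an element with itself, and using complete primeness, forces $(\nu,\nu)=(\mu,\mu)$, so its weight is extremal. Commuting two such elements forces the minimal weight to be unique, equal to $w(\mu)$ for a single $w$. Granting (1), this gives $V^+_w(\mu)\subseteq M_\mu$.

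But $I^+_w\subseteq J$ requires the reverse inclusion $M_\mu\subseteq V^+_w(\mu)$, and nothing in the proposal reaches it. A $U_q(\bfrak_+)$-submodule of $V(\mu)$ with unique extremal minimal weight $w(\mu)$ need not be a Demazure module. Take $\mathfrak{sl}_3$, $\mu=\rho$ and $w=s_1s_2$, so that $w(\rho)=-\alpha_1$.
\begin{itemize}
\item The sum of all weight spaces of $V(\rho)$ of weight $\geq -\alpha_1$ is $U_q(\bfrak_+)$-stable, with unique minimal weight $-\alpha_1$.
\item It contains the whole two-dimensional zero weight space.
\item By contrast, $V^+_w(\rho)_0=\Cbb\, E_1 e^\rho_{i_{w(\rho)}}$ is one-dimensional.
\end{itemize}
So you would have to show that $c^\rho_{\xi,\rho}\in J$ for the zero weight vector $\xi\perp V^+_w(\rho)$. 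This vector is neither of minimal weight nor extremal. Normality arguments and additivity of lowest weights say nothing about it. Propagating from $\rho$ to other $\mu$ does not help either, because the problem already occurs at $\rho$. Closing this step needs real Demazure-module input as in \cite{Joseph}. Alternatively, one can reduce to the classification of the $H\times H$-stable prime ideals of $\Ocal(G_q)$ as exactly the $I_\wrm$, applied to the largest $H\times H$-stable ideal inside $J$. That classification is itself the core of the theory.

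\textbf{The justification for (1).} The reason you give for (1) is also not valid. $J$ is not invariant under $\triangleright$; for example $K_\lambda\triangleright \Ker(\pi_w)\neq\Ker(\pi_w)$ for $0\neq\lambda\in\weights$, by Theorem~\ref{thm:primitive ideals in homogeneous strata}. Moreover, $c^\mu_{\xi,\mu}\mapsto c^\mu_{X^*\xi,\mu}$ is the right action $\triangleleft X$ in the paper's conventions, not the left action. The claim in (1) is nevertheless true. It has to come from operators built out of left and right multiplications, such as a twisted adjoint action combined with the $R$-matrix commutation relations, which preserve every two-sided ideal. It does not come from any invariance of $J$ under $U_q(\gf)$.
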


An ideal in $ \Ocal(G_q) $ is prime if and only if it is completely prime \cite[Theorem 11.4]{Joseph1994}. If we let $ \Spec_\wrm( \Ocal(G_q)) \subseteq \Spec(\Ocal(G_q)) $ denote the set of all completely prime ideals associated to $ \wrm $ via Theorem~\ref{thm:strata of prime spectrum} then we obtain the decomposition
\begin{equation} \label{eq:stratification of prime spectrum}
\Spec(\Ocal(G_q)) = \coprod_{\wrm \in W \times W} \Spec_\wrm( \Ocal(G_q))
\end{equation} 
of the prime spectrum $ \Spec(\Ocal(G_q)) $ of $ \Ocal(G_q) $. 
The set $ \Prim( \Ocal(G_q)) $ of primitive ideals of $ \Ocal(G_q) $ is contained in $ \Spec( \Ocal(G_q)) $, and one obtains a corresponding decomposition 
\begin{equation} \label{eq:stratification of primitive spectrum}
\Prim( \Ocal(G_q) ) = \coprod_{\wrm \in W \times W} \Prim_\wrm( \Ocal(G_q))
\end{equation}
where
\[
\Prim_\wrm(\Ocal(G_q)) = \Prim(\Ocal(G_q)) \cap \Spec_\wrm( \Ocal(G_q))
\]
for $ \wrm \in W \times W $.

Since $ E_\wrm $ is multiplicative and right Ore, we can consider the localisation
\begin{equation}\label{eq:w-localisation}
\Ocal_\wrm = \big( \Ocal(G_q) / I_\wrm \big) [E_\wrm^{-1}], 
\end{equation}
compare \cite[A.2.4]{Joseph}. For the following statement see \cite[Theorem 2.3]{Yakimov2014}.

\begin{thm}\label{thm:the Joseph stratum in terms of Zw}
Let $ \wrm \in W \times W $ and denote by $ Z_\wrm $ the centre of $ \Ocal_\wrm $.  Given $ J \in \Spec(Z_\wrm) $, define
\[
\iota_\wrm(J) = \{ f \in \Ocal(G_q) \mid f + I_\wrm \in \Ocal_\wrm J \}.
\]
Then, $ \iota_\wrm(J) \in \Spec_\wrm( \Ocal(G_q)) $, and the map
\[
\iota_\wrm: \Spec(Z_\wrm) \rightarrow \Spec_\wrm(\Ocal(G_q))
\]
is a bijection, which restricts to a bijection
\[
\iota_\wrm : \Max(Z_\wrm) \rightarrow \Prim_\wrm(\Ocal(G_q)),
\]
where $ \Max(Z_\wrm) $ denotes the set of all maximal ideals of $ Z_\wrm $.
\end{thm}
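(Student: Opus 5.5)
The plan is to factor $\iota_\wrm$ as a composite of two bijections: prime ideals of $\Ocal_\wrm$ versus prime ideals of $Z_\wrm$, and prime ideals of $\Ocal_\wrm$ versus $\Spec_\wrm(\Ocal(G_q))$. After that I would identify which primes are primitive.

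\emph{Step 1 (localisation).} By the results of Joseph quoted above, $I_\wrm$ is completely prime and $E_\wrm$ is a left and right Ore set of regular elements in $\Ocal(G_q)/I_\wrm$. The standard correspondence for Ore localisations (see \cite[A.2]{Joseph}) then applies. It says that $P \mapsto \Ocal_\wrm (P/I_\wrm)$ and its inverse, contraction, are mutually inverse bijections between two sets: prime ideals $P \supseteq I_\wrm$ of $\Ocal(G_q)$ with $(P/I_\wrm) \cap E_\wrm = \emptyset$, and prime ideals of $\Ocal_\wrm$. By Theorem~\ref{thm:strata of prime spectrum}, together with the fact that prime equals completely prime \cite{Joseph1994}, the first set is exactly $\Spec_\wrm(\Ocal(G_q))$. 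This reduces the statement to showing that $J \mapsto \Ocal_\wrm J$ is a bijection $\Spec(Z_\wrm) \to \Spec(\Ocal_\wrm)$, with inverse $Q \mapsto Q \cap Z_\wrm$.

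\emph{Step 2 ($H$-simplicity).} The action of $H \times H$ from \eqref{eq:torus action} preserves $I_\wrm$. It acts on each element $c^\pm_{w,\mu}$ by a character, so it also preserves $E_\wrm$ up to scalars. Hence it descends to an action by automorphisms on $\Ocal_\wrm$. The key claim is that $\Ocal_\wrm$ is $H$-simple, meaning it has no nonzero proper $H$-stable ideals. To prove this, take a nonzero $H$-stable ideal $Q$ of $\Ocal_\wrm$. Its contraction is an $H$-stable ideal of $\Ocal(G_q)$ strictly containing $I_\wrm$. I would then use the Demazure-module description of $I^\pm_{w}$ and the commutation relations \eqref{eq:commutation relations for w-multiplicative elements}. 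A leading-term argument with respect to the weight ordering should show that any $H$-stable ideal strictly containing $I_\wrm$ meets $E_\wrm$. This is the classification of $H$-primes as exactly the $I_\wrm$, from \cite[Ch.~10]{Joseph}. Consequently $Q$ contains a unit, so $Q = \Ocal_\wrm$. I expect this step to be the main obstacle, since it carries the real content of Joseph's work.

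\emph{Step 3 (centre and primes).} For an $H$-simple algebra under a rational torus action, with $q$ not a root of unity, the Goodearl--Letzter stratification theory gives what is needed. Extension and contraction are inverse bijections between $\Spec(\Ocal_\wrm)$ and $\Spec(Z_\wrm)$, and $Z_\wrm$ is a Laurent polynomial ring spanned by central $H$-eigenvectors. Concretely, every nonzero ideal of $\Ocal_\wrm$ is generated by its intersection with the centre. This is proved by taking an element of minimal $H$-support length in the ideal. $H$-simplicity lets one normalise its leading component to be a unit. The commutators of this element with homogeneous elements then have shorter support, so they vanish, which forces centrality. Composing with Step 1 yields the bijection $\iota_\wrm : \Spec(Z_\wrm) \to \Spec_\wrm(\Ocal(G_q))$.

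\emph{Step 4 (primitive versus maximal).} Since $Z_\wrm$ is a finitely generated commutative algebra, the maximal ideals of $Z_\wrm$ are exactly its locally closed points. Under the homeomorphisms of Steps 1--3, these correspond to primes that are locally closed in $\Spec_\wrm(\Ocal(G_q))$, which in turn are maximal in their stratum. Every proper ideal properly containing such a prime escapes the stratum, because it meets $E_\wrm$ modulo $I_\wrm$. The remaining input is the Dixmier--Moeglin equivalence for $\Ocal(G_q)$, which is available since $\Ocal(G_q)$ is a Noetherian algebra with finitely many $H$-primes. It says that a prime is primitive if and only if it is locally closed, and this gives the restriction $\Max(Z_\wrm) \to \Prim_\wrm(\Ocal(G_q))$.
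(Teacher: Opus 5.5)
The paper gives no proof of this statement. It quotes it from \cite[Theorem~2.3]{Yakimov2014}, which combines Joseph's results \cite{Joseph} with the Goodearl--Letzter stratification theory. Your four steps reconstruct that standard argument: the Ore-localisation correspondence, $H$-simplicity of $\Ocal_\wrm$, extension and contraction between an $H$-simple algebra and its centre, and the Dixmier--Moeglin equivalence to single out the primitive ideals. The overall chain of reductions is sound.

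\emph{Caveat on Step~3.} Your justification proves less than you state. The minimal-support-length argument produces a nonzero central element in every nonzero ideal of $\Ocal_\wrm$. It does not show that every ideal $I$ equals $(I\cap Z_\wrm)\Ocal_\wrm$, and that is what makes extension and contraction mutually inverse on primes. If you run the argument inductively on length, the commutators $[a,y]$ are only known to lie in $(I\cap Z_\wrm)\Ocal_\wrm$, not to vanish. One more ingredient is needed:
\begin{itemize}
\item $H$-simplicity makes every nonzero homogeneous central element invertible, so $Z_\wrm$ is a graded field.
\item Hence $\Ocal_\wrm$ is free over $Z_\wrm$ on a homogeneous basis containing $1$.
\item A graded Jacobson density argument for the bimodule $\Ocal_\wrm$, whose bimodule endomorphisms are exactly $Z_\wrm$, then shows that every coefficient $d_j\in Z_\wrm$ of an element $x=\sum_j m_j d_j\in I$ lies in $I$.
\end{itemize}
Alternatively, quote the lattice isomorphism for $H$-simple algebras from \cite{BrownGoodearl}.

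\emph{Simplification of Step~2.} You do not need a new leading-term computation. Suppose $Q$ were a nonzero proper $H$-stable ideal of $\Ocal_\wrm$.
\begin{itemize}
\item A maximal ideal containing $Q$ contracts to some $P\in\Spec_\wrm(\Ocal(G_q))$.
\item The ideal $\bigcap_{h\in H}h(P)$ is an $H$-prime, hence equals some $I_{\wrm'}$. It contains the contraction of $Q$, so $I_\wrm\subsetneq I_{\wrm'}\subseteq P$.
\item $I_{\wrm'}$ is completely prime of type $\wrm'\neq\wrm$. The uniqueness part of Theorem~\ref{thm:strata of prime spectrum} therefore forces $(I_{\wrm'}/I_\wrm)\cap E_\wrm\neq\emptyset$.
\item This contradicts $(P/I_\wrm)\cap E_\wrm=\emptyset$.
\end{itemize}
So $H$-simplicity follows formally from the classification of $H$-primes together with the stratification theorem the paper already quotes.
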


Therefore, to understand $ \Prim_\wrm(\Ocal(G_q)) $, one needs to study the algebra $ Z_\wrm $. Let $ \mu = \mu_+ - \mu_- \in \weights $ with $ \mu_+, \mu_- \in \weights^+ $ and $ \wrm = (w_+, w_-) \in W \times W $. Extending \eqref{eq:w-multiplicative elements} by
\begin{equation*}
c_{w_\pm, \mu}^\pm = c_{w_\pm, \mu_+}^\pm \big( c_{w_\pm, \mu_-}^\pm \big)^{-1} \in \Ocal_\wrm, 
\end{equation*}
which is well-defined independently of the choice $(\mu_+ , \mu_-) $, one has
\begin{equation*} \label{eq:relation for w-multiplicative elements in localisation}
c_{w_\pm, \mu_1}^\pm \, c_{w_\pm, \mu_2}^\pm = c_{w_\pm, \mu_1 + \mu_2}^\pm
\end{equation*}
for any $ \wrm \in W \times W $ and $ \mu_1, \mu_2 \in \weights $, compare \eqref{eq:relation for w-multiplicative elements}.

For a subset \( \Ical \subseteq \{1, \dots, N\} \), we denote
$
\Pbf_\Ical = \bigoplus_{i \in \Ical} \Zbb \varpi_i.
$
Moreover, for \( w \in W \) define
\[
\Ical(w) = \{1 \leq i \leq N \mid w(\varpi_i) = \varpi_i \}, \quad \Scal(w) = \{1, \dots, N \} \setminus \Ical(w), 
\]
and for \( \wrm = (w_+, w_-) \in W \times W \), let
\[
\Ical(\wrm) = \Ical(w_+) \cap \Ical (w_-), \quad \Scal(\wrm) = \Scal(w_+) \cup \Scal(w_-) = \{1, \dots, N \} \setminus \Ical(\wrm).
\]
Then
\begin{equation} \label{defltilde}
\tilde{\Lcal}(\wrm) = \Ker(w_+ - w_-) \cap \Pbf
\end{equation}
is a lattice of rank \( \dim(\Ker(w_+ - w_-)) \) because the Weyl group preserves \( \Pbf \). Also, since \( \Pbf_{\Ical(\wrm)} \subseteq \Ker(w_+ - w_-) \), the set 
\[
\tilde{\Lcal}_{\text{red}}(\wrm) = \Ker(w_+ - w_-) \cap \Pbf_{\Scal(\wrm)}
\]
is a lattice of rank \( k = \dim(\Ker(w_+ - w_-)) - |\Ical(\wrm)| \), and
\[
\tilde{\Lcal}(\wrm) = \tilde{\Lcal}_{\text{red}}(\wrm) \oplus \Pbf_{\Ical(\wrm)}.
\]
Let \( \{\lambda^{(1)}, \dots, \lambda^{(k)} \} \subseteq \weights \) be a basis of \( \tilde{\Lcal}_{\text{red}}(\wrm) \) and set
\begin{equation} \label{eq:basis from the reduced part}
a_j = c_{w_+, \lambda^{(j)}}^+ (c_{w_-, \lambda^{(j)}}^-)^{-1} \in \Ocal_\wrm.
\end{equation}
The following result is due to Yakimov \cite[Theorem 3.1]{Yakimov2014}. 

\begin{thm}\label{thm:centre of the localisation}
Fix $ \wrm = (w_+, w_-) \in W \times W $. The centre $ Z_\wrm $ of $ \Ocal_\wrm $ is isomorphic to the algebra of complex Laurent polynomials of rank $ \dim(\Ker(w_+ - w_-)) $ with generators
\begin{equation*}
\{c_{w_+, \varpi_i}^+ \mid i \in \Ical(\wrm) \} \coprod \{ a_j \mid 1 \leq j \leq k \}, 
\end{equation*}
with the elements $ c_{w_+, \varpi_i}^+ $ and $ a_j $ as defined above.
\end{thm}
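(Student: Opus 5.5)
The plan has three steps: show the listed elements are central in $\Ocal_\wrm$, show that their Laurent monomials are linearly independent, and show that they generate all of $Z_\wrm$.

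\textbf{Centrality.} Everything is governed by the commutation relations \eqref{eq:commutation relations for w-multiplicative elements}. For a homogeneous coefficient $c_{\lambda,\nu}$, commuting it past $c^+_{w_+,\mu}$ produces a scalar $q^{\chi^+_\mu(\lambda,\nu)}$ modulo $I_\wrm$, where $\chi^+_\mu$ is a character of the $\weights\times\weights$-grading. Applying the $*$-operation to the same relation, using \eqref{eq:w-multiplicative subset conjugate} and \eqref{eq:Joseph ideal conjugate}, gives the analogous scalar $q^{\chi^-_\mu(\lambda,\nu)}$ for $c^-_{w_-,\mu}$. These relations pass to $\Ocal_\wrm$ and extend multiplicatively to all $\mu\in\weights$.

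Conjugation by $a_j=c^+_{w_+,\lambda^{(j)}}(c^-_{w_-,\lambda^{(j)}})^{-1}$ then multiplies $c_{\lambda,\nu}$ by $q$ raised to an expression in which the terms depending only on $\nu$ cancel. What remains is $(w_+\lambda^{(j)}-w_-\lambda^{(j)},\lambda)$ up to sign, which vanishes because $\lambda^{(j)}\in\Ker(w_+-w_-)$.

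For $i\in\Ical(\wrm)$ the argument is different. I would first show that $V^\pm_{w_\pm}(\varpi_i)$ is the extremal weight line, since $w_\pm(\varpi_i)=\varpi_i$. Then, modulo $I_\wrm$, the unitarity relation $\sum_k (u^{\varpi_i}_{k1})^*u^{\varpi_i}_{k1}=1$ collapses to a relation of the form $c^-_{w_-,\varpi_i}c^+_{w_+,\varpi_i}=1$ (possibly up to a scalar). Hence $c^+_{w_+,\varpi_i}$ is already invertible in $\Ocal(G_q)/I_\wrm$. Its commutation character coincides with that of $c^+_{w_+,\varpi_i}(c^-_{w_-,\varpi_i})^{-1}$, which is trivial by the previous computation since $\varpi_i\in\Ker(w_+-w_-)$. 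As the $c_{\lambda,\nu}$ generate $\Ocal_\wrm$ together with $E_\wrm^{-1}$, centrality follows.

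\textbf{Independence.} $\Ocal_\wrm$ is a domain because $I_\wrm$ is completely prime. The Laurent monomials in the listed generators are nonzero and have pairwise distinct $H\times H$-weights, since the lattice $\tilde{\Lcal}(\wrm)=\tilde{\Lcal}_{\text{red}}(\wrm)\oplus\Pbf_{\Ical(\wrm)}$ maps injectively to weights via $\mu\mapsto(\mu,\,w_+\mu)$. So they are linearly independent, and the generators are algebraically independent Laurent variables, giving rank $\dim\Ker(w_+-w_-)$.

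\textbf{Exhaustion, the main obstacle.} One must show these monomials span $Z_\wrm$. The $H\times H$-action preserves $Z_\wrm$, so $Z_\wrm$ is spanned by homogeneous central elements. The step I expect to be hardest is proving that a homogeneous central element is a scalar multiple of some $c^+_{w_+,\mu}(c^-_{w_-,\mu'})^{-1}$. My first attempt would embed $\Ocal_\wrm$ into a quantum torus via Cauchon's deleting-derivations procedure, or via Joseph's description of $\Ocal_\wrm$ as generated over the torus $\Cbb[E_\wrm^{\pm1}]$ by $H$-eigenvectors with $q$-commutation. The aim is to reduce to the statement that the centre of a quantum torus is spanned by monomials whose commutation form vanishes identically. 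Any homogeneous central $z$ commutes with all $c^\pm_{w_\pm,\mu}$, which forces its weight $(\lambda,\nu)$ to satisfy $\lambda=w_+\mu_+$ and $\nu$-constraints tying it to the torus. Degree and leading-term arguments along the order on weights, using the triangularity in \eqref{eq:multiplication rule for coefficients}, should then show that $z$ lies in the torus part. A torus element $c^+_{w_+,\mu}(c^-_{w_-,\mu'})^{-1}$ is central exactly when $\mu=\mu'$ and $w_+\mu=w_-\mu$, which yields precisely $\tilde{\Lcal}(\wrm)$.
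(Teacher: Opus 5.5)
Your centrality and independence steps are sound in substance. One slip: since $c^+_{w_+,\varpi_i}=(c^-_{w_-,\varpi_i})^{-1}$ in $\Ocal_\wrm$, the element $c^+_{w_+,\varpi_i}(c^-_{w_-,\varpi_i})^{-1}$ equals $(c^+_{w_+,\varpi_i})^2$. Its commutation character is therefore the \emph{square} of that of $c^+_{w_+,\varpi_i}$, not equal to it; triviality still follows because these characters are real powers of $q$. These two steps show that the listed elements generate a Laurent polynomial subalgebra of $Z_\wrm$ of the right rank. The genuine gap is the exhaustion step, i.e.\ the reverse inclusion, which is the whole content of \cite[Theorem~3.1]{Yakimov2014}. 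The paper quotes that result without proof, and your sketch does not supply one. Commutation with $E_\wrm$ only shows that a nonzero homogeneous central $z$ has weight $(w_+\nu,\nu)$ with $w_+\nu=w_-\nu$, i.e.\ $\nu\in\tilde{\Lcal}(\wrm)$. The component of $\Ocal_\wrm$ of that weight is a module over the whole degree-zero subalgebra of $\Ocal_\wrm$, not merely over its intersection with $\Cbb[E_\wrm^{\pm1}]$. So nothing forces $z$ into the torus, let alone onto a single monomial. Moreover, these admissible weights overshoot. The central weights that actually occur are those with $\nu\in 2\tilde{\Lcal}_{\text{red}}(\wrm)\oplus\Pbf_{\Ical(\wrm)}$, a sublattice of index $2^k$ in $\tilde{\Lcal}(\wrm)$. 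Take $SL_q(2,\Cbb)$ with $\wrm=(w_0,w_0)$ and $u^\varpi=\bigl(\begin{smallmatrix} a & -qc^* \\ c & a^* \end{smallmatrix}\bigr)$. Every element $c\,g(cc^*)$, with $g$ a nonzero Laurent polynomial, has the admissible weight $(-\varpi,\varpi)$. But $a\,c\,g(cc^*)=q\,c\,g(q^2cc^*)\,a$, so none of them is central.

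A proof therefore needs two inputs your sketch does not provide. First, each graded component of $Z_\wrm$ must be at most one-dimensional. For instance, this follows if nonzero homogeneous central elements are units in $\Ocal_\wrm$ (an $H$-simplicity property of Joseph's localisation) and the degree-zero part of $Z_\wrm$ is $\Cbb$. Second, you need an argument, using the non-torus generators, that decides which $\nu\in\tilde{\Lcal}(\wrm)$ carry central elements. ``Degree and leading-term arguments'' do not supply either. Cauchon's deleting derivations would first require a CGL-type iterated Ore extension presentation of $\Ocal(G_q)/I_\wrm$, which is itself a substantial structural theorem. Even then, you would still have to compute $Z_\wrm=\Ocal_\wrm\cap Z(\operatorname{Fract}\Ocal_\wrm)$ and express it through the $a_j$ and $c^+_{w_+,\varpi_i}$.

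Finally, your closing criterion is wrong when $\Ical(\wrm)\neq\emptyset$. Because $c^-_{w_-,\varpi_i}c^+_{w_+,\varpi_i}=1$ in $\Ocal_\wrm$, the torus $\Cbb[E_\wrm^{\pm1}]$ is not free on the elements $c^\pm_{w_\pm,\varpi_j}$. The central element $c^+_{w_+,\varpi_i}$ is not of the form $c^+_{w_+,\mu}(c^-_{w_-,\mu})^{-1}$ with $\mu\in\tilde{\Lcal}(\wrm)$; such elements only give even powers of $c^+_{w_+,\varpi_i}$. So applying the criterion as stated would produce a proper subalgebra of $Z_\wrm$. For $\wrm=(e,e)$, where $\Ocal_\wrm\cong\Cf^\infty(T)$ is commutative, every torus element is central and the criterion fails outright.
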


By the weak Nullstellensatz,  
it follows that
\begin{multline*}
\Max(Z_\wrm) = 
\Big\{ \sum_{1 \leq j \leq k} Z_\wrm(a_j - \zeta_j) + \sum_{i \in \Ical(\wrm)} Z_\wrm(c_{w_+, \varpi_i}^+ - z_i) \;\Big|\, \zeta_j, z_i \in \Cbb^\times
\text{ for all } i,j \Big\}.
\end{multline*}
Combining this with Theorem~\ref{thm:the Joseph stratum in terms of Zw}, we arrive at the following statement. 

\begin{thm}\label{thm:the primitive Joseph stratum}
Fix $ \wrm = (w_+, w_-) \in W \times W$ and let $ k = \dim( \Ker(w_+ - w_-)) - |\Ical(\wrm)|$. For $\zeta = (\zeta_1, \dots, \zeta_k) \in (\Cbb^\times)^k$ and $ z = (z_i)_{i \in \Ical(\wrm)} \in (\Cbb^\times)^{\Ical(\wrm)} $, define
\begin{equation} \label{eq:primitive ideal of O(Gq)}
J_{\wrm, \zeta, z} = \Big\{ f \in \Ocal(G_q) \;\Big|\, f + I_\wrm \in \sum_{1 \leq j \leq k} \Ocal_\wrm(a_j - \zeta_j) + \sum_{i \in \Ical(\wrm)} \Ocal_\wrm(c_{w_+, \varpi_i}^+ - z_i) \Big\}.
\end{equation}
Then we have
\begin{equation*}
\Prim_\wrm(\Ocal(G_q)) = \big\{J_{\wrm, \zeta, z} \;\big|\, \zeta = (\zeta_1, \dots, \zeta_k) \in (\Cbb^\times)^k,\, z = (z_i)_{i \in \Ical(\wrm)} \in (\Cbb^\times)^{\Ical(\wrm)} \big\},
\end{equation*}
and $ J_{\wrm, \zeta, z} = J_{\wrm, \zeta', z'} $ if and only if $ \zeta = \zeta' $ and $ z = z' $.
\end{thm}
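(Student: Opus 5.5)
The statement is obtained by composing Theorem~\ref{thm:the Joseph stratum in terms of Zw} with Theorem~\ref{thm:centre of the localisation} and the weak Nullstellensatz, so the plan is to make each link explicit. First fix $\wrm$. By Theorem~\ref{thm:centre of the localisation}, $Z_\wrm$ is the Laurent polynomial algebra $\Cbb[a_1^{\pm1},\dots,a_k^{\pm1},(c^+_{w_+,\varpi_i})^{\pm1}]_{i\in\Ical(\wrm)}$. This is a finitely generated commutative algebra over $\Cbb$, namely the coordinate ring of the torus $(\Cbb^\times)^{k+|\Ical(\wrm)|}$. Its maximal ideals are therefore exactly the kernels of the characters evaluating at points of this torus. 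Concretely,
\[
\mathfrak m_{\zeta,z}=\sum_j Z_\wrm(a_j-\zeta_j)+\sum_{i} Z_\wrm(c^+_{w_+,\varpi_i}-z_i),
\]
and distinct points $(\zeta,z)$ give distinct maximal ideals, since the evaluation characters differ.

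Next, apply Theorem~\ref{thm:the Joseph stratum in terms of Zw}. It says that $\iota_\wrm$ is a bijection $\Max(Z_\wrm)\to\Prim_\wrm(\Ocal(G_q))$. Hence $\Prim_\wrm(\Ocal(G_q))=\{\iota_\wrm(\mathfrak m_{\zeta,z})\}$, with $\iota_\wrm(\mathfrak m_{\zeta,z})=\iota_\wrm(\mathfrak m_{\zeta',z'})$ only if $(\zeta,z)=(\zeta',z')$. It remains to identify $\iota_\wrm(\mathfrak m_{\zeta,z})$ with $J_{\wrm,\zeta,z}$. By definition,
\[
\iota_\wrm(\mathfrak m)=\{f\mid f+I_\wrm\in\Ocal_\wrm\mathfrak m\}.
\]
Because the generators $a_j-\zeta_j$ and $c^+_{w_+,\varpi_i}-z_i$ lie in $Z_\wrm\subseteq\Ocal_\wrm$, we have $\Ocal_\wrm\mathfrak m_{\zeta,z}=\sum_j\Ocal_\wrm Z_\wrm(a_j-\zeta_j)+\dots$. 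Since $\Ocal_\wrm Z_\wrm=\Ocal_\wrm$, this equals $\sum_j\Ocal_\wrm(a_j-\zeta_j)+\sum_i\Ocal_\wrm(c^+_{w_+,\varpi_i}-z_i)$, which is the set appearing in \eqref{eq:primitive ideal of O(Gq)}. So $\iota_\wrm(\mathfrak m_{\zeta,z})=J_{\wrm,\zeta,z}$, and both the description of the stratum and the injectivity claim follow.

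The only points needing care are the following.
\begin{enumerate}
\item The Nullstellensatz step requires the coordinates to be in $\Cbb^\times$ rather than $\Cbb$. This holds because the generators are invertible in $Z_\wrm$, so their images under any character are nonzero.
\item The generators must be central, which is what makes $\Ocal_\wrm\mathfrak m$ a two-sided ideal. This is guaranteed by Theorem~\ref{thm:centre of the localisation}.
\end{enumerate}
I expect no real obstacle; the main thing to verify is that the choice of basis $\lambda^{(j)}$ is fixed once and for all, so that the parametrisation by $(\zeta,z)$ is well defined.
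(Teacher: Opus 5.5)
Your proposal is correct and follows the paper's argument: the paper describes $\Max(Z_\wrm)$ by combining Theorem~\ref{thm:centre of the localisation} ($Z_\wrm$ is a Laurent polynomial algebra) with the weak Nullstellensatz, and then transports this description through the bijection $\iota_\wrm$ of Theorem~\ref{thm:the Joseph stratum in terms of Zw}. Your check that $\Ocal_\wrm \mathfrak{m}_{\zeta,z}$ equals the sum of the left ideals $\Ocal_\wrm(a_j-\zeta_j)$ and $\Ocal_\wrm(c^+_{w_+,\varpi_i}-z_i)$ fills in a step the paper leaves implicit.
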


Due to \eqref{eq:stratification of primitive spectrum}, one thus obtains an explicit parametrisation 
\[
\Prim(\Ocal(G_q)) = \coprod_{\wrm \in W \times W} \Prim_\wrm(\Ocal(G_q)) \cong \coprod_{\wrm \in W \times W} (\Cbb^\times)^{\dim(\Ker(w_+ - w_-))}
\]
of the primitive ideals of $ \Ocal(G_q) $. We note that the right-hand side of this formula also provides a natural parametrisation of the space of symplectic leaves of the Poisson Lie group $ G $, see \cite[Theorem 4.6]{Yakimov2014}.

Fix $ \wrm = (w_+, w_-) \in W \times W $ and write $ \wrm^* = (w_- , w_+) \in W \times W $. Due to \eqref{eq:Joseph ideal conjugate} and \eqref{eq:w-multiplicative subset conjugate}, the involution of $ \Cf^\infty(K_q) $ descends to a conjugate-linear algebra antihomomorphism
\begin{equation*}
*: \Ocal_{\wrm} \longrightarrow \Ocal_{\wrm^*}.
\end{equation*}
For $ i \in \Ical(\wrm) $, we have $ u^{\varpi_i} _{j 1} \in I_{\wrm} $ for $ 1 < j \leq n_{\varpi_i} $ because $ \epsilon^{\varpi_i} _{j} < \varpi_i = w_+(\varpi_i) $. Therefore, 
\[
c^- _{w_-, \varpi_i} c^+ _{w_+, \varpi_i} = (u^{\varpi_i} _{11})^* u^{\varpi_i} _{11} = \sum_{1 \leq j \leq n_{\varpi_i}} ( u^{\varpi_i} _{j 1} )^* u^{\varpi_i} _{j1} = 1 \quad \mod I_{\wrm}, 
\]
which means
\[
(c^{+} _{w_+, \varpi_i})^{-1} = c^- _{w_-, \varpi_i} \quad \text{in } \Ocal_{\wrm}.
\]
Upon replacing $ \wrm $ by $\wrm^* $, it follows that, for $z_i \in \Cbb^\times$ and $ i \in \Ical(\wrm)$,
\begin{equation}\label{eq:I(wrm) part under involution}
( c^+ _{w_+ , \varpi_i} - z_i )^* = c^- _{w_+ , \varpi_i} - \overline{z_i} = (c^+ _{w_- , \varpi_i})^{-1} - \overline{z_i}, \quad \text{in } \Ocal_{\wrm^*} .
\end{equation}
Moreover, for $ \mu = \mu_+ - \mu_- \in \weights $ with $ \mu_\pm \in \weights^+ $ we have 
\[
(c^+ _{ w_+ , \mu })^* = \big( c^+ _{w_+ , \mu_+} (c^+ _{w_+ , \mu_-})^{-1} \big)^* = c^- _{w_+ , \mu_+} ( c^{-} _{w_+ , \mu_-} )^{-1} = c^{-} _{w_+ , \mu}.
\]
Thus, for $ \lambda \in \Ker (w_+-w_-) \cap \weights $ we obtain
\begin{align*}
\big( c^+ _{w_+ , \lambda} (c^- _{w_- , \lambda})^{-1 } \big)^* = (c^+ _{w_- , \lambda} )^{-1} c^- _{ w_+ , \lambda }
= q^{ - (\lambda , \lambda )+ (w_+ \lambda , w_- \lambda ) } c^- _{w_+ , \lambda} ( c^+ _{w_- , \lambda} )^{-1} \\
= c^- _{w_+ , \lambda} ( c^+ _{w_- , \lambda} )^{-1} = \big( c^+ _{w_- , \lambda} (c^- _{w_+ , \lambda})^{-1} \big)^{-1}, 
\end{align*}
which implies 
\begin{equation}\label{eq:S(wrm) part under involution}
\big( c^+ _{w_+ , \lambda} (c^- _{w_- , \lambda})^{-1 } - \zeta \big)^* = \big( c^+ _{w_- , \lambda} (c^- _{w_+ , \lambda})^{-1} \big)^{-1} - \overline{\zeta} , \qquad \zeta \in \Cbb^\times , \, \lambda \in \Ker (w_+ - w_-) \cap \weights .
\end{equation}
Combining \eqref{eq:primitive ideal of O(Gq)} with \eqref{eq:I(wrm) part under involution}, \eqref{eq:S(wrm) part under involution}, we obtain the following statement.

\begin{prop}\label{prop:primitive ideals under involution}
Let $ \wrm = (w_+, w_-) \in W \times W$, $k = \dim(\Ker (w_+ - w_-)) - | \Ical(\wrm)| $, $\zeta \in (\Cbb^\times)^k$, and $z \in (\Cbb^\times)^{\Ical(\wrm)} $ be as in Theorem~\ref{thm:the primitive Joseph stratum}. Then we have
\[
*( J_{\wrm, \zeta , z} ) = J_{ \wrm^* , \overline{\zeta} ^{-1}, \overline{z}^{-1} },
\]
where $\wrm^* = (w_-, w_+)$, $\overline{\zeta}^{-1} = ( \overline{\zeta_1}^{-1} , \dots , \overline{\zeta_k}^{-1} )$, and $\overline{z}^{-1} = (\overline{z_i}^{-1})_{i \in \Ical(\wrm)}$.

In particular, the primitive ideal $ J_{\wrm, \zeta , z} \subseteq \Ocal(G_q) $ is self-adjoint if and only if $ w _+ = w_-$, $\zeta \in \Tbb^k $, and $z \in \Tbb^{\Ical(\wrm)} $, where $ \Tbb \subseteq \Cbb^\times $ is the unit circle.
\end{prop}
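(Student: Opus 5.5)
Since the involution is a conjugate-linear anti-automorphism of $\Ocal(G_q)$, it maps ideals to ideals. So $*(J_{\wrm,\zeta,z})$ is an ideal, and it is enough to identify it with the ideal in \eqref{eq:primitive ideal of O(Gq)} for $\wrm^*$ and the parameters $\overline{\zeta}^{-1}, \overline{z}^{-1}$.

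First, \eqref{eq:Joseph ideal conjugate} and \eqref{eq:w-multiplicative subset conjugate} show that $*$ maps $I_\wrm$ onto $I_{\wrm^*}$ and $E_\wrm$ onto $E_{\wrm^*}$. Because a conjugate-linear anti-homomorphism turns right fractions into left fractions, and both Ore conditions hold, $*$ extends to a bijective conjugate-linear anti-isomorphism $\Ocal_\wrm \to \Ocal_{\wrm^*}$ compatible with the quotient maps. Its inverse is the analogous map $\Ocal_{\wrm^*}\to\Ocal_\wrm$.

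Next, write $M = \sum_j \Ocal_\wrm(a_j-\zeta_j) + \sum_i \Ocal_\wrm(c^+_{w_+,\varpi_i}-z_i)$. This is a two-sided ideal, since its generators are central by Theorem~\ref{thm:centre of the localisation}. Then $f\in J_{\wrm,\zeta,z}$ if and only if $f+I_\wrm\in M$, and hence $f^*\in *(J_{\wrm,\zeta,z})$ if and only if $f^*+I_{\wrm^*}\in M^*$. The ideal $M^*$ is generated by the starred generators, which are central in $\Ocal_{\wrm^*}$ because $*$ is an anti-isomorphism. By \eqref{eq:I(wrm) part under involution} and \eqref{eq:S(wrm) part under involution}, these are $(c^+_{w_-,\varpi_i})^{-1}-\overline{z_i}$ and $b_j^{-1}-\overline{\zeta_j}$, where $b_j = c^+_{w_-,\lambda^{(j)}}(c^-_{w_+,\lambda^{(j)}})^{-1}$. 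Here $\lambda^{(j)}\in\Ker(w_--w_+)$, and $\Ical(\wrm^*)=\Ical(\wrm)$, so the $\lambda^{(j)}$ also form a basis of $\tilde\Lcal_{\mathrm{red}}(\wrm^*)$, and the $b_j$ are exactly the elements $a_j$ of \eqref{eq:basis from the reduced part} for $\wrm^*$. Since $u^{-1}-\overline{\zeta} = -\overline{\zeta}\,u^{-1}(u-\overline{\zeta}^{-1})$ with $u^{-1}$ a central unit, $M^*$ equals the ideal generated by $b_j-\overline{\zeta_j}^{-1}$ and $c^+_{w_-,\varpi_i}-\overline{z_i}^{-1}$. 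This proves $*(J_{\wrm,\zeta,z})=J_{\wrm^*,\overline\zeta^{-1},\overline z^{-1}}$.

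For the last claim, $J_{\wrm,\zeta,z}$ is self-adjoint if and only if it coincides with $J_{\wrm^*,\overline\zeta^{-1},\overline z^{-1}}$. The strata in \eqref{eq:stratification of primitive spectrum} are disjoint, so this forces $\wrm=\wrm^*$, that is $w_+=w_-$. The injectivity statement in Theorem~\ref{thm:the primitive Joseph stratum} then forces $\zeta=\overline\zeta^{-1}$ and $z=\overline z^{-1}$, that is $|\zeta_j|=|z_i|=1$. The converse is immediate. The main obstacle is making sure that $*$ really extends to the localisations and respects the ideals $M$. This rests on the two-sided Ore property and on the centrality of the generators, and I would check it carefully via the universal property of Ore localisation applied to the opposite algebra.
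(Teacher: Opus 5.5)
Your proposal is correct and follows the same route as the paper. The paper also extends $*$ to a conjugate-linear antihomomorphism $\Ocal_\wrm \to \Ocal_{\wrm^*}$, computes the adjoints of the generators via \eqref{eq:I(wrm) part under involution} and \eqref{eq:S(wrm) part under involution}, and reads off the result from \eqref{eq:primitive ideal of O(Gq)}. You spell out details the paper leaves implicit: the Ore extension of $*$, the equality $\tilde{\Lcal}_{\mathrm{red}}(\wrm^*)=\tilde{\Lcal}_{\mathrm{red}}(\wrm)$, the central-unit rewriting $u^{-1}-\overline{\zeta}=-\overline{\zeta}u^{-1}(u-\overline{\zeta}^{-1})$, and the use of disjoint strata plus injectivity of the parametrisation for the self-adjointness criterion.
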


We now give another description of the ideal $ J_{\wrm, \zeta, z} $ which avoids reference to the localisation \eqref{eq:w-localisation}. As before, let $\wrm = (w_+, w_-) \in W \times W$, $k = \dim(\Ker(w_+ - w_-)) - |\Ical(\wrm)|$, $ \zeta = (\zeta_1, \dots, \zeta_k) \in (\Cbb^\times)^k$, and $z = (z_i)_{i \in \Ical(\wrm)} \in (\Cbb^\times)^{\Ical(\wrm)} $.
For each $ 1 \leq j \leq k $, write the vectors $ \lambda^{(j)} $ of the basis for $ \tilde{\Lcal}_{\text{red}} $ used in \eqref{eq:basis from the reduced part} in the form $ \lambda^{(j)} = \lambda^{(j)}_+ - \lambda^{(j)}_- $ where $ \lambda^{(j)}_\pm \in \Pbf^+ $ have disjoint support with respect to the standard basis $ \{\varpi_1, \dots, \varpi_N \} $ of $ \weights $. If we define 
\[
b_j(\zeta_j) = c^+ _{w_+ , \lambda^{(j)} _{+}} c^- _{w_- , \lambda^{(j)}_-} - \zeta_j c^+ _{w_+, \lambda^{(j)}_-} c^- _{w_- , \lambda^{(j)} _+} \in \Ocal(G_q), 
\]
then we have 
\[
b_j(\zeta_j) = c_{w_+, \lambda^{(j)}_-}^+ (a_j - \zeta_j) c_{w_-, \lambda^{(j)}_+}^-\in \Ocal(G_q)/I_\wrm.
\]
Using these elements, we obtain
\begin{align} \label{eq:primitive ideal of O(Gq)-alternative}
J_{\wrm, \zeta, z} = \Big\{ &f \in \Ocal(G_q) \;\Big|\, \exists c \in E_\wrm \text{ such that} \\
&\; cf \in \sum_{1 \leq j \leq k} \Ocal(G_q) b_j(\zeta_j) + \sum_{i \in \Ical(\wrm)} \Ocal(G_q)(c_{w_+, \varpi_i}^+ - z_i) + I_\wrm \Big\}. \nonumber
\end{align}
The following fact is then an easy consequence of Theorem~\ref{thm:the primitive Joseph stratum}.

\begin{prop}\label{prop:primitive ideals are maximal in each stratum}
Let $ \wrm = (w_+, w_-) \in W \times W $. If $ J, J' \in \Prim_\wrm(\Ocal(G_q)) $ and $ J \subseteq J' $, then $ J = J' $.
\end{prop}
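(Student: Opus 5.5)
The plan is to transport the inclusion $J \subseteq J'$ to the localisation $\Ocal_\wrm$ and then to its centre $Z_\wrm$. There Theorem~\ref{thm:the Joseph stratum in terms of Zw} identifies primitive ideals of the stratum with maximal ideals, and maximality forces equality.

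By Theorem~\ref{thm:the primitive Joseph stratum} we can write $J = \iota_\wrm(M)$ and $J' = \iota_\wrm(M')$ with $M, M' \in \Max(Z_\wrm)$. Explicitly,
\[
M = \sum_{j} Z_\wrm(a_j - \zeta_j) + \sum_{i \in \Ical(\wrm)} Z_\wrm(c^+_{w_+,\varpi_i} - z_i),
\]
and $M'$ is given by the same expression with parameters $(\zeta', z')$.

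The key step is to recover $M'$ from $J'$ via
\[
M' = Z_\wrm \cap \bigl( J'/I_\wrm \bigr)[E_\wrm^{-1}] = Z_\wrm \cap \Ocal_\wrm M'.
\]
The inclusion $\supseteq$ of the second equality is clear. For $\subseteq$, note that $\Ocal_\wrm M'$ is a proper ideal of $\Ocal_\wrm$. Indeed, $\iota_\wrm(M')$ is a prime ideal containing $I_\wrm$ and disjoint from $E_\wrm$ by Theorem~\ref{thm:strata of prime spectrum}, so it cannot contain $1$. Hence $Z_\wrm \cap \Ocal_\wrm M'$ is a proper ideal of $Z_\wrm$ containing the maximal ideal $M'$, and so it equals $M'$.

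The first equality follows from the description of $J'$ as $\{f \mid f + I_\wrm \in \Ocal_\wrm M'\}$, together with the Ore property of $E_\wrm$. Every element of $\Ocal_\wrm M'$ has the form $(f + I_\wrm)e^{-1}$ with $e \in E_\wrm$, and then $f \in J'$ because $\Ocal_\wrm M'$ is a two-sided ideal (central generators) and $e$ is invertible.

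Given this, $J \subseteq J'$ implies $\Ocal_\wrm M = (J/I_\wrm)[E_\wrm^{-1}] \subseteq (J'/I_\wrm)[E_\wrm^{-1}]$. Intersecting with $Z_\wrm$ gives $M \subseteq M'$, hence $M = M'$ by maximality, and so $J = J'$.

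A more concrete alternative uses the parameters directly. From $J \subseteq J'$, the central elements $a_j - \zeta_j$ and $c^+_{w_+,\varpi_i} - z_i$ lie in $\Ocal_\wrm M'$. Subtracting $a_j - \zeta'_j$ shows that $\zeta'_j - \zeta_j$ lies in a proper ideal, so $\zeta = \zeta'$, and similarly $z = z'$. Equality then follows from the injectivity statement in Theorem~\ref{thm:the primitive Joseph stratum}.

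The main obstacle is the properness of $\Ocal_\wrm M'$ and the compatibility of localisation with the ideals, i.e. that the map from ideals of $\Ocal(G_q)/I_\wrm$ disjoint from $E_\wrm$ to ideals of $\Ocal_\wrm$ behaves as expected. I would handle this by citing the standard Ore localisation facts in \cite[A.2]{Joseph}, together with the bijectivity in Theorem~\ref{thm:the Joseph stratum in terms of Zw}.
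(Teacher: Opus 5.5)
Your proof is correct, but your main route differs from the paper's.

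\textbf{The paper's route.} It works with the explicit parametrisation $J = J_{\wrm,\zeta,z}$, $J' = J_{\wrm,\zeta',z'}$ and never leaves $\Ocal(G_q)$. Since $b_j(\zeta_j) \in J \subseteq J'$ and $b_j(\zeta_j') \in J'$, the difference $-(\zeta_j - \zeta_j')\, c^{+}_{w_+,\lambda_-^{(j)}} c^{-}_{w_-,\lambda_+^{(j)}}$ lies in $J'$. If $\zeta_j \neq \zeta_j'$, this puts an element of $E_\wrm$ into $J'/I_\wrm$, contradicting that $J'$ is of type $\wrm$. The same argument gives $z = z'$.

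\textbf{Comparison with your alternative.} This is essentially your ``concrete alternative''. The paper uses $E_\wrm$-disjointness of $J'$ where you use properness of $\Ocal_\wrm M'$, and it avoids manipulating ideals in the localisation. In your version, the step that $a_j - \zeta_j \in \Ocal_\wrm M'$ relies on your identity $(J/I_\wrm)[E_\wrm^{-1}] = \Ocal_\wrm M$. Alternatively, you can get it from $b_j(\zeta_j) = c^+_{w_+,\lambda^{(j)}_-}(a_j - \zeta_j)c^-_{w_-,\lambda^{(j)}_+}$, since the outer factors are invertible in $\Ocal_\wrm$. Once $\zeta = \zeta'$ and $z = z'$, the equality $J = J'$ is immediate from the definition, so no injectivity statement is needed.

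\textbf{Your main argument.} You show that the inverse of $\iota_\wrm$ on maximal ideals is $J' \mapsto Z_\wrm \cap (J'/I_\wrm)[E_\wrm^{-1}]$, and that this map is monotone. Hence $\iota_\wrm$ reflects inclusions, and maximality of $M$ in $Z_\wrm$ forces $M = M'$. This uses only the bijection of Theorem~\ref{thm:the Joseph stratum in terms of Zw} and standard Ore localisation facts. It does not use Yakimov's explicit Laurent presentation of $Z_\wrm$ or the elements $b_j(\zeta_j)$. It is therefore more structural, and would apply whenever a stratum is parametrised by maximal ideals of the centre of such a localisation.

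The cost is the bookkeeping you identified: the identity $(J'/I_\wrm)[E_\wrm^{-1}] = \Ocal_\wrm M'$, via the right Ore form $x e^{-1}$ and two-sidedness from centrality of $M'$, and the fact $1 \notin \Ocal_\wrm M'$. You handle both correctly. The second holds simply because $\iota_\wrm(M')$ is a proper ideal; equivalently, $1 \in E_\wrm$ and $J'/I_\wrm$ is disjoint from $E_\wrm$.
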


\begin{proof}
Let $ k = \dim(\Ker(w_+ - w_-)) - |\Ical(\wrm)| $.
By Theorem~\ref{thm:the primitive Joseph stratum}, there exist $ \zeta, \zeta' \in (\Cbb^\times)^k $ and $ z, z' \in (\Cbb^\times)^{\Ical(\wrm)} $ such that
\[
J = J_{\wrm,\zeta,z} \quad \text{and} \quad J' = J_{\wrm,\zeta',z'}.
\]
Let $ 1 \leq j \leq k $. Since $ J \subseteq J' $, we have
\[
-(\zeta_j - \zeta_j') \, c^{+}_{w_+,\lambda_-^{(j)}} \, c^{-}_{w_-,\lambda_+^{(j)}} = b_j(\zeta_j) - b_j(\zeta_j')
\in J'.
\]
If $\zeta_j - \zeta_j' \neq 0$, then this implies $c^{+}_{w_+,\lambda_-^{(j)}} \, c^{-}_{w_-,\lambda_+^{(j)}} \in J'/I_\wrm$. Consequently,
\[
c^{+}_{w_+,\lambda_-^{(j)}} \, c^{-}_{w_-,\lambda_+^{(j)}} \in (J'/I_\wrm) \cap E_\wrm,
\]
contradicting the fact that $(J'/I_\wrm) \cap E_\wrm = \emptyset$. Therefore, $\zeta_j = \zeta_j'$ for every $1 \leq j \leq k$, and hence $\zeta = \zeta'$. A similar argument shows that $z = z'$, from which we conclude that $J = J'$.
\end{proof}

Finally, note that if $ J $ is a primitive ideal, then so is $ K_{\omega} \triangleright J \triangleleft K_{\chi} $ for any $ \omega, \chi \in \hf^*/i \hbar^{-1} \roots^\vee \cong H $. Explicitly, combining \eqref{eq:torus action} with \eqref{eq:primitive ideal of O(Gq)-alternative}, we obtain
\begin{equation*}
K_{\omega} \triangleright J_{\wrm, \zeta, z} \triangleleft K_{\chi} = J_{\wrm, (\omega , \chi) \cdot \zeta , (\omega, \chi) \cdot z}, \quad \wrm = (w_+, w_-) \in W \times W,
\end{equation*}
where $ k = \dim \Ker(w_+ - w_-) - |\Ical(\wrm)| $, and for any $ \zeta = (\zeta_j)_{1 \leq j \leq k} \in (\Cbb^\times)^k $ and $ z = (z_i)_{i \in \Ical(\wrm)} \in (\Cbb^\times)^{\Ical(\wrm)} $, the expressions $ (\omega, \chi) \cdot \zeta $ and $ (\omega, \chi) \cdot z $ are given by
\begin{equation}\label{eq:torus action on the second parameter}
(\omega, \chi) \cdot \zeta = \big( q^{-2(\omega, \lambda^{(j)}) -2 (\chi, w_+(\lambda^{(j)}) )} \zeta_j \big)_{1 \leq j \leq k} \in (\Cbb^\times)^k
\end{equation}
and
\begin{equation}\label{eq:torus action on the third parameter}
(\omega, \chi) \cdot z = \big( q^{-(\omega + \chi, \varpi_i)} z_i \big)_{i \in \Ical(\wrm)} \in (\Cbb^\times)^{\Ical(\wrm)},
\end{equation}
respectively. Note that the factor $ 2 $ in the exponent of \eqref{eq:torus action on the second parameter} arises from the fact that the element $ a_j $ in \eqref{eq:basis from the reduced part} is quadratic in the matrix coefficients $ (c^\pm_{w_\pm, \lambda^{(j)}})^{\pm 1} $. 

In particular, the strata $ \Prim_\wrm(\Ocal(G_q)) $ for $ \wrm \in W \times W $ are closed under the canonical left and right actions of $ H $ on $ \Prim(\Ocal(G_q)) $, and these actions are transitive on each stratum.    

\section{Irreducible representations and the primitive spectrum} \label{sec:Irreducible representations and primitive spectrum}

Our aim in this section is to investigate the primitive spectrum \(\Prim(\Ocal(G_q)) \) with respect to Hilbert space representations. Specifically, we show that all primitive ideals in homogeneous Joseph strata can be obtained as kernels of topologically irreducible bounded representations of $ \Ocal(G_q) $ on Hilbert spaces, whereas primitive ideals in inhomogeneous Joseph strata do not admit any such realisation.

\subsection{Homogeneous strata}

We first discuss how to represent any primitive ideal contained in 
\[
\coprod_{w \in W} \Prim_{(w,w)}\big( \Ocal(G_q)\big)
\]
as the kernel of a topologically irreducible bounded representation of $ \Ocal(G_q) $ on some Hilbert space.

This builds on the foundational work of Vaksman-Soibelman and Soibelman \cite{VAKSMAN_SOIBELMAN_functionsquantumsu2}, \cite{SOIBELMAN_functionscompactquantumgroups} classifying all  topologically irreducible $ * $-representations of $ \Cf^\infty(K_q) $, compare \cite{KorogodskiSoibelman}, which we shall briefly review. Recall that $ \Cf^\infty(SU_q(2)) $ is the universal $ * $-algebra generated by two matrix elements $ u_{11}^\varpi $ and $ u_{21}^\varpi $ of the fundamental representation $ \pi_\varpi $ of $ U_q(\mathfrak{sl}(2,\Cbb)) $, subject to the relation
\[
u^\varpi =
\begin{pmatrix}
u_{11}^\varpi & -q {u_{21}^\varpi}^* \\
u_{21}^\varpi & {u_{11}^\varpi}^*
\end{pmatrix}
\text{ is a unitary matrix in }
M_2(\Cf^\infty(SU_q(2))).
\]
One readily verifies that the operators on $ \ell^2(\Nbb) $ defined by 
\begin{equation} \label{eq:infinite-dimensional representation of C( SUq(2))}
\varphi_q(u_{11}^\varpi) e_k = \sqrt{1 - q^{2k}}\, e_{k-1}, \quad \varphi_q(u_{21}^\varpi) e_k = q^k e_k, \quad k \in \Nbb,
\end{equation}
where $\{ e_0, e_1, e_2, \ldots \}$ is the standard orthonormal basis of $ \ell^2(\Nbb) $ and $e_{-1} = 0$, determine a unital $ * $-representation $ \varphi_q : \Cf^\infty(SU_q(2)) \rightarrow \Lbb(\ell^2(\Nbb)) $.

Fix $ 1 \leq i \leq N $ and let $ U_{q_i}^\Rbb(\kf_i) \subseteq U_q^\Rbb(\kf) $ denote the $*$-subalgebra generated by $E_i$, $F_i$, and $K_i^{\pm1}$. This gives rise to an embedding
\[
\tilde{U}_{q_i}^\Rbb(\mathfrak{su}(2)) \cong U_{q_i}^\Rbb(\kf_i) \hookrightarrow U_q^\Rbb(\kf)
\]
of Hopf $ * $-algebras, where $ \tilde{U}_{q_i}^\Rbb(\mathfrak{su}(2)) \subseteq U_{q_i}^\Rbb(\mathfrak{su}(2)) $ is the $ * $-subalgebra generated by $ E, F $, and $ K_\varpi^{\pm2} $. The transpose of this embedding defines a Hopf $ * $-algebra homomorphism $ \sigma_i : \Cf^\infty(K_q) \rightarrow \Cf^\infty(SU_{q_i}(2)) $. Consider the $ * $-representation
\begin{equation*}
\pi_{s_i} = \varphi_{q_i} \sigma_i : \Cf^\infty(K_q) \rightarrow \Lbb(\ell^2(\Nbb)).
\end{equation*}
Next fix $ w \in W $, and choose a reduced expression $ w = s_{i_1} \cdots s_{i_l} $. Then the $ * $-representation
\begin{equation*}
\pi_w = \pi_{s_{i_1}} \otimes \cdots \otimes \pi_{s_{i_l}} : \Cf^\infty(K_q) \rightarrow \Lbb\big(\ell^2(\Nbb) \otimes \cdots \otimes \ell^2(\Nbb)\big) \cong \Lbb(\ell^2(\Nbb^l)),
\end{equation*}
is independent of the choice of reduced expression up to unitary equivalence. 

For every $ \zeta \in \hf^*/i \hbar^{-1} \Qbf^\vee \cong H $, we then obtain a bounded representation $ \pi_w \otimes K_\zeta: \Ocal(G_q) \rightarrow \Lbb(\ell^2(\Nbb^l)) $ by taking the tensor product of $ \pi_w $ with the character  $ K_\zeta $.  
Note that the map $ K_\zeta: \Ocal(G_q) \rightarrow \Cbb $ is a $ * $-homomorphism if and only if $ \zeta \in \tfrak^*/i \hbar^{-1} \Qbf^\vee \cong T $, 
since
\begin{equation} \label{eq:when Kzeta preserves involution}
(K_\zeta, f^*) = \overline{(\hat{S}^{-1}(K_\zeta)^*, f)} = \overline{(K_{\overline{\zeta}}, f)}, \quad f \in \Cf^\infty(K_q).
\end{equation}
Let us now recall the following key result from  \cite[Section~3.4--3.6]{KorogodskiSoibelman}. 

\begin{thm} \label{thm:irreducible *-representations of C(Kq)}
Let $ \SSp(\Cf^\infty(K_q)) $ denote the set of equivalence classes of topologically irreducible $ * $-representations of $ \Cf^\infty(K_q) $. Then there are canonical bijections
\[
\SSp(\Cf^\infty(K_q)) \cong \{ \pi_w \otimes K_\zeta \mid w \in W,\, \zeta \in \tfrak^*/i \hbar^{-1} \Qbf^\vee \cong T \} \cong W \times T.
\]
Moreover, if  $ w = s_{i_1} \cdots s_{i_l} \in W $ is a reduced expression and $ \zeta \in \tfrak^*/i \hbar^{-1} \Qbf^\vee $, then the dense $(\pi_w \otimes K_\zeta)(\Cf^\infty(K_q))$-invariant subspace
\begin{equation} \label{eq:the dense simple module}
D_w = \{ x \in \ell^2(\Nbb^l) \mid x \textup{ has finite support} \} \subseteq \ell^2 (\Nbb^l) = \Hcal_{\pi_w}
\end{equation}
carries an irreducible representation of $ \Ocal(G_q) $ under the restricted action.
Hence $ \Ker(\pi_w \otimes K_\zeta) $ is primitive, and in fact
\[
\Ker(\pi_w \otimes K_\zeta) \in \Prim_{(w,w)} ( \Cf^\infty(K_q))
\]
for all $ w \in W $ and $ \zeta \in \tfrak^*/i \hbar^{-1} \Qbf^\vee $. 
\end{thm}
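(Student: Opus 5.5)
The classification statement $\SSp(\Cf^\infty(K_q)) \cong W \times T$ is the deep part of the result, and for it I would simply rely on \cite[Section~3.4--3.6]{KorogodskiSoibelman}. The plan below addresses the algebraic assertions, which are what the rest of the paper actually uses:
\begin{itemize}
\item $D_w$ is an invariant subspace carrying an irreducible algebraic representation;
\item the kernel of $\pi_w \otimes K_\zeta$ lies in $\Prim_{(w,w)}(\Cf^\infty(K_q))$.
\end{itemize}
Throughout, I fix a reduced expression $w = s_{i_1}\cdots s_{i_l}$ and argue by induction on $l$. The induction uses $\pi_w = \pi_{w'} \otimes \pi_{s_{i_l}}$ with $w' = s_{i_1}\cdots s_{i_{l-1}}$, together with $\Delta(c^\mu_{\xi,\eta}) = \sum_k c^\mu_{\xi,e_k^\mu}\otimes c^\mu_{e_k^\mu,\eta}$.

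\emph{Invariance of $D_w$.} By \eqref{eq:infinite-dimensional representation of C( SUq(2))}, the generators of $\Cf^\infty(SU_q(2))$ act on $\ell^2(\Nbb)$ as weighted shifts or diagonal operators. Hence every element of $\varphi_q(\Cf^\infty(SU_q(2)))$ maps finitely supported vectors to finitely supported vectors. Through $\sigma_i$ and the coproduct, the same holds for $\pi_w$. Twisting by the character $K_\zeta$ only rescales each weight-homogeneous element, so $D_w$ is invariant under $\pi_w\otimes K_\zeta$.

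\emph{Extremal coefficients act diagonally.} The key computation is again by induction on $l$. Let $\mu \in \weights^+$.
\begin{itemize}
\item For $\xi \perp V_w^+(\mu)$ one has $\pi_w(c^\mu_{\xi,\mu}) = 0$.
\item The operator $\pi_w(c^+_{w,\mu})$ is diagonal in the standard basis $e_{k_1}\otimes\cdots\otimes e_{k_l}$, with eigenvalues of the form (nonzero constant)$\cdot q^{\sum_r k_r (\mu,\beta_r)}$. Here $\beta_r = s_{i_l}\cdots s_{i_{r+1}}(\alpha_{i_r}^\vee)$, up to normalisation.
\end{itemize}
For $l=1$ this is the explicit $SU_q(2)$ computation: $\sigma_i(c^\mu_{\eta,\mu})$ vanishes unless $\eta$ lies in the $U_{q_i}(\mathfrak{sl}_2)$-submodule generated by the highest weight vector, and $\sigma_i(c^+_{s_i,\mu})$ is a power of $u^\varpi_{21}$. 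The inductive step expands the coproduct. Only the extremal term survives, because $V^+_{w'}(\mu)$ contains the relevant Demazure pieces and $w = w's_{i_l}$ is reduced. Taking adjoints gives the analogous statement for $c^-_{w,\mu}$.

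Since $\beta_1,\dots,\beta_l$ are distinct positive coroots, the joint eigenvalues of $\{\pi_w(c^+_{w,\mu}) \mid \mu\in\weights^+\}$ separate the basis vectors. Linear independence of the $\beta_r$ is not available, so I would choose $\mu$ generic with respect to the finitely many relevant exponent vectors.

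\emph{Irreducibility of $D_w$.} Let $0\neq x\in D_w$. Since $x$ has finite support, suitable polynomials in the commuting diagonal operators $\pi_w(c^+_{w,\mu})$ (Lagrange interpolation) project $x$ onto a single basis vector. Iterated images of elements acting as weighted shifts $e_k \mapsto \sqrt{1-q^{2k}}\,e_{k-1}$ in each tensor factor then bring this vector to the vacuum $e_0\otimes\cdots\otimes e_0$. Because $\pi_w$ is a $*$-representation, the image algebra is $*$-closed, so the adjoint shifts are also available and generate all basis vectors from the vacuum. Hence every nonzero vector is cyclic in $D_w$, and $D_w$ is irreducible. This step and the preceding computation are where I expect the main difficulty: one has to realise the individual tensor-factor shifts from matrix coefficients, which I would again do by induction on $l$ using the coproduct.

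\emph{Kernel and stratum.} Bounded operators vanish on the dense subspace $D_w$ exactly when they vanish on $\ell^2(\Nbb^l)$, so $\Ker(\pi_w\otimes K_\zeta)$ equals the kernel of the irreducible representation on $D_w$. It is therefore primitive, and in particular prime, hence completely prime by \cite[Theorem 11.4]{Joseph1994}.
\begin{itemize}
\item The vanishing result for $\xi\perp V_w^+(\mu)$ shows $I_w^+$ lies in the kernel. Applying the involution and \eqref{eq:Demazure ideals conjugate} gives the same for $I_w^-$, hence $I_{(w,w)}\subseteq \Ker$.
\item The diagonal operators $\pi_w(c^\pm_{w,\mu})$ have nonzero eigenvalues, so each is injective, and so is every product $\pi_w(c^+_{w,\lambda}c^-_{w,\mu})$. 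After the $K_\zeta$-twist these are still nonzero multiples of injective operators. Thus $E_{(w,w)}$ is disjoint from the kernel modulo $I_{(w,w)}$.
\end{itemize}
The uniqueness in Theorem~\ref{thm:strata of prime spectrum} then places the kernel in $\Prim_{(w,w)}$.
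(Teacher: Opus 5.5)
The paper gives no proof of this statement; it quotes the whole result from \cite{KorogodskiSoibelman}. Your arguments for the following steps are sound:
\begin{itemize}
\item the invariance of $D_w$;
\item the passage from an irreducible dense submodule to primitivity of the kernel;
\item the location of the kernel in the stratum (containment of $I_{(w,w)}$, disjointness from $E_{(w,w)}$, and uniqueness in Theorem~\ref{thm:strata of prime spectrum}).
\end{itemize}

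\textbf{The gap: irreducibility of $D_w$.} By \eqref{eq:w-multiplicative elements under tensor product decomposition}, $\pi_w(c^+_{w,\mu})$ acts on $e_{k_1}\otimes\cdots\otimes e_{k_l}$ by $\prod_r q_{i_r}^{k_r p_r^\mu}=q^{(\mu,\sum_r k_r\gamma_r)}$, where $\gamma_r=s_{i_l}\cdots s_{i_{r+1}}(\alpha_{i_r})$. So the joint eigenvalue depends on $(k_1,\dots,k_l)$ only through $\sum_r k_r\gamma_r$. Distinctness of the $\gamma_r$ is irrelevant, and no choice of $\mu$ helps once they are linearly dependent. For example, take $w_0=s_1s_2s_1$ in type $A_2$. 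Then $(\gamma_1,\gamma_2,\gamma_3)=(\alpha_2,\alpha_1+\alpha_2,\alpha_1)$, so $e_1\otimes e_0\otimes e_1$ and $e_0\otimes e_1\otimes e_0$ have the same eigenvalue $q^{(\mu,\alpha_1+\alpha_2)}$ for every $\mu$. Neither $\pi_w(c^-_{w,\mu})=\pi_w(c^+_{w,\mu})^*$ nor the twist by $K_\zeta$ adds any information. Interpolation therefore only projects onto joint eigenspaces, which are in general several-dimensional. Your next step, producing individual tensor-factor shifts inside $\pi_w(\Ocal(G_q))$, is exactly what you leave open, and it is where real work would be needed. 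As written, the cyclicity argument does not go through.

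\textbf{A repair using what you already cite.} The classification you take from \cite{KorogodskiSoibelman} includes topological irreducibility of $\pi_w$. Since $(\pi_w\otimes K_\zeta)(f)=\pi_w(K_\zeta\triangleright f)$, the representations $\pi_w\otimes K_\zeta$ and $\pi_w$ have the same image. Let $A=\pi_w(c^+_{w,\rho})$. Because $(\rho,\gamma)>0$ for every positive root $\gamma$, each eigenspace $\Hcal_t$ of $A$ is finite-dimensional and spanned by standard basis vectors, and $D_w$ is the algebraic direct sum of the $\Hcal_t$. Now let $0\neq M\subseteq D_w$ be invariant and let $P_t$ be the orthogonal projection onto $\Hcal_t$. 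For $x\in M$, a polynomial in $A$ chosen according to the finitely many eigenvalues occurring in $x$ gives $P_tx\in M$, so $P_t(M)\subseteq M\cap\Hcal_t$. The closure $\overline{M}$ is a nonzero closed invariant subspace, so $\overline{M}=\ell^2(\Nbb^l)$. By continuity of $P_t$ and finite-dimensionality of $M\cap\Hcal_t$, we get $\Hcal_t=P_t(\overline{M})\subseteq M\cap\Hcal_t$ for all $t$, hence $M=D_w$.

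\textbf{A smaller point.} Your induction for $\pi_w(I^+_w)=0$ and for the diagonal form peels off $s_{i_l}$. This produces coefficients $c^\mu_{\xi,\eta}$ with $\eta=F_{i_l}^m e^\mu_1$ not of highest weight, which your inductive hypothesis does not cover. Peel off $s_{i_1}$ instead, and use that when $\ell(s_iv)>\ell(v)$, the module $V^+_{s_iv}(\mu)$ contains $V^+_v(\mu)$ and is stable under $E_i,F_i,K_i^{\pm1}$; then the induction closes. Alternatively, cite \cite[(6.2.4)]{KorogodskiSoibelman} as the paper does.
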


As a direct consequence of Theorem \ref{thm:irreducible *-representations of C(Kq)}, one obtains that the bounded representations $ \pi_w \otimes K_\omega: \Ocal(G_q) \rightarrow \Lbb(\Hcal_{\pi_w}) $ for any $ \omega \in \hf^*/i \hbar^{-1} \Qbf^\vee \cong H $ are topologically irreducible, and that they restrict to irreducible representations on $ D_w $. Therefore we arrive at the following description of homogeneous Joseph strata. 

\begin{thm} \label{thm:primitive ideals in homogeneous strata}
Let $ w \in W $. Then
\begin{equation}\label{eq:primitive ideals in homogeneous strata}
\Prim_{(w,w)}(\Ocal(G_q))
= \bigl\{\Ker(\pi_w \otimes K_\omega) \;\big|\; \omega \in \hf^*/i\hbar^{-1}\Qbf^\vee \cong H \bigr\}
\end{equation}
and
\begin{equation} \label{eq:degeneracy of primitive ideals}
\Ker(\pi_w \otimes K_\omega) = \Ker(\pi_w \otimes K_{\omega'}) \Longleftrightarrow \omega - \omega' \in \bigg( \frac{i}{2} \hbar^{-1} \sum_{j \in \Scal (w)} \Zbb \alpha_j ^\vee \bigg) \bigg/ i \hbar^{-1} \Qbf^\vee.
\end{equation}
In particular, if $0 \neq \omega - \omega' \in \weights $, then
\[
\Ker(\pi_w \otimes K_\omega ) \neq \Ker (\pi_w \otimes K_{\omega'}).
\]
\end{thm}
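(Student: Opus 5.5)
Fix $w \in W$ with reduced expression $w = s_{i_1}\cdots s_{i_l}$. The plan is to first show that every kernel $\Ker(\pi_w \otimes K_\omega)$, $\omega \in H$, lies in $\Prim_{(w,w)}(\Ocal(G_q))$. Then we use the transitivity of the torus action on strata to get the reverse inclusion, and finally compute the stabiliser to obtain \eqref{eq:degeneracy of primitive ideals}.

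\textbf{Step 1: the kernels lie in the stratum.} The representation $\pi_w\otimes K_\omega$ is the composition $(\pi_w \otimes \epsilon)\circ(\id\otimes K_\omega)\circ\Delta$, that is, $f\mapsto \pi_w(f\triangleleft K_\omega)$. Hence
\[
\Ker(\pi_w\otimes K_\omega) = \Ker(\pi_w)\triangleleft K_{-\omega}.
\]
Here we write $K_\zeta$ for $\pi_w\otimes K_\zeta$ with $\zeta=0$, which is $\pi_w$ itself. By Theorem~\ref{thm:irreducible *-representations of C(Kq)}, $\Ker(\pi_w)\in\Prim_{(w,w)}(\Ocal(G_q))$. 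Since the right $H$-action by $K_{-\omega}$ is an algebra automorphism preserving each stratum (the formula preceding Section~\ref{sec:Irreducible representations and primitive spectrum}), the kernel $\Ker(\pi_w\otimes K_\omega)$ again lies in $\Prim_{(w,w)}$. It is also the kernel of the irreducible module $D_w$ twisted by that automorphism, so it is primitive.

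\textbf{Step 2: surjectivity.} The stratum $\Prim_{(w,w)}$ is parametrised by $(\zeta,z)\in(\Cbb^\times)^k\times(\Cbb^\times)^{\Ical(w,w)}$. By \eqref{eq:torus action on the second parameter} and \eqref{eq:torus action on the third parameter}, already the right action alone, $\chi\mapsto J\triangleleft K_\chi$, multiplies the parameters by
\[
q^{-2(\chi,w(\lambda^{(j)}))} \quad\text{and}\quad q^{-(\chi,\varpi_i)}.
\]
Since the weights $w(\lambda^{(j)})$ and $\varpi_i$, $i\in\Ical$, are linearly independent, letting $\chi$ range over $\hf^*$ gives a transitive action on $(\Cbb^\times)^{k+|\Ical|}$. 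Hence every element of the stratum has the form $\Ker(\pi_w)\triangleleft K_\chi=\Ker(\pi_w\otimes K_{-\chi})$.

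\textbf{Step 3: the degeneracy.} By the injectivity part of Theorem~\ref{thm:the primitive Joseph stratum}, the kernels for $\omega$ and $\omega'$ coincide if and only if $\chi=\omega-\omega'$ fixes the parameters. This means
\[
q^{-2(\chi,w\lambda^{(j)})}=1 \text{ for all } j, \qquad q^{-(\chi,\varpi_i)}=1 \text{ for } i\in\Ical(w).
\]
Since $w=w_+=w_-$, we have $\Ker(w_+-w_-)=\hf^*$, so $k=|\Scal(w)|$ and we may take $\lambda^{(j)}=\varpi_j$ for $j\in\Scal(w)$. Writing $q=e^{h}$ and $\hbar=h/2\pi$, the conditions become
\[
(\chi,w\varpi_j)\in \tfrac{i}{2}\hbar^{-1}\Zbb \text{ for } j\in\Scal(w), \qquad (\chi,\varpi_i)\in i\hbar^{-1}\Zbb \text{ for } i\in\Ical(w).
\]
The main obstacle is to match this lattice with $\frac{i}{2}\hbar^{-1}\sum_{j\in\Scal(w)}\Zbb\alpha_j^\vee$ modulo $i\hbar^{-1}\Qbf^\vee$, because of the twist by $w$. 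I would first check carefully whether the action formula pairs $\chi$ with $w(\lambda^{(j)})$ or with $\lambda^{(j)}$ in the relevant convention. If the twist by $w$ genuinely appears, I would use the left action instead of the right to get the untwisted pairing, since any $\chi$ fixing the ideal gives the same kernel. With the untwisted pairing, and using that $\{\varpi_i\}$ and $\{\alpha_i^\vee\}$ are dual bases, the conditions read: the coefficient of $\alpha_j^\vee$ in $\chi$ lies in $\frac{i}{2}\hbar^{-1}\Zbb$ for $j\in\Scal(w)$, and in $i\hbar^{-1}\Zbb$ for $i\in\Ical(w)$. This is exactly the stated lattice.

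\textbf{The final claim.} If $0\neq\omega-\omega'\in\weights$, write $\omega-\omega'=\sum_j m_j\alpha_j^\vee$ with real coefficients. Membership in the lattice above would force each real $m_j$ to lie in the purely imaginary set $\frac i2\hbar^{-1}\Zbb$, hence $m_j=0$ for all $j$, contradicting $\omega\neq\omega'$.
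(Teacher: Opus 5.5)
Your route is the paper's: write $\Ker(\pi_w\otimes K_\omega)$ as a torus translate of $\Ker(\pi_w)\in\Prim_{(w,w)}(\Ocal(G_q))$, use transitivity of the torus action on the stratum, and read off the stabiliser from the parameter formulas with $\lambda^{(j)}=\varpi_j$, $j\in\Scal(w)$. However, Step 1 puts the torus on the wrong side. With the paper's convention $X\triangleright f=f_{(1)}(X,f_{(2)})$, your composite $(\id\otimes K_\omega)\circ\Delta$ is $f\mapsto K_\omega\triangleright f$, not $f\triangleleft K_\omega$. Hence
\[
\Ker(\pi_w\otimes K_\omega)=K_{-\omega}\triangleright\Ker(\pi_w)
\]
is a \emph{left} translate. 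The identity $\Ker(\pi_w)\triangleleft K_\chi=\Ker(\pi_w\otimes K_{-\chi})$ that you use in Steps 2 and 3 is false in general. Comparing the two parameter formulas gives instead $\Ker(\pi_w)\triangleleft K_\chi=K_{w^{-1}\chi}\triangleright\Ker(\pi_w)=\Ker(\pi_w\otimes K_{-w^{-1}\chi})$. This is exactly where your ``twist by $w$'' in Step 3 comes from. Your proposed remedy, switching to the left action ``since any $\chi$ fixing the ideal gives the same kernel'', is not a valid argument. The left and right stabilisers of a point are a priori different subgroups, and you must use the action that actually implements $\omega\mapsto\Ker(\pi_w\otimes K_\omega)$.

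Once Step 1 is corrected, Step 2 runs with the left action, which is transitive because $\varpi_1,\dots,\varpi_N$ is a basis. Step 3 then gives directly $(\omega-\omega',\varpi_j)\in\frac{i}{2}\hbar^{-1}\Zbb$ for $j\in\Scal(w)$ and $(\omega-\omega',\varpi_j)\in i\hbar^{-1}\Zbb$ for $j\in\Ical(w)$. This is precisely the paper's proof, and the ``obstacle'' disappears.

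As it happens, your twisted condition cuts out the same subgroup. The lattice $\frac{i}{2}\hbar^{-1}\sum_{j\in\Scal(w)}\Zbb\alpha_j^\vee+i\hbar^{-1}\Qbf^\vee$ is $w$-stable: $w$ preserves $\Qbf^\vee$, and $(wx,\varpi_i)=(x,w^{-1}\varpi_i)=(x,\varpi_i)$ for $i\in\Ical(w)$. So your final answer was right, but for a reason you did not supply, and this does not repair the false identity used in Step 2. Your argument for the last claim, that a nonzero weight cannot lie in a purely imaginary lattice, is fine.
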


\begin{proof}
Observe that we have 
$ (\pi_w \otimes K_\omega)(f) = \pi_w \bigl( K_\omega \triangleright f \bigr) $ for all $ f \in \Ocal(G_q) $. 
Hence
\begin{equation}\label{eq:torus action on the kernel level}
\Ker(\pi_w \otimes K_\omega)
= K_{-\omega} \triangleright \Ker(\pi_w)
\in \Prim_{(w,w)}(\Ocal(G_q)).
\end{equation}
Equation~\eqref{eq:primitive ideals in homogeneous strata} now follows from the fact that the left $ H $-action on $ \Prim_{(w,w)}(\Ocal(G_q)) $ is transitive.

In order to prove \eqref{eq:degeneracy of primitive ideals}, note that since both $ \Ker(\pi_w \otimes K_\omega) $ and $ \Ker(\pi_w \otimes K_{\omega'})$ are contained in the homogeneous stratum $ \Prim_{(w,w)} (\Ocal(G_q)) $ by Theorem~\ref{thm:irreducible *-representations of C(Kq)}, we have
\[
\tilde{\Lcal}_{\operatorname{red}} (w,w) = \Ker (w -w) \cap \weights_{\Scal(w,w)} = \weights_{\Scal(w)}, 
\]
compare \eqref{defltilde}. In the setting of Theorem~\ref{thm:the primitive Joseph stratum}, we may therefore take the set $ \{\varpi_j \mid j \in \Scal(w) \} $ as basis of $ \tilde{\Lcal}_{\operatorname{red}}(w,w) $, and use $ (\Cbb^\times)^{\Scal(w)} $ in place of $ (\Cbb^\times)^k $ as parameter set, where $ k = \dim(\Ker(w-w)) - | \Ical(w)| $. Now, by \eqref{eq:torus action on the second parameter}--\eqref{eq:torus action on the third parameter} and \eqref{eq:torus action on the kernel level}, we obtain
\begin{align*}
\Ker(\pi_w \otimes K_\omega) = \Ker(\pi_w \otimes K_{\omega'}) \Longleftrightarrow &\;(\omega - \omega' , \varpi_j) \in \frac{i}{2} \hbar^{-1} \Zbb \text{ for all } j \in \Scal(w) \text{ and} \\ &(\omega - \omega', \varpi_j) \in i \hbar^{-1} \Zbb \text{ for all } j \in \Ical(w) \\
\Longleftrightarrow & \;\omega - \omega' \in \bigg( \frac{i}{2} \hbar^{-1} \sum_{j \in \Scal (w)} \Zbb \alpha_j ^\vee \bigg) \bigg/ i \hbar^{-1} \Qbf^\vee
\end{align*}
as claimed. 
\end{proof}

This completes our objective of associating a topologically  irreducible bounded representation of $ \Ocal(G_q) $ to each primitive ideal in the homogeneous strata. 

Let us record the following consequence of Theorem \ref{thm:irreducible *-representations of C(Kq)} at the level of representations. Recall the definition of $ D_w $ and $ \Hcal_{\pi_w} $ from \eqref{eq:the dense simple module}. 

\begin{prop} \label{prop:degeneracy of representations}
Let $ w \in W $. The representations $ \pi_w \otimes K_\omega $ for $ \omega \in \hf^*/i\hbar^{-1}\Qbf^\vee \cong H $
are mutually inequivalent, both when viewed as representations on the domain $ D_w $, and as bounded representations on its Hilbert space completion $ \Hcal_{\pi_w} $. 
\end{prop}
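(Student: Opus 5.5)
The strategy is to find a single element of $\Ocal(G_q)$ whose image under $\pi_w \otimes K_\omega$ has a spectral invariant that depends injectively on $\omega \in H$. Kernels cannot do this: by \eqref{eq:degeneracy of primitive ideals} they do not separate all $\omega$. A natural candidate is the $w$-multiplicative element $c^+_{w,\mu} = u^\mu_{i_{w(\mu)}1}$ for $\mu \in \weights^+$.

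First I note that $(\pi_w \otimes K_\omega)(f) = \pi_w(K_\omega \triangleright f)$. Since $c^+_{w,\mu}$ has right weight $\mu$, \eqref{eq:torus action} gives
\[
(\pi_w \otimes K_\omega)(c^+_{w,\mu}) = q^{(\omega,\mu)}\, \pi_w(c^+_{w,\mu}).
\]
It therefore suffices to understand the eigenvalues of $T_\mu = \pi_w(c^+_{w,\mu})$.

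The key step, which I expect to be the main obstacle, is the following claim. With respect to the standard basis $e_{k_1} \otimes \cdots \otimes e_{k_l}$ of $\ell^2(\Nbb^l)$, coming from a reduced expression $w = s_{i_1}\cdots s_{i_l}$, the operator $T_\mu$ is diagonal. Its eigenvalue on $e_0 \otimes \cdots \otimes e_0$ is a nonzero number $t_\mu$. All other eigenvalues have modulus strictly less than $|t_\mu|$, at least for $\mu = \rho$ or for each fundamental weight.

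For $l=1$ I would compute directly from \eqref{eq:infinite-dimensional representation of C( SUq(2))}. Here $\sigma_i(c^+_{s_i,\mu})$ is, up to a nonzero scalar, $(u^\varpi_{21})^{(\mu,\alpha_i^\vee)}$, which acts as $q_i^{k(\mu,\alpha_i^\vee)}$ on $e_k$.

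For general $l$ I would argue by induction on $l$ using $\Delta(c^+_{w,\mu}) = \sum_k c^\mu_{w(\mu),\epsilon_k} \otimes u^\mu_{k1}$. Write $w = w' s_{i_l}$. Then $\pi_{s_{i_l}}(u^\mu_{k1})$ vanishes unless $\epsilon_k \in \{\mu, s_{i_l}\mu\}$. This is the standard Soibelman argument, using that the representation of $\Cf^\infty(SU_{q_i}(2))$ only sees the $\alpha_i$-string through the highest weight. Moreover, $\pi_{w'}(c^\mu_{w(\mu),\mu})$ vanishes because $w(\mu) \not\leq w'(\mu)$ in the Demazure sense, compare \eqref{eq:Bruhat order and Demazure module}. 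Hence only the term $c^+_{w',s_{i_l}\mu} \otimes c^+_{s_{i_l},\mu}$ survives, up to the normalisations \eqref{eq:w-weight vectors}. The inductive hypothesis then gives a tensor product of diagonal operators with the stated properties. If strict domination of the top eigenvalue fails for a particular $\mu$, I would instead use $\mu = \rho$, for which every exponent $(\cdot,\alpha_{i_j}^\vee)$ is positive.

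Granting the claim, suppose $T$ is an invertible intertwiner between $\pi_w \otimes K_\omega$ and $\pi_w \otimes K_{\omega'}$. It may be a linear bijection of $D_w$ or a bounded invertible operator on $\Hcal_{\pi_w}$. In either case $q^{(\omega,\mu)} T_\mu$ and $q^{(\omega',\mu)} T_\mu$ are similar. Their eigenvectors lie in $D_w$, since the operators are diagonal there, so the two operators have the same sets of eigenvalues. Comparing the eigenvalue of maximal modulus, which is unique by the claim, gives $q^{(\omega,\mu)} t_\mu = q^{(\omega',\mu)} t_\mu$, that is, $q^{(\omega-\omega',\mu)} = 1$. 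Applying this for all fundamental weights (or $\mu$ ranging over $\weights^+$) yields $(\omega - \omega', \varpi_j) \in i\hbar^{-1}\Zbb$ for all $j$. Hence $\omega - \omega' \in i\hbar^{-1}\Qbf^\vee$, so $\omega = \omega'$ in $H$.
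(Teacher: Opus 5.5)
Your reduction is the same as the paper's. You use $(\pi_w \otimes K_\omega)(c^+_{w,\mu}) = q^{(\omega,\mu)}\pi_w(c^+_{w,\mu})$, the diagonal form of $\pi_w(c^+_{w,\mu})$ in the standard basis, and a comparison of the eigenvalue of maximal modulus under an invertible intertwiner. The paper does not prove the diagonal form. It quotes \cite[Section~3, (6.2.4)]{KorogodskiSoibelman}, namely $\pi_w(c^+_{w,\mu}) = \varphi_{q_{i_1}}(u^\varpi_{21})^{p^\mu_1} \otimes \cdots \otimes \varphi_{q_{i_l}}(u^\varpi_{21})^{p^\mu_l}$ with $p^\mu_j = (\mu, s_{i_l}\cdots s_{i_{j+1}}(\alpha^\vee_{i_j})) \in \Nbb$.

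The induction you sketch for this formula fails at the step you flagged. When you peel off $s_{i_l}$ on the right, it is not true that $\pi_{s_{i_l}}(u^\mu_{k1}) = 0$ unless $\epsilon_k \in \{\mu, s_{i_l}\mu\}$. The map $\sigma_{i_l}$ only kills coefficients with $e_k$ orthogonal to the $U^\Rbb_{q_{i_l}}(\kf_{i_l})$-string through $e^\mu_1$. The interior coefficients of that string survive $\varphi_{q_{i_l}}$: already for $SU_q(2)$ and $\mu = 2\varpi$, the middle entry of the highest weight column is a nonzero multiple of $u^\varpi_{21}u^\varpi_{11}$, and $\varphi_q(u^\varpi_{21}u^\varpi_{11}) \neq 0$.

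These terms would have to be killed by the left factors $\pi_{w'}(c^\mu_{w(\mu),\eta})$, with $\eta$ of weight $\mu - n\alpha_{i_l}$ and $0 < n < (\mu,\alpha^\vee_{i_l})$. But $I^+_{w'}$ only contains coefficients whose second vector is a highest weight vector, so your Demazure argument does not reach them. For the same reason, the term you keep, $\pi_{w'}(c^\mu_{w(\mu), s_{i_l}\mu})$, is not of the form $\pi_{w'}(c^+_{w',\mu'})$ with $\mu'$ dominant. So the inductive hypothesis does not apply to it either.

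The induction does go through if you peel off $s_{i_1}$ on the left instead. Write $w = s_{i_1} w''$, so that $\pi_w(c^+_{w,\mu}) = \sum_k \pi_{s_{i_1}}(u^\mu_{i_{w(\mu)}k}) \otimes \pi_{w''}(u^\mu_{k1})$.
\begin{itemize}
\item The right factors now have highest weight column, so $I^+_{w''} \subseteq \Ker(\pi_{w''})$ kills every term with $\epsilon_k \not\geq w''(\mu)$.
\item The left factor vanishes whenever $e_k$ is orthogonal to the $U^\Rbb_{q_{i_1}}(\kf_{i_1})$-submodule generated by the extremal vector $e^\mu_{i_{w(\mu)}}$. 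Because $\ell(s_{i_1}w'') > \ell(w'')$, the weights of this submodule are $w''(\mu) - n\alpha_{i_1}$ for $0 \leq n \leq (w''(\mu), \alpha^\vee_{i_1}) = p^\mu_1$.
\item Hence only $\epsilon_k = w''(\mu)$ survives. This gives $\pi_w(c^+_{w,\mu}) = \pi_{s_{i_1}}(c^\mu_{w(\mu), w''(\mu)}) \otimes \pi_{w''}(c^+_{w'',\mu})$, where the first factor is a nonzero multiple of $\varphi_{q_{i_1}}(u^\varpi_{21})^{p^\mu_1}$.
\end{itemize}
Alternatively, simply cite the formula as the paper does.

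Two minor points on the last step.
\begin{itemize}
\item You do not need the fallback to $\mu = \rho$, which alone would only give $q^{(\omega - \omega',\rho)} = 1$. Since all $p^\mu_j \geq 0$, every eigenvalue is $t_\mu$ times a number in $(0,1]$. So $t_\mu$ is the unique eigenvalue of maximal modulus for every $\mu \in \weights^+$, even when its eigenspace is infinite-dimensional.
\item In the bounded case, the eigenvectors need not lie in $D_w$. What you actually need is that bounded similarity preserves the point spectrum, and that the point spectrum of a diagonal operator on $\ell^2$ is the set of its diagonal entries.
\end{itemize}
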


\begin{proof}
Let $ w = s_{i_1} \cdots s_{i_l} \in W $ be a reduced expression. By \cite[Section~3, (6.2.4)]{KorogodskiSoibelman}, the elements $ c^+_{w,\mu} $ from \eqref{eq:w-multiplicative elements} satisfy
\begin{equation} \label{eq:w-multiplicative elements under tensor product decomposition}
\pi_w\bigl(c^+_{w,\mu}\bigr)
= \varphi_{q_{i_1}}\bigl(u^\varpi_{21}\bigr)^{p_1^\mu}
\otimes \varphi_{q_{i_2}}\bigl(u^\varpi_{21}\bigr)^{p_2^\mu}
\otimes \cdots \otimes
\varphi_{q_{i_l}}\bigl(u^\varpi_{21}\bigr)^{p_l^\mu}
\end{equation}
for any $ \mu \in \weights^+ $, where 
$ p_j^\mu = \bigl(\mu,\, s_{i_l}s_{i_{l-1}}\cdots s_{i_{j+1}}(\alpha_{i_j}^\vee) \bigr) \in \mathbb{N} $ 
for $ j = 1, \ldots, l $. Therefore, it follows from \eqref{eq:infinite-dimensional representation of C( SUq(2))} that the operators 
\[
(\pi_w \otimes K_\omega)(c^+_{w,\mu}) 
= q^{(\omega,\mu)}\pi_w(c^+_{w,\mu}) 
\]
are diagonal in the standard basis of $ D_w $. The scalar $ q^{(\omega,\mu)} $ is the unique element of largest norm among the eigenvalues of $ (\pi_w \otimes K_\omega)(c^+_{w,\mu}) $, and therefore preserved by any invertible intertwiner. Since this holds for every $ \mu \in \weights^+ $ we see that $ (\pi_w \otimes K_\omega) $ and $ (\pi_w \otimes K_{\omega'}) $ are equivalent as representations on $ D_w $ if and only if $ \omega = \omega' \in \hf^*/i\hbar^{-1}\Qbf^\vee \cong H $. 

Essentially the same argument carries over to the case of bounded representations by considering the spectra of $ \pi_w\bigl(c^+_{w,\mu}\bigr) $, viewed as bounded operators on $ \Hcal_{\pi_w} $. It follows that there is a bounded invertible intertwiner between $ (\pi_w \otimes K_\omega) $ and $ (\pi_w \otimes K_{\omega'}) $ if and only if $ \omega = \omega' \in \hf^*/i\hbar^{-1}\Qbf^\vee \cong H $. 
\end{proof}

\begin{cor} \label{cor:degeneracy of primitive ideals}
For $ w \in W $, let
\[
\SPrim_w ( \Cf^\infty(K_q)) = \SPrim(\Cf^\infty(K_q)) \cap \Prim_{(w,w)} (\Ocal(G_q))
\]
and
\[
\SSp_{w} (\Cf^\infty(K_q)) = \big\{ \pi \in \SSp (\Cf^\infty(K_q)) \mid \Ker(\pi) \in \SPrim_w (\Cf^\infty(K_q)) \big\}.
\]
Then 
\[
\SSp_w(\Cf^\infty(K_q)) \ni \pi \longmapsto \Ker(\pi) \in \SPrim_w (\Cf^\infty(K_q))
\]
is a $ 2^{|\Scal(w)|} $-to-$ 1 $ map.
\end{cor}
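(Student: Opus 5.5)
By Theorem~\ref{thm:irreducible *-representations of C(Kq)}, $\SSp_w(\Cf^\infty(K_q))$ consists exactly of the classes $\pi_w \otimes K_\zeta$ with $\zeta \in \tfrak^*/i\hbar^{-1}\Qbf^\vee \cong T$. Their kernels all lie in $\Prim_{(w,w)}(\Ocal(G_q))$, and no other topologically irreducible $*$-representation has kernel in this stratum. This holds because $\Ker(\pi_{w'}\otimes K_\zeta)\in\Prim_{(w',w')}$ and the strata are disjoint. The map in question is therefore $T \ni \zeta \mapsto \Ker(\pi_w \otimes K_\zeta)$. It is surjective onto $\SPrim_w(\Cf^\infty(K_q))$ by definition.

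We must show every fibre has exactly $2^{|\Scal(w)|}$ points. Proposition~\ref{prop:degeneracy of representations} shows that distinct $\zeta \in T$ give inequivalent representations, hence distinct classes in $\SSp$. The fibre size is then the number of $\zeta' \in T$ with $\Ker(\pi_w\otimes K_{\zeta'}) = \Ker(\pi_w\otimes K_\zeta)$. By \eqref{eq:degeneracy of primitive ideals} this condition means
\[
\zeta - \zeta' \in \Gamma := \bigg( \frac{i}{2} \hbar^{-1} \sum_{j \in \Scal (w)} \Zbb \alpha_j ^\vee \bigg) \bigg/ i \hbar^{-1} \Qbf^\vee .
\]
The fibre through $\zeta$ is thus $(\zeta + \Gamma)\cap T$. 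Since $\zeta \in \tfrak^*$ and $\frac{i}{2}\hbar^{-1}\alpha_j^\vee$ also lies in the real form $\tfrak^* = i(i\tfrak)^*$, we have $\Gamma \subseteq \tfrak^*/i\hbar^{-1}\Qbf^\vee$. Hence the fibre is the full coset $\zeta+\Gamma$, of cardinality $|\Gamma|$.

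It remains to compute $|\Gamma|$. The lattice $\frac{1}{2}\sum_{j\in\Scal(w)}\Zbb\alpha_j^\vee$ meets $\Qbf^\vee = \bigoplus_{j=1}^N \Zbb\alpha_j^\vee$ exactly in $\sum_{j\in\Scal(w)}\Zbb\alpha_j^\vee$, since the $\alpha_j^\vee$ form a basis. So $\Gamma \cong \bigoplus_{j\in\Scal(w)} \tfrac12\Zbb/\Zbb \cong (\Zbb/2\Zbb)^{|\Scal(w)|}$, which has $2^{|\Scal(w)|}$ elements.

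The only delicate point is to make sure the equivalence relation in Proposition~\ref{prop:degeneracy of representations} is unitary equivalence for $*$-representations. This is immediate, because unitarily equivalent representations are in particular boundedly equivalent. Everything else is bookkeeping with the lattice quotient.
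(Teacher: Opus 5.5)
Your proposal is correct and is essentially the paper's proof. The paper simply declares the corollary a direct consequence of Theorems~\ref{thm:irreducible *-representations of C(Kq)}--\ref{thm:primitive ideals in homogeneous strata} and Proposition~\ref{prop:degeneracy of representations}; you supply the bookkeeping it leaves implicit, namely that each fibre is a full coset of $\Gamma \subseteq \tfrak^*/i\hbar^{-1}\Qbf^\vee$ and that $\Gamma \cong (\Zbb/2\Zbb)^{|\Scal(w)|}$. Injectivity of $\zeta \mapsto [\pi_w \otimes K_\zeta]$ is already contained in the bijection $\SSp(\Cf^\infty(K_q)) \cong W \times T$ of Theorem~\ref{thm:irreducible *-representations of C(Kq)}, so your appeal to Proposition~\ref{prop:degeneracy of representations} is redundant but harmless.
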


\begin{proof}
This is a direct consequence of Theorems~\ref{thm:irreducible *-representations of C(Kq)}--\ref{thm:primitive ideals in homogeneous strata} and Proposition~\ref{prop:degeneracy of representations}.
\end{proof}

We close this section with an explicit description of $ \SPrim(\Cf^\infty(K_q)) $ 
inside $ \Prim(\Ocal(G_q)) $.

\begin{thm} \label{thm:SPrim(C(Kq))}
We have
\begin{align*}
\SPrim ( \Cf^\infty(K_q)) &= \{\Ker(\pi_w \otimes K_\omega ) \mid w \in W, \omega \in \tf^*/i\hbar^{-1}\Qbf^\vee \cong T \} \\ 
&= \{ J \in \Prim(\Ocal(G_q)) \mid *(J) = J \} \\
&= \{ J_{(w,w), \zeta, z} \mid w \in W,\, \zeta \in \Tbb^{\Scal(w)},\, z \in \Tbb^{ \Ical(w) } \}. 
\end{align*}
\end{thm}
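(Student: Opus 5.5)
We prove the three equalities by showing the first set equals the third and the third equals the second.

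\emph{First equality.} By definition, $\SPrim(\Cf^\infty(K_q))$ is the set of kernels of topologically irreducible $*$-representations. Theorem~\ref{thm:irreducible *-representations of C(Kq)} says every such representation is unitarily equivalent to some $\pi_w \otimes K_\omega$ with $w \in W$ and $\omega \in \tf^*/i\hbar^{-1}\Qbf^\vee$. Unitary equivalence preserves kernels, so the first equality follows.

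\emph{Second and third sets agree.} This is exactly Proposition~\ref{prop:primitive ideals under involution}. Every primitive ideal has the form $J_{\wrm,\zeta,z}$ for a unique triple $(\wrm,\zeta,z)$, by Theorem~\ref{thm:the primitive Joseph stratum}. By the proposition, $*(J_{\wrm,\zeta,z}) = J_{\wrm^*,\overline{\zeta}^{-1},\overline{z}^{-1}}$. The disjointness of the Joseph strata, together with the uniqueness of the parameters in Theorem~\ref{thm:the primitive Joseph stratum}, turns self-adjointness into three conditions: $w_+ = w_-$, $\zeta \in \Tbb^k$ and $z \in \Tbb^{\Ical(\wrm)}$. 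For $\wrm = (w,w)$ we have $\Ical(\wrm) = \Ical(w)$. As in the proof of Theorem~\ref{thm:primitive ideals in homogeneous strata}, we may use the basis $\{\varpi_j \mid j \in \Scal(w)\}$ of $\tilde{\Lcal}_{\text{red}}(w,w) = \weights_{\Scal(w)}$, which gives $k = |\Scal(w)|$. One should check that the conjugation formula \eqref{eq:S(wrm) part under involution} is compatible with this basis. It is, because $\lambda^{(j)} = \varpi_j$ lies in $\Ker(w_+ - w_-)$ and the formula holds for arbitrary $\lambda$ in that kernel.

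\emph{First set equals third.} Fix $w$ and consider $\omega \in \tf^*/i\hbar^{-1}\Qbf^\vee$. Since $\pi_w \otimes K_\omega$ is a $*$-representation, its kernel is self-adjoint, and by Theorem~\ref{thm:irreducible *-representations of C(Kq)} it lies in $\Prim_{(w,w)}$. It therefore belongs to the third set, using the equality just established. For the converse, equation \eqref{eq:torus action on the kernel level} identifies $\Ker(\pi_w \otimes K_\omega)$ with $K_{-\omega} \triangleright \Ker(\pi_w)$, and $\Ker(\pi_w) = J_{(w,w),\zeta^0,z^0}$ for some unimodular $\zeta^0, z^0$. By \eqref{eq:torus action on the second parameter}--\eqref{eq:torus action on the third parameter}, acting with $K_{-\omega}$ multiplies the parameters by
\[
q^{2(\omega,\varpi_j)} \quad (j \in \Scal(w)), \qquad q^{(\omega,\varpi_i)} \quad (i \in \Ical(w)).
\]
For $\omega \in \tf^*$, the pairing $(\omega,\varpi_j)$ lies in $i\Rbb$ up to the normalisation $q^{x} = e^{hx}$, so these factors are unimodular. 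The main point is surjectivity onto $\Tbb^{\Scal(w)} \times \Tbb^{\Ical(w)}$. The map
\[
\omega \longmapsto \bigl((q^{2(\omega,\varpi_j)})_{j \in \Scal(w)},\, (q^{(\omega,\varpi_i)})_{i \in \Ical(w)}\bigr)
\]
is a continuous homomorphism from the compact torus $\tf^*/i\hbar^{-1}\Qbf^\vee$ to $\Tbb^N$. Since $\{\varpi_j\}$ is a basis of $\weights$, each coordinate can be prescribed independently. Hence the map is surjective, and every $J_{(w,w),\zeta,z}$ with unimodular parameters arises as a kernel. This step, together with the bookkeeping of which $\omega$ are real and the factor of $2$, is where I expect the only real care to be needed.
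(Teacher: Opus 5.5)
Your proof is correct, but it matches the first set to the others by a different route than the paper.

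\textbf{The paper's route.} It proves the middle equality at the level of $\omega$. It observes that $*\Ker(\pi_w \otimes K_\omega) = \Ker(\pi_w \otimes K_{\overline{\omega}})$ for arbitrary $\omega \in \hf^*/i\hbar^{-1}\Qbf^\vee$. It then combines this with Theorem~\ref{thm:primitive ideals in homogeneous strata}, in particular the degeneracy criterion \eqref{eq:degeneracy of primitive ideals}. Self-adjointness of $\Ker(\pi_w \otimes K_\omega)$ thus becomes a condition on $\omega - \overline{\omega}$ modulo the degeneracy lattice. The parameters $\zeta, z$ enter only at the very end, through Proposition~\ref{prop:primitive ideals under involution}.

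\textbf{Your route.} You never use \eqref{eq:degeneracy of primitive ideals} or the conjugation $\omega \mapsto \overline{\omega}$. Proposition~\ref{prop:primitive ideals under involution} gives you the equality of the second and third sets. You then show that the compact torus $T$ acts transitively on the unimodular parameter set $\Tbb^{\Scal(w)} \times \Tbb^{\Ical(w)}$, starting from the unimodular base point $\Ker(\pi_w)$.

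\textbf{Comparison.} Both arguments rest on the same formulas \eqref{eq:torus action on the second parameter}--\eqref{eq:torus action on the third parameter}. The paper's version is shorter once Theorem~\ref{thm:primitive ideals in homogeneous strata} is available. Yours is more explicit. As a byproduct, it identifies the fibres of $\omega \mapsto \Ker(\pi_w \otimes K_\omega)$ on $T$ as cosets of the kernel of your homomorphism. That kernel has $2^{|\Scal(w)|}$ elements, consistent with Corollary~\ref{cor:degeneracy of primitive ideals}.

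\textbf{Two places to tighten.}
\begin{itemize}
\item Unimodularity of $q^{(\omega,\varpi_j)}$ for $\omega \in \tf^*$ follows cleanly from \eqref{eq:when Kzeta preserves involution}. In that case $K_\omega$ is a $*$-character, so its diagonal values on the unitary matrix $u^{\varpi_j}$ lie in $\Tbb$.
\item Surjectivity can be made explicit by taking $\omega = \sum_j i t_j \alpha_j^\vee$ with $t_j \in \Rbb$, using $(\alpha_k^\vee, \varpi_j) = \delta_{kj}$.
\end{itemize}
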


\begin{proof}
The first equality follows from the definition of $ \SPrim ( \Cf^\infty(K_q)) $ and Theorem~\ref{thm:irreducible *-representations of C(Kq)}. Next note that $ \Ker(\pi_w \otimes K_\omega) = K_{-\omega} \triangleright \Ker(\pi_w) $ together with $ *\Ker(\pi_w) = \Ker(\pi_w) $ and \eqref{eq:when Kzeta preserves involution} implies $  * \Ker(\pi_w \otimes K_\omega) = \Ker(\pi_w \otimes K_{\overline{\omega}}) $ for any $ \omega \in \hf^*/i\hbar^{-1}\Qbf^\vee \cong H $. Combining this with \eqref{eq:degeneracy of primitive ideals} yields the second equality. 
The final equality then follows from Proposition~\ref{prop:primitive ideals under involution}.
\end{proof}

\begin{rmk} \label{rmk:note on parametrisation}
Note that the parametrisation of the primitive ideals in the final equality of Theorem~\ref{thm:SPrim(C(Kq))} depends on our choice of the elements \eqref{eq:w-multiplicative elements}, which, in turn, depends on our choice of the weight vectors \eqref{eq:w-weight vectors}. 
\end{rmk}

\subsection{Inhomogeneous strata}

Next we show that inhomogeneous strata do not admit any bounded representations on a Hilbert space. Let us say that an ideal $ J \subseteq \Ocal(G_q) $ is of type $ \wrm = (w_+, w_-) \in W \times W $ if
\[
I_\wrm \subseteq J \quad \text{and} \quad (J / I_\wrm) \cap E_\wrm = \emptyset.
\]
According to Theorem \ref{thm:strata of prime spectrum}, the stratum $ \Spec_\wrm(\Ocal(G_q)) $ is the set of all prime ideals of type $ \wrm $.

\begin{thm} \label{thm:no bdd repn for inhomogeneous strata}
Let $ \wrm = (w_+ , w_-) \in W \times W $ with $ w_+ \neq w_- $. Then there are no bounded representations of $ \Ocal(G_q) $ on Hilbert spaces whose kernel is of type $\wrm$.
\end{thm}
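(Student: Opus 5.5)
The idea is to turn the $q$-commutation relations \eqref{eq:commutation relations for w-multiplicative elements} into a norm estimate that cannot hold on a Hilbert space when $w_+\neq w_-$. Let $\pi:\Ocal(G_q)\to\Lbb(\Hcal)$ be a bounded representation whose kernel $J$ is of type $\wrm=(w_+,w_-)$, so that $I_\wrm\subseteq J$ and no element of $E_\wrm$ lies in $J$ modulo $I_\wrm$. Fix $\mu=\rho$. Put $A=\pi(c^+_{w_+,\rho})$ and $B=\pi(c^-_{w_-,\rho})$.

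\textbf{Step 1 (the commutation relation).} Since $c^-_{w_-,\rho}=c^{-w_0(\rho)}_{-w_-(\rho),-\rho}$, we may apply \eqref{eq:commutation relations for w-multiplicative elements} with $\lambda=-w_-(\rho)$ and $\nu=-\rho$. Because $\pi$ vanishes on $I_\wrm$, this gives
\[
BA=\kappa\,AB,\qquad \kappa=q^{(w_+\rho,\,w_-\rho)-(\rho,\rho)}.
\]
The stabiliser of $\rho$ in $W$ is trivial, so $w_+\neq w_-$ forces $w_+\rho\neq w_-\rho$. These two vectors have the same length, so strict Cauchy--Schwarz gives $(w_+\rho,w_-\rho)<(\rho,\rho)$. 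Since $0<q<1$, it follows that $0<\kappa<1$.

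\textbf{Step 2 (nonvanishing).} By \eqref{eq:relation for w-multiplicative elements} we have $A^n=\pi(c^+_{w_+,n\rho})$ and $B^m=\pi(c^-_{w_-,m\rho})$. The product $c^+_{w_+,n\rho}c^-_{w_-,m\rho}$ lies in $E_\wrm$, hence not in $J$, so $A^nB^m\neq0$ for all $n,m$. Iterating the relation of Step 1 gives
\[
(AB)^n=\kappa^{n(n-1)/2}A^nB^n.
\]
Hence $\|(AB)^n\|\le\kappa^{n(n-1)/2}(\|A\|\|B\|)^n$, which decays superexponentially, and $AB$ is quasinilpotent. Symmetrically, $B^nA^n=\kappa^{n^2}A^nB^n$.

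\textbf{Step 3 (main obstacle: a contradiction with quasinilpotence).} I have to show that $AB$, or a suitable element built from $A$ and $B$, cannot be quasinilpotent. Equivalently, I need a lower bound on $\|A^nB^n\|$ that grows no faster than geometrically. My first attempt would use the antipode/unitarity identities in $\Ocal(G_q)$, namely $\sum_k S(u^{n\rho}_{1k})u^{n\rho}_{k1}=1$, twisted by the braid group elements $\Tcal_{w_\pm}$. Reduced modulo $I_\wrm$, all terms with weights outside the relevant Demazure modules drop out. This should express a nonzero scalar multiple of the identity (or of an idempotent-like element) as $X_n\,c^+_{w_+,n\rho}\,c^-_{w_-,n\rho}\,Y_n$, where $\|\pi(X_n)\|$ and $\|\pi(Y_n)\|$ grow at most like $C^n$. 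That would give $\|A^nB^n\|\ge c\,C^{-n}$, contradicting the decay rate $\kappa^{n(n-1)/2}$ from Step 2.

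If this direct route fails, I would instead exploit the rescaling symmetry. The torus actions \eqref{eq:torus action on the second parameter}--\eqref{eq:torus action on the third parameter} let me replace $\pi$ by $\pi\circ(K_\omega\triangleright\cdot\triangleleft K_\chi)$, which is still bounded and still has kernel of type $\wrm$. Choosing $\omega,\chi$ to multiply $A$ and $B$ by scalars shifts the growth rate of $\|A^nB^n\|$ only geometrically. So the superexponential factor $\kappa^{n(n-1)/2}$ must be absorbed by some invariant quantity, for instance the spectral data of the central elements $a_j$ or $c^+_{w_+,\varpi_i}$ in $Z_\wrm$, which act by nonzero scalars on the relevant primitive quotients by Theorem~\ref{thm:the primitive Joseph stratum}. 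Making this absorption argument precise is the step I expect to be hardest.
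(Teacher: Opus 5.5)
\paragraph{Gap in Steps 1--3.} Step 1 has a sign slip. Since $(w_+\rho,w_-\rho)<(\rho,\rho)$ and $0<q<1$, you get $\kappa=q^{(w_+\rho,w_-\rho)-(\rho,\rho)}>1$, so your bound $\|(AB)^n\|\le\kappa^{n(n-1)/2}(\|A\|\|B\|)^n$ grows. Quasinilpotence still holds, but via $A^nB^n=\kappa^{-n^2}B^nA^n$.

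The real gap is that nothing in Steps 1--2 is contradictory. On $\ell^2(\Nbb)$ the operators $Ae_k=q^ke_k$ and $Be_k=e_{k+1}$ satisfy $BA=q^{-1}AB$ and $A^nB^m\neq0$ for all $n,m$. So the whole proof sits in Step 3, which you do not supply, and neither sketched route works.

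\emph{First route.} By \eqref{eq:commutation relations for w-multiplicative elements} the elements $c^\pm_{w_\pm,\mu}$ are normal modulo $I_\wrm$. An identity $1=X_n\,c^+_{w_+,n\rho}c^-_{w_-,n\rho}\,Y_n$ modulo $I_\wrm$ would therefore make $c^+_{w_+,n\rho}c^-_{w_-,n\rho}$ a unit in the domain $\Ocal(G_q)/I_\wrm$. This is false in general: for $SL_q(2,\Cbb)$ and $\wrm=(w_0,e)$ the quotient is the skew Laurent ring generated by $a^{\pm1}$ and $c$, and $c^+_{w_0,\varpi}c^-_{e,\varpi}=ca^*\equiv ca^{-1}$ is not a unit. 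You must also exclude $\Ker(\pi)=I_\wrm$ itself, and there only identities modulo $I_\wrm$ are available, so this route is closed. The ``idempotent-like'' variant is too vague to assess.

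\emph{Second route.} Torus twists only rescale $A$ and $B$ by scalars. The generators of $Z_\wrm$ live in the localisation $\Ocal_\wrm$, to which $\pi$ need not extend, and $\Ker(\pi)$ need not be primitive. Note also that you never use the Bruhat order, only $w_+\rho\neq w_-\rho$.

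\paragraph{Missing idea.} You need to treat $c^+_{w_+,\rho}$ not in isolation but as one entry of the first column of the invertible matrix $u^\rho$, and combine this with the Bruhat order. The paper argues as follows for unital $\pi$.
\begin{enumerate}
\item Replace $\pi$ if necessary by $f\mapsto\pi(f^*)^*$, whose kernel has type $(w_-,w_+)$; this lets you assume $w_+\ngeq w_-$.
\item By \eqref{eq:Bruhat order and Demazure module} this gives $V(\rho)_{w_-(\rho)}\perp V^+_{w_+}(\rho)$. Only the $u^\rho_{i1}$ with $e^\rho_i\notin V^+_{w_+}(\rho)^\perp$ survive under $\pi$, since the others lie in $I^+_{w_+}\subseteq\Ker(\pi)$. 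The survivors have weight $\epsilon^\rho_i\neq w_-(\rho)$, so $u^\rho_{i1}c^-_{w_-,\rho}=q^{(\rho,\rho-w_-^{-1}\epsilon^\rho_i)}c^-_{w_-,\rho}u^\rho_{i1}$ modulo $I_\wrm$, with every exponent at least some $r>0$.
\item Invertibility of $\pi(u^\rho)$ gives $\delta\|\xi\|\le(\sum_i\|\pi(u^\rho_{i1})\xi\|^2)^{1/2}$ for some $\delta>0$.
\item Put $\xi=B^m\eta$ and commute the surviving $u^\rho_{i1}$ past $B^m$. This yields $\delta\|B^m\|\le q^{mr}\|\pi(u^\rho)\|\,\|B^m\|$, contradicting $B^m=\pi(c^-_{w_-,m\rho})\neq0$ as $m\to\infty$.
\end{enumerate}
The relation $BA=\kappa AB$ plays no role in this argument.
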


\begin{proof}
The following arguments are inspired by the proof of \cite[Proposition~2]{Lambert2004}.

Note first that, in order to prove the claim, it suffices to consider unital representations.
Hence, suppose $ \theta : \Ocal(G_q) \rightarrow \Lbb(\Hcal)$ is a unital bounded representation whose kernel is of type $ \wrm = (w_+, w_-) $ with $ w_+ \neq w_- $. Then either $ w_+ \nleq w_- $ or $ w_+ \ngeq w_- $ with respect to the Bruhat order. The map 
\[
\theta^* : \Ocal(G_q) \ni f \longmapsto \theta(f^*)^* \in \Lbb(\Hcal)
\]
is a bounded representation which satisfies $ \Ker(\theta^*) = *(\Ker(\theta)) $, and hence $ \Ker(\theta^*) $ is of type $ (w_-, w_+) $, see \eqref{eq:Joseph ideal conjugate} and \eqref{eq:w-multiplicative subset conjugate}. Therefore we may assume without loss of generality that $ w_+ \ngeq w_- $. 

Due to \eqref{eq:Bruhat order and Demazure module}, we then have
$
V^+_{w_+}(\rho) \nsupseteq V^+_{w_-}(\rho),
$
and this implies
\begin{equation}\label{eq:Demazure module for w+ rho avoids w- rho}
V^+_{w_+}(\rho) \cap V(\rho)_{w_- (\rho)} = 0. 
\end{equation}
Indeed, otherwise the one-dimensional weight space $ V(\rho)_{w_- (\rho)} $ would be contained in the $U_q(\bfrak_+)$-invariant subspace $V^+_{w_+}(\rho)$, and hence
\[
V^+ _{w_+}(\rho) \supseteq U_q(\bfrak_+) V(\rho)_{w_- (\rho)} = V^+_{w_-}(\rho),
\]
which is a contradiction.

Since weight spaces for different weights are orthogonal, \eqref{eq:Demazure module for w+ rho avoids w- rho} implies
\begin{equation*} 
V(\rho)_{w_- (\rho)} \subseteq V^+_{w_+}(\rho)^\perp.
\end{equation*}
Using that $ V(\rho)_{w_- (\rho)} = \Cbb e^\rho _{i_{w_- (\rho)}} $, where $ 1 \leq i_{w_- (\rho)} \leq n_\rho $ is the unique index such that $ \epsilon^{\rho} _{i_{w_- (\rho)}} = w_- (\rho) $, this shows in particular
\begin{equation}\label{eq:intersection with weight space and Demazure module}
e_{i_{w_- (\rho)}}^\rho \in V^+_{w_+}(\rho)^\perp.
\end{equation}
Now consider the matrix of operators
\[
\theta(u^\rho) = \big( \theta(u^\rho_{ij}) \big)_{1 \le i,j \le n_\rho} \in M_{n_\rho}\big(\Lbb(\Hcal)\big) \cong \Lbb(\Hcal^{n_\rho}).
\]
Since  $u^\rho \in M_{n_\rho}(\Ocal(G_q)) $ is invertible and $ \theta $ is an algebra homomorphism, it follows that $ \theta(u^\rho): \Hcal^{n_\rho} \rightarrow \Hcal^{n_\rho} $ is invertible. Hence, there exists a constant $ a > 0 $ such that 
\begin{equation} \label{estimate}
a \| \xi \| = a \| (\xi,0,\dots,0) \| \le \| \theta(u^\rho)(\xi,0,\dots,0) \| = \Big( \sum_{i=1}^{n_\rho} \| \theta(u^\rho_{i1}) \xi \|^2 \Big)^{1/2} 
\end{equation}
for all $ \xi \in \Hcal $. 
Let $ \Ical = \{ 1 \le i \le n_ \rho \mid e_i^\rho \notin V_{w_+}^+(\rho)^\perp \} $. Since $ \Ker(\theta) $ is of type $ \wrm $ we have $ I_{w_+}^+ \subseteq I_\wrm \subseteq \Ker(\theta) $, and therefore only indices $ i \in \Ical $ contribute to the sum in \eqref{estimate}. Thus, for any $ m \in \Nbb $ and $ \xi \in \Hcal $, the commutation relation \eqref{eq:commutation relations for w-multiplicative elements} yields
\begin{align*}
a \|\theta(c^-_{w_-,\, \rho})^m \xi\| 
&\le \Big( \sum_{i \in \Ical} \| \theta(u^\rho_{i1}) \theta(c^-_{w_-,\, \rho})^m \xi \|^2 \Big)^{1/2} \\
&= \Big(\sum_{i \in \Ical} q^{2m\big((\rho,\rho) - (w_-(\rho), \epsilon_i^\rho)\big)} \| \theta(c^-_{w_-, \, \rho})^m \theta(u^\rho_{i1}) \xi \|^2 \Big)^{1/2} \\
&\le q^{mr} \| \theta(c^-_{w_-,\, \rho})^m \| \Big( \sum_{i \in \Ical} \| \theta(u^\rho_{i1}) \xi \|^2 \Big)^{1/2} \\
&= q^{mr} \| \theta(c^-_{w_-,\, \rho})^m \| \| \theta(u^\rho)(\xi,0,\dots,0) \| \le q^{mr} \| \theta(c^-_{w_-,\, \rho})^m \| \| \theta(u^\rho) \| \|\xi\|,
\end{align*}
where
\begin{equation} \label{defr}
r = \min_{i \in \Ical} \big( (\rho,\rho) - (w_-(\rho), \epsilon_i^\rho) \big) = \min_{i \in \Ical} (\rho, \rho - w_-^{-1}(\epsilon_i^\rho)).
\end{equation}
Note that $ r \geq 0 $ since $ w_- ^{-1} (\epsilon_i ^\rho) \in \weights(\rho) $ and hence $ \rho - w_- ^{-1} (\epsilon_i ^\rho) \in \Qbf^+ $. Taking the supremum over all $ \xi \in \Hcal $ with $ \|\xi \| = 1 $ we therefore obtain 
\begin{equation} \label{inequality}
a \|\theta(c^-_{w_-,\, \rho})^m \| \le q^{mr} \| \theta(c^-_{w_-,\, \rho})^m \| \| \theta(u^\rho) \|. 
\end{equation}
Using again that $ \Ker(\theta) $ is of type $ \wrm $ and \eqref{eq:relation for w-multiplicative elements} we also know that $ \theta(c^-_{w_-, \rho})^m = \theta(c^-_{w_-, m \rho}) \neq 0 $ for all $ m $. Hence \eqref{inequality} implies
\[
a \le q^{mr} \| \theta(u^\rho) \|, \quad \forall m \in \Nbb, 
\]
and we conclude $ r = 0 $. 

However, note that $ i_{w_- (\rho)} \notin \Ical $ due to \eqref{eq:intersection with weight space and Demazure module}. Hence $ \epsilon_i^\rho \neq w_- (\rho)$ for all $ i \in \Ical $, and therefore $ r > 0 $ by \eqref{defr}. We have thus arrived at a contradiction, which finishes the proof.
\end{proof}

\begin{rmk}\label{rmk:Remark for RIKEN}
Keita Goto and Kan Kitamura pointed out to us that the proof of Theorem~\ref{thm:no bdd repn for inhomogeneous strata}  also applies to representations by bounded operators on Banach spaces. The only additional complication is that, under the assumption $ w_+ \neq w_- $, we no longer have the simplification of assuming $ w_+ \not\geq w_- $ without loss of generality. However, the case $ w_- \not\geq w_+ $ can be treated in an entirely analogous manner.
We therefore conclude that there are in fact no bounded representations of $ \Ocal(G_q) $ on Banach spaces whose kernels are of inhomogeneous type.
\end{rmk}

\begin{cor} \label{cor:no bdd repn for inhomogeneous strata}
Given a topologically irreducible bounded representation $ \theta: \Ocal(G_q) \to \Lbb(\Hcal) $ on a Hilbert space $ \Hcal $ we have 
\[
\Ker(\theta) \in \Spec_{(w,w)} \big( \Ocal(G_q) \big) 
\]
for some $ w \in W $.
\end{cor}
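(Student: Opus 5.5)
The plan is to reduce the corollary to Theorem~\ref{thm:no bdd repn for inhomogeneous strata}. Two things are needed: the kernel $\Ker(\theta)$ is a prime ideal, and it therefore lies in a unique stratum $\Spec_\wrm(\Ocal(G_q))$ by Theorem~\ref{thm:strata of prime spectrum}, that is, it is of type $\wrm = (w_+,w_-)$. Once these are established, Theorem~\ref{thm:no bdd repn for inhomogeneous strata} rules out $w_+ \neq w_-$, since $\theta$ is a bounded representation on a Hilbert space whose kernel would be of that type. Hence $w_+ = w_- = w$ for some $w \in W$.

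The first step is to check that $\Ker(\theta)$ is prime, which the preliminaries assert for topologically primitive ideals. I would verify it directly as follows. Let $I, J$ be ideals with $IJ \subseteq \Ker(\theta)$, and suppose $J \not\subseteq \Ker(\theta)$. Then $[\theta(J)\Hcal]$ is a nonzero closed subspace. It is invariant, because $\theta(a)\theta(j)\xi = \theta(aj)\xi$ with $aj \in J$. Topological irreducibility therefore gives $[\theta(J)\Hcal] = \Hcal$. On the other hand $\theta(I)$ annihilates $\theta(J)\Hcal$, and by continuity it annihilates the closure, so $\theta(I) = 0$. Since $\theta$ is nonzero, $\Ker(\theta)$ is proper. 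Hence $\Ker(\theta)$ is prime, and by \cite[Theorem 11.4]{Joseph1994} it is completely prime.

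The second step applies Theorem~\ref{thm:strata of prime spectrum} to the completely prime ideal $\Ker(\theta)$. This gives a unique $\wrm$ with $I_\wrm \subseteq \Ker(\theta)$ and $(\Ker(\theta)/I_\wrm)\cap E_\wrm = \emptyset$. This is exactly the statement that $\Ker(\theta)$ is of type $\wrm$, and that $\Ker(\theta) \in \Spec_\wrm(\Ocal(G_q))$ via \eqref{eq:stratification of prime spectrum}. Theorem~\ref{thm:no bdd repn for inhomogeneous strata} then forces $\wrm$ to be homogeneous.

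There is no real obstacle here. The only point needing care is the primality step: we must use the closed span, rather than the algebraic span, to invoke topological irreducibility, and we must note that nonzero representations have proper kernels.
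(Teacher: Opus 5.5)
Your proposal is correct and follows the paper's proof: primality of the kernel places it in some stratum $\Spec_\wrm(\Ocal(G_q))$, and Theorem~\ref{thm:no bdd repn for inhomogeneous strata} then forces $w_+ = w_-$. The paper simply cites the fact that topologically primitive ideals are prime, whereas you verify it directly via the closed span $[\theta(J)\Hcal]$; that verification is correct.
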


\begin{proof}
Every topologically primitive ideal is prime. Hence, according to \eqref{eq:stratification of prime spectrum}, we have
\[
\Ker(\theta) \in \Spec_{\wrm} \big( \Ocal(G_q) \big)  
\]
for some $ \wrm = (w_+, w_-) \in W \times W $. 
Now Theorem~\ref{thm:no bdd repn for inhomogeneous strata} implies $ w_+ = w_- $, and the result follows.
\end{proof}

Corollary \ref{cor:no bdd repn for inhomogeneous strata} shows that inhomogeneous strata cannot be accessed using bounded representations on Hilbert spaces. 

\begin{rmk}
There is no obstruction against realising primitive ideals in inhomogeneous strata using unbounded Hilbert space representations. In fact, one can equip the underlying vector space $ V $ of any representation $ \theta: \Ocal(G_q) \rightarrow \End(V) $ with an inner product, and view $ \theta $ as an unbounded representation of $ \Ocal(G_q) $ on the corresponding Hilbert space completion of $ V $, with common dense domain $ V $. 

This can be done in particular in the case $ \Ker(\theta) $ is a primitive ideal in an inhomogeneous stratum. However, for such an unbounded representation, the $ * $-structure of $ \Cf^\infty(K_q) $ cannot be compatible with taking formal adjoints. 
This follows from the fact that primitive ideals in inhomogeneous strata are not invariant under the $ * $-operation, compare Theorem \ref{thm:SPrim(C(Kq))}. 
\end{rmk}

\subsection{Beurling-Fourier algebras and primitive spectrum}

We shall now discuss a consequence of Corollary~\ref{cor:no bdd repn for inhomogeneous strata} for the spectra of Beurling-Fourier algebras of $ K_q $, which provides an interesting variation of the main result of \cite{LeeVoigt_BeurlingFourier}. To align with the notation used there, we denote $ \Ocal(G_q) $ by $ \Cf^\infty(K_q) $ throughout this section.

For $ \mu = \mu_1 \varpi_1 + \cdots + \mu_N \varpi_N \in \weights^+ $ we write $ |\mu| = (\mu, \mu)^{\frac{1}{2}} $. As all norms on a finite-dimensional vector space are equivalent, we can find a constant $ a > 0 $ such that
\begin{equation} \label{aestimate}
\mu_1 + \cdots + \mu_N \leq a |\mu|,
\end{equation}
independently of $ \mu $. 

Next, the $ * $-representation $ \pi_\mu: U_q^\mathbb{R}(\mathfrak{k}) \rightarrow \Lbb(V(\mu)) $ is a direct summand of the tensor product representation $\pi_{\varpi_1}^{\otimes \mu_1} \otimes \cdots \otimes \pi_{\varpi_N}^{\otimes \mu_N}$. Therefore, the matrix 
$ u^\mu = (u_{ij}^\mu)_{1 \leq i,j \leq n_\mu} \in  M_{n_\mu}\bigl(\Cf^\infty(K_q)\bigr) $
is unitarily equivalent to a subblock matrix of the matrix 
\[
(u^{\varpi_1})^{\otimes \mu_1} \otimes \cdots \otimes (u^{\varpi_N})^{\otimes \mu_N},
\] 
where we use the notation
\[
u \otimes v = ( u_{ij} v_{kl} )_{\substack{1 \leq i,j \leq n \\ 1 \leq k,l \leq m}} \quad \text{for } u = (u_{ij})_{1 \leq i,j \leq n} \text{ and } v = (v_{kl})_{1 \leq k,l \leq m}.
\]
Now let $ \theta : \Cf^\infty(K_q) \rightarrow \Lbb(\Hcal) $ be a unital bounded representation. Since $ \theta $ is an algebra homomorphism, the matrix of operators
\[
\theta(u^\mu) = \big( \theta(u^\mu_{ij}) \big)_{1 \leq i,j \leq n_\mu} \in M_{n_\mu}(\Lbb(\Hcal)) \cong \Lbb(\Hcal^{n_\mu})
\]
is unitarily equivalent to a corner
of the matrix of operators
\[
\theta(u^{\varpi_1})^{\otimes \mu_1} \otimes \cdots \otimes \theta(u^{\varpi_N})^{\otimes \mu_N} \in M_{n_{\varpi_1}^{\mu_1} \cdots n_{\varpi_N}^{\mu_N}}(\Lbb(\Hcal)).
\]
Hence we get
\[
\|\theta(u^\mu)\| \leq \|\theta(u^{\varpi_1})^{\otimes \mu_1} \otimes \cdots \otimes \theta(u^{\varpi_N})^{\otimes \mu_N}\| \leq \|\theta(u^{\varpi_1})\|^{\mu_1} \cdots \|\theta(u^{\varpi_N})\|^{\mu_N} \leq r^{\mu_1 + \cdots + \mu_N},
\]
where 
\[
r = \max \big\{ \|\theta(u^{\varpi_j})\| \mid 1 \leq j \leq N \big\}.
\]
We may choose $ \beta \geq \max\{ 1 , r^a \} $, where $ a $ is the constant in \eqref{aestimate}. Then we get 
\[
\|\theta(u^\mu)\| \leq r^{\mu_1 + \cdots + \mu_N} \leq \beta^{a^{-1} (\mu_1 + \cdots + \mu_N )} \leq \beta^{|\mu|}, \qquad \forall \mu \in \weights^+.
\]
By \cite[Proposition~2.5.1 and Proposition~4.3.6]{LeeVoigt_BeurlingFourier}, this implies
\[
\text{$\theta$ extends to a CB (completely bounded) representation } \tilde{\theta} : A(K_q, w_\beta) \rightarrow \Lbb(\Hcal),
\]
where $A(K_q, w_\beta)$ is the Beurling-Fourier algebra of $K_q$ associated with the central weight $w_\beta : \weights^+ \ni \mu \mapsto \beta^{|\mu|} \in [1, \infty)$ on the dual of $ K_q $; see \cite{LeeVoigt_BeurlingFourier} for further details.

Conversely, every CB representation $ A(K_q, w_\beta) \rightarrow \Lbb(\Hcal) $ obviously restricts to a representation $ \Cf^\infty(K_q) \rightarrow \Lbb(\Hcal) $.  Therefore, fixing a Hilbert space $\Hcal$, we have
\begin{align*}
\{&\theta: \Cf^\infty(K_q) \rightarrow \Lbb(\Hcal) \mid \theta \text{ is a unital representation} \} = \\
&\bigcup_{\beta \geq 1} \{ \theta : \Cf^\infty(K_q) \rightarrow \Lbb(\Hcal) \mid \text{$\theta$ is unital and admits a CB extension to $A(K_q, w_\beta)$} \}.
\end{align*}
Now let us define the \emph{CB-primitive spectrum} of $ \Cf^\infty(K_q) $ by
\begin{align*}
\Prim_{CB}(\Cf^\infty(K_q)) = 
\bigcup_{\beta \geq 1} \Big\{&\Ker(\theta)\in \Prim (\Cf^\infty(K_q)) \mid \theta: \Cf^\infty(K_q) \rightarrow \Lbb(\Hcal) \text{ is } \\ &\text{topologically irreducible and CB extends to } A(K_q, w_\beta) \Big\}.
\end{align*}
Combining our above observation with Theorems~\ref{thm:primitive ideals in homogeneous strata} and \ref{thm:no bdd repn for inhomogeneous strata}, we obtain the following complementary version of the main result of \cite{LeeVoigt_BeurlingFourier}. 

\begin{thm} \label{thm:Spectra of Beurling-Fourier covers Prim}
We have 
\begin{align*}
&\Prim_{CB}(\Cf^\infty(K_q)) = \coprod_{w \in W} \Prim_{(w,w)}(\Cf^\infty(K_q)). 
\end{align*}
That is, the CB-primitive spectrum of $ \Cf^\infty(K_q) $ identifies with the union of the homogeneous strata of $ \Prim(\Cf^\infty(K_q)) $. 
\end{thm}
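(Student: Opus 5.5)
The plan is to prove the two inclusions separately, using the reformulation just established: unital bounded representations of $\Cf^\infty(K_q)$ on $\Hcal$ are exactly the unital representations admitting a CB extension to some $A(K_q, w_\beta)$.

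For the inclusion $\supseteq$, fix $w \in W$ and $J \in \Prim_{(w,w)}(\Cf^\infty(K_q))$. By Theorem~\ref{thm:primitive ideals in homogeneous strata} there is $\omega \in \hf^*/i\hbar^{-1}\Qbf^\vee$ with $J = \Ker(\pi_w \otimes K_\omega)$. As noted after Theorem~\ref{thm:irreducible *-representations of C(Kq)}, the bounded representation $\theta = \pi_w \otimes K_\omega$ is topologically irreducible and unital. By the estimate preceding the theorem, there is $\beta \geq 1$ (for instance $\beta \geq \max\{1, r^a\}$ with $r = \max_j \|\theta(u^{\varpi_j})\|$) such that $\theta$ extends CB to $A(K_q, w_\beta)$. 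Hence $J \in \Prim_{CB}(\Cf^\infty(K_q))$. One could also bound $\|\theta(u^\mu)\| \leq q^{-(\mathrm{Re}\,\omega,\mu)}$ directly, since $\pi_w$ is a $*$-representation and $u^\mu$ is unitary, but this is not needed.

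For the inclusion $\subseteq$, let $J = \Ker(\theta) \in \Prim(\Cf^\infty(K_q))$, where $\theta$ is topologically irreducible and CB extends to some $A(K_q, w_\beta)$. In particular $\theta$ is a topologically irreducible bounded representation of $\Ocal(G_q)$. Corollary~\ref{cor:no bdd repn for inhomogeneous strata} gives $J \in \Spec_{(w,w)}(\Ocal(G_q))$ for some $w$. Since $J$ is also primitive, $J \in \Prim(\Ocal(G_q)) \cap \Spec_{(w,w)} = \Prim_{(w,w)}$.

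The union is disjoint because of the stratification \eqref{eq:stratification of primitive spectrum}. The only point needing care is that topologically irreducible representations are unital, so the reformulation via the Beurling-Fourier algebras applies. This holds because $\theta(1)$ is an idempotent commuting with $\theta$: its range is a closed invariant subspace, and it is nonzero since $\theta \neq 0$, so $\theta(1) = 1$. I do not expect any step to be a real obstacle, since the substantive content sits in Theorems~\ref{thm:primitive ideals in homogeneous strata} and \ref{thm:no bdd repn for inhomogeneous strata}.
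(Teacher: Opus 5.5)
Your proposal is correct and follows the paper's own argument. The paper likewise combines the observation that unital bounded representations are exactly those admitting a CB extension to some $A(K_q,w_\beta)$ with Theorem~\ref{thm:primitive ideals in homogeneous strata} for $\supseteq$, and with Theorem~\ref{thm:no bdd repn for inhomogeneous strata} (equivalently Corollary~\ref{cor:no bdd repn for inhomogeneous strata}) for $\subseteq$. The norm bound in your unused aside is not right in higher rank: since $\pi_w(u^\mu)$ is unitary, one has exactly $\|(\pi_w \otimes K_\omega)(u^\mu)\| = \max_{\epsilon \in \weights(\mu)} |q^{(\omega,\epsilon)}|$, which is not of the form you wrote (already for $SU(3)$), but this plays no role in the proof.
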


\begin{rmk}\label{rmk:remark on invariant subspace problem}
One may wonder whether the condition
\[
\Ker(\theta) \in \Prim (\Cf^\infty(K_q))
\]
in the definition of the CB-primitive spectrum $ \Prim_{CB}(\Cf^\infty(K_q)) $ can be removed, or equivalently, whether $ \Prim_{CB}(\Cf^\infty(K_q)) $ in Theorem \ref{thm:Spectra of Beurling-Fourier covers Prim} can be replaced by the set of all kernels of topologically irreducible representations $ \Cf^\infty(K_q) \rightarrow \Lbb(\Hcal) $ which CB extend to $ A(K_q, w_\beta) $ for some $ \beta \geq 1 $. 

It turns out that this question is at least as difficult as the invariant subspace problem.
Recall that the invariant subspace problem for separable infinite-dimensional Hilbert spaces asks whether every bounded linear operator $ L $ on a separable infinite-dimensional Hilbert space $ \Hcal $ admits a nontrivial closed invariant subspace, that is, a closed subspace
\( 0 \neq M \subsetneqq \Hcal\)
such that
\(
L(M) \subseteq M.
\)
For recent surveys about the state of the art of the invariant subspace problem we refer to \cite{chalendar2025}, \cite{chalendar}.

Suppose that the invariant subspace problem has a negative solution. That is, there exists a bounded operator $ L $ on an infinite-dimensional Hilbert space $ \Hcal $ with no nontrivial closed invariant subspaces. Since, for any subspace $ M \subseteq \Hcal $,
\[
L(M) \subseteq M
\quad \Longleftrightarrow \quad
(L + \lambda I)(M) \subseteq M
\]
for any $ \lambda \in \Cbb $, 
we may assume without loss of generality that $ L $ is invertible. 

Let us now explain how this relates to our situation. Let $e \in W$ be the unit element. According to \cite[Lemma~8.8]{Yakimov2014}, the quotient $ \Cf^\infty(K_q)/I_{(e,e)} $ is isomorphic to $ \Cf^\infty(T) $, the algebra of matrix coefficients of the classical maximal torus  $T \subseteq K $, via
\[
\varphi: \Cf^\infty(K_q) / I_{{(e,e)}} \ni u^\lambda _{ i j} \longmapsto \delta_{ij} t^{\epsilon_j ^\lambda} \in \Cf^\infty (T), \qquad \lambda \in \weights^+, \, 1 \leq i,j \leq n_\lambda ,
\]
where $ t^\mu: T \rightarrow \Tbb $ is the character associated to $ \mu \in V(\lambda) $, defined by
\[
t^\mu : T \ni \exp(X) \longmapsto e^{ \mu(X) } \in \Tbb , \qquad X \in \tf.
\]
Note that $ \Cf^\infty(T) \cong \Cbb [ t^{\pm \varpi_1}, \dots, t^{\pm \varpi_N} ] $ is a Laurent polynomial algebra.

Consider the representation $ \theta: \Cf^\infty(K_q) \rightarrow \Lbb(\Hcal) $ obtained by composing the canonical projection $ \Cf^\infty(K_q) \rightarrow  \Cf^\infty(K_q)/I_{(e,e)} \cong \Cf^\infty(T) $ with the representation 
\[
\Cf^\infty(T) \ni t^{\varpi_j} \longmapsto L \in \Lbb(\Hcal), \qquad 1 \leq j \leq N.
\]
Since $ L $ has no nontrivial closed invariant subspaces it follows that $ \theta $ is topologically irreducible. Therefore $ \Ker(\theta) \subseteq \Cf^\infty(K_q) $ is a prime ideal, and in fact 
\[
\Ker(\theta) \in \Spec_{(e,e)}(\Cf^\infty(K_q)).
\]
However, 
the image of $ \theta $ must be infinite-dimensional, and therefore $ \Ker(\theta) $ cannot be primitive, compare \cite[Corollaries~8.11 and 8.12]{Yakimov2014}.

To summarise, the existence of a bounded operator $ L $ on an infinite-dimensional Hilbert space with no nontrivial closed invariant subspaces implies the existence of a topologically irreducible representation of $ \Cf^\infty(K_q) $ whose kernel is not primitive. 
\end{rmk}

Of course, in the above context one may still attempt to establish partial results for homogeneous strata of the form $ \Spec_{(w,w)}\big( \Cf^\infty(K_q) \big) $ with $ w \neq e $. We do not pursue this problem here. 

\section{Complex semisimple quantum groups and primitive spectrum}\label{sec:Complex semisimple quantum groups and primitive spectrum}

Recall the classical one-to-one correspondence
\[
\Irr(\Ocal^\mathbb{R}(K)) \ni \theta \longmapsto \Ker(\theta) \in \Prim(\Ocal^\mathbb{R}(K))
\]
for the algebra $ \Ocal^\mathbb{R}(K) $ of matrix coefficients of the compact group $ K $. The set $ \Irr(\Ocal^\mathbb{R}(K)) $ of equivalence classes of irreducible representations of $ \Ocal^\mathbb{R}(K)$ coincides with the set of holomorphic evaluations at points of $ G $. This is due to the fact that the elements of $ \Ocal^\mathbb{R}(K) $ extend canonically to holomorphic functions on the complexification $ G $.

On the quantum side, a natural candidate for the complexification of the standard quantisation $ K_q $ is the Drinfeld double $ G_q = D(K_q) $ of $ K_q $. There is an algebra embedding of $ \Cf^\infty(K_q) = \Ocal^\mathbb{R}(K_q) $ into the algebra of polynomial functions on $G_q$, which gives rise to the notion of \emph{quantum holomorphic evaluations at points of $G_q$}.

It is a natural task to investigate the relationship between quantum holomorphic evaluations and the primitive spectrum of $\Cf^\infty(K_q)$, and this is the focus of the present section.

\subsection{The \texorpdfstring{$q$}{q}-deformation of complex semisimple Lie groups and algebras of polynomials}

The underlying $*$-algebra of the algebraic quantum group dual to $\Cf^\infty(K_q)$ is
\begin{equation*}
\Df(K_q) = \bigoplus_{\mu \in \weights^+} \End(V(\mu)).
\end{equation*}
In analogy to the Iwasawa decomposition of a complex Lie group, the $ * $-algebra
\begin{equation}\label{eq:complex semisimple quantum group}
\Cfrak_c ^\infty(G_q) = \Cfrak^\infty(K_q) \otimes \Df(K_q) \cong \bigoplus_{\mu \in \weights^+} \Cfrak^\infty(K_q) \otimes \End(V(\mu))
\end{equation}
can be endowed with an algebraic quantum group structure, which is called \textit{the quantised algebra of functions on $G_q$}. It follows directly from the definition that we have a direct product decomposition 
\begin{equation} \label{eq:irreducible representations of Cc(G_q)}
\SSp(\Cfrak_c^\infty(G_q)) \cong \SSp(\Cfrak^\infty (K_q)) \times \SSp(\Df(K_q))
\end{equation}
of the set of topologically irreducible $ * $-representations of $ \Cfrak_c ^\infty(G_q) $ via the correspondence
\[
(\pi_C, \pi_\lambda) \longmapsto \pi_C \otimes \pi_\lambda,
\]
where $\pi_\lambda \in \SSp( \Df(K_q)) $ for $ \lambda \in \weights^+ $ is the projection onto the corresponding summand of $ \Df(K_q) $. The algebraic multiplier algebra of $\Cfrak_c ^\infty(G_q)$ is the $*$-algebra
\begin{equation*}
\Cfrak^\infty(G_q) = \prod_{\mu \in \weights^+} \Cfrak^\infty(K_q) \otimes \End(V(\mu)).
\end{equation*}
Every $\pi \in \SSp (\Cfrak_c ^\infty(G_q))$ extends uniquely to a unital $*$-representation of $\Cfrak^\infty (G_q)$, as it factors through one of the components in \eqref{eq:complex semisimple quantum group}, and we denote this extension by the same symbol $\pi$.

Let $ \Ocal^\Rbb (G_q) = \Cf^\infty(K_q) \bowtie \Cf^\infty(K_q)$ be the Drinfeld double associated to the universal $r$-form
\[
\Cf^\infty(K_q) \times \Cf^\infty(K_q) \ni (f,g) \longmapsto (\Rcal^{-1} , f \otimes g) \in \Cbb.
\]
More explicitly, we can describe $ \Ocal^\Rbb (G_q) $ as the tensor product $\Cf^\infty(K_q) \otimes \Cf^\infty(K_q)$ endowed with the Hopf algebra structure such that 
\begin{gather*}
(f \bowtie g) (h \bowtie k) = (\Rcal^{-1} , h_{(1)} \otimes g_{(1)}) f h_{(2)} \bowtie g_{(2)} k (\Rcal , h_{(3)} \otimes g_{(3)}), \\
\Delta(f \bowtie g) = ( f_{(1)} \bowtie g_{(1)} ) \otimes ( f_{(2)} \bowtie g_{(2)}), \quad \epsilon( f \bowtie g) = \epsilon(f) \epsilon(g), \\
S(f \bowtie g) = ( \Rcal , f_{(1)} \otimes g_{(1)} ) S(f_{(2)}) \bowtie S(g_{(2)}) ( \Rcal^{-1} , f_{(3)} \otimes g_{(3)} ),
\end{gather*}
for $f,g,h,k \in \Cf^\infty(K_q)$. Moreover, $ \Ocal^\Rbb (G_q) $ becomes a Hopf $ * $-algebra with respect to the involution determined by 
\[
(f \bowtie g)^* = g^* \bowtie f^*
\]
for $f,g \in \Cf^\infty(K_q)$. 

Recall that we have a canonical embedding $U_q(\gfrak) \subseteq \prod_{\mu \in \weights^+} \End(V(\mu))$ and consider the map
\begin{equation*}
i: \Ocal^\Rbb(G_q) \rightarrow \Cf^\infty (G_q), \quad i(f \bowtie g) = f_{(1)} g_{(1)} \otimes l^- (f_{(2)}) l^+ (g_{(2)}).
\end{equation*}
One checks that $ i $ is an injective Hopf $*$-algebra homomorphism, the image of which we will denote by
\[
\Pol^\Rbb (G_q) = i ( \Ocal^\Rbb (G_q) ) \subseteq \Cf^\infty(K_q) \otimes U_q ^\Rbb (\kf) \subseteq \Cf^\infty(G_q),
\]
and refer to as \textit{the algebra of polynomial functions on $G_q$}. The restrictions of $i$ to the first and second tensor components of $ \Ocal^\Rbb(G_q) = \Cf^\infty(K_q) \otimes \Cf^\infty(K_q) $ will be denoted by $ i_-, i_+ : \Cf^\infty(K_q) \rightarrow \Pol^\Rbb (G_q)$, respectively, and their images by
\[
\Pol(G_q) = i_- (\Cf^\infty(K_q)), \quad \overline{\Pol}( G_q ) = i_+(\Cf^\infty(K_q)).
\]
Note that $ i_- $ and $ i_+ $ are injective algebra homomorphisms such that $ i_\pm(f)^* = i_\mp(f^*) $ for all $ f \in \Cf^\infty(K_q) $, which implies $ *(\Pol(G_q)) = \overline{\Pol}(G_q) $.
We refer to the subalgebras
\(\Pol(G_q), \overline{\Pol}(G_q) \subseteq \Pol^\mathbb{R}(G_q) \)
as the \emph{algebras of holomorphic and antiholomorphic polynomials on \( G_q \)}, respectively. The embedding
\begin{equation}\label{eq:q-holomorphic embedding}
i_- : \Cf^\infty(K_q) \hookrightarrow \Pol^\Rbb(G_q)
\end{equation}
serves as a quantum analogue of the classical holomorphic extension map. 

\begin{lem}\label{lem:PolR(Gq) contains 1 otimes FrUq(g)}
We have
\(
1 \otimes F_r U_q (\gf) \subseteq \Pol^\Rbb (G_q).
\)
\end{lem}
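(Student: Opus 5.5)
The plan is to produce, for every $ f \in \Cf^\infty(K_q) $, an explicit element of $ \Pol^\Rbb(G_q) $ equal to $ 1 \otimes J(f) $. Since $ J $ is a linear isomorphism from $ \Cf^\infty(K_q) $ onto $ F_r U_q(\gf) $, this gives $ 1 \otimes F_r U_q(\gf) \subseteq \Pol^\Rbb(G_q) $. The candidate is
\[
X_f = i_+\big(S(f_{(1)})\big)\, i_-\big(f_{(2)}\big) \in \Pol^\Rbb(G_q).
\]
It lies in $ \Pol^\Rbb(G_q) $ because $ \Pol^\Rbb(G_q) $ is an algebra containing the images of $ i_+ $ and $ i_- $.

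First I compute $ i_+ $ on an antipode. Since $ i_+(g) = g_{(1)} \otimes l^+(g_{(2)}) $ and $ \Delta(S(g)) = S(g_{(2)}) \otimes S(g_{(1)}) $, we get
\[
i_+(S(g)) = S(g_{(2)}) \otimes l^+(S(g_{(1)})) = S(g_{(2)}) \otimes \hat S l^+(g_{(1)}).
\]
The last equality uses that $ l^+ $ is a Hopf algebra homomorphism, so it intertwines the antipodes.

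Next I multiply out in $ \Cf^\infty(G_q) = \prod_\mu \Cf^\infty(K_q) \otimes \End(V(\mu)) $. I will use that the algebra structure here is the tensor product algebra structure coming from \eqref{eq:complex semisimple quantum group}; the Iwasawa-type description $ \Cf_c^\infty(G_q) = \Cf^\infty(K_q) \otimes \Df(K_q) $ has this as its underlying algebra. Writing the iterated coproduct of $ f $ with four legs, this gives
\[
X_f = S(f_{(2)}) f_{(3)} \otimes \hat S l^+(f_{(1)})\, l^-(f_{(4)}).
\]
Applying the antipode axiom $ S(f_{(2)}) f_{(3)} = \epsilon(f_{(2)}) 1 $ and the counit property, this collapses to
\[
X_f = 1 \otimes \hat S l^+(f_{(1)})\, l^-(f_{(2)}) = 1 \otimes J(f).
\]

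The only genuinely delicate point is the multiplication convention in the middle step: whether $ \Cf^\infty(G_q) $ carries the plain tensor product algebra structure, with the function factors and the $ U_q(\gf) $-factors multiplying separately. I would check this against the definition of $ \Cf_c^\infty(G_q) $ as a $ * $-algebra. I would also check that the products of elements of $ \Pol^\Rbb(G_q) \subseteq \Cf^\infty(K_q) \otimes U_q^\Rbb(\kf) $ are computed componentwise in each $ \End(V(\mu)) $, which is consistent with the canonical embedding $ U_q(\gf) \subseteq \prod_\mu \End(V(\mu)) $. Once this is confirmed, the rest is Sweedler bookkeeping.
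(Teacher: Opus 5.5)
Your argument is correct, but it uses a different element from the paper's. The paper evaluates $i$ on the single element $S(f_{(1)}) \bowtie f_{(2)}$ of the double, that is, on $i_-(S(f_{(1)}))\, i_+(f_{(2)})$. This collapses to $1 \otimes l^-(S(f_{(1)}))\, l^+(f_{(2)}) = 1 \otimes \hat{S}^{-1} I'(S(f))$. The paper then needs the auxiliary isomorphism $I'$ onto $F_l U_q(\gf)$, imported from \cite{Lee2025}, and the relation between $F_l U_q(\gf)$ and $F_r U_q(\gf)$ under the antipode from \eqref{eq:locally finite part is big1}.

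You put the antiholomorphic factor on the left, so you land directly on $1 \otimes J(f)$. You therefore only need the fact, already recorded in the preliminaries, that $J$ maps $\Cf^\infty(K_q)$ onto $F_r U_q(\gf)$. This avoids $I'$ and the external reference altogether. The price is that you multiply inside $\Cf^\infty(G_q)$ in the order $i_+(\cdot)\, i_-(\cdot)$, which is not how $i$ is defined, and you must use that $\Pol^\Rbb(G_q)$ is closed under products. The paper only reads off the defining formula for $i$ on one element and does not need multiplicativity of $i$.

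The point you flag is not a problem. $\Cf_c^\infty(G_q) = \Cf^\infty(K_q) \otimes \Df(K_q)$ carries the tensor product $*$-algebra structure; only the coproduct is twisted. This is consistent with $i(f \bowtie g) = i_-(f)\, i_+(g)$, and with the paper's later use of $(\pi_C \otimes \pi_D)(f \otimes X) = \pi_C(f)\pi_D(X)$ for representations with commuting ranges.

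Your computation is in fact the one the paper itself uses in the proof of Theorem~\ref{thm:Iwasawa decomposition}, where $i_-(u^\mu)^*\, i_-(u^\mu) = 1 \otimes J(u^\mu)$ is obtained in exactly this way.
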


\begin{proof}
For $f \in \Cf^\infty (K_q)$ we calculate
\begin{multline*}
\Pol^\Rbb (G_q) \ni i \big( S(f_{(1)} ) \bowtie f_{(2)} \big) = S(f_{(2)})f_{(3)} \otimes l^- \big(S( f_{(1)} ) \big) l^+ (f_{(4)} ) \\
= 1 \otimes \hat{S}^{-1} \Big( l^+ \big( S(f_{(2)} ) \big) \hat{S} l^- \big( S(f_{(1)} ) \big) \Big) = 1 \otimes \hat{S}^{-1} I' \big( S(f) \big).
\end{multline*}
Thus, the assertion follows from \eqref{eq:locally finite part is big1} and \eqref{eq:I'-isomorphism}.
\end{proof}

Recall that any topologically irreducible $ * $-representation of $ \Cf_c ^\infty(G_q) $ can be extended uniquely to a topologically irreducible unital $ * $-representation of $ \Cf^\infty(G_q) $. 

\begin{prop}\label{prop:images of irreps for Cc(Gq) and PolR(Gq) coincide}
For any $\pi \in \SSp ( \Cf_c ^\infty (G_q) )$ we have
\begin{equation*}
\pi(\Cf_c ^\infty(G_q)) = \pi ( \Pol^\Rbb (G_q) ).
\end{equation*}
\end{prop}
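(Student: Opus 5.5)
Write $\pi = \pi_C \otimes \pi_\lambda$ with $\pi_C \in \SSp(\Cf^\infty(K_q))$ and $\lambda \in \weights^+$, using \eqref{eq:irreducible representations of Cc(G_q)}. Then $\pi$ factors through the summand $\Cf^\infty(K_q) \otimes \End(V(\lambda))$ of $\Cf^\infty(G_q)$, and
\[
\pi(\Cf_c^\infty(G_q)) = \pi_C(\Cf^\infty(K_q)) \otimes \End(V(\lambda)).
\]
Since $\Pol^\Rbb(G_q) \subseteq \Cf^\infty(K_q) \otimes U_q^\Rbb(\kf)$, and $\pi$ acts on the second leg via $\pi_\lambda$, the inclusion $\pi(\Pol^\Rbb(G_q)) \subseteq \pi(\Cf_c^\infty(G_q))$ is immediate. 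The plan is therefore to prove the reverse inclusion, and it suffices to show two things: $\pi(\Pol^\Rbb(G_q))$ contains $1 \otimes \End(V(\lambda))$, and it contains $\pi_C(f) \otimes 1$ for every $f \in \Cf^\infty(K_q)$. The image is an algebra, so these two facts together give everything.

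For the first point I use Lemma~\ref{lem:PolR(Gq) contains 1 otimes FrUq(g)}. It gives $1 \otimes F_r U_q(\gf) \subseteq \Pol^\Rbb(G_q)$, so
\[
\pi(\Pol^\Rbb(G_q)) \supseteq 1 \otimes \pi_\lambda(F_r U_q(\gf)),
\]
and $\pi_\lambda(F_r U_q(\gf)) = \End(V(\lambda))$ by \eqref{eq:locally finite part is big in representation}. The same lemma also provides the elements $1 \otimes K_{-2\mu}$ for $\mu \in \weights^+$, since these lie in $F_r U_q(\gf)$ by \eqref{eq:locally finite part is big1}. The space $\Pol^\Rbb(G_q)$ is then stable under left and right multiplication by these elements. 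Using \eqref{eq:locally finite part is big2}, I expect $\Pol^\Rbb(G_q)$ to be stable in fact under multiplication by $1 \otimes U_q(\gf)$, although only the image under $\pi_\lambda$ matters for the argument.

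For the second point, take $f \in \Cf^\infty(K_q)$ and consider
\[
i_-(f) = f_{(1)} \otimes l^-(f_{(2)}).
\]
Applying $\pi$ gives $\sum \pi_C(f_{(1)}) \otimes \pi_\lambda(l^-(f_{(2)}))$. Since $1 \otimes \End(V(\lambda))$ is already contained in the image, I can multiply by matrix units $1 \otimes E_{ab}$ on both sides to cut out individual tensor components. This is the step I expect to be the main obstacle, because the coproduct entangles the two legs. To deal with it, take $f = u^\mu_{ij}$, so that
\[
\pi(i_-(u^\mu_{ij})) = \sum_k \pi_C(u^\mu_{ik}) \otimes \pi_\lambda(l^-(u^\mu_{kj})).
\]
The matrix $l^-(u^\mu)$ is invertible, with inverse $l^-(S(u^\mu))$. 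Multiplying on the right by $\pi(1 \otimes l^-(S(u^\mu_{jm})))$ and summing over $j$ then gives
\[
\sum_{j,k} \pi_C(u^\mu_{ik}) \otimes \pi_\lambda\bigl(l^-(u^\mu_{kj})\, l^-(S(u^\mu_{jm}))\bigr) = \pi_C(u^\mu_{im}) \otimes 1.
\]
This step uses that $1 \otimes l^-(S(u^\mu_{jm}))$, or at least its image under $\pi$, lies in $\pi(\Pol^\Rbb(G_q))$. That holds because $\pi$ of it lies in $1 \otimes \End(V(\lambda))$, which was established in the first point. Since the elements $u^\mu_{im}$ span $\Cf^\infty(K_q)$, this proves the second point and hence the proposition.
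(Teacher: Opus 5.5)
Your proof is correct. Its first half matches the paper's: you write $\pi = \pi_C \otimes \pi_\lambda$, note the easy inclusion $\pi(\Pol^\Rbb(G_q)) \subseteq \pi(\Cf_c^\infty(G_q))$, and combine Lemma~\ref{lem:PolR(Gq) contains 1 otimes FrUq(g)} with \eqref{eq:locally finite part is big in representation} to get $1 \otimes \End(V(\lambda)) \subseteq \pi(\Pol^\Rbb(G_q))$.

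The routes differ in how you extract $\pi_C(u^\mu_{jk}) \otimes 1$ from $\pi i_-(u^\mu_{jk})$. The paper compresses with the rank-one weight projections $1 \otimes p_l$. This uses two facts: $p_l \pi_\lambda(l^-(u^\mu_{mk})) p_l = 0$ for $m \neq k$, and the diagonal entry $l^-(u^\mu_{kk})$ acts on $e^\lambda_l$ by $q^{-(\epsilon^\mu_k,\epsilon^\lambda_l)}$. It thus obtains $\pi_C(u^\mu_{jk}) \otimes p_l$ up to a nonzero scalar, and then sums over $l$. You instead multiply on the right by the matrix $1 \otimes \pi_\lambda(l^-(S(u^\mu)))$. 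This is legitimate because only its image under $\pi$ is needed, and that image lies in $1 \otimes \End(V(\lambda))$, even though $1 \otimes l^-(S(u^\mu_{jm}))$ itself need not lie in $\Pol^\Rbb(G_q)$. Your step uses only that $l^-$ is a unital algebra homomorphism and that $u^\mu S(u^\mu) = 1$. So it avoids all weight bookkeeping, in particular the vanishing of compressions of off-diagonal entries between equal weights. The paper's version is more explicit, but it needs slightly more information about the $l$-functionals.

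One side remark: your expectation that $\Pol^\Rbb(G_q)$ is stable under multiplication by $1 \otimes U_q(\gf)$ is false in general. For $SL_q(2,\Cbb)$, grade both legs by $\Pbf/\Qbf \cong \Zbb/2$. Then every element of $\Pol^\Rbb(G_q)$ lies in the span of tensors whose two legs have the same degree, so $1 \otimes K_\varpi \notin \Pol^\Rbb(G_q)$. You rightly never use this claim, so the argument is unaffected.
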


\begin{proof}
According to the direct product decomposition in \eqref{eq:irreducible representations of Cc(G_q)} we can write $ \pi = \pi_C \otimes \pi_\lambda $ for $ \pi_C \in \SSp(\Cfrak^\infty (K_q)), \pi_\lambda \in \SSp(\Df(K_q)) $. 
By Lemma~\ref{lem:PolR(Gq) contains 1 otimes FrUq(g)} and \eqref{eq:locally finite part is big in representation}, we then have
\begin{equation*}
1 \otimes \Lbb(V(\lambda)) = 1 \otimes \pi_\lambda ( F_r U_q( \gf ) ) \subseteq \pi ( \Pol^\Rbb (G_q) ).
\end{equation*}
Now let $ p_l \in \Lbb ( V(\lambda)) $ denote the projection onto the subspace $\Cbb e^\lambda _l \subseteq V(\lambda)$ for $ 1 \leq l \leq n_\lambda$. Inspecting the definition of $ i $, we then have 
\[
(1 \otimes p_l ) \pi i ( u^\mu _{jk} \bowtie 1 ) (1 \otimes p_l) = \pi_C ( u^\mu_{jk} ) \otimes p_l \pi_\lambda \big( l^- (u^\mu _{kk} ) \big) p_l = \pi_C (u^\mu _{jk} ) \otimes q^{ - ( \epsilon_k ^\mu , \epsilon_l ^\lambda ) } p_l,
\] 
which implies that $ \pi_C(u^\mu _{jk}) \otimes 1 = \sum_{l = 1} ^{n_\lambda} \pi_C (u^\mu _{jk} ) \otimes p_l $ is contained in $\pi( \Pol^\Rbb (G_q) )$ for any $\mu \in \weights^+$ and $1 \leq j, k \leq n_\mu$. That is, we have 
\[
\pi_C ( \Cf^\infty(K_q) ) \otimes 1 \subseteq \pi ( \Pol^\Rbb (G_q)).
\]
Combining these observations yields 
\[
\pi ( \Cf_c ^\infty (G_q) ) = \pi_C ( \Cf^\infty (K_q) ) \otimes \pi_\lambda ( \Dfrak (K_q) ) \subseteq \pi(\Pol^\Rbb (G_q)).
\]
Since $\pi( \Pol^\Rbb (G_q) ) \subseteq \pi ( \Cf^\infty (G_q) ) = \pi ( \Cf_c ^\infty(G_q) )$, we have in fact
\[
\pi(\Cf_c ^\infty (G_q) ) = \pi( \Pol^\Rbb (G_q) )
\]
as claimed. 
\end{proof}

\begin{cor}\label{cor:SSp(Cc(Gq)) restricts to SSP(Pol(Gq))}
There is a canonical inclusion \[
\SSp(\Cf_c ^\infty(G_q)) \subseteq \SSp(\Pol^\Rbb(G_q))
\]
obtained via extension of topologically irreducible $ * $-representations from $ \Cf_c ^\infty (G_q) $ to $ \Cf^\infty(G_q) $ followed by restriction to $\Pol^\Rbb (G_q)$. 
\end{cor}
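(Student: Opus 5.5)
The plan is to start from $\pi \in \SSp(\Cf_c^\infty(G_q))$ and extend it uniquely to a unital $*$-representation of $\Cf^\infty(G_q)$. Then restrict this extension to the unital $*$-subalgebra $\Pol^\Rbb(G_q) \subseteq \Cf^\infty(G_q)$. Three things need checking: the restriction is a topologically irreducible $*$-representation, the assignment is well defined on unitary equivalence classes, and it is injective on classes.

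For irreducibility, I would use Proposition~\ref{prop:images of irreps for Cc(Gq) and PolR(Gq) coincide}, which gives $\pi(\Pol^\Rbb(G_q)) = \pi(\Cf_c^\infty(G_q))$. A closed subspace of $\Hcal_\pi$ that is invariant under $\pi(\Pol^\Rbb(G_q))$ is therefore invariant under $\pi(\Cf_c^\infty(G_q))$. By topological irreducibility of $\pi$ it must be trivial. The restriction is nonzero because it is unital. So $\pi|_{\Pol^\Rbb(G_q)}$ is a topologically irreducible $*$-representation.

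For well-definedness, note that a unitary intertwiner between $\pi$ and $\pi'$ on $\Cf_c^\infty(G_q)$ also intertwines their unique extensions to $\Cf^\infty(G_q)$. To see this, recall that each extension is determined by $\pi(m)\pi(x)\xi = \pi(mx)\xi$ on the dense subspace $\pi(\Cf_c^\infty(G_q))\Hcal_\pi$, which is all of $\Hcal_\pi$ by nondegeneracy. Hence the intertwiner also intertwines the restrictions to $\Pol^\Rbb(G_q)$.

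For injectivity, the converse step is again Proposition~\ref{prop:images of irreps for Cc(Gq) and PolR(Gq) coincide}. Suppose $U$ is a unitary intertwining $\pi|_{\Pol^\Rbb}$ and $\pi'|_{\Pol^\Rbb}$. I need $U$ to intertwine $\pi$ and $\pi'$ on $\Cf_c^\infty(G_q)$. The subtle point, and the main obstacle, is that the equality of images is only a set equality for each representation separately. An element $x \in \Cf_c^\infty(G_q)$ gives $\pi(x) = \pi(p)$ for some $p \in \Pol^\Rbb(G_q)$, but the same $p$ need not work for $\pi'$.

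I would handle this using the explicit form $\pi = \pi_C \otimes \pi_\lambda$ and the computation in the proof of that proposition. The elements $1 \otimes F_r U_q(\gf)$ lie in $\Pol^\Rbb(G_q)$ by Lemma~\ref{lem:PolR(Gq) contains 1 otimes FrUq(g)}. Under $\pi$, $\pi'$ they act through $\pi_\lambda$, $\pi_{\lambda'}$ on the second tensor factor. Since $1 \otimes F_r U_q(\gf)$ separates the $V(\lambda)$ (central elements such as Casimirs in $F_r U_q(\gf)$), unitary equivalence of the restrictions forces $\lambda = \lambda'$. The compressions $(1\otimes p_l)\, i(u^\mu_{jk}\bowtie 1)\,(1\otimes p_l)$ then recover $\pi_C(u^\mu_{jk})\otimes 1$ by a formula that is the same polynomial for both representations, since $p_l = \pi_\lambda(X_l)$ for fixed $X_l \in F_r U_q(\gf)$ via \eqref{eq:locally finite part is big in representation}. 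Hence $U$ intertwines $\pi_C \otimes 1$ with $\pi'_C \otimes 1$ and $1 \otimes \pi_\lambda$ with itself, so it intertwines $\pi$ and $\pi'$. This gives the canonical inclusion.
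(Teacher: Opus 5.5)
Your proposal is correct, and its core is the paper's reasoning. The paper gives no separate proof of this corollary: it reads off topological irreducibility of the restriction directly from the equality $\pi(\Cf_c^\infty(G_q)) = \pi(\Pol^\Rbb(G_q))$ in Proposition~\ref{prop:images of irreps for Cc(Gq) and PolR(Gq) coincide}, exactly as in your second paragraph.

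Where you go beyond the paper is injectivity on unitary equivalence classes, which the paper does not spell out here. It is subsumed later by the uniqueness of the pair $(\pi_C,\pi_D)$ in Theorem~\ref{thm:Iwasawa decomposition}, and hence by Theorem~\ref{thm:generalisation of Podles and Woronowicz}. A unitary intertwining the two restrictions transports one Iwasawa decomposition to the other, and uniqueness then forces it to intertwine the $\pi_C$'s and the $\pi_D$'s.

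Your direct argument avoids that machinery and is sound. You correctly spotted that equality of images alone does not suffice. Once $\lambda=\lambda'$, the element $\sum_l q^{(\epsilon^\mu_k,\epsilon^\lambda_l)}(1\otimes X_l)\,i(u^\mu_{jk}\bowtie 1)\,(1\otimes X_l)\in\Pol^\Rbb(G_q)$, with $\pi_\lambda(X_l)=p_l$, is a single element. It is mapped to $\pi_C(u^\mu_{jk})\otimes 1$ by $\pi$ and to $\pi'_C(u^\mu_{jk})\otimes 1$ by $\pi'$. Moreover $1\otimes F_rU_q(\gf)$ fills out $1\otimes\End(V(\lambda))$ by \eqref{eq:locally finite part is big in representation}, so $U$ intertwines $\pi$ and $\pi'$ on all of $\Cf_c^\infty(G_q)$.

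For the step $\lambda=\lambda'$ you do not need the separation of the $V(\lambda)$ by central elements. That fact is true (quantum Harish-Chandra), but it is not established in the paper. A more self-contained route: $K_{-2\mu}\in F_rU_q(\gf)$ for $\mu\in\weights^+$, so unitary equivalence of the restrictions forces $1\otimes\pi_\lambda(K_{-2\mu})$ and $1\otimes\pi_{\lambda'}(K_{-2\mu})$ to have the same spectrum. The maxima of these spectra are $q^{-2(\mu,\lambda)}$ and $q^{-2(\mu,\lambda')}$. Hence $(\mu,\lambda)=(\mu,\lambda')$ for all dominant $\mu$, and so $\lambda=\lambda'$.
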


We will show in the next subsection that the inclusion in Corollary \ref{cor:SSp(Cc(Gq)) restricts to SSP(Pol(Gq))} is in fact an equality.

\subsection{Iwasawa decomposition of complex semisimple quantum groups}

In the operator algebraic setting, complex quantum groups are defined as Drinfeld doubles of compact $ q $-deformations. The Iwasawa decomposition of a complex quantum group is basically built into the construction in this approach. 

However, the starting point for Podle\'s and Woronowicz in \cite{Podles1990} was different. They were considering coordinate functions and their conjugates, together with some representation-theoretic conditions, to derive a characterisation of the algebra of polynomial functions on the quantum Lorentz group $ SL_q(2, \Cbb) $. In this setting, the Iwasawa decomposition is not obvious from the outset. 

The goal of this section is to generalise the original approach of Podle\'s and Woronowicz in \cite{Podles1990} to the case of arbitrary complex semisimple quantum groups. This will allow us to give a complete description of the topologically irreducible $ * $-representations of algebras of polynomial functions on complex quantum groups by bounded operators on a Hilbert space. 

Let us say that a $ * $-representation $ \pi_D: U_q ^\Rbb (\kf) \rightarrow \Lbb(\Hcal) $ on a Hilbert space $ \Hcal $ is \emph{positive} if the operators $ \pi_D(K_\mu) $ are positive for all $ \mu \in \weights $. 

\begin{thm} \label{thm:Iwasawa decomposition}
Let $ \pi: \Pol^\Rbb(G_q) \rightarrow \Lbb(\Hcal) $ be a unital $ * $-representation. Then there exists a unital $ * $-representation $ \pi_C: \Cf^\infty(K_q) \rightarrow \Lbb(\Hcal) $ and a positive unital $ * $-representation $ \pi_D: U_q ^\Rbb (\kf) \rightarrow \Lbb(\Hcal) $ with commuting ranges such that 
\begin{equation}\label{eq:Iwasawa decomposition}
\pi = (\pi_C \otimes \pi_D)\big|_{\Pol^\Rbb (G_q)},
\end{equation}
where $ (\pi_C \otimes \pi_D)(f \otimes X) = \pi_C (f) \pi_D (X) $. Moreover, $ \pi_C $ and $ \pi_D $ are uniquely determined by $ \pi $. 
\end{thm}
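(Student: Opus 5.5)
The plan is to recover $\pi_D$ and $\pi_C$ from $\pi$ by a triangular ("Cholesky") factorisation of operator matrices. This is the algebraic form of the Iwasawa decomposition $G = KAN$. Write $X^\mu = i_-(u^\mu) \in M_{n_\mu}(\Pol^\Rbb(G_q))$. By definition of $i$,
\[
X^\mu = (u^\mu \otimes 1)(1 \otimes l^-(u^\mu)).
\]
Here $u^\mu$ is unitary. The matrix $L^\mu = l^-(u^\mu)$ has entries in $U_q(\bfrak_-)$, is triangular with respect to our non-ascending ordering of weights, and its diagonal entries are powers $K_{-\epsilon^\mu_j}$ of Cartan generators (compare the computation $p_l \pi_\lambda(l^-(u^\mu_{kk}))p_l = q^{-(\epsilon^\mu_k,\epsilon^\lambda_l)}p_l$ in the proof of Proposition~\ref{prop:images of irreps for Cc(Gq) and PolR(Gq) coincide}). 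Hence
\[
(X^\mu)^* X^\mu = 1 \otimes (L^\mu)^* L^\mu = 1 \otimes J(u^\mu).
\]
Moreover $X^\mu$ is invertible in $M_{n_\mu}(\Pol^\Rbb(G_q))$ with inverse $i_-(S(u^\mu))$. Thus $T^\mu := \pi(1 \otimes J(u^\mu)) = \pi(X^\mu)^*\pi(X^\mu)$ is a positive invertible operator on $\Hcal^{n_\mu}$.

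First I would define $\pi_D$ on $F_r U_q(\gf)$ by $\pi_D(Y) = \pi(1 \otimes Y)$, which is legitimate by Lemma~\ref{lem:PolR(Gq) contains 1 otimes FrUq(g)}. Next I would use the following standard fact: a positive invertible operator matrix admits a unique factorisation $T = M^*M$ with $M$ triangular and with positive invertible diagonal entries. Applying this to $T^\mu$ produces candidate operators $\pi_D(l^-(u^\mu_{ij}))$. In particular the diagonal entries give positive invertible operators $\pi_D(K_{-\epsilon^\mu_j})$. Using $\mu = \varpi_i$ and the relation \eqref{eq:relation for w-multiplicative elements} on highest and lowest weight entries, these should assemble into positive operators $\pi_D(K_\lambda)$ for all $\lambda \in \weights$. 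Since $U_q(\gf) = F_r U_q(\gf) U_q(\hf)$ by \eqref{eq:locally finite part is big2}, I would then extend $\pi_D$ to all of $U_q^\Rbb(\kf)$ by
\[
\pi_D(YK_\lambda) = \pi(1\otimes Y)\pi_D(K_\lambda).
\]
Finally I would set $\pi_C(u^\mu) = \pi(X^\mu)\,\pi_D(L^\mu)^{-1}$. This is unitary because $\pi_D(L^\mu)^*\pi_D(L^\mu) = T^\mu = \pi(X^\mu)^*\pi(X^\mu)$.

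The main obstacle is verifying that these formulas actually define $*$-homomorphisms, and that the ranges of $\pi_C$ and $\pi_D$ commute. For $\pi_D$ the key point is consistency. The square roots obtained for $K_\lambda$ must be multiplicative, and they must satisfy the commutation relations $K_\lambda Y K_{-\lambda} = $ (weight factor)$\cdot Y$ with $\pi(1\otimes Y)$ for $Y \in F_r U_q(\gf)$.

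To handle this I would exploit uniqueness of the triangular factorisation. Tensor products $u^\mu \otimes u^\nu$ contain $u^{\mu+\nu}$ as a block, and $l^-$ is a Hopf homomorphism. So the factorisation of $T^{\mu+\nu}$ must agree with the product of the factorisations for $\mu$ and $\nu$, which gives multiplicativity. The commutation relations should follow because conjugation by the positive operator $\pi(1\otimes K_{-2\lambda})$ preserves the factorisation structure, and a positive square root commutes with everything commuting with its square.

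For $\pi_C$, I would check the relations of $\Cf^\infty(K_q)$ together with the commuting ranges. To do this I would write $\pi(i_+(g))$ analogously and compare the exchange relations of $\Ocal^\Rbb(G_q)$ with the factorised expressions. The $R$-matrix factors in those relations should cancel exactly against the commutation of $l^\pm$ with $u^\mu$. Once this is done, \eqref{eq:Iwasawa decomposition} holds on the generators $i_\pm(u^\mu_{ij})$, and hence everywhere.

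Uniqueness follows from the same observation. Any decomposition as in \eqref{eq:Iwasawa decomposition} forces $\pi_D(L^\mu)$ to be a triangular factor of $T^\mu$ with positive diagonal, since positivity of $\pi_D$ makes the diagonal entries positive. Uniqueness of that factorisation then determines $\pi_D$ on generators, and $\pi_C(u^\mu) = \pi(X^\mu)\pi_D(L^\mu)^{-1}$ determines $\pi_C$.
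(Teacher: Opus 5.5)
Your set-up matches the paper's. You take $\pi_D=\pi(1\otimes\cdot)$ on $F_rU_q(\gf)$, let $\pi_D(K_{-\mu})$ be the positive square root of $\pi(1\otimes K_{-2\mu})$, and set $\pi_C(u^\mu)=\pi i_-(u^\mu)\,\pi_D(l^-(u^\mu))^{-1}$, which is unitary. The operator Cholesky factorisation is also a legitimate, if roundabout, route to uniqueness.

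The gap is the step that carries the theorem: proving that $\pi_C$ is multiplicative and that its range commutes with $\pi_D(U_q(\gf))$. Your sentence about $R$-matrix factors ``cancelling against the commutation of $l^\pm$ with $u^\mu$'' is not an argument. In $\Cf^\infty(K_q)\otimes U_q(\gf)$ the two legs commute for free, but for the operators $\pi_C(u^\mu_{ij})$ and $\pi_D(l^\pm(f))$ this commutation is exactly what must be shown. Once you substitute the factorised expressions into the exchange relations, these unknown operators occur in mixed order, and nothing in your sketch extracts the commutation from them.

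The missing idea is to compute $\pi i_-(x)^*\pi i_-(x)=\pi_DJ(x)$ for $x=u^\lambda\otimes u^\mu$ in two ways.
\begin{itemize}
\item Since $l^\pm$ are Hopf algebra homomorphisms, $J(u^\lambda_{ij}u^\mu_{kl})=\sum_s l^-(u^\mu_{sk})^*J(u^\lambda_{ij})\,l^-(u^\mu_{sl})$.
\item Multiplicativity of $\pi i_-$ sandwiches $\pi i_-(u^\lambda)^*\pi i_-(u^\lambda)=\pi_DJ(u^\lambda)$ between $\pi i_-(u^\mu)^*$ and $\pi i_-(u^\mu)=\pi_C(u^\mu)\pi_D(l^-(u^\mu))$.
\end{itemize}
Cancelling the invertible matrix $\pi_D(l^-(u^\mu))$ gives
\[
\sum_{s}\pi_C(u^\mu_{sk})^*\,\pi_DJ(u^\lambda_{ij})\,\pi_C(u^\mu_{sl})=\delta_{kl}\,\pi_DJ(u^\lambda_{ij}).
\]
By unitarity of $\pi_C(u^\mu)$, this says precisely that $\pi_C(u^\mu_{kl})$ commutes with $\pi_D(F_rU_q(\gf))$. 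By functional calculus and $U_q(\gf)=F_rU_q(\gf)U_q(\hf)$, it then commutes with all of $\pi_D(U_q(\gf))$. Only after this does multiplicativity of $\pi_C$ follow: decompose $u^\lambda\otimes u^\mu$ into irreducible blocks and move $\pi_D(l^-(u^\lambda))^{-1}$ past $\pi_C(u^\mu)$. Preservation of $*$ then comes from unitarity. Note that the first expansion already uses multiplicativity of $\pi_D$ on all of $U_q(\gf)$. So $\pi_D$ must be a genuine representation of $U_q^\Rbb(\kf)$ beforehand, not just a family of Cholesky entries.

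That construction of $\pi_D$ is itself only sketched, in three places.
\begin{itemize}
\item Multiplicativity of the square roots is automatic for commuting positive operators. Your argument via $T^{\mu+\nu}$ is therefore unnecessary, and as stated it presupposes relations between the entries of the two factors.
\item For $Y\in F_rU_q(\gf)$ of weight $\gamma$ you only have the twisted relation $AX=X(cA)$, with $A=\pi(1\otimes K_{-2\lambda})$, $X=\pi(1\otimes Y)$ and $c=q^{-2(\lambda,\gamma)}$. So ``a square root commutes with everything commuting with its square'' does not apply as stated. You need its intertwining form ($AX=XB$ with $A,B\geq 0$ implies $A^{1/2}X=XB^{1/2}$, by polynomial approximation) or the paper's Liouville argument.
\item The formula $\pi_D(YK_\lambda)=\pi(1\otimes Y)\pi_D(K_\lambda)$ needs a well-definedness check, since multiplication $F_rU_q(\gf)\otimes U_q(\hf)\to U_q(\gf)$ is not injective. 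The paper sidesteps this by assigning operators to the generators $K_\lambda,E_j,F_j$, built from $\pi_D(K_{-2\varpi_i})$, $\pi_D(K_{-2\rho}E_j)$ and $\pi_D(K_{-2\rho}K_jF_j)$, and checking the defining relations.
\end{itemize}
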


\begin{proof}
Using Lemma~\ref{lem:PolR(Gq) contains 1 otimes FrUq(g)}, we define a unital $ * $-representation $ \pi_D : F_r U_q ^\Rbb (\kf)  \rightarrow \Lbb(\Hcal)$ by
\[
\pi_D(x) = \pi(1 \otimes x).
\]
Pick $ \mu \in \weights^+ $ and consider $ u^\mu \in M_{n_\mu}(\Cf^\infty(K_q)) $. Recall also that $ i_-: \Cf^\infty(K_q) \rightarrow \Pol^\Rbb(G_q) $ is the embedding of holomorphic polynomials, and note that
\[
i_-(u^\mu) = \Big( \sum_{1 \leq k \leq n_\mu} u^\mu_{ik} \otimes l^-(u^\mu_{kj}) \Big)_{1 \leq i, j \leq n_\mu} \in M_{n_\mu} \big(\Pol^\Rbb(G_q) \big)
\]
is invertible. It follows that
\begin{multline}\label{eq:square of pi i1(u mu)}
S = \pi i_-(u^\mu)^* \pi i_-(u^\mu) 
= \pi\big(1 \otimes l^-(u^\mu)^* l^-(u^\mu)\big) 
= \pi\big(1 \otimes J(u^\mu)\big) \\
= \pi_D J(u^\mu) \in M_{n_\mu}(\Lbb(\Hcal)) \cong \Lbb(\Hcal^{n_\mu})
\end{multline}
is an invertible positive operator.

Let $\iota : \Hcal \rightarrow \Hcal^{n_\mu} \cong \Hcal \otimes V(\mu) $ denote the inclusion corresponding to the highest weight vector of $ V(\mu) $ and let $ p = \iota^*: \Hcal^{n_\mu} \rightarrow \Hcal $. Then 
\[
\pi_D(K_{-2\mu}) = \pi_D J(u^\mu_{11}) = p S \iota \in \Lbb(\Hcal)
\]
is a positive self-adjoint operator. 

We claim that $ \pi_D(K_{-2\mu}) $ is in fact invertible. Indeed, since $ S^{\frac{1}{2}} $ is invertible, the operator $S^{\frac{1}{2}} \iota : \Hcal \rightarrow \Hcal^{n_\mu}$ is injective with closed range. In particular, if $ \xi \in \Hcal $ satisfies $ \pi_D(K_{-2\mu}) \xi = 0 $, then
\[
0 = \langle \xi , \pi_D(K_{-2\mu}) \xi \rangle = \langle \xi, p S \iota \xi \rangle = \langle \iota \xi , S \iota \xi \rangle = \| S^{\frac{1}{2}} \iota \xi \|^2,
\]
so $\xi = 0$. This shows that $\pi_D(K_{-2\mu})$ is injective.

Next, by the closed range theorem, 
the adjoint $ p S^{\frac{1}{2}}: \Hcal^{n_\mu} \rightarrow \Hcal$ of $ S^{\frac{1}{2}} \iota $ also has closed range. 
Therefore,
\begin{align*}
\Img( \pi_D(K_{-2\mu})) &= p S^{\frac{1}{2}} S^{\frac{1}{2}} \iota(\Hcal) = p S^{\frac{1}{2}} \big( \Img(S^{\frac{1}{2}} \iota) \big) \\
&= p S^{\frac{1}{2}} \big( \overline{\Img(S^{\frac{1}{2}} \iota)} \big) = p S^{\frac{1}{2}} \big( \Ker(p S^{\frac{1}{2}})^\perp \big) \\
&= p S^{\frac{1}{2}} \Big( \Ker(p S^{\frac{1}{2}}) \oplus \Ker(p S^{\frac{1}{2}})^\perp \Big) = p S^{\frac{1}{2}}(\Hcal^{n_\mu}) = \Img(p S^{\frac{1}{2}}) 
\end{align*}
is closed in $\Hcal$. Since $\pi_D(K_{-2 \mu})$ is injective and self-adjoint, we conclude
\[
\Img(\pi_D ( K_{-2\mu})) = \overline{\Img(\pi_D(K_{-2 \mu}))} = \Ker(\pi_D(K_{-2\mu}))^\perp = \Hcal,
\]
proving the surjectivity of $\pi_D (K_{-2\mu} )$.

Hence $ \pi_D(K_{-2\mu}): \Hcal \rightarrow \Hcal $ is bijective, and its inverse is bounded by the open mapping theorem, thus completing the proof of our claim.

Recall from \eqref{eq:ej}--\eqref{eq:fj} that $ K_{-2\rho} E_j $ and $ K_{-2\rho} K_j F_j $ are contained in $ F_r U_q(\gf) $. Using that $\{ \pi_D(K_{-2\mu}) \mid \mu \in \weights^+ \}$ is a family of commuting invertible positive operators it follows that
\begin{gather*}
\tilde{K}_\lambda = \pi_D(K_{-2\varpi_1})^{- \frac{\lambda_1}{2}} \cdots \pi_D(K_{-2\varpi_N})^{- \frac{\lambda_N}{2}}, \\
\tilde{E}_j = \pi_D(K_{-2\rho})^{-1} \pi_D(K_{-2\rho} E_j), \\
\tilde{F}_j = \pi_D ( K_{-2 \varpi_1} )^{\frac{a_{1j}}{2}} \cdots \pi_D ( K_{-2 \varpi_N} )^{\frac{a_{Nj}}{2}}  \pi_D(K_{-2\rho})^{- 1} \pi_D(K_{-2\rho} K_j F_j) 
\end{gather*}
for $ \lambda = \lambda_1 \varpi_1 + \cdots + \lambda_N \varpi_N \in \weights $ and $ 1 \leq j \leq N $ are well-defined operators in $ \Lbb(\Hcal) $. Here, $(a_{ij})_{1 \leq i,j \leq N}$ is the Cartan matrix, which satisfies $\alpha_j = \sum_{1 \leq i \leq N} a_{ij} \varpi_i$.

For $ \lambda \in \weights $ consider the operator-valued holomorphic function 
\[
f(z) = \tilde{K}_\lambda^z q^{(\lambda, \alpha_j)z}\pi_D(K_{-2 \rho} K_j F_j) \tilde{K}_\lambda^{-z} 
\]
on $ \mathbb{C} $. Since $ \pi_D $ is a $ * $-representation we have $ f(z + 2) = f(z) $ for all $ z \in \mathbb{C} $, and since $ \tilde{K}_\lambda^z $ is unitary for purely imaginary $ z $, it follows that $ f(z) $ is bounded, thus constant by Liouville's theorem. As a consequence, we obtain 
\[
\tilde{K}_\lambda^z \pi_D(K_{-2 \rho} K_j F_j) \tilde{K}_\lambda^{-z} = q^{-(\lambda, \alpha_j)z}\pi_D(K_{-2 \rho} K_j F_j )
\]
for all $ z \in \mathbb{C} $. In a similar way one shows 
\[
\tilde{K}_\lambda^z \pi_D(K_{-2\rho} E_j) \tilde{K}_\lambda^{-z} = q^{(\lambda, \alpha_j)z} \pi_D(K_{-2\rho} E_j)
\]
for all $ z \in \mathbb{C} $. Using these formulas it is straightforward to check that the operators $ \tilde{K}_\lambda, \tilde{E}_j, \tilde{F}_j $ satisfy the defining relations for $ U_q^\Rbb(\kf) $.  
By construction, the resulting unital $ * $-representation of $ U_q^\Rbb(\kf) $ is positive, and it agrees with $ \pi_D $ on $ F_r U_q^\Rbb(\kf) $. Hence, we conclude that $ \pi_D $ extends to a positive unital $ * $-representation of $ U_q^\Rbb(\kf) $, which we denote again by $ \pi_D $.

Recall that the matrix coefficients $ u^\mu_{ij} $ for $ \mu \in \weights^+ $ and $ 1 \leq i,j \leq n_\mu $ form a linear basis of $ \Cf^\infty(K_q) $. We define a linear map $\pi_C : \Cf^\infty(K_q) \rightarrow \Lbb(\Hcal)$ via
\begin{equation}\label{eq:definition of pic(umu)}
\pi_C(u^\mu) = \pi i_-(u^\mu) \pi_D l^-(u^\mu)^{-1} \in M_{n_\mu}(\Lbb(\Hcal)), \quad \mu \in \weights^+.
\end{equation}
Then $ \pi_C(u^\mu) $ is invertible and
\begin{align*}
\pi_C(u^\mu)^* \pi_C(u^\mu) &= \pi_D l^+((u^\mu)^{*})^{-1} \pi i_- (u^\mu)^* \pi i_-(u^\mu) \pi_D l^-(u^\mu)^{-1} \\
&= \pi_D l^+(u^\mu) \pi_D J(u^\mu) \pi_D l^-(u^\mu)^{-1} \\
&= \pi_D l^+(u^\mu) \pi_D \hat{S} l^+(u^\mu) \pi_D l^-(u^\mu) \pi_D l^-(u^\mu)^{-1}
=1,
\end{align*}
so $ \pi_C (u^\mu) $ is in fact unitary.

Pick $ \lambda, \mu \in \weights^+ $ and consider the tensor product
\[
u^\lambda \otimes u^\mu = \big( u^\lambda _{ij} u^\mu _{kl} \big) _ {\substack {1 \leq i,j \leq n_\lambda \\ 1 \leq k,l \leq n_\mu } }\in M_{n_\lambda n_\mu} (\Cf^\infty (K_q)).
\]
Then the matrix of operators
\[
\pi i_- ( u^\lambda \otimes u^\mu ) ^* \pi i_- (u^\lambda \otimes u^\mu ) \in M_{n_\lambda n_\mu} ( \Lbb (\Hcal) )
\]
satisfies, by the definition of $ i_- $, 
\begin{align*}
\pi i_- ( u^\lambda \otimes u^\mu ) ^* \pi i_- (u^\lambda \otimes u^\mu ) &= \pi_D J ( u^\lambda \otimes u^\mu ) \\
&= \bigg( \sum_{ \substack{ 1 \leq r \leq n_\lambda \\ 1 \leq s \leq n_\mu } } \pi_D \Big( \hat{S} l^+ ( u^\lambda _{ir} u^\mu _{ k s} ) l^- ( u^\lambda _{r j}  u^\mu _{s l} ) \Big) \bigg) _{ \substack{1 \leq i, j \leq n_\lambda \\ 1 \leq k, l \leq n_\mu} } \\
&= \bigg( \sum_{ \substack{ 1 \leq r \leq n_\lambda \\ 1 \leq s \leq n_\mu } } \pi_D l^- ( u^\mu _{sk} )^* \pi_D J( u^\lambda _{ij} ) \pi_D l^- ( u^\mu _{ s l } ) \bigg) _{ \substack{1 \leq i, j \leq n_\lambda \\ 1 \leq k, l \leq n_\mu} }
\end{align*}
as in \eqref{eq:square of pi i1(u mu)}. 
Moreover, using \eqref{eq:definition of pic(umu)} and the fact that $ \pi i_- $ is an algebra homomorphism, we also have
\begin{multline*}
\pi i_-(u^\lambda \otimes u^\mu) ^* \pi i_-(u^\lambda \otimes u^\mu) = \bigg( \sum_{ \substack{ 1 \leq r \leq n_\lambda \\ 1 \leq s \leq n_\mu } } \pi i_- ( u^\mu _{ s k } )^* \pi i_- ( u^\lambda _{ r i } ) ^* \pi i_- (u ^\lambda _{rj} ) \pi i_- ( u^\mu _{sl} ) \bigg)_{ \substack{1 \leq i,j \leq n_\lambda \\ 1 \leq k,l \leq n_\mu} } \\
= \bigg( \sum_{ 1 \leq s , p , t \leq n_\mu } \pi_D l^- ( u^\mu _{ p k } )^* \pi_C ( u^\mu _{ s p } )^*  \pi_D J ( u^\lambda _{ij} ) \pi_C ( u^\mu _{ s t } ) \pi_D l^- ( u^\mu _{t l} ) \bigg)_{ \substack{1 \leq i,j \leq n_\lambda \\ 1 \leq k,l \leq n_\mu} } .
\end{multline*}
Equating the last expressions of the two preceding equations and cancelling the conjugation by the invertible matrix $\pi_D l^-(u^\mu)$ from both sides, we obtain
\[
\delta_{kl} \, \pi_D J ( u^\lambda _{ij}) = \sum_{1 \leq s \leq n_\mu} \pi_C ( u^\mu _{s k } )^* \pi_D J( u^\lambda _{ij} ) \pi_C ( u^\mu _{sl}) , \quad 1 \leq i, j \leq n_\lambda, \, 1 \leq k, l \leq n_\mu,
\]
which is equivalent to
\[
\pi_C ( u^\mu _{kl} ) \pi_D J (u^\lambda _{ij} ) = \pi_D J ( u^\lambda _{ij} ) \pi_C ( u^\mu _{kl} ) , \quad 1 \leq i,j \leq n_\lambda, \, 1 \leq k,l \leq n_\mu.
\]
This shows that the elements in $ \pi_C(\Cf^\infty (K_q)) $ commute with those in $ \pi_D(F_rU_q(\gf)) $. Since $ K_{-2 \lambda} \in F_rU_q (\gf) $ for all $ \lambda \in \weights^+ $, it follows from the definition of $ \pi_D (K_\lambda) = \tilde{K} _\lambda $ and functional calculus that we have
\[
\pi_C(u^\mu _{kl}) \pi_D (K_{\lambda}) = \pi_D ( K_\lambda ) \pi_C(u^\mu _{kl}), \quad \lambda, \mu \in \weights^+, \, 1 \leq k,l \leq n_\mu.
\]
This, in turn, implies by \eqref{eq:locally finite part is big2} that $ \pi_C(\Cf^\infty(K_q)) $ and $ \pi_D(U_q (\gf)) $ commute.

Next we show that $ \pi_C $ is a unital $ * $-algebra representation. Let $ \lambda, \mu \in \weights^+ $ and consider the decomposition
\[
u^\lambda \otimes u^\mu \cong \bigoplus_{\nu \subseteq \lambda \otimes \mu} u^\nu
\]
of the tensor product 
$ u^\lambda \otimes u^\mu $ into irreducibles. Then
\begin{align*}
\pi_C(u^\lambda \otimes u^\mu) &\cong \bigoplus_{\nu \subseteq \lambda \otimes \mu} \pi_C(u^\nu) = \bigoplus_{\nu \subseteq \lambda \otimes \mu} \pi i_-(u^\nu) \bigoplus_{\nu \subseteq \lambda \otimes \mu} \pi_D l^-(u^\nu)^{-1} \\
&\cong \pi i_-(u^\lambda \otimes u^\mu) \pi_D l^-(u^\lambda \otimes u^\mu)^{-1} \\
&= \big( \pi i_-(u^\lambda) \otimes \pi i_-(u^\mu) \big) \big( 1 \otimes \pi_D l^-(u^\mu) \big)^{-1} \big( \pi_D l^-(u^\lambda) \otimes 1 \big)^{-1} \\
&= \big( \pi i_-(u^\lambda) \otimes 1 \big) \big( 1 \otimes \pi_C(u^\mu) \big) \big( \pi_D l^-(u^\lambda) \otimes 1 \big)^{-1},
\end{align*}
and since the images of $ \pi_C $ and $ \pi_D $ commute, this equals
\[
= \big( \pi i_-(u^\lambda) \otimes 1 \big) \big( \pi_D l^-(u^\lambda)^{-1} \otimes 1 \big) \big( 1 \otimes \pi_C(u^\mu) \big) = \pi_C(u^\lambda) \otimes \pi_C(u^\mu).
\]
We conclude that
\[
\pi_C(u^\lambda_{ij} u^\mu_{kl}) = \pi_C(u^\lambda_{ij}) \pi_C(u^\mu_{kl}) \quad \text{for } 1 \leq i,j \leq n_\lambda \text{ and } 1 \leq k,l \leq n_\mu,
\]
proving the multiplicativity of $ \pi_C $. Observe also that we have $ \pi_C(1) = \id $ by construction. 
Finally, since $\pi_C(u^\mu) \in M_{n_\mu}(\Lbb(\Hcal))$ is unitary for all $\mu \in \weights^+$, we have
\[
\pi_C(u^\mu)^* = \pi_C(u^\mu)^{-1} = \pi_C((u^\mu)^{-1}) = \pi_C((u^\mu)^*) \in M_{n_\mu}(\Lbb(\Hcal)),
\]
which shows that $ \pi_C $ is $ * $-preserving.

Now consider the unital $ * $-representation
\[
\pi_C \otimes \pi_D : \Cf^\infty(K_q) \otimes U_q^\Rbb(\kf) \ni f \otimes X \longmapsto \pi_C(f) \pi_D (X) \in \Lbb(\Hcal).
\]
According to \eqref{eq:definition of pic(umu)} we have
\[
(\pi_C \otimes \pi_D) i_- = \pi i_-,
\]
and since $ i_-(\Cf^\infty(K_q)) $ generates $ \Pol^\Rbb(G_q) $ as a $ * $-algebra, this implies
\[
(\pi_C \otimes \pi_D)|_{\Pol^\Rbb(G_q)} = \pi
\]
as required. 

Uniqueness of $ \pi_D: U_q(\gf) \rightarrow \Lbb(\Hcal) $ follows from the fact that this map is uniquely determined by $ \pi $ on $ F_r U_q(\gf) $, together with the requirement that $ \pi_D $ is positive. Uniqueness of $ \pi_C $ is then a consequence of $ \eqref{eq:definition of pic(umu)} $.  
\end{proof}

\begin{lem}\label{lem:bounded representation of Uq(g)}
Assume that $ \pi_D: U_q^\Rbb(\kf) \rightarrow \Lbb(\Hcal)$ is a topologically irreducible positive unital $ * $-representation. Then there exists a uniquely determined weight $ \lambda \in \weights^+ $ such that $ \pi_D $ is unitarily equivalent to the $ * $-representation $ 
\pi_\lambda: U_q ^\Rbb (\kf) \longrightarrow \Lbb(V(\lambda))
$ 
associated with $ \lambda $. 
\end{lem}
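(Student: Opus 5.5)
The plan is to show that $\pi_D$ is finite-dimensional and then apply the classification of finite-dimensional integrable representations. Since $\pi_D$ is positive, the operators $\pi_D(K_{\varpi_1}), \dots, \pi_D(K_{\varpi_N})$ form a commuting family of positive invertible operators. Write $\pi_D(K_{\varpi_i}) = e^{h A_i}$ with $A_i$ commuting self-adjoint bounded operators, and let $P$ be the joint spectral measure of $(A_1,\dots,A_N)$ on $\Rbb^N$. Identify a point $x \in \Rbb^N$ with the real weight $\nu = \sum_i x_i \alpha_i^\vee$ (suitably normalised) so that $\pi_D(K_\lambda) = q^{(\lambda,\nu)}$ on the spectral subspace over $\nu$. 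The relations $K_\lambda E_i K_{-\lambda} = q^{(\lambda,\alpha_i)} E_i$ together with functional calculus show that $\pi_D(E_i)$ maps $P(B)\Hcal$ into $P(B+\alpha_i)\Hcal$, and $\pi_D(F_i)$ maps it into $P(B-\alpha_i)\Hcal$, for Borel sets $B$. This is exactly the Liouville-type argument already used in the proof of Theorem~\ref{thm:Iwasawa decomposition}.

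Next we find a highest weight vector. The spectrum $\sigma$ of the tuple is compact. Pick a point $\nu_0 \in \sigma$ maximising the linear functional $(\rho^\vee,\cdot)$, or more precisely a point that is extremal with respect to the partial order in the direction of the positive roots. For a small neighbourhood $B$ of $\nu_0$, the translates $B+\alpha_i$ lie outside $\sigma$ once $B$ is small enough relative to the gap $(\rho^\vee,\alpha_i)>0$. Hence $\pi_D(E_i)$ kills $P(B)\Hcal \neq 0$ for all $i$.

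The main obstacle is to extract an actual eigenvector, since $P(\{\nu_0\})$ might be zero a priori. To handle this, restrict attention to the $U_q(\mathfrak{sl}_2)$-triples $(E_i,F_i,K_i)$ acting on the closed subspace generated by $P(B)\Hcal$. For any $\xi \in P(B)\Hcal$, the relation
\[
E_iF_i^n\xi = [n]_{q_i}\,F_i^{n-1}\,\frac{q_i^{1-n}K_i - q_i^{n-1}K_i^{-1}}{q_i-q_i^{-1}}\,\xi
\]
combined with boundedness of $\pi_D(F_i)$ forces $\|F_i^n\xi\|$ to behave in a controlled way. The vectors $F_i^n\xi$ have spectral support in $B - n\alpha_i$, which leaves the compact set $\sigma$ for large $n$, so $F_i^n\xi = 0$ for large $n$. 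Taking the smallest such $n$ gives $0 = E_iF_i^n\xi$, which equals a multiple of $F_i^{n-1}$ applied to $(q_i^{1-n}K_i - q_i^{n-1}K_i^{-1})\xi$. By a downward induction this forces the spectral values of $K_i$ on $P(B)\Hcal$ to equal $q_i^{m}$ with $m = (\nu_0,\alpha_i^\vee) \in \Nbb$ (shrinking $B$ as needed). Hence $\nu_0 \in \weights^+$ and $P(B)\Hcal = P(\{\nu_0\})\Hcal =: \Hcal_{\nu_0} \neq 0$. It follows in the same way that the whole spectrum is discrete and contained in $\weights$.

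Now take a unit vector $v \in \Hcal_{\nu_0}$. The subspace $U_q(\gf)v = U_q(\nfrak_-)v$ is a quotient of the Verma module of highest weight $\lambda := \nu_0$. The $\mathfrak{sl}_2$-argument above shows that $F_i^{(\lambda,\alpha_i^\vee)+1}v = 0$. Together with the quantum Serre relations, this makes $U_q(\gf)v$ a quotient of the integrable module $V(\lambda)$, hence isomorphic to $V(\lambda)$. This subspace is finite-dimensional, hence closed, and it is invariant. Topological irreducibility then gives $\Hcal = U_q(\gf)v \cong V(\lambda)$. Both carry inner products making them $*$-representations, and such an inner product is unique up to a scalar, so $\pi_D$ is unitarily equivalent to $\pi_\lambda$.

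Uniqueness of $\lambda$ follows because $\lambda$ is the highest weight, read off from the spectrum of the operators $\pi_D(K_\mu)$. The positivity assumption is essential throughout: it excludes twists by characters $K_\zeta$ of $U_q(\hf)$ with nonreal $\zeta$.
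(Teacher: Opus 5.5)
Your proof is correct, but it finds a joint eigenvector of the $\pi_D(K_\mu)$ by a genuinely different route from the paper.

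The paper uses topological irreducibility at the outset:
\begin{itemize}
\item Since $\Zbb^N$ acts freely and properly on $(0,\infty)^N$ via $l \mapsto Q^l$, two points of the joint spectrum lying in different orbits would have disjoint saturated neighbourhoods. Their spectral projections would commute with $\pi_D$, contradicting irreducibility.
\item Hence the whole joint spectrum lies in a single discrete orbit, and being compact it is finite, so the $K$'s are diagonalisable.
\item A highest weight vector is then reached by applying the $E_j$ finitely often, and finite-dimensionality follows from compactness of the spectrum.
\item Dominance comes from \cite[Corollary~3.44]{VoigtYuncken} together with positivity.
\end{itemize}

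You instead localise at an extremal point $\nu_0$ of the compact spectrum, where all $\pi_D(E_i)$ vanish on $P(B)\Hcal$. You then obtain discreteness, integrality and dominance of $\nu_0$ from the $\mathfrak{sl}_2$ identity for $E_iF_i^n$, using irreducibility only in the last step. The paper's argument is softer and gives discreteness of the entire spectrum at once. Yours is more hands-on but proves more, and it avoids the orbit-space lemma: it shows that every nonzero positive $*$-representation contains a copy of some $V(\lambda)$. Taking orthogonal complements, such a representation therefore decomposes as an orthogonal sum of these.

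Some points should be stated more carefully.
\begin{itemize}
\item In the downward induction you cannot cancel $F_i^{n-1}$ vectorwise, since it need not be injective. Instead, note that $\eta = (q_i^{1-n}K_i - q_i^{n-1}K_i^{-1})\xi$ again lies in $P(B)\Hcal$ and is killed by both $E_i$ and $F_i^{n-1}$. So the same identity applies to $\eta$, and iterating gives $\prod_{k=1}^{n}(q_i^{1-k}K_i - q_i^{k-1}K_i^{-1})\xi = 0$ for all $\xi \in P(B)\Hcal$. Here $n$ can be chosen uniformly, because $B$ is bounded and the spectrum is compact. By positivity of $\pi_D(K_i)$, this confines its spectrum on $P(B)\Hcal$ to $\{1, q_i, \dots, q_i^{n-1}\}$, which is exactly what you need.
\item The spectral covariance $\pi_D(E_i)P(B) = P(B+\alpha_i)\pi_D(E_i)$ is just the passage of an intertwining relation to the Borel functional calculus. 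It is not the Liouville argument from Theorem~\ref{thm:Iwasawa decomposition}.
\item Your aside that the whole spectrum is discrete ``in the same way'' is not justified by your argument, which only works at a point where the $E_i$ vanish. It is also unnecessary, since it follows a posteriori from $\Hcal \cong V(\lambda)$.
\end{itemize}
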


\begin{proof}
We claim that the positive self-adjoint operators $ \{\pi_D (K_{\mu}) \mid \mu \in \weights \}$ are simultaneously diagonalisable on $\Hcal$.

Let $ \Bcal(\Rbb_{+}^N) $ denote the Borel $ \sigma $-algebra of $ \Rbb_{+}^N  = (0,\infty)^N $, and let $ e: \Bcal(\Rbb_{+}^N) \rightarrow \Lbb(\Hcal) $ be the spectral measure associated with the commuting positive self-adjoint operators
\[
\{ \pi_D(K_{\varpi_1}), \dots, \pi_D(K_{\varpi_N}) \} \subseteq \Lbb(\Hcal).
\]
Moreover denote the support of $ e $ by $ \sigma $, the \textit{joint spectrum} of the commuting operators. The commutation relations
\[
\pi_D(E_i)\pi_D(K_{\varpi_j}) = q_j^{-\delta_{ij}} \pi_D(K_{\varpi_j}) \pi_D(E_i), \quad
\pi_D(F_i)\pi_D(K_{\varpi_j}) = q_j^{\delta_{ij}} \pi_D(K_{\varpi_j}) \pi_D(F_i)
\]
imply
\begin{equation}\label{eq:commutation of spectral measure with E F}
\pi_D(E_j) e(\omega) = e(Q_j \omega) \pi_D(E_j), \quad
\pi_D(F_j) e(\omega) = e(Q_j^{-1} \omega) \pi_D(F_j), \quad \omega \in \Bcal(\Rbb_{+}^N),
\end{equation}
where $ Q_j^{\pm 1}: \Rbb_{+}^N \rightarrow \Rbb_{+}^N $ are the maps
\[
Q_j^{\pm 1}(x_1, \dots, x_N) = (x_1, \dots, x_{j-1}, \, q_j^{\pm 1} x_j, \, x_{j+1}, \dots, x_N).
\]
This is because the map $\Bcal(\Rbb_{+}^N) \ni \omega \mapsto e(Q_j^{\pm 1} \omega) \in \Lbb(\Hcal)$ is the spectral measure corresponding to the commuting self-adjoint operators
\[
\{ \pi_D(K_{\varpi_1}), \dots, \pi_D(K_{\varpi_{j-1}}), \, q_j^{\mp 1} \pi_D(K_{\varpi_j}), \, \pi_D(K_{\varpi_{j+1}}), \dots, \pi_D(K_{\varpi_N}) \} \subseteq \Lbb(\Hcal).
\]
Let $ x = (x_1, \dots, x_N), \, y = (y_1, \dots, y_N) \in \sigma $. We claim that there exists $ l = (l_1, \dots, l_N) \in \Zbb^N$ such that
\[
(y_1, \dots, y_N) = (q_1^{l_1} x_1, \dots, q_N^{l_N} x_N).
\]
Suppose no such $l$ exists. This means in particular $ x \neq y $. For $l \in \Zbb^N$, define $ Q^l = Q_1^{l_1} \cdots Q_N^{l_N} : \Rbb_{+}^N \rightarrow \Rbb_{+}^N$. Since the action
\[
\Zbb^N \times \Rbb_{+}^N \ni (l, z) \mapsto Q^l z \in \Rbb_{+}^N
\]
is free and proper, there exist open subsets $ U_0, V_0 \subset \Rbb_{+}^N $ with $ x \in U_0 $ and $ y \in V_0 $ such that
\[
V_0 \cap Q^l U_0 = \emptyset \quad \text{for all } l \in \Zbb^N,
\]
see \cite[Lemma~21.11]{JohnLee}. Set
\[
U = \bigcup_{l \in \Zbb^N} Q^l U_0, \quad V = \bigcup_{l \in \Zbb^N} Q^l V_0 \subseteq \Rbb_{+}^N.
\]
By construction, $U \cap V = \emptyset$ and $e(U) \neq 0 \neq e(V)$. Moreover, \eqref{eq:commutation of spectral measure with E F} implies that $e(U)$ and $e(V)$ commute with $\pi_D(X)$ for all $X \in U_q^\Rbb(\kf)$. It follows that $e(U)\Hcal$ and $e(V)\Hcal$ are nontrivial mutually orthogonal $ \pi_D $-invariant closed subspaces of $ \Hcal $, contradicting the topological irreducibility of $ \pi_D $.

Therefore, we see that if $ x \in \sigma $, then
\[
\sigma \subseteq \big\{ (q_1^{l_1} x_1, \dots, q_N^{l_N} x_N) \mid l = (l_1, \dots, l_N) \in \Zbb^N \big\}.
\]
Since the latter set is discrete in $\Rbb_+ ^N$, the operators $\{ \pi_D(K_{\varpi_1}), \dots, \pi_D(K_{\varpi_N}) \}$ are simultaneously diagonalisable on $\Hcal$. Hence, the family $\{ \pi_D(K_\mu) \mid \mu \in \weights \}$ is also simultaneously diagonalisable. 

Choose a simultaneous eigenvector $\xi_0 \in \Hcal$. The commutation relations \eqref{eq:commutation of spectral measure with E F} imply that the action of any $ \pi_D(E_j) $ on $ \xi_0 $ produces simultaneous eigenvectors with strictly smaller eigenvalues, or zero. Since the joint spectrum $\sigma$ is a compact subset of $\Rbb_+^N$, there exist $ m \in \Nbb$ and $ 1 \leq i_1, \dots , i_m \leq N$ such that
\[
\xi = \pi_D(E_{i_1}) \cdots \pi_D(E_{i_m}) \xi_0
\]
is a simultaneous eigenvector for $\{ \pi_D(K_\mu) \mid \mu \in \weights \}$ satisfying
\begin{equation*}
\pi_D(E_j) \xi = 0, \quad 1 \leq j \leq N.
\end{equation*}
Thus, the linear subspace
\[
\pi_D(U_q^\Rbb(\kf)) \xi \subseteq \Hcal
\]
is a highest weight representation. Applying \eqref{eq:commutation of spectral measure with E F} with $\pi_D(F_j)$ and appealing to the compactness of $\sigma$ once more, we see that this subspace must be finite-dimensional since increasing the eigenvalues cannot be continued indefinitely. According to \cite[Corollary~3.44]{VoigtYuncken}, together with the positivity of the eigenvalues of $\pi_D(K_\mu)$, it follows that there exists $\lambda \in \weights^+$ such that
\begin{equation*}
\pi_D(K_\mu) \xi = q^{(\mu, \lambda)} \xi, \quad \forall \mu \in \weights.
\end{equation*}
Hence $ \pi_D(U_q^\Rbb(\kf)) \xi \subseteq \Hcal $ is an irreducible subrepresentation of  $\pi_D $ which is unitarily equivalent to $ \pi_\lambda $. Using again topological irreducibility of $ \pi_D $, we obtain $ \pi_D(U_q^\Rbb(\kf)) \xi = \Hcal $, and this finishes the proof. 
\end{proof}

Let us fix some $ M \geq 1 $. Denote by $ C^*_M(U_q^\Rbb(\kf)) $ the universal unital $ C^* $-algebra generated by bounded operators $ E_i, K_{\varpi_i}, F_i $ for $ 1 \leq i \leq N $ on some Hilbert space, satisfying the defining relations of $ U_q^\Rbb(\kf) $, with all $ K_{\varpi_i} $ positive,  
and norm bounds $ \|X \| \leq M $ for all $ X = E_i, K_{\varpi_i} ^{\pm 1}, F_i $. That is, $ C^*_M(U_q^\Rbb(\kf)) $ is the norm closure of the image of $ U_q^\Rbb(\kf) $ in the universal positive $ * $-representation of $ U_q^\Rbb(\kf) $ subject to the condition that the generators $ E_i, K_{\varpi_i} ^{\pm 1}, F_i $ have norm at most $ M $. We note that the canonical unital $ * $-homomorphism $ i_M: U_q^\Rbb(\kf) \rightarrow C^*_M(U_q^\Rbb(\kf)) $ will fail to be injective, but we have $ C^*_M(U_q^\Rbb(\kf))\neq 0 $ since the trivial representation of $ U_q^\Rbb(\kf) $ factorises through $ C^*_M(U_q^\Rbb(\kf)) $ for any $ M \geq 1 $.

\begin{lem} \label{lem: norm bounded positive reps}
The $ C^* $-algebra $ C^*_M(U_q^\Rbb(\kf)) $ is finite-dimensional for any $ M \geq 1 $. 
\end{lem}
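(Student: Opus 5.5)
The plan is to show that only finitely many irreducible representations $\pi_\lambda$ factor through $C^*_M(U_q^\Rbb(\kf))$, and that every irreducible representation of this $C^*$-algebra is of this form. Together these force the algebra to be a finite direct sum of matrix algebras.

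Let $\rho$ be an irreducible representation of $A = C^*_M(U_q^\Rbb(\kf))$ on a Hilbert space $\Hcal$. Composing with $i_M$ gives a topologically irreducible positive unital $*$-representation $\rho \circ i_M$ of $U_q^\Rbb(\kf)$, since $i_M(U_q^\Rbb(\kf))$ is dense in $A$ and a $C^*$-representation is irreducible exactly when it has no closed invariant subspaces. By Lemma~\ref{lem:bounded representation of Uq(g)}, $\rho \circ i_M$ is unitarily equivalent to $\pi_\lambda$ for a unique $\lambda \in \weights^+$. In particular $\Hcal$ is finite-dimensional and $\rho$ is determined up to equivalence by $\lambda$.

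The bound $\|\pi_\lambda(K_{\varpi_i}^{-1})\| \le M$ restricts $\lambda$. The highest weight vector gives the eigenvalue $q^{-(\varpi_i,\lambda)}$ of $\pi_\lambda(K_{\varpi_i}^{-1})$, and $(\varpi_i,\lambda) = d_i \lambda_i$ for $\lambda = \sum_j \lambda_j \varpi_j$, using $(\varpi_i,\alpha_j^\vee)=\delta_{ij}$. Therefore $q^{-d_i\lambda_i} \le M$, that is, $\lambda_i \le \log M/(d_i\log q^{-1})$ for every $i$. So only finitely many $\lambda$ occur, say $\lambda \in F$ with $F \subseteq \weights^+$ finite.

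To conclude, consider $\Phi = \bigoplus_{\lambda \in F'} \pi_\lambda$, where $F' \subseteq F$ consists of those $\lambda$ with $\pi_\lambda$ satisfying the norm bounds. Each such $\pi_\lambda$ factors through $A$, so $\Phi$ extends to $A$. The extension is injective, because the norm of any element of a $C^*$-algebra is the supremum of its norms over irreducible representations, and every irreducible representation of $A$ is equivalent to some $\pi_\lambda$ with $\lambda \in F'$. Hence $A$ embeds into $\bigoplus_{\lambda\in F'} \End(V(\lambda))$ and is finite-dimensional.

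The main point needing care is the first step: passing from irreducibility of $\rho$ on $A$ to topological irreducibility on the dense image, and using that $\rho(K_{\varpi_i})$ is positive. Both follow directly from density, continuity and the definition of $A$. The weight bound itself is routine.
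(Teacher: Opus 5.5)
Your proof is correct and follows the paper's argument. Irreducible representations of $C^*_M(U_q^\Rbb(\kf))$ are identified with some $\pi_\lambda$ via Lemma~\ref{lem:bounded representation of Uq(g)}, the norm bounds on $K_{\varpi_i}^{\pm 1}$ leave only finitely many $\lambda$, and finitely many finite-dimensional irreducibles force finite-dimensionality; your faithful direct sum $\Phi$ simply makes explicit the last step, which the paper leaves implicit.

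There is one small slip. The identity $(\varpi_i,\lambda)=d_i\lambda_i$ is false in general: for $\mathfrak{sl}(2,\Cbb)$ one gets $(\varpi,\lambda_1\varpi)=\lambda_1/2$, and the correct identity is $(\alpha_i,\lambda)=d_i\lambda_i$. Your bound still holds, because $(\varpi_i,\lambda)=\sum_j\lambda_j(\varpi_i,\varpi_j)\geq\lambda_i(\varpi_i,\varpi_i)$, since the entries $(\varpi_i,\varpi_j)$ are nonnegative and $(\varpi_i,\varpi_i)>0$.
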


\begin{proof}
According to Lemma \ref{lem:bounded representation of Uq(g)}, every topologically irreducible $ * $-representation of $ C^*_M(U_q^\Rbb(\kf)) $ must be unitarily equivalent to some $ \pi_\lambda $ for $ \lambda \in \weights^+ $. However, since the norms of the generators $ K_{\varpi_i} $ are bounded by $ M $, only finitely many weights $ \lambda $ are possible. We conclude that $ C^*_M(U_q^\Rbb(\kf)) $ is a unital $ C^* $-algebra with finitely many topologically irreducible $ * $-representations up to equivalence, which are all finite-dimensional. This means that $ C^*_M(U_q^\Rbb(\kf)) $ is finite-dimensional. 
\end{proof}

Now let $ \pi_D : U_q^\Rbb(\kf) \rightarrow \Lbb(\Hcal) $ be an arbitrary positive unital $ * $-representation. Then there exists some $ M \geq 1 $ such that $ \pi_D $ factorises through $ C^*_M(U_q^\Rbb(\kf)) $. 
Due to Lemma \ref{lem: norm bounded positive reps}, the representation $ \pi_D $ decomposes as a finite direct sum of multiples of the representations $ \pi_\lambda $. 

Let us say that a nondegenerate $ * $-representation $ \Pi_D: \Df(K_q) \rightarrow \Lbb(\Hcal) $ is of \emph{finite type} if there exists a finite set $ F \subseteq \weights^+ $ such that $ \Pi_D $ factorises through the quotient $ \bigoplus_{\lambda \in F} \End(V(\lambda)) $. Note that any nondegenerate $ * $-representation $ \Df(K_q) \rightarrow \Lbb(\Hcal) $ of finite type extends uniquely to a unital $ * $-representation of the algebraic multiplier algebra
\[
 M(\Df(K_q)) = \prod_{\lambda \in \weights^+} \End (V(\lambda))
\]
of $\Df (K_q)$.

Our above considerations imply the following characterisation of positive unital $ * $-representations of $ U_q^\Rbb(\kf) $. 

\begin{cor} \label{cor: irreducibility diagonalisability}
Let $ \pi_D: U_q^\Rbb(\kf) \rightarrow \Lbb(\Hcal) $ be a positive unital $ * $-representation. Then there exists a uniquely determined nondegenerate $ * $-representation of finite type $ \Pi_D: \Df(K_q) \rightarrow \Lbb(\Hcal) $ such that the restriction of the corresponding unital $ * $-homomorphism $ M(\Df(K_q)) \rightarrow \Lbb(\Hcal) $ to $ U_q^\Rbb(\kf) \subset M(\Df(K_q)) $ agrees with $ \pi_D $. Moreover, every nondegenerate $ * $-representation of $\Df(K_q)$ of finite type arises in this way.
\end{cor}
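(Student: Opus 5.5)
The plan is to derive the statement from the discussion right before it: Lemma~\ref{lem: norm bounded positive reps} combined with Lemma~\ref{lem:bounded representation of Uq(g)}. There are three things to show: existence of $\Pi_D$, its uniqueness, and surjectivity onto finite-type representations.

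\emph{Existence.} Let $\pi_D$ be a positive unital $*$-representation. Set $M = \max\{1, \|\pi_D(E_i)\|, \|\pi_D(F_i)\|, \|\pi_D(K_{\varpi_i}^{\pm 1})\|\}$. Then $\pi_D$ factors as $\tilde\pi \circ i_M$ for a unital $*$-homomorphism $\tilde\pi : C^*_M(U_q^\Rbb(\kf)) \to \Lbb(\Hcal)$.

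By Lemma~\ref{lem: norm bounded positive reps}, $C^*_M(U_q^\Rbb(\kf))$ is a finite-dimensional $C^*$-algebra. Hence it is a finite direct sum of matrix algebras, each coming from a topologically irreducible representation. By Lemma~\ref{lem:bounded representation of Uq(g)} each of these is some $\pi_\lambda$ with $\lambda$ in a finite set $F \subseteq \weights^+$. Thus we get an isomorphism $C^*_M(U_q^\Rbb(\kf)) \cong \bigoplus_{\lambda \in F} \End(V(\lambda))$ under which $i_M$ becomes $X \mapsto (\pi_\lambda(X))_{\lambda \in F}$. Distinct $\lambda$ give inequivalent $\pi_\lambda$, so the summands really are separate.

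This identifies $i_M$ with the restriction to $U_q^\Rbb(\kf)$ of the canonical quotient map $M(\Df(K_q)) = \prod_\lambda \End(V(\lambda)) \to \bigoplus_{\lambda\in F}\End(V(\lambda))$. We then define $\Pi_D$ as the composite $\Df(K_q) \to \bigoplus_{\lambda\in F}\End(V(\lambda)) \xrightarrow{\tilde\pi} \Lbb(\Hcal)$. It is nondegenerate because $\tilde\pi$ is unital and the image of $\Df(K_q)$ is the whole finite sum. Its multiplier extension restricts to $\pi_D$ on $U_q^\Rbb(\kf)$ by construction.

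\emph{Uniqueness.} Suppose $\Pi_D$ and $\Pi_D'$ are of finite type for finite sets $F$ and $F'$, and both induce $\pi_D$. Both factor through $\bigoplus_{\lambda\in F\cup F'}\End(V(\lambda))$. The image of $U_q^\Rbb(\kf)$ in this finite sum is everything. This follows from \eqref{eq:locally finite part is big in representation} applied to each summand, together with the inequivalence of the $\pi_\lambda$, via a Jacobson density / Burnside argument for the finite direct sum. Since the two extensions agree on a generating set, they agree, and hence $\Pi_D = \Pi_D'$.

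\emph{Surjectivity.} Let $\Pi_D$ be of finite type, factoring through $\bigoplus_{\lambda\in F}\End(V(\lambda))$. Restricting its unital extension to $U_q^\Rbb(\kf)$ gives a unital $*$-representation. Each $\pi_\lambda(K_\mu)$ is a positive diagonal operator with eigenvalues $q^{(\mu,\nu)}$, and the image of $K_\mu$ is a positive element of the finite-dimensional $C^*$-algebra. Its image under the $*$-homomorphism is therefore positive, so the restriction is a positive representation.

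I expect the only delicate point to be the identification of $i_M$ with the quotient map, equivalently the surjectivity of $U_q^\Rbb(\kf) \to \bigoplus_{\lambda\in F}\End(V(\lambda))$. This is where the inequivalence of the $\pi_\lambda$ and the density theorem have to be invoked carefully.
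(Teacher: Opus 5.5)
Your proposal is correct and follows essentially the same route as the paper's proof. You factor $\pi_D$ through $C^*_M(U_q^\Rbb(\kf))$, identify this finite-dimensional algebra with $\bigoplus_{\lambda \in F}\End(V(\lambda))$ via Lemma~\ref{lem:bounded representation of Uq(g)}, extend to $\Df(K_q)$, and get uniqueness from the surjectivity of $U_q^\Rbb(\kf) \to \bigoplus_{\lambda \in F \cup F'} \End(V(\lambda))$; you also spell out the steps the paper calls straightforward. Two small remarks. First, the point you flag as delicate is automatic: $i_M$ has dense, hence full, image in the finite-dimensional algebra $C^*_M(U_q^\Rbb(\kf))$, and choosing $M$ large enough covers any finite $F \cup F'$. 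Second, in the converse you should say explicitly that uniqueness forces the construction, applied to the restricted positive representation, to return the given $\Pi$.
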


\begin{proof}
As explained above, we find a uniquely determined minimal finite subset $F \subseteq \weights^+ $ such that $ \pi_D $ factorises via unital $ * $-homomorphisms through $ \bigoplus_{\lambda \in F} \End(V(\lambda)) $. We can extend the resulting unital $ * $-homomorphism $ \bigoplus_{\lambda \in F} \End(V(\lambda)) \rightarrow \Lbb(\Hcal) $ in a unique way to a nondegenerate $ * $-homomorphism of finite type $ \Pi_D: \Df(K_q) \rightarrow \Lbb(\Hcal) $, which has the required properties by construction. Uniqueness of $ \Pi_D $ is straightforward to check.  

If $ \Pi: \Df(K_q) \rightarrow \Lbb(\Hcal) $ is a nondegenerate $ * $-representation of finite type then 
precomposition of the corresponding unital $ * $-homomorphism $ M(\Df(K_q)) \rightarrow \Lbb(\Hcal) $ with the canonical map $ U_q^\Rbb(\kf) \rightarrow M(\Df(K_q)) $ gives rise to a positive unital $ * $-representation $ \pi_D: U_q^\Rbb(\kf) \rightarrow \Lbb(\Hcal) $, and one checks that the above procedure applied to $ \pi_D $ recovers $ \Pi $. 
\end{proof}

The following theorem is a generalisation of \cite[Theorem~5.4.4]{Podles1990}, establishing the Iwasawa decomposition for complex semisimple quantum groups. In a similar way as above, we say that a nondegenerate $ * $-representation $ \pi: \Cf_c ^\infty(G_q) \rightarrow \Lbb(\Hcal) $ is of finite type if there exists a finite set $ F \subseteq \weights^+ $ such that $ \pi $ factorises through the quotient $ \bigoplus_{\lambda \in F} \Cf^\infty (K_q) \otimes \End(V(\lambda)) $.

\begin{thm} \label{thm:generalisation of Podles and Woronowicz}
Let $ G_q $ be a complex semisimple quantum group. Every unital $ * $-representation $ \Pol^\Rbb(G_q) \rightarrow \Lbb(\Hcal) $ on a Hilbert space $ \Hcal $ corresponds to a uniquely determined  nondegenerate $ * $-representation $ \Cf_c^\infty(G_q) \rightarrow \Lbb(\Hcal) $ of finite type, and every nondegenerate $ * $-representation of finite type of $ \Cf_c^\infty(G_q) $ arises in this way.
In particular, we have a canonical identification
\[
\SSp(\Pol^\Rbb(G_q)) \cong \SSp(\Cf_c^\infty(G_q))
\]
between the topologically irreducible $ * $-representations of $ \Pol^\Rbb(G_q) $ and $ \Cf_c^\infty(G_q) $. 
\end{thm}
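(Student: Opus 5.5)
The plan is to combine Theorem~\ref{thm:Iwasawa decomposition} with Corollary~\ref{cor: irreducibility diagonalisability}. Let $\pi: \Pol^\Rbb(G_q) \to \Lbb(\Hcal)$ be a unital $*$-representation.

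\emph{Construction.} By Theorem~\ref{thm:Iwasawa decomposition} we obtain uniquely determined $\pi_C$ and a positive $\pi_D$ with commuting ranges such that $\pi = (\pi_C \otimes \pi_D)|_{\Pol^\Rbb(G_q)}$. Corollary~\ref{cor: irreducibility diagonalisability} turns $\pi_D$ into a unique nondegenerate $*$-representation $\Pi_D$ of $\Df(K_q)$ of finite type. Its extension to $M(\Df(K_q))$ restricts to $\pi_D$ on $U_q^\Rbb(\kf)$.

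Since $U_q^\Rbb(\kf)$ maps onto $\bigoplus_{\lambda\in F}\End(V(\lambda))$, the range of $\Pi_D$ equals $\pi_D(U_q^\Rbb(\kf))$. This range commutes with $\pi_C(\Cf^\infty(K_q))$. Hence $\Pi(f\otimes x) = \pi_C(f)\Pi_D(x)$ defines a $*$-representation of $\Cf_c^\infty(G_q)=\Cf^\infty(K_q)\otimes\Df(K_q)$. It is nondegenerate because $\Pi_D$ is and $\pi_C$ is unital, and it is of finite type since it factors through the summands indexed by $F$.

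Its unital extension to $\Cf^\infty(G_q)$ is $\pi_C\otimes\pi_D$ on $\Cf^\infty(K_q)\otimes U_q^\Rbb(\kf)\subseteq \Cf^\infty(G_q)$. Therefore it restricts to $\pi$ on $\Pol^\Rbb(G_q)$, which gives Theorem B.

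\emph{Uniqueness.} Suppose $\Pi'$ is another nondegenerate finite-type representation of $\Cf_c^\infty(G_q)$ whose extension restricts to $\pi$. The extension is unital on the multiplier algebra $\Cf^\infty(G_q)$. Restricting it to $\Cf^\infty(K_q)\otimes 1$ gives a $*$-representation $\pi_C'$, and restricting to $1\otimes U_q^\Rbb(\kf)$ gives $\pi_D'$. These have commuting ranges, and $\pi_D'$ is positive because the $K_\mu$ act by positive scalars on each $V(\lambda)$. Moreover $\pi=(\pi_C'\otimes\pi_D')|_{\Pol^\Rbb}$. The uniqueness part of Theorem~\ref{thm:Iwasawa decomposition} then gives $\pi_C'=\pi_C$ and $\pi_D'=\pi_D$. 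The uniqueness in Corollary~\ref{cor: irreducibility diagonalisability} identifies the $\Df$-part, and hence $\Pi'=\Pi$.

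\emph{Surjectivity.} Conversely, let $\Pi$ be a nondegenerate finite-type representation of $\Cf_c^\infty(G_q)$. It factors through a unital $*$-algebra $\bigoplus_{\lambda\in F}\Cf^\infty(K_q)\otimes\End(V(\lambda))$ and extends to $\Cf^\infty(G_q)$. Restricting to $\Pol^\Rbb(G_q)$ gives a unital $*$-representation. Applying the construction to it returns $\Pi$ by the uniqueness just shown, so the two maps are mutually inverse.

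\emph{Irreducible case.} Topologically irreducible representations of $\Pol^\Rbb(G_q)$ correspond to topologically irreducible ones of $\Cf_c^\infty(G_q)$, because the closed invariant subspaces agree. One direction is Proposition~\ref{prop:images of irreps for Cc(Gq) and PolR(Gq) coincide}. In general, the images of $\Pol^\Rbb(G_q)$ and $\Cf_c^\infty(G_q)$ have the same commutant. Indeed, the commutant of $\pi(\Pol^\Rbb)$ commutes with $\pi_D(F_rU_q(\gf))$, hence with the $\pi_D(K_{-2\mu})$ and their functional calculus. It therefore commutes with all of $\pi_D$, and via \eqref{eq:definition of pic(umu)} also with $\pi_C$. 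Every irreducible representation of $\Cf_c^\infty(G_q)$ is of finite type, since it lives on one summand. This yields $\SSp(\Pol^\Rbb(G_q))\cong\SSp(\Cf_c^\infty(G_q))$, extending Corollary~\ref{cor:SSp(Cc(Gq)) restricts to SSP(Pol(Gq))}.

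The main obstacle is this commutant argument, specifically showing that operators commuting with $\pi(\Pol^\Rbb)$ commute with the extended $\pi_D(K_\lambda)$ defined through square roots and inverses. This should follow from continuous functional calculus applied to commuting positive invertible operators.
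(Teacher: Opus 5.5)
Your proposal is correct and follows the paper's route: Theorem~\ref{thm:Iwasawa decomposition} supplies $(\pi_C,\pi_D)$, Corollary~\ref{cor: irreducibility diagonalisability} converts $\pi_D$ into a finite-type $\Pi_D$, extension to the multiplier algebra gives the converse, and the irreducible case rests on irreducible representations of $\Cf_c^\infty(G_q)$ living on a single summand $\Cf^\infty(K_q)\otimes\End(V(\lambda))$, hence being of finite type. The paper's proof is terse, and you spell out the uniqueness and irreducibility details it leaves implicit. Your commutant computation is valid but is not really an obstacle: the direction it proves is already Proposition~\ref{prop:images of irreps for Cc(Gq) and PolR(Gq) coincide}, and the reverse direction is immediate because $\pi(\Pol^\Rbb(G_q))$ lies in the image of the extended representation, whose commutant equals that of $\Pi(\Cf_c^\infty(G_q))$.
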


\begin{proof}
According to Theorem~\ref{thm:Iwasawa decomposition}, we have a bijective correspondence between unital $ * $-representations $ \pi: \Pol^\Rbb(G_q) \rightarrow \Lbb(\Hcal) $ and pairs $ (\pi_C, \pi_D) $ consisting of a unital $ * $-representation $ \pi_C: \Cf^\infty(K_q) \rightarrow \Lbb(\Hcal) $ and a positive unital $ * $-representation $ \pi_D: U_q ^\Rbb (\kf) \rightarrow \Lbb(\Hcal) $ with commuting ranges. Moreover, by Corollary \ref{cor: irreducibility diagonalisability}, the map $ \pi_D $ corresponds to a uniquely determined nondegenerate $ * $-homomorphism $ \Pi_D: \Df(K_q) \rightarrow \Lbb(\Hcal) $ of finite type, and $ \pi_C $ and $ \Pi_D $ have commuting ranges. That is, they correspond to a uniquely determined nondegenerate $ * $-representation $ \Cf_c^\infty(G_q) \rightarrow \Lbb(\Hcal) $ of finite type as claimed. Conversely, by extension to the algebraic multiplier algebra, any nondegenerate $ * $-representation $ \Cf_c^\infty(G_q) \rightarrow \Lbb(\Hcal) $ of finite type induces a unital $ * $-representation $ \Pol^\Rbb(G_q) \rightarrow \Lbb(\Hcal) $. 

For the final part, note that a nondegenerate $ * $-representation $ \Cf_c^\infty(G_q) \rightarrow \Lbb(\Hcal) $ is topologically irreducible if and only if it is induced by a topologically irreducible $ * $-representation of $ \Cf^\infty(K_q) \otimes \End(V(\lambda)) $ for some $ \lambda \in \weights^+ $ via the canonical projection map \( \id \otimes \pi_\lambda : \Cf_c ^\infty(G_q) \rightarrow \Cf^\infty(K_q) \otimes \End(V(\lambda)) \). Clearly all such representations are of finite type.
\end{proof}

\begin{rmk}\label{rmk:Podles and Woronowicz}
Let us explain why Theorem~\ref{thm:generalisation of Podles and Woronowicz} indeed generalises \cite[Theorem~5.4.4]{Podles1990} from the case $ G_q = SL_q(2, \Cbb) $ to arbitrary $ G_q $. We note that the proof of \cite[Theorem~5.4.4]{Podles1990} given by Podle\'s and Woronowicz applies only to topologically irreducible $ * $-representations, as it relies on \cite[Theorem~5.1.3]{Podles1990}, which is established only for topologically irreducible $ * $-representations.

Define $ \alpha, \beta, \gamma, \delta \in \Pol(SL_q(2,\Cbb)) $ by
\begin{equation*}
    \begin{pmatrix}
        \alpha & \beta \\
        \gamma & \delta
    \end{pmatrix}
    = i_-(u^\varpi) =
    \begin{pmatrix}
        u^\varpi_{11} & u^\varpi_{12} \\
        u^\varpi_{21} & u^\varpi_{22}
    \end{pmatrix}
    \otimes
    \begin{pmatrix}
        l^-(u^\varpi_{11}) & l^-(u^\varpi_{12}) \\
        l^-(u^\varpi_{21}) & l^-(u^\varpi_{22})
    \end{pmatrix}
    \in M_2(\Pol(SL_q(2,\Cbb))).
\end{equation*}
By definition, these four elements generate $ \Pol(SL_q(2,\Cbb)) $ as an algebra, and the $ * $-subalgebra they generate is $ \Pol^\Rbb(SL_q(2,\Cbb)) $. In \cite[Theorem~5.4.1]{Podles1990}, it is shown that $ \alpha, \beta, \gamma, \delta $ satisfy the defining relations \cite[(1.9)--(1.25)]{Podles1990} of $ SL_q(2,\Cbb) $. Therefore, for any unital $*$-representation $\pi: \Pol^\Rbb(SL_q(2,\Cbb)) \rightarrow \Lbb(\Hcal)$, the entries of
\[
\begin{pmatrix}
\pi(\alpha) & \pi(\beta) \\
\pi(\gamma) & \pi(\delta)
\end{pmatrix}
\in M_2(\Lbb(\Hcal))
\]
also satisfy \cite[(1.9)--(1.25)]{Podles1990}. Conversely, any operators $\tilde{\alpha}, \tilde{\beta}, \tilde{\gamma}, \tilde{\delta} \in \Lbb(\Hcal)$ satisfying \cite[(1.9)--(1.25)]{Podles1990} give rise to a unital $*$-homomorphism $ \pi: \Pol^\Rbb (SL_q(2,\Cbb)) \rightarrow \Lbb(\Hcal) $ such that 
\[
\begin{pmatrix}
\pi(\alpha) & \pi(\beta) \\
\pi(\gamma) & \pi(\delta)
\end{pmatrix}
= 
\begin{pmatrix}
\tilde{\alpha} & \tilde{\beta} \\
\tilde{\gamma} & \tilde{\delta}
\end{pmatrix}.
\]
Note that if we write 
\[
u^\varpi = \begin{pmatrix}
a & b \\ c & d
\end{pmatrix} 
\]
then \eqref{eq:Iwasawa decomposition} becomes
\[
\begin{pmatrix}
\pi(\alpha) & \pi(\beta) \\ \pi(\gamma) & \pi(\delta)
\end{pmatrix}
=
\begin{pmatrix}
\pi_C(a) & \pi_C(b) \\ \pi_C(c) & \pi_C(d)
\end{pmatrix}
\otimes
\begin{pmatrix}
\pi_D(K)^{-1} & \pi_D l^-(b) \\ 0 & \pi_D(K)
\end{pmatrix}
\]
because $l^- (a) = K^{-1}$, $l^- (d) = K$, and $l^- (c) = 0 $. This recovers \cite[Theorem~1.3]{Podles1990} as the left-hand side defines an $ SL_q(2,\Cbb) $-matrix.

If $ \pi $ is topologically irreducible, then \cite[Theorem~5.1.3]{Podles1990} and \cite[Theorem~5.4.4]{Podles1990} imply that the operators $ \pi(\alpha), \pi(\beta), \pi(\gamma), \pi(\delta) $ determine a unital $*$-representation of $ \Cf^\infty(SL_q(2,\Cbb)) $ by bounded operators on $ \Hcal $, which restricts to a nondegenerate $ * $-representation $ \Cf_c^\infty(SL_q(2,\Cbb)) \rightarrow \Lbb(\Hcal) $. This implies the irreducible case of Theorem \ref{thm:generalisation of Podles and Woronowicz} in the case $ G_q = SL_q(2, \Cbb) $. 

Conversely, every nondegenerate $ * $-representation $ \Cf_c^\infty(SL_q(2, \Cbb)) \rightarrow \Lbb(\Hcal) $ extends uniquely to a nondegenerate $ * $-representation $ C_0(SL_q(2, \Cbb)) \rightarrow \Lbb(\Hcal) $, and it follows that Theorem \ref{thm:generalisation of Podles and Woronowicz} for $ G_q = SL_q(2, \Cbb) $ implies \cite[Theorem~5.4.4]{Podles1990} in the irreducible case.
\end{rmk}

\subsection{Quantum holomorphic evaluations and primitive spectrum}

Recall that the embedding $ i_-: \Cfrak^\infty(K_q) \rightarrow \Pol^\Rbb(G_q) $ from \eqref{eq:q-holomorphic embedding} serves as a quantum analogue of the holomorphic extension map. Motivated by this analogy, we associate to each \( \pi \in \SSp(\Pol^\Rbb(G_q)) \) the bounded representation
\begin{equation*}
\theta_\pi: \Ocal(G_q) = \Cfrak^\infty(K_q) \xrightarrow{i_-} \Pol^\Rbb(G_q) \xrightarrow{\pi} \Lbb(\Hcal_\pi),
\end{equation*}
and refer to it as the \emph{quantum holomorphic evaluation at \( \pi \)}. Note that, due to Theorem~\ref{thm:generalisation of Podles and Woronowicz}, we can replace $ \SSp(\Pol^\Rbb(G_q)) $ by $ \SSp(\Cf_c ^\infty(G_q)) $ here.

Classically, we have a canonical identification  
\begin{equation}\label{eq:classical relation between Prim(Cf(K)) and holomorphic evaluations}
\Max(\Ocal^\mathbb{R}(K)) = \Max(\Ocal(G)) = \{ \Ker(\ev_z) \mid z \in G \}
\end{equation}
of the maximal ideal space of the coordinate ring $ \Ocal^\mathbb{R}(K) = \Ocal(G) $ with the set of all kernels of evaluation homomorphisms at the points of $ G $. By way of analogy, it is natural to compare the primitive spectrum \(\Prim(\Ocal(G_q)) \) with the set 
\[
\{\Ker(\theta_\pi) \mid \pi \in \SSp(\Cf_c^\infty(G_q)) \}
\]
of all kernels of quantum holomorphic evaluations at the points of \(\SSp(\Cf_c^\infty(G_q))\). More specifically, one may ask to what extent the kernels of quantum holomorphic evaluations detect the primitive spectrum of $ \Ocal(G_q) $. 

The following result shows that the answer to this question is not as straightforward as in the classical case. 

\begin{prop} \label{prop:kernels of q-holomorphic evaluations are not primitive}
Let $ \lambda \in \weights^+ $ and consider $ \pi = \pi_C \otimes \pi_\lambda \in \SSp(\Cf_c^\infty(G_q)) $, where $ \pi_C: \Cf^\infty(K_q) \rightarrow \Lbb(\Hcal_C) $ is a topologically irreducible $ * $-representation. Then 
\[
\Hcal_{C} \otimes V(\lambda)_{w_0(\lambda)} \subseteq \Hcal_{C} \otimes V(\lambda)
\]
is a closed $ \theta_\pi(\Ocal(G_q)) $-invariant linear subspace. In particular, if $ \lambda \neq 0 $ then $ \theta_\pi $ is not topologically irreducible.
    
Moreover, if $ \lambda \neq 0$ and $ \pi_C = \epsilon $ is the counit of $ \Cf^\infty(K_q) $ then $ \Ker(\theta_\pi) $ is not primitive.
\end{prop}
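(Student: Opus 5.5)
The plan is to compute $\theta_\pi$ explicitly from the definition of $i_-$ and read off the invariant subspace from the triangularity of $l^-$. The second part then reduces to a finite-dimensional algebra argument.

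\emph{Invariant subspace.} By the definition of $i$, for $f \in \Cf^\infty(K_q)$ we have
\[
\theta_\pi(f) = \pi i_-(f) = \pi_C(f_{(1)}) \otimes \pi_\lambda\big(l^-(f_{(2)})\big) \in \Lbb(\Hcal_C \otimes V(\lambda)).
\]
Since $l^-(\Cf^\infty(K_q)) \subseteq U_q(\bfrak_-)$, it suffices to check that $V(\lambda)_{w_0(\lambda)}$ is $\pi_\lambda(U_q(\bfrak_-))$-invariant. The $K_\mu$ act by scalars on weight spaces. Each $F_j$ maps $V(\lambda)_{w_0(\lambda)}$ into $V(\lambda)_{w_0(\lambda)-\alpha_j} = 0$, because $w_0(\lambda)$ is the lowest weight.

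Hence every $\theta_\pi(f)$ maps $\Hcal_C \otimes V(\lambda)_{w_0(\lambda)}$ into itself. This subspace is closed, since it is the tensor product with a finite-dimensional space. If $\lambda \neq 0$, then $\dim V(\lambda) > 1$, because $\lambda \neq w_0(\lambda)$ are distinct weights. So the subspace is nonzero and proper, and $\theta_\pi$ is not topologically irreducible.

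\emph{Non-primitivity for $\pi_C = \epsilon$.} In this case $\theta_\pi = \pi_\lambda \circ l^-$ is a unital representation on the finite-dimensional space $V(\lambda)$. Set $A = \theta_\pi(\Ocal(G_q)) \cong \Ocal(G_q)/\Ker(\theta_\pi)$. Suppose for contradiction that $\Ker(\theta_\pi)$ is primitive. Then $A$ is a finite-dimensional primitive algebra, hence prime with zero Jacobson radical, hence simple, so $A \cong M_k(\Cbb)$. Every unital $A$-module is then a direct sum of copies of the unique simple module $\Cbb^k$, so all simple $A$-modules have dimension $k$. By the first part, $V(\lambda)_{w_0(\lambda)}$ is a one-dimensional $A$-submodule of $V(\lambda)$. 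Therefore $k = 1$, $A$ is commutative, and $A \cong \Cbb$, so every $\theta_\pi(f)$ acts by a scalar on $V(\lambda)$.

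We now contradict this with a diagonal coefficient. As in the proof of Proposition~\ref{prop:images of irreps for Cc(Gq) and PolR(Gq) coincide}, $\pi_\lambda(l^-(u^\lambda_{11}))$ acts on $V(\lambda)_\nu$ by $q^{-(\lambda,\nu)}$. Let $u^\lambda_{ij}$ be the matrix units and $e^\lambda_j$ the weight basis vectors introduced earlier. Since $\epsilon(u^\lambda_{1j}) = \delta_{1j}$, we get
\[
\theta_\pi(u^\lambda_{11}) = \pi_\lambda\big(l^-(u^\lambda_{11})\big).
\]
Its eigenvalues on the highest and lowest weight vectors are $q^{-(\lambda,\lambda)}$ and $q^{-(\lambda,w_0(\lambda))}$. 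These differ for $\lambda \neq 0$, because $(\lambda,w_0(\lambda)) < (\lambda,\lambda)$; this holds since $w_0(\lambda) \neq \lambda$ and $|w_0(\lambda)| = |\lambda|$ give strict Cauchy–Schwarz. This is the desired contradiction.

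The only delicate points are the exact form of $l^-$ on diagonal coefficients and the passage from ``finite-dimensional primitive'' to ``simple''. Both follow from the computation already made in the paper and from standard Artinian ring theory respectively, so I expect no real obstacle.
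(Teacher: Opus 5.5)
Your proof is correct and follows essentially the same route as the paper. You show the lowest weight space is $U_q(\mathfrak{b}_-)$-stable; then, for $\pi_C = \epsilon$, the resulting one-dimensional subrepresentation forces a primitive finite-dimensional quotient $M_k(\mathbb{C})$ to be $\mathbb{C}$, which an element acting non-scalarly contradicts. The only difference is your witness: you use the diagonal coefficient $u^\lambda_{11}$, whose image has distinct diagonal entries $q^{-(\lambda,\lambda)} \neq q^{-(\lambda,w_0(\lambda))}$ and needs only the Cartan part of the $R$-matrix. The paper instead uses the off-diagonal coefficient $c^\lambda_{\lambda,F_j e^\lambda_1}$, whose image is proportional to $\pi_\lambda(K_{-\lambda}K_jF_j)$ and which is killed by the character $K_{-w_0(\lambda)}$.
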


\begin{proof}
The first claim follows from the fact that the lowest weight space $ V(\lambda)_{w_0(\lambda)} $ is preserved by $ l^-(\Cf^\infty(K_q)) \subseteq U_q(\bfrak_-) $. 
Note that
\begin{multline*}
\theta_\pi(u^\mu _{ij})(\xi \otimes e_{n_\lambda} ^\lambda) = \big( \pi_C(u^\mu _{ij}) \xi \big) \otimes \big(\pi_\lambda(K_{- \epsilon_{j}^\mu}) e_{n_\lambda}^\lambda \big) \\
= \big(\pi_C(u^\mu _{ij}) \xi \big) \otimes q^{-(\epsilon_j^\mu, w_0(\lambda))} e_{n_\lambda}^\lambda = \big((\pi_C \otimes K_{-w_0(\lambda)}) (u^\mu_{ij})\big)(\xi \otimes e_{n_\lambda}^\lambda )
\end{multline*}
for all $\mu \in \weights^+, 1 \leq i,j \leq n_\mu $, and $ \xi \in \Hcal_{C} $. Thus, the subrepresentation on the invariant subspace $\Hcal_C \otimes V(\lambda)_{w_0(\lambda)}$ is given by $ \pi_C \otimes K_{-w_0(\lambda)} $. 

For the final claim assume $ 0 \neq \lambda \in \weights^+$ and let $ \pi_C = \epsilon $. In this case the Hilbert space $ \Hcal_{C} \otimes V(\lambda) \cong V(\lambda) $ is finite-dimensional, and therefore $ \Ocal(G_q)/\Ker(\theta_\pi) $ is a finite-dimensional algebra. Moreover, by the above calculation, $ \epsilon \otimes K_{-w_0(\lambda)} \cong K_{-w_0(\lambda)} $ is a subrepresentation of $\theta_\pi$, and hence
\begin{equation*}
\Ker(\theta_\pi) \subseteq \Ker(K_{-w_0(\lambda)}).
\end{equation*}
Since $ \lambda \neq 0 $ there exists $ 1 \leq j \leq N $ such that $ v = F_j \cdot e^\lambda_1 \neq 0 $. Let us abbreviate $ c = c^\lambda_{\lambda, v} $ for the matrix coefficient associated with $ v $ and the highest weight vector $ e^\lambda_1 \in V(\lambda) $. By inspecting the definition of the $ l $-functionals and the universal $R$-matrix $\Rcal$, compare \cite[Theorem~3.108]{VoigtYuncken}, one checks that, up to a nonzero scalar, the operator $ \theta_\pi(c) = \pi_\lambda(l^-(c)) $ equals $ \pi_\lambda(K_{-\lambda} K_j F_j) $.  
Therefore, $\theta_\pi(c)(e_1^\lambda) $
is nonzero, whereas
\(
K_{-w_0(\lambda)}(c) = 0.
\)
As a consequence, we have a strict inclusion
\begin{equation}\label{eq:Ker theta pi is not primitive}
\Ker(\theta_\pi) \subsetneqq \Ker(K_{ - w_0(\lambda)}).
\end{equation}
Now assume that $ \Ker(\theta_\pi) $ is primitive. Then the finite-dimensional quotient algebra $ \Ocal(G_q)/\Ker(\theta_\pi) $ is isomorphic to a full matrix algebra by the Jacobson density theorem. This quotient must have dimension strictly greater than one by \eqref{eq:Ker theta pi is not primitive}. Therefore, the natural surjection
\[
\Ocal(G_q)/\Ker(\theta_\pi) \longrightarrow \Ocal(G_q)/ \Ker(K_{- w_0(\lambda)})
\]
becomes a nontrivial algebra homomorphism from a noncommutative matrix algebra onto $ \Cbb $, which is impossible.
\end{proof}

Recall that the closed subsets in the Zariski topology of $ \Prim(\Ocal(G_q)) $ are of the form $\{P \in \Prim(\Ocal(G_q)) \mid I \subseteq P \} $ for some ideal $ I \subseteq \Ocal(G_q) $. 
In view of Proposition \ref{prop:kernels of q-holomorphic evaluations are not primitive}, we consider the subsets
\[
\Prim(\theta_\pi) = \{P \in \Prim(\Ocal(G_q)) \mid \Ker(\theta_\pi) \subseteq P \} \subseteq \Prim (\Ocal(G_q))
\]
and examine to what extent they cover $ \Prim(\Ocal(G_q)) $ as $ \pi \in \SSp(\Cf_c^\infty(G_q)) $ varies.

\begin{thm} \label{thm:the best that Ker theta pi can do}
Let $ \lambda \in \weights^+ $ and consider $ \pi = \pi_C \otimes \pi_\lambda \in \SSp(\Cf_c^\infty(G_q)) $, where $ \pi_C: \Cf^\infty(K_q) \rightarrow \Lbb(\Hcal_C) $ is a topologically irreducible $ * $-representation. Then we have 
\begin{equation}\label{eq:the best that Ker theta pi can do}
\Prim(\theta_\pi) = \Big\{P \in \Prim(\Ocal(G_q)) \; \Big| \; \bigcap_{\nu \in \Pbf(\lambda)} \Ker(\pi_C \otimes K_{-\nu} ) \subseteq P \Big\}. 
\end{equation}
That is, the set $ \Prim(\theta_\pi) \subseteq \Prim (\Ocal(G_q)) $ is the Zariski closure of the finite subset $\big\{\Ker(\pi_C \otimes K_{-\nu}) \mid \nu \in \Pbf(\lambda)\big\} $. 
\end{thm}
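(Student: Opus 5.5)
Since a primitive ideal $P$ contains $\Ker(\theta_\pi)$ if and only if it contains any ideal with the same radical-type behaviour, the plan is to show that $\Ker(\theta_\pi)$ and $J := \bigcap_{\nu \in \Pbf(\lambda)} \Ker(\pi_C \otimes K_{-\nu})$ satisfy
\[
J^{n} \subseteq \Ker(\theta_\pi) \subseteq J
\]
for some $n \in \Nbb$, where $\Pbf(\lambda)$ denotes the set of weights of $V(\lambda)$. Granting this, any prime (in particular primitive) ideal $P$ contains $\Ker(\theta_\pi)$ if and only if it contains $J$, which gives \eqref{eq:the best that Ker theta pi can do}. The statement about Zariski closure is then immediate, because $\{P \mid J \subseteq P\}$ is the smallest Zariski closed set containing the finitely many ideals $\Ker(\pi_C \otimes K_{-\nu})$. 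These ideals are primitive by Theorem~\ref{thm:primitive ideals in homogeneous strata}.

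To obtain the two inclusions, filter $\Hcal_C \otimes V(\lambda)$ by weights. Order the weights of $V(\lambda)$ compatibly with the partial order and set $\Hcal_{\geq k} = \Hcal_C \otimes \bigoplus_{\nu \text{ of height} \leq k} V(\lambda)_\nu$, or more precisely take sums of weight spaces over down-closed sets of weights. Since $\theta_\pi(f) = (\pi_C \otimes \pi_\lambda)(f_{(1)} \otimes l^-(f_{(2)}))$ with $l^-(\Cf^\infty(K_q)) \subseteq U_q(\bfrak_-)$, each operator $\theta_\pi(f)$ lowers weights up to a Cartan part. Hence the spaces $\Hcal_C \otimes \bigoplus_{\nu \leq \nu_0} V(\lambda)_\nu$ are invariant, which generalises the first part of Proposition~\ref{prop:kernels of q-holomorphic evaluations are not primitive}. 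The $U_q(\hf)$-component of $l^-(u^\mu_{kj})$ is $\delta_{kj} K_{-\epsilon_j^\mu}$. Using this, the same computation as in that proposition shows that on each associated graded piece $\Hcal_C \otimes V(\lambda)_\nu$ the induced representation is $(\pi_C \otimes K_{-\nu})^{\oplus \dim V(\lambda)_\nu}$.

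Consequently, if $f \in J$, then $\theta_\pi(f)$ is strictly weight-lowering with respect to the filtration. A product of $n$ such operators vanishes once $n$ exceeds the length of the longest chain in $\Pbf(\lambda)$, which gives $J^n \subseteq \Ker(\theta_\pi)$. For the reverse inclusion, $\theta_\pi(f) = 0$ forces every graded piece to vanish, so $f \in \Ker(\pi_C \otimes K_{-\nu})$ for all $\nu$.

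I expect the main obstacle to be the graded-piece computation. One must verify that, modulo strictly lower weight spaces, $\pi_\lambda(l^-(u^\mu_{kj}))$ acts on $V(\lambda)_\nu$ as $\delta_{kj} q^{-(\epsilon^\mu_j,\nu)}$. This requires the triangular form of $l^-$ coming from the factorisation of the universal $R$-matrix \cite[Theorem~3.108]{VoigtYuncken}: its Cartan part gives $K_{-\epsilon_j^\mu}$, and all other terms lie in $U_q(\hf)U_q(\nfrak_-)$ with nontrivial $U_q(\nfrak_-)$ part. Once this is established, $\theta_\pi(f)$ acts on the graded piece as $\sum_k \pi_C(u^\mu_{ik}) \delta_{kj} q^{-(\epsilon^\mu_j,\nu)} = (\pi_C \otimes K_{-\nu})(u^\mu_{ij})$, and the rest of the argument is formal.
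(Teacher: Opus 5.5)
Your proposal is correct and follows essentially the same route as the paper. The weight filtration of $\Hcal_C \otimes V(\lambda)$ has subquotients $(\pi_C \otimes K_{-\nu})^{\oplus \dim V(\lambda)_\nu}$, which gives $\Ker(\theta_\pi) \subseteq J$; for $f \in J$ the operator $\theta_\pi(f)$ is strictly lower triangular, which gives $J^n \subseteq \Ker(\theta_\pi)$; primeness of primitive ideals then finishes the argument. The only cosmetic difference is that the paper takes the exponent $n = \dim V(\lambda)$ rather than the length of the longest chain in $\Pbf(\lambda)$.
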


\begin{proof}
Let $ \nu \in \Pbf(\lambda) $ and consider the subquotient of $ \Hcal_C \otimes V(\lambda) $ defined by 
\[
\big(\Hcal_C \otimes \bigoplus_{\nu' \leq \nu} V(\lambda)_{\nu'} \big) \big/ \big( \Hcal_C \otimes \bigoplus_{\nu' < \nu} V(\lambda)_{\nu'} \big) \cong \Hcal_C \otimes V(\lambda)_\nu. 
\]
Since $ \theta_\pi(\Ocal(G_q)) \subseteq \Lbb(\Hcal_{\pi_C}) \otimes \pi_\lambda(U_q(\bfrak_-)) $, 
this is naturally a representation of $ \Ocal(G_q) $, and the action is given by
\begin{align*}
u^\mu _{ij} \cdot (\xi \otimes v) &= \pi_C(u^\mu _{ij}) (\xi) \otimes \pi_\lambda(K_{-\epsilon_j^\mu})(v) \\
&= \pi_C(u^\mu _{ij})(\xi) \otimes q^{-(\epsilon_j^\mu, \nu)} v \\
&= \pi_C(K_{-\nu} \triangleright u^\mu_{ij})(\xi) \otimes v 
= (\pi_C \otimes K_{-\nu}) (u^\mu _{ij})(\xi) \otimes v
\end{align*}
for $ \xi \otimes v \in \Hcal_C \otimes V(\lambda)_\nu $ and any $ \mu \in \weights^+$ and $ 1 \leq i, j \leq n_\mu $. Therefore, the above subquotient is isomorphic to the representation $(\pi_C \otimes K_{-\nu})^{ \oplus \dim V(\lambda)_\nu}$. Since $\nu \in \Pbf(\lambda)$ was arbitrary, this implies
\begin{equation}\label{eq:Ker theta pi is contained in the intersection}
\Ker(\theta_\pi) \subseteq \bigcap_{\nu \in \Pbf(\lambda)} \Ker ( \pi_C \otimes K_{-\nu}),
\end{equation}
and hence
\begin{equation*} 
\Prim(\theta_\pi) \supseteq \Big\{ P \in \Prim ( \Ocal(G_q)) \; \Big| \; \bigcap_{\nu \in \Pbf(\lambda)} \Ker ( \pi_C \otimes K_{-\nu} ) \subseteq P \Big\}. 
\end{equation*}
To prove the reverse inclusion let $ f \in \bigcap_{\nu \in \Pbf(\lambda)} \Ker(\pi_C \otimes K_{-\nu}) $. If we arrange the basis $ e_1^\lambda, \dots, e_{n_\lambda}^\lambda $ of $ V(\lambda) $ such that $ \epsilon_i^\lambda > \epsilon_j^\lambda $ implies $ i < j $, then the matrix of operators
\[
\theta_\pi (f) \in M_{n_\lambda} ( \Lbb (\Hcal_{\pi_C}) )
\]
is lower triangular with vanishing diagonal entries. 
This implies
\[
\Big(\bigcap_{\nu \in \Pbf(\lambda)} \Ker(\pi_C \otimes K_{-\nu}) \Big)^{n_\lambda} \subseteq \Ker(\theta_\pi).
\]
Therefore, if $ P \in \Prim (\theta_\pi) $ then $\Big( \bigcap_{\nu \in \Pbf(\lambda)} \Ker ( \pi_C \otimes K_{-\nu} ) \Big)^{n_\lambda} \subseteq P$. Since any primitive ideal is prime, it follows that 
\[
\bigcap_{\nu \in \Pbf(\lambda)} \Ker ( \pi_C \otimes K_{-\nu} ) \subseteq P
\]
as required.
\end{proof}

\begin{rmk} \label{rmk:concluding discussion}
Theorem~\ref{thm:the best that Ker theta pi can do} shows that the information about $ \Prim(\Ocal(G_q)) $ which can be obtained via the kernels of holomorphic evaluations is captured by the action of the weight lattice $ \weights $ on $ \SPrim(\Cf^\infty(K_q)) \subseteq \Prim(\Ocal(G_q)) $. 
    
We do not know precisely which primitive ideals are contained in $ \Prim(\theta_\pi) $ in general, since the gluing of the Joseph strata with respect to the Zariski topology is not known. 

However, in the case $ G_q = SL_q(2,\Cbb) $, the primitive ideal space of $ \Ocal(G_q) = \Ocal(SL_q(2, \Cbb)) $ can be described explicitly, see \cite[Example~2.2]{Goodearl2022}, \cite[Example II.1.3]{BrownGoodearl}. 
Theorem~\ref{thm:the best that Ker theta pi can do} implies that  
\begin{multline*}
\bigcup_{\pi \in \SSp(\Cf^\infty_c(SL_q(2,\Cbb)))} \Prim(\theta_\pi) = \\
\Prim_{(e,e)}\big(\Ocal(SL_q(2, \Cbb))\big) 
\cup 
\big\{K_\lambda \triangleright P \mid P \in \SPrim_{w_0}\big( \Cf^\infty(SU_q(2))\big), \, \lambda \in \weights \big\}.
\end{multline*}
It follows that we have strict inclusions 
\begin{equation*}
\bigcup_{\pi \in \SSp(\Cf_c^\infty(G_q))} \Prim(\theta_\pi) 
\subsetneqq \coprod_{w \in W} \Prim_{(w,w)}(\Ocal(G_q)) \subsetneqq \Prim(\Ocal(G_q))
\end{equation*}
in this case. In particular, the sets $ \Prim(\theta_\pi) $ do not cover the primitive ideal space of $ \Ocal(G_q) $. 
At the same time, we note that 
$ \bigcup_{\pi\in\SSp(\Cf^\infty_c(G_q))}
\Prim(\theta_\pi) $ is Zariski dense in $ \Prim(\Ocal(G_q)) $ for general $ G_q $ since $ \SPrim(\Cf^\infty (K_q)) $ separates the elements of $ \Cf^\infty(K_q) = \Ocal(G_q) $. 
\end{rmk}

As a consequence of Theorem \ref{thm:the best that Ker theta pi can do}, we obtain a generalisation of the second part of Proposition~\ref{prop:kernels of q-holomorphic evaluations are not primitive}. Before stating this, we record the following result from \cite{Joseph}.

\begin{lem} \label{lem:Joseph's lemma}
Let $ \wrm, \wrm' \in W \times W $. If $ J \in \Prim_\wrm(\Ocal(G_q)) $ and $ \Ical \subseteq \Prim_{\wrm'}(\Ocal(G_q)) $ satisfy
\[
\bigcap_{P \in \Ical} P \subseteq J,
\]
then $ \wrm \leq \wrm' $ with respect to the product order.
\end{lem}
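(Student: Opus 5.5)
We would follow Joseph's strategy: reduce the statement to an inclusion of the ideals $I_{\wrm'} \subseteq I_\wrm$ and then read off the Bruhat order from Demazure modules via \eqref{eq:Bruhat order and Demazure module}. Write $\wrm = (w_+, w_-)$ and $\wrm' = (w_+', w_-')$. Each $P \in \Ical$ is of type $\wrm'$, so $I_{\wrm'} \subseteq P$ for all $P \in \Ical$. Hence
\[
I_{\wrm'} \subseteq \bigcap_{P \in \Ical} P \subseteq J .
\]
By symmetry (or by applying the involution, using \eqref{eq:Joseph ideal conjugate} and \eqref{eq:w-multiplicative subset conjugate}), it suffices to prove $w_+ \leq w_+'$. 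The argument for $w_- \leq w_-'$ is the analogous one with $\Ocal^-$ and $E_{w_-}^-$.

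Suppose, towards a contradiction, that $w_+ \not\leq w_+'$. By \eqref{eq:Bruhat order and Demazure module} we then have $V^+_{w_+}(\rho) \not\subseteq V^+_{w_+'}(\rho)$. Exactly as in the proof of Theorem~\ref{thm:no bdd repn for inhomogeneous strata}, this forces
\[
V(\rho)_{w_+(\rho)} \cap V^+_{w_+'}(\rho) = 0 .
\]
Indeed, $V^+_{w_+'}(\rho)$ is $U_q(\bfrak_+)$-invariant, so if it contained the one-dimensional weight space $V(\rho)_{w_+(\rho)}$ it would contain all of $V^+_{w_+}(\rho)$. Since $V^+_{w_+'}(\rho)$ is spanned by weight vectors and weight spaces are orthogonal, we get $e^\rho_{i_{w_+(\rho)}} \in V^+_{w_+'}(\rho)^\perp$. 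This gives
\[
c^+_{w_+,\rho} = u^\rho_{i_{w_+(\rho)},1} \in I^+_{w_+'} \subseteq I_{\wrm'} \subseteq J .
\]
But $J$ is of type $\wrm$, so $(J/I_\wrm) \cap E_\wrm = \emptyset$. The element $c^+_{w_+,\rho}$ lies in $E_\wrm$ (take $\mu = 0$ in the $c^-$ factor), which is a contradiction. Therefore $w_+ \leq w_+'$.

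For the minus component, $w_- \not\leq w_-'$ similarly gives $e^{-w_0\rho}_{-w_-(\rho)} \perp V^-_{w_-'}(\rho)$, using that $V^-_{w}(\rho)$ is a $U_q(\bfrak_-)$-invariant sum of weight spaces containing the extremal line. Then $c^-_{w_-,\rho} \in I^-_{w_-'} \subseteq J$, again contradicting $E_\wrm \cap J/I_\wrm = \emptyset$.

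The only delicate point is this minus-side Demazure comparison. The Bruhat characterisation is stated only for $V^+$, but we can transport it by the involution: \eqref{eq:coefficients under conjugation} identifies $V^-_w(\rho)^\perp$ with the conjugate of $V^+_w(\rho)^\perp$, consistent with $(I^+_w)^* = I^-_w$. Equivalently, we can apply the plus-side argument to $*J \in \Prim_{\wrm^*}$ and $*\Ical \subseteq \Prim_{\wrm'^*}$ via Proposition~\ref{prop:primitive ideals under involution}.
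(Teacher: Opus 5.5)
Your argument is correct, but it is not the paper's route. The paper gives no argument of its own: it simply invokes Joseph's Lemma~10.3.6, which concerns how the strata sit relative to one another in the Zariski topology, applied to a family of primitive ideals lying in the single stratum $\Prim_{\wrm'}(\Ocal(G_q))$.

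You instead reprove the needed consequence from the facts recorded in Section~2. The hypothesis enters only through $I_{\wrm'} \subseteq \bigcap_{P \in \Ical} P \subseteq J$. If $w_+ \not\leq w_+'$, the Demazure characterisation \eqref{eq:Bruhat order and Demazure module} then gives $c^+_{w_+,\rho} \in I^+_{w_+'} \subseteq J$. Its class lies in $E_\wrm$, since $c^-_{w_-,0}=1$, and this contradicts $J$ being of type $\wrm$. This is exactly the mechanism in the proof of Theorem~\ref{thm:no bdd repn for inhomogeneous strata}.

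What your route buys is a self-contained proof and a slightly stronger statement. $J$ need only be an ideal of type $\wrm$, for instance any prime in $\Spec_\wrm(\Ocal(G_q))$. $\Ical$ can be any family of ideals containing $I_{\wrm'}$. In effect you prove that $I_{\wrm'} \subseteq J$ with $J$ of type $\wrm$ forces $\wrm \leq \wrm'$. The citation buys brevity and places the lemma inside Joseph's description of the closure relations between strata.

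For the minus component, your second option is the clean one, and it needs less than you invoke. By \eqref{eq:Joseph ideal conjugate} and \eqref{eq:w-multiplicative subset conjugate}, $*J$ is of type $\wrm^* = (w_-,w_+)$ and contains $(I_{\wrm'})^* = I_{\wrm'^*}$. Your plus-side argument applied to $*J$ then yields $w_- \leq w_-'$ directly. Primitivity of $*J$, via Proposition~\ref{prop:primitive ideals under involution}, plays no role. Note also that ``by symmetry'' alone would not be justified, because \eqref{eq:Bruhat order and Demazure module} is only available for the modules $V^+_w(\rho)$.
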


\begin{proof}
This is a direct consequence of \cite[Lemma~10.3.6]{Joseph},  applied to a subset of $ \Prim(\Ocal(G_q)) $ contained entirely in the stratum $ \Prim_{\wrm'}(\Ocal(G_q)) $.
\end{proof}

\begin{cor} \label{cor:kernels of quantum holomorphic evaluations are not primitive-general case}
Let $ \lambda \in \weights^+ $ and consider $ \pi = \pi_C \otimes \pi_\lambda \in \SSp ( \Cf_c ^\infty (G_q)) $, where $ \pi_C: \Cf^\infty(K_q) \rightarrow \Lbb(\Hcal_C) $ is a topologically irreducible $ * $-representation. Then $ \Ker(\theta_\pi) $ is  primitive if and only if $ \lambda = 0 $. 
\end{cor}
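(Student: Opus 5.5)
The plan is to treat the two directions separately. If $\lambda = 0$, then $\pi_\lambda$ is the trivial representation and $\theta_\pi = \pi_C$, since $l^-(f)$ acts on $V(0)$ by $\epsilon(f)$. Hence $\Ker(\theta_\pi) = \Ker(\pi_C)$, which is primitive by Theorem~\ref{thm:irreducible *-representations of C(Kq)}.

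For the converse, assume $\lambda \neq 0$ and, aiming for a contradiction, that $J = \Ker(\theta_\pi)$ is primitive. By Theorem~\ref{thm:irreducible *-representations of C(Kq)} we have $\Ker(\pi_C) \in \Prim_{(w,w)}(\Ocal(G_q))$ for some $w \in W$. By \eqref{eq:torus action on the kernel level}, every $\Ker(\pi_C \otimes K_{-\nu}) = K_{\nu} \triangleright \Ker(\pi_C)$ lies in the same stratum $\Prim_{(w,w)}(\Ocal(G_q))$, because the $H$-action preserves strata.

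Next, locate $J$ in the stratification. Since $J \in \Prim(\theta_\pi)$, Theorem~\ref{thm:the best that Ker theta pi can do} gives $\bigcap_{\nu \in \Pbf(\lambda)} \Ker(\pi_C \otimes K_{-\nu}) \subseteq J$. If $J \in \Prim_{\wrm}(\Ocal(G_q))$, Lemma~\ref{lem:Joseph's lemma} then yields $\wrm \leq (w,w)$. For the opposite inequality, use \eqref{eq:Ker theta pi is contained in the intersection}: $J \subseteq \Ker(\pi_C \otimes K_{-\nu})$ for every $\nu$. I expect this to force $(w,w) \leq \wrm$, reasoning as follows. An inclusion of primes $J \subseteq P$ with $P$ of type $(w,w)$ should give $I_\wrm \subseteq J \subseteq P$. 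Containment of the Joseph ideals $I_{\wrm} \subseteq I_{(w,w)}$ is governed by the reverse Bruhat order, since $I^+_{w}$ shrinks as $w$ grows, by \eqref{eq:Bruhat order and Demazure module}. Combining this with $P \cap E_{(w,w)} = \emptyset$ should give $w \le w_\pm$.

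Justifying this rigorously is the main obstacle. An alternative that avoids it is to show directly that $J$ lies in $\Prim_{(w,w)}$. The type is determined by which $c^\pm_{v,\mu}$ vanish modulo $J$, and these can be computed explicitly in the triangular form of $\theta_\pi$, whose diagonal blocks are $\pi_C \otimes K_{-\nu}$. For instance, $c^+_{w,\mu}$ acts with diagonal blocks $q^{(\nu,\mu)}\pi_C(c^+_{w,\mu}) \neq 0$, so it is not in $J$. Also, $I_{(w,w)} \subseteq J$, because $I_{(w,w)}$ is $H$-invariant and lies in every diagonal kernel, and one then argues via the triangular structure that strictly lower-triangular parts vanish on $I^\pm_w$ generators. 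Either way, the goal is $J \in \Prim_{(w,w)}(\Ocal(G_q))$.

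Once $J$ and each $\Ker(\pi_C \otimes K_{-\nu})$ lie in the same stratum and $J \subseteq \Ker(\pi_C \otimes K_{-\nu})$, Proposition~\ref{prop:primitive ideals are maximal in each stratum} gives $J = \Ker(\pi_C \otimes K_{-\nu})$ for all $\nu \in \Pbf(\lambda)$. Taking $\nu = \lambda$ and $\nu = w_0(\lambda)$, whose difference is a nonzero element of $\weights$ since $\lambda \neq 0$, contradicts the final assertion of Theorem~\ref{thm:primitive ideals in homogeneous strata}. Hence $\Ker(\theta_\pi)$ is not primitive when $\lambda \neq 0$.
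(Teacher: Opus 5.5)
Your skeleton matches the paper's proof. You place $\Ker(\pi_C)$ and all $\Ker(\pi_C\otimes K_{-\nu})$ in $\Prim_{(w,w)}(\Ocal(G_q))$, show that a primitive $J=\Ker(\theta_\pi)$ lies in the same stratum, and finish with Proposition~\ref{prop:primitive ideals are maximal in each stratum} and the last assertion of Theorem~\ref{thm:primitive ideals in homogeneous strata}; your choice $\nu=\lambda$, $\nu=w_0(\lambda)$ is fine.

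\textbf{The gap.} You leave the step $(w,w)\le\wrm$ unproved. It needs no new idea: it is Lemma~\ref{lem:Joseph's lemma} with the roles exchanged. Apply the lemma to the primitive ideal $\Ker(\pi_C\otimes K_{-\nu})\in\Prim_{(w,w)}(\Ocal(G_q))$ and the singleton $\Ical=\{J\}\subseteq\Prim_{\wrm}(\Ocal(G_q))$. This is allowed precisely because $J$ is assumed primitive. Since $\bigcap_{P\in\Ical}P=J\subseteq\Ker(\pi_C\otimes K_{-\nu})$ by \eqref{eq:Ker theta pi is contained in the intersection}, you get $(w,w)\le\wrm$, hence $\wrm=(w,w)$. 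This is exactly the paper's argument. The paper also invokes Theorem~\ref{thm:no bdd repn for inhomogeneous strata} to know beforehand that $\wrm$ is homogeneous; your two applications of the lemma in the product order make that unnecessary.

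Your heuristic can also be made rigorous, but through non-membership rather than containment of Joseph ideals. From $I^+_{w_+}\subseteq P$ and $c^+_{w,\rho}\notin P$ you get $e^\rho_{i_{w(\rho)}}\not\perp V^+_{w_+}(\rho)$. Hence $V(\rho)_{w(\rho)}\subseteq V^+_{w_+}(\rho)$, and $w\le w_+$ by \eqref{eq:Bruhat order and Demazure module}. The same argument after applying $*$ gives $w\le w_-$.

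\textbf{The alternative route fails.} Its key claim, that $I_{(w,w)}\subseteq\Ker(\theta_\pi)$ unconditionally, is false in general. The strictly lower-triangular blocks of $\theta_\pi(f)=\sum\pi_C(f_{(1)})\otimes\pi_\lambda(l^-(f_{(2)}))$ involve coefficients $f_{(1)}$ that $\pi_C$ need not annihilate.

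Concretely, take $\pi_C=\epsilon$ (so $w=e$) and $\lambda\neq0$. The coefficient $c=c^\lambda_{\lambda,v}$ with $v=F_j\cdot e^\lambda_1$ is a combination of the $u^\lambda_{1k}$ with $k\neq1$. It therefore maps to zero under $\Cf^\infty(K_q)/I_{(e,e)}\cong\Cf^\infty(T)$ from Remark~\ref{rmk:remark on invariant subspace problem}, so $c\in I_{(e,e)}$; for $SL_q(2,\Cbb)$ it is a multiple of $u^\varpi_{12}\in I^-_e$. Yet $\theta_\pi(c)\neq0$ by the proof of Proposition~\ref{prop:kernels of q-holomorphic evaluations are not primitive}.

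So locating $J$ in the stratum must use the hypothesis that $J$ is primitive, and the lemma-based argument is the one to keep.
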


\begin{proof}
According to Corollary~\ref{cor:no bdd repn for inhomogeneous strata} and Theorem~\ref{thm:irreducible *-representations of C(Kq)}, we have
\[
\Ker(\pi_C) \in \Prim_{(w,w)} (\Ocal(G_q))
\]
for some $ w \in W $, and it follows from Proposition~\ref{prop:primitive ideals are maximal in each stratum} that finite subsets are closed in $ \Prim_{(w,w)}(\Ocal(G_q)) $. 
Therefore Theorem~\ref{thm:the best that Ker theta pi can do} implies
\begin{equation}\label{eq:kernel of q-evaluation at a stratum}
\Prim_{(w,w)}(\Ocal(G_q)) \cap \Prim(\theta_\pi)
= \big\{\Ker(\pi_C \otimes K_{-\nu}) \mid \nu \in \weights(\lambda) \big\}.
\end{equation}
Now suppose that $ \Ker(\theta_\pi) $ is primitive. According to  ~\eqref{eq:stratification of primitive spectrum} and Theorem \ref{thm:no bdd repn for inhomogeneous strata}, we then have 
\[
\Ker(\theta_\pi) \in \Prim_{(w',w')}(\Ocal(G_q))
\]
for some $ w' \in W $. Since
\[
\Ker (\theta_\pi) \subseteq \Ker(\pi_C \otimes K_{-\nu} ) \in \Prim_{(w,w)}(\Ocal(G_q))
\]
for $ \nu \in \weights(\lambda) $ by \eqref{eq:Ker theta pi is contained in the intersection}, applying Lemma~\ref{lem:Joseph's lemma} to $ J = \Ker(\pi_C \otimes K_{-\nu}) $ for some fixed $ \nu \in \weights(\lambda) $ and the singleton set $ \Ical = \{\Ker(\theta_\pi) \} $ gives $ w \leq w' $. Moreover we have \( \Ker (\theta_\pi) \in \Prim(\theta_\pi) \) by the definition of $ \Prim(\theta_\pi) $, so that \eqref{eq:the best that Ker theta pi can do} yields 
\[
\bigcap_{\nu \in \weights(\lambda)} \Ker(\pi_C \otimes K_{-\nu} ) \subseteq \Ker(\theta_\pi). 
\]
Applying Lemma~\ref{lem:Joseph's lemma} to $ J = \Ker(\theta_\pi) $ and $ \Ical = \{\Ker(\pi_C \otimes K_{-\nu}) \mid \nu \in \weights(\lambda)\} $ therefore gives $ w' \leq w $. As a consequence, we obtain $ w = w' $.

In view of \eqref{eq:kernel of q-evaluation at a stratum} we thus have
\[
\{ \Ker(\theta_\pi) \} = \big\{ \Ker(\pi_C \otimes K_{-\nu}) \mid \nu \in \weights(\lambda) \big\}.
\]
This implies $ \lambda = 0 $ since the set on the right-hand side has cardinality greater than one whenever $ \lambda \neq 0 $, see Theorem~\ref{thm:primitive ideals in homogeneous strata}. 

For the converse implication, observe that if $ \lambda = 0 $ then $ \Ker(\theta_\pi) = \Ker(\pi_C) $ is clearly primitive. 
\end{proof}

Theorem~\ref{thm:the best that Ker theta pi can do} allows us to establish the following partial quantum analogue of the fact that classical holomorphic evaluations induce a bijection between $ G $ and $ \Max(\Pol^\mathbb{R}(K)) $. 

\begin{prop} \label{prop:embedding of SPrim(Cc(Gq)) into Prim(C(Kq))}
Consider
\[
\SPrim(\Cf_c^\infty(G_q)) = \big\{\Ker(\pi) \mid \pi \in \SSp(\Cf_c^\infty(G_q)) \big\},
\]
and write $ \mathfrak{P}\Prim(\Ocal(G_q)) $ for the power set of $ \Prim(\Ocal(G_q)) $. 
Then the map 
\begin{equation} \label{injectivecorrespondence}
\SPrim(\Cf_c^\infty (G_q)) \ni \Ker(\pi) \mapsto \Prim(\theta_\pi ) \in \mathfrak{P}\Prim(\Ocal(G_q))
\end{equation}
is well-defined and injective. 
\end{prop}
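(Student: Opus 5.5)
The plan is to reduce everything to the description of $\Prim(\theta_\pi)$ in Theorem~\ref{thm:the best that Ker theta pi can do}, and then show that this set remembers both $\Ker(\pi_C)$ and $\lambda$. Throughout, write $\pi = \pi_C \otimes \pi_\lambda$ as in \eqref{eq:irreducible representations of Cc(G_q)}.

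\emph{Well-definedness.} I will first show that $\Ker(\pi)$ determines the pair $(\Ker(\pi_C), \lambda)$. By \eqref{eq:complex semisimple quantum group}, $\pi$ vanishes on every summand $\Cf^\infty(K_q) \otimes \End(V(\mu))$ with $\mu \neq \lambda$. On the $\lambda$-summand its kernel is $\Ker(\pi_C) \otimes \End(V(\lambda))$. Hence $\lambda$ is the unique weight whose summand is not contained in $\Ker(\pi)$, and $\Ker(\pi_C)$ can be read off from that summand. Next, the proof of Theorem~\ref{thm:primitive ideals in homogeneous strata} gives $\Ker(\pi_C \otimes K_{-\nu}) = K_{\nu} \triangleright \Ker(\pi_C)$, so each of these ideals depends only on $\Ker(\pi_C)$ and $\nu$. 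By \eqref{eq:the best that Ker theta pi can do}, $\Prim(\theta_\pi)$ therefore depends only on $\Ker(\pi)$.

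\emph{Injectivity, recovering the stratum.} Suppose $\Prim(\theta_\pi)$ is given. By Theorem~\ref{thm:irreducible *-representations of C(Kq)}, $\Ker(\pi_C) \in \Prim_{(w,w)}(\Ocal(G_q))$ for some $w$. Every $P \in \Prim(\theta_\pi)$ contains $\bigcap_{\nu} \Ker(\pi_C \otimes K_{-\nu})$, which is an intersection of ideals in $\Prim_{(w,w)}(\Ocal(G_q))$. Lemma~\ref{lem:Joseph's lemma} then says that the stratum of $P$ is $\geq (w,w)$. Since $\Prim(\theta_\pi)$ meets the stratum $(w,w)$, this stratum is the unique minimal stratum meeting $\Prim(\theta_\pi)$, so $w$ is determined. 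Then \eqref{eq:kernel of q-evaluation at a stratum} recovers the finite set
\[
S = \Prim(\theta_\pi) \cap \Prim_{(w,w)}(\Ocal(G_q)) = \{K_\nu \triangleright \Ker(\pi_C) \mid \nu \in \weights(\lambda)\}.
\]

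\emph{Injectivity, recovering $\Ker(\pi_C)$ and $\lambda$.} I will show that $\Ker(\pi_C)$ is the unique $*$-invariant element of $S$. Write $\pi_C = \pi_w \otimes K_\omega$ with $\omega \in \tfrak^*$. From the proof of Theorem~\ref{thm:SPrim(C(Kq))},
\[
*\Ker(\pi_w \otimes K_{\omega-\nu}) = \Ker\bigl(\pi_w \otimes K_{\overline{\omega-\nu}}\bigr).
\]
Weights lie in $(i\tfrak)^*$, so conjugation sends $\omega - \nu$ to $\omega + \nu$. Hence the ideal $K_\nu \triangleright \Ker(\pi_C)$ is self-adjoint if and only if $\Ker(\pi_w \otimes K_{\omega-\nu}) = \Ker(\pi_w \otimes K_{\omega+\nu})$. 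The last assertion of Theorem~\ref{thm:primitive ideals in homogeneous strata} shows this forces $2\nu = 0$, i.e. $\nu = 0$. So $\Ker(\pi_C)$ is recovered.

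The same assertion shows that $\nu \mapsto K_\nu \triangleright \Ker(\pi_C)$ is injective on $\weights$. This lets me recover the set $\weights(\lambda)$ from $S$ (a translate of it is visible, normalised by $\Ker(\pi_C)$ corresponding to $\nu=0$). Then $\lambda$ is the highest element of $\weights(\lambda)$. Together with $\Ker(\pi_C)$, this determines $\Ker(\pi)$ by the first step.

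\emph{Main obstacle.} I expect the delicate part to be the $*$-invariance identification. The sign of conjugation on $\weights \subseteq (i\tfrak)^*$ versus $\tfrak^*$ has to be tracked carefully through \eqref{eq:when Kzeta preserves involution}. Also, the ideals $K_\nu \triangleright \Ker(\pi_C)$ for different $\nu$ are distinct as ideals, but the labelling of $S$ by $\nu$ is only determined once the base point $\Ker(\pi_C)$ has been identified.
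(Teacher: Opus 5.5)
\textbf{Gap in recovering $\Ker(\pi_C)$.} The step that recovers $\Ker(\pi_C)$ as ``the unique $*$-invariant element of $S$'' does not work in general. The set $S = \{K_\nu \triangleright \Ker(\pi_C) \mid \nu \in \weights(\lambda)\}$ contains $\Ker(\pi_C)$ only if $0 \in \weights(\lambda)$, that is, only if $\lambda \in \Qbf$. This is because the last assertion of Theorem~\ref{thm:primitive ideals in homogeneous strata} gives $K_\nu \triangleright \Ker(\pi_C) \neq \Ker(\pi_C)$ for $0 \neq \nu \in \weights$. Take $G_q = SL_q(2,\Cbb)$ and $\lambda = \varpi$. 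Then $S = \{K_{\varpi} \triangleright \Ker(\pi_C),\, K_{-\varpi} \triangleright \Ker(\pi_C)\}$, and your own formula $*\Ker(\pi_w \otimes K_{\omega-\nu}) = \Ker(\pi_w \otimes K_{\omega+\nu})$ shows that $*$ swaps these two ideals. So $S$ has no $*$-invariant element at all. Your subsequent normalisation of the translate of $\weights(\lambda)$ ``by $\Ker(\pi_C)$ corresponding to $\nu = 0$'' has the same defect. As written, injectivity is established only for $\lambda \in \Qbf$.

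\textbf{How to repair it.} Within your reconstruction scheme, the fix is to look for the $*$-invariant element in the whole orbit $\{K_\chi \triangleright P \mid \chi \in \weights\}$ of an arbitrary $P \in S$, not in $S$ itself. This orbit is determined by $S$. If $P = K_\nu \triangleright \Ker(\pi_C)$, your computation shows that $K_\chi \triangleright P$ is $*$-invariant if and only if $2(\chi+\nu) = 0$, i.e.\ $\chi = -\nu$. This recovers $\Ker(\pi_C)$, and then $\weights(\lambda) = \{\nu \in \weights \mid K_\nu \triangleright \Ker(\pi_C) \in S\}$ by injectivity of $\nu \mapsto K_\nu \triangleright \Ker(\pi_C)$.

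\textbf{Comparison with the paper.} The paper avoids needing a base point and argues symmetrically. From $\Prim(\theta_\pi) = \Prim(\theta_{\pi'})$ it gets $w = w'$ and equality of the two finite sets. It writes $\Ker(\pi_C) = J_{(w,w),\zeta,z}$ and $\Ker(\pi_C') = J_{(w,w),\zeta',z'}$ with unimodular parameters (Theorem~\ref{thm:SPrim(C(Kq))}). For each $\nu \in \weights(\lambda)$ it matches some $\nu' \in \weights(\lambda')$ via \eqref{eq:torus action on the second parameter}--\eqref{eq:torus action on the third parameter}. Taking absolute values forces $\nu = \nu'$, which yields both $\Ker(\pi_C) = \Ker(\pi_C')$ and $\weights(\lambda) \subseteq \weights(\lambda')$.

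\textbf{Minor point on the stratum.} Lemma~\ref{lem:Joseph's lemma} says that each $P \in \Prim(\theta_\pi)$ lies in a stratum $\leq (w,w)$, not $\geq (w,w)$. So $(w,w)$ is the \emph{greatest} stratum meeting $\Prim(\theta_\pi)$, not the minimal one. Your identification of $w$ still goes through with this correction.
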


\begin{proof}
Let $ \pi \cong \pi_C \otimes \pi_\lambda $ and $ \pi' \cong \pi_C ' \otimes \pi_{\lambda'} $ be two topologically irreducible $ * $-representations of $ \Cf_c^\infty(G_q) $, and write $ w, w' \in W $ for the elements such that $ \Ker(\pi_C) \in \SPrim_{w}(\Cf^\infty(K_q)) $ and $ \Ker(\pi_C') \in \SPrim _{w'}(\Cf^\infty(K_q)) $.

Assume that 
\(
\Prim(\theta_\pi) = \Prim(\theta_{\pi'}).
\)
Then, according to Theorem~\ref{thm:the best that Ker theta pi can do}, we have
\[
\bigcap_{\nu \in \weights(\lambda)} \Ker(\pi_C \otimes K_{-\nu} ) \subseteq \Ker(\pi_C' \otimes K_{-\nu'} ), \qquad \forall \nu' \in \weights(\lambda'),
\]
and hence
\[
\bigcap_{\nu \in \weights ( \lambda)} \Ker (\pi_C \otimes K_{-\nu} ) \subseteq \bigcap_{\nu' \in \weights ( \lambda' )} \Ker (\pi_C ' \otimes K_{-\nu'} ).
\]
By symmetry, we obtain in fact
\[
\bigcap_{\nu \in \weights(\lambda)} \Ker(\pi_C \otimes K_{-\nu}) = \bigcap_{\nu' \in \weights(\lambda')} \Ker(\pi_C ' \otimes K_{-\nu'}).
\]
Due to Lemma \ref{lem:Joseph's lemma} this yields $ w = w' $. 
Moreover, according to Proposition~\ref{prop:primitive ideals are maximal in each stratum}, finite subsets of $\Prim_{(w,w)}(\Ocal(G_q)) $ are Zariski-closed within the stratum. Therefore, the previous equality shows in fact  
\begin{equation}\label{eq:equality between actual primitive ideals}
\big\{ \Ker (\pi_C \otimes K_{-\nu} ) \mid \nu \in \weights(\lambda) \big\} = \big\{ \Ker (\pi_C ' \otimes K_{-\nu'} ) \mid \nu' \in \weights(\lambda') \big\}.
\end{equation}
By Theorem~\ref{thm:SPrim(C(Kq))}, there exist $ \zeta, \zeta' \in \Tbb^{\Scal(w)} $ and $ z, z' \in \Tbb^{\Ical(w)} $ such that
\[
\Ker(\pi_C) = J_{(w,w), \zeta, z} \quad \text{and} \quad \Ker(\pi_C') = J_{(w,w), \zeta', z'}.
\]
Now let $ \nu \in \weights(\lambda) $. Since $ \Ker(\pi_C \otimes K_{-\nu}) = K_\nu \triangleright \Ker(\pi_C) $ is contained in the right-hand side of \eqref{eq:equality between actual primitive ideals}, the formulas for the left torus action \eqref{eq:torus action on the second parameter}--\eqref{eq:torus action on the third parameter} imply that there exists $ \nu' \in \weights(\lambda') $ such that
\[
q^{-2 (\nu, \varpi_i)} \zeta_i = q^{-2 (\nu', \varpi_i)} \zeta_i ' , \;\; \forall i \in \Scal(w) \quad \text{and} \quad q^{-(\nu, \varpi_i)} z_i = q^{- (\nu', \varpi_i)} z_i ', \;\; \forall i \in \Ical(w).
\]
Taking absolute values forces $ \nu = \nu' $. This shows in particular $ \zeta = \zeta' $ and $ z = z' $. Moreover, applying this argument to all $ \nu \in \weights(\lambda) $ yields \(\weights(\lambda) \subseteq \weights(\lambda') \). 
By symmetry, it follows that \(\weights(\lambda) = \weights(\lambda') \), and hence $ \lambda = \lambda' $. 

We thus obtain
\[
\Ker(\pi_C) = J_{(w,w), \zeta, z } = J_{(w,w), \zeta' , z'} = \Ker(\pi_C'),
\]
so that 
\begin{align}\label{eq:kernel identity}
\Ker(\pi) &= \Ker(\pi_C) \otimes \Df(K_q) + \Cf^\infty(K_q) \otimes \Ker(\pi_\lambda) \\
&= \Ker(\pi_C') \otimes \Df(K_q) + \Cf^\infty(K_q) \otimes \Ker(\pi_{\lambda'}) = \Ker(\pi') \nonumber
\end{align}
as required.

To prove that the correspondence \eqref{injectivecorrespondence} is well-defined, let
\[
\pi = \pi_C \otimes \pi_\lambda,
\qquad
\pi' = \pi_C' \otimes \pi_{\lambda'}
\in \SSp\bigl(\Cf_c^\infty(G_q)\bigr),
\]
and suppose that $\Ker(\pi)=\Ker(\pi')$, so that \eqref{eq:kernel identity} holds.

Assume first that $ \lambda \neq \lambda' $. Applying $ \id \otimes \pi_\lambda$ to the two middle expressions in \eqref{eq:kernel identity}, we obtain
\[
\Ker(\pi_C)\otimes \End(V(\lambda))
=
\Cf^\infty(K_q)\otimes \End(V(\lambda)).
\]
It follows that $\Ker(\pi_C)=\Cf^\infty(K_q)$, contradicting the fact that $\pi_C\neq 0$. Hence $ \lambda = \lambda ' $.

Applying $ \id \otimes \pi_\lambda$ once more to the two middle expressions in \eqref{eq:kernel identity} yields
\[
\Ker(\pi_C)\otimes \End(V(\lambda)) = \Ker(\pi_C')\otimes \End(V(\lambda)).
\]
Therefore, we also have $ \Ker(\pi_C)=\Ker(\pi_C')$. As a consequence, Theorem~\ref{thm:the best that Ker theta pi can do} implies 
$
\Prim(\theta_\pi)=\Prim(\theta_{\pi'}), 
$
and this finishes the proof. 
\end{proof}

Note that according to Corollary~\ref{cor:degeneracy of primitive ideals} the correspondence
\[
\SSp(\Cf^\infty(K_q)) \ni \pi \longmapsto \Ker(\pi) \in \SPrim(\Cf^\infty(K_q))
\]
is not injective. Therefore the argument in the proof of Corollary ~\ref{prop:embedding of SPrim(Cc(Gq)) into Prim(C(Kq))} does not allow us to conclude that \( \pi_C \cong \pi_C' \). Due to this degeneracy, we cannot reformulate Proposition~\ref{prop:embedding of SPrim(Cc(Gq)) into Prim(C(Kq))}  with \( \SSp(\Cf_c^\infty(G_q)) \) instead of $ \SPrim(\Cf_c^\infty(G_q)) $.

\begin{rmk} \label{rmk:embedding of SPrim(Cc(Gq)) into Prim(C(Kq))}
As indicated above, Proposition~\ref{prop:embedding of SPrim(Cc(Gq)) into Prim(C(Kq))} can be interpreted as a quantum analogue of the inclusion \( \supseteq \) in~\eqref{eq:classical relation between Prim(Cf(K)) and holomorphic evaluations}.

In Remark \ref{rmk:concluding discussion} we have already observed in the special case $ G_q = SL_q(2, \mathbb{C}) $ that there is no quantum analogue of the reverse inclusion. This is true for arbitrary $ G_q $ as well. Specifically, 
let
\[
J = J_{(w_0,w_0), \zeta , z} \in \bigcup_{\pi \in \SSp(\Cf_c^\infty(G_q))} \Prim(\theta_\pi).
\]
Then, due to Theorem~\ref{thm:the best that Ker theta pi can do}, there exist \( \pi_C \in \SSp(\Cf^\infty(K_q)) \) and \( \lambda \in \weights^+ \) such that
\[
\bigcap_{\nu \in \weights(\lambda)} \Ker(\pi_C \otimes K_{-\nu}) \subseteq J,
\]
which implies according to Lemma~\ref{lem:Joseph's lemma} that
\[
\Ker(\pi_C) \in \SPrim_{w_0}(\Cf^\infty(K_q)).
\]
Since finite subsets of \( \Prim_{(w_0, w_0)}(\Ocal(G_q)) \) are Zariski-closed within the stratum, we conclude that
\[
J = \Ker(\pi_C \otimes K_{-\nu}) \qquad \text{for some } \nu \in \weights(\lambda).
\]
As a consequence, we obtain 
\begin{multline*}
\Prim_{(w_0, w_0)}(\Ocal(G_q)) \cap \bigcup_{\pi \in \SSp(\Cf_c^\infty(G_q))} \Prim(\theta_\pi) \\
= \left\{ \Ker(\pi_{w_0} \otimes K_{\zeta + \nu}) \mid \zeta \in \tf^*/i \hbar^{-1} \roots^\vee,\ \nu \in \weights \right\}
\neq \Prim_{(w_0, w_0)}(\Ocal(G_q)),
\end{multline*}
compare Theorem~\ref{thm:irreducible *-representations of C(Kq)} and Theorem~\ref{thm:primitive ideals in homogeneous strata}. 
In summary, the classical relation \eqref{eq:classical relation between Prim(Cf(K)) and holomorphic evaluations} does not carry over to the $ q $-deformed situation.
\end{rmk}

Instead of looking at primitive ideals, one may also study the behaviour of quantum holomorphic evaluations directly at the level of representations. In this setting we have the following variant of Proposition~\ref{prop:embedding of SPrim(Cc(Gq)) into Prim(C(Kq))}.

\begin{prop} \label{prop:holomorphic evaluations are distinct for different points}
Let $ \pi, \pi' \in \SSp(\Cf_c^\infty(G_q)) $. If the representations $ \theta_\pi $ and $ \theta_{\pi'} $ are boundedly equivalent then $ \pi $ and $ \pi' $ are unitarily equivalent.
\end{prop}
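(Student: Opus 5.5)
The plan is to reduce to the parametrisation of Theorem~\ref{thm:irreducible *-representations of C(Kq)}, and then recover the remaining torus parameter from spectral data, as in the proof of Proposition~\ref{prop:degeneracy of representations}.

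First, let $T$ be a bounded invertible intertwiner between $\theta_\pi$ and $\theta_{\pi'}$. Then $\Ker(\theta_\pi)=\Ker(\theta_{\pi'})$, and hence $\Prim(\theta_\pi)=\Prim(\theta_{\pi'})$. The proof of Proposition~\ref{prop:embedding of SPrim(Cc(Gq)) into Prim(C(Kq))} applies verbatim and gives $\lambda=\lambda'$ and $\Ker(\pi_C)=\Ker(\pi_C')$, where $\pi=\pi_C\otimes\pi_\lambda$ and $\pi'=\pi_C'\otimes\pi_{\lambda'}$. By Theorem~\ref{thm:irreducible *-representations of C(Kq)} we may write $\pi_C=\pi_w\otimes K_\zeta$ and $\pi_C'=\pi_{w'}\otimes K_{\zeta'}$ with $\zeta,\zeta'\in\tf^*/i\hbar^{-1}\Qbf^\vee$. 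Equality of kernels together with the disjointness of the strata forces $w=w'$. By \eqref{eq:degeneracy of primitive ideals}, $\zeta-\zeta'$ lies in a finite $2$-group, so it only remains to show $\zeta=\zeta'$.

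Second, to separate $\zeta$ from $\zeta'$ I would use the operators $A_\mu=\theta_\pi(c^+_{w,\mu})$ for regular dominant $\mu$, say $\mu=\rho$ and $\mu=\rho+\varpi_i$ for $1\le i\le N$. Choose the weight basis of $V(\lambda)$ ordered as in the proof of Theorem~\ref{thm:the best that Ker theta pi can do}. Then $A_\mu$ is a lower triangular operator matrix whose diagonal blocks are
\[
(\pi_C\otimes K_{-\nu})(c^+_{w,\mu})=q^{(\zeta,\mu)}q^{-(\nu,\mu)}\pi_w(c^+_{w,\mu}),\qquad \nu\in\weights(\lambda).
\]
For finite block triangular operators, the spectrum is contained in the union of the spectra of the diagonal blocks. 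By \eqref{eq:w-multiplicative elements under tensor product decomposition} and \eqref{eq:infinite-dimensional representation of C( SUq(2))}, each $\pi_w(c^+_{w,\mu})$ is a diagonal operator with eigenvalues $q^{k}$ for $k\in\Nbb$, and the value $1$ occurs exactly at the basis vector $e_0\otimes\cdots\otimes e_0$. Since $\mu$ is regular and $\zeta\in\tf^*$, so that $|q^{(\zeta,\mu)}|=1$, the quantity $-(\nu,\mu)$ is strictly maximised over $\weights(\lambda)$ at $\nu=w_0(\lambda)$ only. Hence the union of diagonal spectra has a unique point of maximal modulus, namely $q^{(\zeta,\mu)}q^{-(w_0(\lambda),\mu)}$. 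This point is a genuine eigenvalue of $A_\mu$: its eigenvector is $(e_0\otimes\cdots\otimes e_0)\otimes e^\lambda_{n_\lambda}$, which lies in the invariant lowest weight subspace of Proposition~\ref{prop:kernels of q-holomorphic evaluations are not primitive}. Therefore $\sigma(A_\mu)$ has $q^{(\zeta,\mu)}q^{-(w_0(\lambda),\mu)}$ as its unique point of maximal modulus.

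Third, bounded equivalence preserves spectra, so $\sigma(A_\mu)=\sigma(A'_\mu)$, where $A'_\mu=\theta_{\pi'}(c^+_{w,\mu})$. Since $\lambda=\lambda'$, comparing the points of maximal modulus gives $q^{(\zeta,\mu)}=q^{(\zeta',\mu)}$, that is, $(\zeta-\zeta',\mu)\in i\hbar^{-1}\Zbb$. Taking differences between $\mu=\rho+\varpi_i$ and $\mu=\rho$ yields $(\zeta-\zeta',\varpi_i)\in i\hbar^{-1}\Zbb$ for all $i$, hence $\zeta-\zeta'\in i\hbar^{-1}\Qbf^\vee$ and $\zeta=\zeta'$ in $T$. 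By Theorem~\ref{thm:irreducible *-representations of C(Kq)} this gives $\pi_C\cong\pi_C'$, and together with $\lambda=\lambda'$ we conclude $\pi\cong\pi'$.

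The main obstacle is the spectral step. One has to make sure that the spectrum inclusion for triangular operator matrices holds in this infinite-dimensional setting, which I would establish by block back-substitution for the resolvent. One also has to exclude other weights $\nu$ in the $W$-orbit of $w_0(\lambda)$ contributing points of the same modulus, and this is exactly why regular $\mu$ is required.
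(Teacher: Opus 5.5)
Your proof is correct and follows the paper's argument. Like the paper, you use the proof of Proposition~\ref{prop:embedding of SPrim(Cc(Gq)) into Prim(C(Kq))} to get $\lambda=\lambda'$ and $w=w'$, and then compare the spectra of $\theta_\pi(c^+_{w,\mu})$ and $\theta_{\pi'}(c^+_{w,\mu})$ under conjugation by $T$. The only difference is that the paper uses the exact identity $\theta_\pi(c^+_{w,\mu})=q^{(\zeta,\mu)}\,\pi_w(c^+_{w,\mu})\otimes\pi_\lambda(K_{-\mu})$, a unimodular multiple of a nonzero positive operator, instead of mere block triangularity. This makes both your spectral inclusion for triangular operator matrices and the restriction to regular $\mu$ unnecessary, so any $\mu\in\weights^+$ works.
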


\begin{proof}
Let $ \pi = \pi_C \otimes \pi_\lambda $ and $ \pi' = \pi_{C} ' \otimes \pi_{\lambda'} \in \SSp(\Cf_c^\infty(G_q)) \cong \SSp(\Pol^\Rbb (G_q)) $. Let $ T: \Hcal_{\pi_C} \otimes V(\lambda) \rightarrow \Hcal_{\pi_{C} '} \otimes V(\lambda') $ be an invertible bounded operator such that
\begin{equation}\label{eq:unitary equivalence of two holomorphic evaluations}
T \theta_\pi (f) T^{-1} = \theta_{\pi'}(f), \quad \forall f \in \Cf^\infty(K_q).
\end{equation}
This implies $ \Ker(\theta_\pi) = \Ker(\theta_{\pi'}) $ and hence $ \Prim(\theta_\pi) = \Prim(\theta_{\pi'}) $.
From the proof of Proposition~\ref{prop:embedding of SPrim(Cc(Gq)) into Prim(C(Kq))} we thus get $ \lambda = \lambda' $. Therefore it suffices to show that $ \pi_C $ and $ \pi'_C $ are unitarily equivalent. 

By Theorem~\ref{thm:irreducible *-representations of C(Kq)}, there exist $ w, w' \in W $ and $ \zeta, \zeta' \in \tf^* $ such that $ \pi_C = \pi_w \otimes K_\zeta $, and $ \pi_C ' = \pi_{w'} \otimes K_{\zeta'} $. Moreover, the proof of Proposition~\ref{prop:embedding of SPrim(Cc(Gq)) into Prim(C(Kq))} yields $ w = w' $. Hence we only need to show that $ \zeta = \zeta' $ in $ \tf^*/i\hbar^{-1}\Qbf^\vee\cong T. $ 

In order to check this let $ \mu \in \weights^+ $. Applying \eqref{eq:unitary equivalence of two holomorphic evaluations} to $ f = c^+_{w,\mu}=u^\mu_{i_{w(\mu)} 1} $, compare \eqref{eq:w-multiplicative elements}, yields
\[
T
\Bigl(
q^{(\zeta,\mu)}
\pi_w(c^+_{w,\mu})
\otimes
\pi_\lambda(K_{-\mu})
\Bigr)
T^{-1}
=
q^{(\zeta',\mu)}
\pi_w(c^+_{w,\mu})
\otimes
\pi_\lambda(K_{-\mu}).
\]
By
\eqref{eq:w-multiplicative elements under tensor product decomposition}, the operator $ \pi_w(c^+_{w,\mu}) $ is nonzero, positive and self-adjoint. Moreover, $ \pi_\lambda(K_{-\mu}) $ is positive and invertible. Hence
\[
A_\mu = \pi_w(c^+_{w,\mu}) \otimes \pi_\lambda(K_{-\mu})
\]
is a nonzero positive operator. Since conjugation by the invertible operator $ T $ preserves the spectrum, the above identity implies
\[
q^{(\zeta,\mu)}\sigma(A_\mu) = q^{(\zeta',\mu)}\sigma(A_\mu).
\]
Noting that the spectrum of $ A_\mu $ satisfies $\{ 0 \} \subsetneqq \sigma(A_\mu) \subseteq \Rbb_+$, and $ q^{(\zeta - \zeta', \mu)} $ is a complex number of modulus $1$, this allows us to conclude $ q^{(\zeta - \zeta', \mu)} = 1 $. Since $ \mu \in \weights^+ $ was arbitrary we therefore obtain $ \zeta - \zeta' \in i \hbar^{-1} \roots^\vee $ as required.
\end{proof}

\begin{rmk}
Consider $ \pi = \pi_C \otimes \hat{\epsilon} \in \SSp(\Cf_c^\infty(G_q)) $ where $ \hat{\epsilon}: \Dfrak(K_q) \rightarrow \Cbb $ is the counit. Note that $ \hat{\epsilon} $ is the representation corresponding to the weight $ \lambda = 0 \in \weights^+ $ in our discussion above, and that $ \theta_\pi = \pi_C \in \SSp(\Cf^\infty(K_q)) $. Hence, in view of Proposition~\ref{prop:holomorphic evaluations are distinct for different points} we have 
\[
\SSp(\Cf^\infty(K_q))
\subseteq
\bigl\{ \theta_\pi \;\big|\; \pi \in \SSp(\Cf_c^\infty(G_q)) \bigr\} \big/ \sim
\;\cong\;
\SSp(\Cf_c^\infty(G_q)),
\]
where $ \sim $ denotes equivalence of bounded representations. This inclusion may be viewed as a quantum analogue of the classical relation $ K \subseteq G $ at the level of topologically irreducible $ * $-representations.
\end{rmk}

We have seen in Proposition~\ref{prop:kernels of q-holomorphic evaluations are not primitive} that quantum holomorphic evaluations typically fail to be topologically irreducible.   
One may wonder whether these representations are at least \emph{indecomposable} in a suitable sense. 

Let us say that a bounded representation $ \theta: A \rightarrow \Lbb(\Hcal) $ is \emph{orthogonally indecomposable} if there is no orthogonal direct sum decomposition $ \Hcal = \Hcal_0 \oplus \Hcal_1 $
into bounded subrepresentations on nontrivial closed subspaces $ \Hcal_0, \Hcal_1 \subseteq \Hcal $. 

\begin{prop} \label{orthogonalindecomposability}
For any $ \pi = \pi_C \otimes \pi_\lambda \in \SSp(\Cf_c^\infty(G_q)) $ the quantum holomorphic evaluation $ \theta_\pi $ is orthogonally indecomposable. 
\end{prop}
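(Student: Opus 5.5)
The plan is to convert an orthogonal decomposition of $\theta_\pi$ into a projection commuting with the whole $*$-algebra $\pi(\Pol^\Rbb(G_q))$, and then use the topological irreducibility of $\pi$.

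Suppose $\Hcal_\pi = \Hcal_0 \oplus \Hcal_1$ is an orthogonal decomposition into closed $\theta_\pi(\Ocal(G_q))$-invariant subspaces, and let $P$ be the orthogonal projection onto $\Hcal_0$, so that $1-P$ is the projection onto $\Hcal_1$. Invariance of $\Hcal_0$ gives $\theta_\pi(f)P = P\theta_\pi(f)P$. Invariance of $\Hcal_1$ gives $\theta_\pi(f)(1-P) = (1-P)\theta_\pi(f)(1-P)$, which expands to $P\theta_\pi(f) = P\theta_\pi(f)P$. Combining the two identities yields
\[
P\theta_\pi(f) = \theta_\pi(f)P \qquad \text{for all } f \in \Ocal(G_q).
\]
Since $P = P^*$, taking adjoints shows that $P$ also commutes with $\theta_\pi(f)^*$ for all $f$.

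By construction $\theta_\pi = \pi \circ i_-$. Because $\pi$ is a $*$-representation and $i_-(f)^* = i_+(f^*)$, we have $\theta_\pi(f)^* = \pi(i_+(f^*))$. Hence $P$ commutes with $\pi(i_-(\Cf^\infty(K_q)))$ and with $\pi(i_+(\Cf^\infty(K_q)))$. These two images generate $\pi(\Pol^\Rbb(G_q))$, so $P$ commutes with all of $\pi(\Pol^\Rbb(G_q))$. By Proposition~\ref{prop:images of irreps for Cc(Gq) and PolR(Gq) coincide} this algebra equals $\pi(\Cf_c^\infty(G_q))$.

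It follows that $P\Hcal_\pi = \Hcal_0$ is a closed $\pi(\Cf_c^\infty(G_q))$-invariant subspace. Since $\pi \in \SSp(\Cf_c^\infty(G_q))$ is topologically irreducible, $\Hcal_0$ is either $0$ or $\Hcal_\pi$. Thus one of the two summands is trivial, and $\theta_\pi$ is orthogonally indecomposable.

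I do not expect a real obstacle here. The only point needing care is that orthogonality, together with invariance of both summands, is exactly what makes $P$ commute with the adjoints $\theta_\pi(f)^*$ and not merely with $\theta_\pi(f)$. This is also why the argument breaks down for non-orthogonal decompositions, consistent with Proposition~\ref{prop:kernels of q-holomorphic evaluations are not primitive}, which exhibits non-split invariant subspaces.
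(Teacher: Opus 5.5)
Your proof is correct and is essentially the paper's argument: the projection onto $\Hcal_0$ commutes with each $\theta_\pi(f)$, and by taking adjoints also with each $\theta_\pi(f)^*$, hence with all of $\pi(\Pol^\Rbb(G_q))$, and topological irreducibility of $\pi$ finishes it. The only difference is the citation: the paper invokes Theorem~\ref{thm:generalisation of Podles and Woronowicz} to regard $\pi$ as a topologically irreducible $*$-representation of $\Pol^\Rbb(G_q)$, whereas you use Proposition~\ref{prop:images of irreps for Cc(Gq) and PolR(Gq) coincide}, which suffices and is slightly more economical (your closing aside is a bit loose, though: combined with this result, Proposition~\ref{prop:kernels of q-holomorphic evaluations are not primitive} only shows that the orthogonal complement of that invariant subspace is not invariant, not that the subspace is non-split, and Proposition~\ref{prop:some quantum holomorphic evaluations are decomposable} even exhibits an algebraic splitting on the dense domain).
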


\begin{proof}
By Theorem~\ref{thm:generalisation of Podles and Woronowicz}, we may regard $\pi$ as a topologically irreducible unital $*$-representation of $\Pol^\Rbb(G_q)$. Suppose that
\[
\Hcal_\pi = \Hcal_0 \oplus \Hcal_1
\]
is an orthogonal direct sum of closed $\theta_\pi(\Ocal(G_q))$-invariant subspaces, and let $P \in \Lbb(\Hcal_\pi)$ be the orthogonal projection onto $\Hcal_0$. Since both summands are invariant, we have
\[
P \theta_\pi ( f ) = \theta_\pi ( f ) P, \qquad f \in \Ocal(G_q).
\]
Taking adjoints and using $P = P^*$, we also obtain
\[
P \theta_\pi ( f )^* = \theta_\pi ( f )^* P, \qquad f \in \Ocal(G_q).
\]
Since $\theta_\pi = \pi \circ i_-$ and $i_-(\Ocal(G_q))$ generates $\Pol^\Rbb(G_q)$ as a $*$-algebra, these imply
\[
P\pi(x) = \pi(x)P, \qquad x \in \Pol^\Rbb(G_q).
\]
In particular, $\Hcal_0 = P\Hcal_\pi$ is a closed $\pi(\Pol^\Rbb(G_q))$-invariant subspace. By topological irreducibility of $\pi$, it follows that $\Hcal_0 = 0 $ or $\Hcal_0 = \Hcal_\pi$.
\end{proof}

Note that for any $ \pi = \pi_C \otimes \pi_\lambda \in \SSp(\Cf_c^\infty(G_q)) $, the associated quantum holomorphic evaluation may also be viewed as a purely algebraic representation $ \theta_\pi: \Ocal(G_q) \rightarrow \End(D_C \otimes V(\lambda)) $, where $ D_C \subseteq \Hcal_C $ is the dense linear subspace described in Theorem \ref{thm:irreducible *-representations of C(Kq)}. 
In contrast to Proposition \ref{orthogonalindecomposability}, 
we have the following example showing that, at this level, indecomposability in general fails. 

\begin{prop}\label{prop:some quantum holomorphic evaluations are decomposable}
Let
\[
\pi = \pi_{w_0} \otimes \pi_\varpi \in \SSp\big(\Cf_c^\infty (SL_q(2,\Cbb)) \big)
\cong \SSp\big( \Cf^\infty(SU_q(2)) \big) \times \SSp\big( \Df(SU_q(2)) \big),
\]
and let $ D \subset \ell^2 (\Nbb) $ be the dense subspace spanned by the standard orthonormal basis.
Then the quantum holomorphic evaluation
\[
\theta_\pi: \Ocal(SL_q(2, \Cbb)) \longrightarrow \End\big(D \otimes V(\varpi) \big)
\]
decomposes into a direct sum 
\[
\theta_\pi \cong (\pi_{w_0} \otimes K_\varpi ) \oplus (\pi_{w_0} \otimes K_{-\varpi})
\]
as abstract representations. 
\end{prop}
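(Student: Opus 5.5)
The plan is to make the block structure of $\theta_\pi$ explicit, and then split off the invariant subspace $D \otimes V(\varpi)_{-\varpi}$ from Proposition~\ref{prop:kernels of q-holomorphic evaluations are not primitive} by exhibiting an invariant algebraic complement.

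\textbf{Block form.} Write $e_1 = e^\varpi_1$ and $e_2 = e^\varpi_2$ for the basis of $V(\varpi)$, of weights $\varpi$ and $-\varpi$. Since $l^-(\Cf^\infty(K_q)) \subseteq U_q(\bfrak_-)$, we have $l^-(u^\varpi_{11}) = K^{-1}$, $l^-(u^\varpi_{22}) = K$ and $l^-(u^\varpi_{21}) = 0$. Moreover $\pi_\varpi(l^-(u^\varpi_{12}))$ sends $e_1$ to $\kappa e_2$ for some nonzero scalar $\kappa$; as in Proposition~\ref{prop:kernels of q-holomorphic evaluations are not primitive}, it is up to a scalar $\pi_\varpi(K_{-\varpi}KF)$.

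Hence, with respect to $D \otimes V(\varpi) = (D \otimes e_1) \oplus (D \otimes e_2)$, every $\theta_\pi(f)$ has the lower triangular form
\[
\theta_\pi(f) = \begin{pmatrix} a(f) & 0 \\ b(f) & c(f) \end{pmatrix}.
\]
By the computation in the proof of Theorem~\ref{thm:the best that Ker theta pi can do}, $a = \pi_{w_0} \otimes K_{-\varpi}$ and $c = \pi_{w_0} \otimes K_{\varpi}$; this identifies both the subrepresentation and the quotient. The off-diagonal part is given on generators by
\[
b(u^\varpi_{i1}) = 0, \qquad b(u^\varpi_{i2}) = \kappa\, \pi_{w_0}(u^\varpi_{i1}),
\]
and multiplicativity of $\theta_\pi$ says precisely that $b(fg) = b(f)a(g) + c(f)b(g)$.

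\textbf{Reduction.} A complementary invariant subspace of the form $\{\xi \otimes e_1 + A\xi \otimes e_2 \mid \xi \in D\}$, with $A \colon D \to D$ linear, exists if and only if
\[
b(f) = A\,a(f) - c(f)\,A \qquad \text{for all } f.
\]
Both sides satisfy the same twisted Leibniz rule, so it suffices to check this on the four generators $u^\varpi_{ij}$. Using $a(u_{ij}) = q^{-(\epsilon_j,\varpi)}\pi_{w_0}(u_{ij})$, $c(u_{ij}) = q^{(\epsilon_j,\varpi)}\pi_{w_0}(u_{ij})$ and $(\varpi,\varpi) = \tfrac12$, the conditions become the following.
\begin{itemize}
\item For $j = 1$: $q^{-1/2}A\,\pi_{w_0}(u_{i1}) = q^{1/2}\pi_{w_0}(u_{i1})\,A$.
\item For $j = 2$: $q^{1/2}A\,\pi_{w_0}(u_{i2}) - q^{-1/2}\pi_{w_0}(u_{i2})\,A = \kappa\,\pi_{w_0}(u_{i1})$.
\end{itemize}
Here $\pi_{w_0} = \varphi_q$ is given explicitly by \eqref{eq:infinite-dimensional representation of C( SUq(2))}, with $u_{12} = -q u_{21}^*$ and $u_{22} = u_{11}^*$.

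\textbf{Solving for $A$.} The relation with $u_{21}$, which acts diagonally by $q^k$, forces $A$ to lower the index by one. So I take the ansatz $A e_k = c_k e_{k-1}$ with $e_{-1} = 0$; such an $A$ preserves $D$, since no boundedness is required. Substituting into the remaining three relations yields recursions for the scalars $c_k$. The relation with $u_{11}$ is a two-step condition. The relations with $u_{12}$ and $u_{22}$ give linear equations involving $\kappa$, $q^k$ and $\sqrt{1-q^{2k}}$, and I expect them to determine $c_k$ uniquely. I would try $c_k$ proportional to $q^{-k}\sqrt{1-q^{2k}}$, or a similar expression obtained by solving the $u_{22}$-equation first and then verifying the others.

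The main obstacle is checking that this overdetermined system is consistent. The expectation is that consistency follows from the $SU_q(2)$ relations, since $b$ is a genuine cocycle. The resulting $A$ is presumably unbounded, which is consistent with Proposition~\ref{orthogonalindecomposability}. Once $A$ is found, the graph subspace is an invariant complement to $D \otimes e_2$ that is isomorphic to $a = \pi_{w_0} \otimes K_{-\varpi}$. This gives $\theta_\pi \cong (\pi_{w_0} \otimes K_\varpi) \oplus (\pi_{w_0} \otimes K_{-\varpi})$ as abstract representations.
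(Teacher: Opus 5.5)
\textbf{The gap: the existence of $A$ is asserted, not shown.} Your route is different from the paper's and it can be completed. As written, though, the decisive step is missing, since the existence of $A$ is the whole content of the proposition. Your stated reason for expecting consistency is also not valid. The twisted Leibniz rule $b(fg)=b(f)a(g)+c(f)b(g)$ holds for every block lower-triangular representation, split or not. The question is exactly whether this cocycle is inner, and the $SU_q(2)$ relations do not force that a priori. In fact the same equations have no \emph{bounded} solution (see below), so at the Hilbert space level $b$ is a cocycle that is not a coboundary.

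\textbf{How to close it.} You have to actually solve the system, and it closes quickly. Write $\alpha=\varphi_q(u_{11})$ and $\gamma=\varphi_q(u_{21})$, so that $\pi_{w_0}(u_{12})=-q\gamma$ and $\pi_{w_0}(u_{22})=\alpha^*$. The $u_{21}$-relation forces $Ae_k=c_ke_{k-1}$. The $u_{12}$-relation evaluated on $e_k$ then reads $(1-q^2)q^{k-1/2}c_k=\kappa\sqrt{1-q^{2k}}$, so
\[
c_k=\frac{\kappa\, q^{1/2-k}\sqrt{1-q^{2k}}}{1-q^2},
\]
which gives $c_0=0$ automatically. With this choice the $u_{11}$-relation $q^{-1/2}\sqrt{1-q^{2k}}\,c_{k-1}=q^{1/2}\sqrt{1-q^{2k-2}}\,c_k$ holds identically. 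The $u_{22}$-relation also holds:
\[
q^{1/2}\sqrt{1-q^{2k+2}}\,c_{k+1}-q^{-1/2}\sqrt{1-q^{2k}}\,c_k=\frac{\kappa q^{-k}}{1-q^2}\bigl((1-q^{2k+2})-(1-q^{2k})\bigr)=\kappa q^k .
\]
So the graph of $A$ is an invariant complement to $D\otimes e_2$, and your argument is complete.

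\textbf{What the explicit $A$ tells you.} Since $c_k\sim q^{-k}$, the operator $A$ is unbounded. Any closed invariant complement of $\ell^2(\Nbb)\otimes e_2$ would be the graph of a bounded operator satisfying the same equations. The $u_{21}$-relation would force that operator to map $e_k$ into $\Cbb e_{k-1}$, and the $u_{12}$-relation would then force it to equal $A$, which is impossible. Hence no closed invariant complement exists, which complements Proposition~\ref{orthogonalindecomposability}.

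\textbf{Comparison with the paper.} The paper does not solve for a complement. It takes the cyclic submodule $M_+$ generated by $e_0\otimes v_\varpi$ and observes that $\theta_\pi(a)$ kills this vector while $\theta_\pi(c)$ and $\theta_\pi(c^*)$ act on it by scalars. It then uses the PBW-type basis of $\Ocal(SL_q(2,\Cbb))$ to see that $M_+$ is spanned by the vectors $\theta_\pi(a^*)^k(e_0\otimes v_\varpi)$. Finally, it shows that the projection of $M_+$ onto the quotient by $M_-=D\otimes V(\varpi)_{-\varpi}$ is bijective, via a triangularity and linear independence argument.

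Your graph is exactly $M_+$. With the paper's normalisation $\kappa=1$ it contains $\sqrt{1-q^2}\,q^{1/2}e_1\otimes v_\varpi+e_0\otimes v_{-\varpi}=\theta_\pi(a^*)(e_0\otimes v_\varpi)$. Your method avoids the PBW basis and the injectivity argument and gives the complement by an explicit formula. It also shows that this is the \emph{unique} invariant complement of $M_-$, because any complement is the graph of some $A\colon D\to D$, and $A$ is pinned down by the $u_{21}$- and $u_{12}$-relations. The paper's argument is more conceptual: it identifies the complement as the module generated by a single joint eigenvector of $\theta_\pi(a)$, $\theta_\pi(c)$ and $\theta_\pi(c^*)$.
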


\begin{proof}
Recall that we denote the fundamental representation matrix by
\[
u^\varpi = \begin{pmatrix} a & b \\ c & d \end{pmatrix}.
\]
We denote the standard orthonormal basis of $\ell^2(\Nbb)$ by $\{ e_n \mid n \in \Nbb \}$. Recalling that $l^- ( c^*) \in U_q (\mathfrak{sl} (2,\Cbb)) $ lowers weights by $ \alpha = 2 \varpi $, we may write $ V(\varpi) = \Cbb v_\varpi \oplus \Cbb v_{-\varpi} $, where $ v_{\pm \varpi} $ are weight vectors of weight $ \pm \varpi $ such that
\[
v_{-\varpi} = \pi_\varpi l^- (-qc^*) v_\varpi.
\]
Consider the cyclic submodules
\[
M_\pm = \theta_\pi \big(\Ocal(SL_q(2, \Cbb)) \big) ( e_0 \otimes v_{\pm\varpi} ) \subseteq D \otimes V(\varpi).
\]
From the definition of $ \theta_\pi $ we immediately see that $M_- = D \otimes V(\varpi)_{-\varpi} $ and that $ M_- $ is an irreducible representation isomorphic to $ \pi_{w_0} \otimes K_{\varpi} $, compare Proposition~\ref{prop:kernels of q-holomorphic evaluations are not primitive}.

We claim that $ M_+ $ is an irreducible representation equivalent to $ \pi_{w_0} \otimes K_{- \varpi} $. 
Note first that the generators of $ \Ocal(SL_q(2, \Cbb)) $ act on $ e_0 \otimes v_\varpi $ by
\begin{align}\label{eq:action of theta pi a star}
\theta_\pi(a) (e_0 \otimes v_\varpi) &= \pi_{w_0} (a) e_0 \otimes \pi_\varpi l^- ( a ) v_\varpi + \pi_{w_0} (-qc)^* e_0 \otimes \pi_\varpi l^- (c) v_\varpi = 0,  \\
\theta_\pi ( c) (e_0 \otimes v_\varpi ) &= \pi_{w_0} (c) e_0 \otimes \pi_\varpi l^-(a) v_\varpi + \pi_{w_0}(a)^* e_0 \otimes \pi_\varpi l^-(c) v_\varpi = q^{-\frac{1}{2}} e_0  \otimes v_\varpi, \nonumber \\
\theta_\pi ( c^* ) (e_0 \otimes v_\varpi ) &= \pi_{w_0} (a) e_0 \otimes \pi_\varpi l^- ( c^*) v_\varpi + \pi_{w_0} (c^*) e_0 \otimes \pi_\varpi l^- (a^*) v_\varpi = q^{\frac{1}{2}} e_0  \otimes v_\varpi, \nonumber \\
\theta_\pi (a^*) (e_0 \otimes v_\varpi ) &= \pi_{w_0} ( c) e_0 \otimes \pi_\varpi l^- ( - q c^* )v_\varpi + \pi_{w_0} (a^*) e_0 \otimes \pi_\varpi l^- ( a^*) v_\varpi \nonumber \\
= e_0 \otimes v_{-\varpi} &+ (1-q^2)^{\frac{1}{2}} q^{\frac{1}{2}} e_1 \otimes v_\varpi = e_0 \otimes v_{-\varpi} + \Big( (\pi_{w_0} \otimes K_{-\varpi} ) (a^*) e_0 \Big) \otimes v_{\varpi}. \nonumber
\end{align}
Since $ \Ocal(SL_q(2, \Cbb)) $ admits a basis
\[
\big\{ a^k (c^*)^m c^n  \mid k, m, n \in \Nbb \big\} \cup \big\{ (a^*)^k (c^*)^m c^n \mid k, m, n \in \Nbb, \, k \neq 0 \big\},
\]
where
\[
\begin{pmatrix}
a & -q c^* \\ 
c & a^*
\end{pmatrix}
= u^\varpi =
\begin{pmatrix}
u^\varpi_{11} & u^\varpi_{12} \\ 
u^\varpi_{21} & u^\varpi_{22}
\end{pmatrix},
\]
these formulas imply that
\begin{equation*}
M_+ = \theta_\pi(\Ocal(SL_q(2, \Cbb))) (e_0 \otimes v_{\varpi}) = \Span \big\{ \theta_\pi(a^*)^k (e_0 \otimes v_\varpi) \mid k \in \Nbb \}.
\end{equation*}
Consider the intertwiner $ T: M_+ \rightarrow \big(D \otimes V(\varpi) \big)/M_- $ obtained by composing the inclusion map $ M_+ \rightarrow D \otimes V(\varpi) $ with the canonical projection onto $ (D \otimes V(\varpi))/M_- $. The quotient $ (D \otimes V(\varpi))/M_- $ is equivalent to the irreducible representation $ \pi_{w_0} \otimes K_{-\varpi} $, compare the proof of Theorem~\ref{thm:the best that Ker theta pi can do}. 
Since $ T( e_0 \otimes v_{\varpi}) \neq 0 $ it follows that $ T $ is surjective.

Next let $ x = \sum_{k \in \Nbb} \lambda_k \theta_\pi(a^*)^k (e_0 \otimes v_\varpi) \in \Ker(T) \subseteq M_+ $. A repeated application of \eqref{eq:action of theta pi a star} shows that
\[
\theta_\pi( a^*)^k ( e_0 \otimes v_\varpi ) \in \Big( (\pi_{w_0} \otimes K_{-\varpi}) (a^*)^k e_0 \Big) \otimes v_{\varpi} + M_- , \qquad \forall k \in \Nbb.
\]
We therefore have
\[
0 + M_- = T(x) = \Big( \sum_{k \in \Nbb} \lambda_k (\pi_{w_0} \otimes K_{-\varpi}) (a^*)^k e_0 \Big) \otimes v_{\varpi} + M_-. 
\]
This implies $ \lambda_k = 0 $ for all $ k \in \Nbb $, since the vectors $ (\pi_{w_0} \otimes K_{-\varpi}) (a^*)^k e_0 \in D $ are linearly independent. We conclude that $ x = 0 $, and therefore $ \Ker(T) = 0 $. This means that $ T $ is an isomorphism, so that $ M_+ $ is equivalent to $ \pi_{w_0} \otimes K_{- \varpi} $. In particular $ M_+ $ is irreducible. 

Note that $M_+ \cap M_- = 0$ since $\Ker(T) = 0 $. 
Moreover, we observe that $ T(e_0 \otimes v_{\varpi}) $ generates $ D \otimes V(\varpi)/M_- $ and $ e_0 \otimes v_{-\varpi} $ generates $ M_- $ as a representation of $ \Ocal(SL_q(2, \Cbb)) $, so that $ M_+ + M_- = D \otimes V(\varpi) $. 

In summary, we obtain a direct sum decomposition 
\(
D \otimes V(\varpi) = M_+ \oplus M_-  
\)
and 
\[
\theta_\pi \cong \left( \pi_{w_0} \otimes K_{-\varpi} \right) \oplus \left( \pi_{w_0} \otimes K_{\varpi} \right)
\]
as claimed. 
\end{proof}

\section*{Funding}
H. Lee was supported by the National Natural Science Foundation of China (Grant No.~12371138 and W2441002) and by the National Research Foundation of Korea (NRF) grants funded by the Korea government (MSIT) (No.RS-2025-00561391 and No.RS-2022-NR069971).

\section*{Acknowledgments}
Most of this work was carried out while H. Lee was affiliated with the Institute for Advanced Study in Mathematics of Harbin Institute of Technology and was completed after he began a postdoctoral position at the Research Institute of Mathematics at Seoul National University. H. Lee would like to thank both institutes for their hospitality.

H. Lee is deeply grateful to Simeng Wang, Xiao Xiong, and Quanhua Xu for their generous support and encouragement during his postdoctoral stay in Harbin. He would also like to express his sincere gratitude to Hun Hee Lee and Sang-Gyun Youn for their invaluable support and kindness during his postdoctoral stay in Seoul. Their support has meant a great deal to him.

\section*{Declaration of Generative AI Use}

The authors used ChatGPT solely for proofreading and minor language editing of the completed manuscript. No AI tools were used in the development of the main content of the paper.

\printbibliography 

\end{document}